\newcommand{\C}{\mathbb{C}}
\newcommand{\Q}{\mathbb{Q}}
\newcommand{\U}{\mathcal{U}}
\newcommand{\Z}{\mathbb{Z}}
\newcommand{\sq}{\square}
\newcommand{\fg}{\mathfrak{g}}
\newcommand{\fgl}{\mathfrak{gl}}
\newcommand{\fsl}{\mathfrak{sl}}
\newcommand{\N}{\mathbb{N}}
\newcommand{\n}{\mathbf{n}}
\newcommand{\m}{\mathbf{m}}
\newcommand{\CZ}{\mathcal{Z}}
\newcommand{\CR}{\mathcal{R}}
\newcommand{\CP}{\mathcal{P}}
\newtheorem{theorem}{Theorem}[section]
\newtheorem{remark}[theorem]{Remark}
\newtheorem{problem}[theorem]{Problem}
\newtheorem{proposition}[theorem]{Proposition}
\newtheorem{corollary}[theorem]{Corollary}
\newtheorem{lemma}[theorem]{Lemma}
\newtheorem{example}[theorem]{Example}
\DeclareMathOperator{\cont}{cont}
\newcommand{\xx}{\mathbf{x}}
\newcommand{\hc}{\mathit{hc}}
\newcommand{\ch}{\mathit{ch}}
\newcommand{\Tr}{\mathrm{Tr}}
\newcommand{\Id}{\mathrm{Id}}
\newcommand{\Rep}{\mathit{Rep}}
\newcommand{\SF}{\mathit{Sym}}
\newcommand{\K}{\mathcal{K}}
\newcommand{\ba}{\mathbf{a}}
\newcommand{\e}{\mathbf{\varepsilon}}
\title[Cyclotomic expansions via interpolation Macdonald polynomials]{Cyclotomic expansions for $\fgl_N$ link invariants \\[2mm] via interpolation Macdonald polynomials}
\author{Anna Beliakova}
\address{Institut f\"ur Mathematik\\
Universit\"at Z\"urich\\
Winterthurerstrasse 190\\
CH-8057 Z\"urich\\ Switzerland}
\email{anna@math.uzh.ch}
\author{Eugene Gorsky}
\address{Department of Matematics\\ University of California, Davis \\ One Shields Avenue, Davis CA 95616, USA}
\email{egorskiy@math.ucdavis.edu}
\begin{document}
\maketitle


\begin{abstract}
In this paper we construct a new basis
for the cyclotomic completion of the center of
the  quantum $\fgl_N$ in terms of the interpolation Macdonald polynomials. Then we use
a result of Okounkov to provide a dual  basis with respect to the quantum Killing form (or Hopf pairing).
The main applications  are:
1)  cyclotomic 
expansions for the  $\fgl_N$ Reshetikhin--Turaev link invariants 
and the universal $\fgl_N$ knot invariant;
2) an explicit construction of the unified $\fgl_N$
invariants for integral homology 3-spheres using universal Kirby colors.
These results generalize those  of Habiro for $\fsl_2$. In addition,
we give a simple proof of the fact that the universal $\fgl_N$  invariant of any evenly framed link 
and the universal
$\fsl_N$ invariant  of any $0$-framed
algebraically split link are $\Gamma$-invariant, where $\Gamma=Y/2Y$ with 
 the root lattice $Y$. 
\end{abstract}

\section{Introduction}
In a series of papers \cite{BBlL, BBL,H} Habiro, the first author et al. defined  {\it unified} invariants of homology 3-spheres that belong to the Habiro ring and dominate Witten--Reshetikhin--Turaev (WRT) invariants. 
Unified invariants provide an important tool to study
structural properties of the WRT invariants.
In \cite{BCL, BL} they were used to prove 
integrality of the  $\fsl_2$ WRT invariants for all 3-manifolds at all roots of unity.
\vspace{2mm}

The theory of unified invariants for $\fsl_2$ is based on  {\it
cyclotomic expansions}
for the colored Jones polynomial and for the universal knot invariant
 constructed as follows.
 Given a framed oriented link
$L$ in the 3-sphere, we open its components to obtain a bottom tangle $T$, 
 presented by  a diagram $D$ (see Figure 1).  For
 a ribbon Hopf algebra $U_q\fg$, the {\it universal} link invariant $J_L(\fg;q)$  is obtained by 
spliting $D$ intro elementary pieces: crossings, caps and cups and then by 
 associating to them
 $R^{\pm 1}$-matrices, and  pivotal elements,
 respectively. 
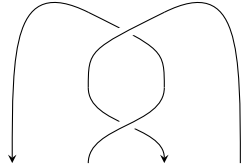
\begin{figure}[ht!]
\begin{tikzpicture}
\draw (1,1).. controls (1,1.5) and (1,1.5) .. (0,2);
\draw [line width=5,white] (0,1).. controls (0,1.5) and (0,1.5).. (1,2);
\draw (0,1).. controls (0,1.5) and (0,1.5).. (1,2);
\draw[stealth-] (1,0).. controls (1,0.5) and (0,0.5) .. (0,1);
\draw [line width=5,white] (0,0).. controls (0,0.5) and (1,0.5)..(1,1);
\draw (0,0).. controls (0,0.5) and (1,0.5)..(1,1);
\draw [stealth-] (-1,0) .. controls (-1,1.5) and (-1,2.5) .. (0,2);
\draw (2,0) .. controls (2,1.5) and (2,2.5) .. (1,2);
\end{tikzpicture}
\caption{An example of the clasp bottom tangle}
\label{fig:clasp}
\end{figure}

For a knot $K$, 
$J_K(\fg;q)$  belongs to (some completion of) the center $\CZ(U_q\fg)$.
In the easiest case $\fg=\fsl_2$, the center is generated by the Casimir $C$.
For a $0$-framed knot $K$, Habiro showed that
there are  coefficients $a_m(K)\in \Z[q^{\pm 1}]$
such that
\begin{align} 
\label{cyclotomic}
  J_K( \fsl_2; q) = 
  \sum^\infty_{m=0} a_m(K)\, \sigma_m \;  \quad \text{with} \;\quad \sigma_m=\prod_{i=1}^m\left(C^2 -(q^i+q^{-i}+2) \right)
 .
\end{align}
 Replacing $C^2$  in \eqref{cyclotomic}  by 
 its value $q^{n}+q^{-n}+2$
 on  the $n$-dimensional irreducible representation $V_{n-1}$,
 we get
the $n$-colored Jones polynomial of $K$ (normalized to 1 for the unknot)
 \begin{equation}\label{Jones}
  J_K(V_{n-1},q) = \sum_{m=0}^\infty (-1)^m q^{- \frac{m(m+1)}{2}} a_m(K) \,
  (q^{1+n};q)_m ( q^{1-n}; q)_m
\end{equation}
where $(a;q)_m=(1-a)(1-aq)\dots(1-aq^{m-1})$.
 Equation \eqref{Jones} is known as a cyclotomic expansion of the colored Jones polynomial.
 Thus, Habiro's series \eqref{cyclotomic}
 dominates all colored Jones polynomials of $K$.
   To prove the fact that $J_K( \fsl_2;q)$ belongs to the 
 even part of  $\CZ(U_q\fsl_2)$, generated by 
$C^2$, Habiro used the whole power of the theory of bottom tangles developed in \cite{Hbottom}.
\vspace{2mm}
   

In this paper we  give a simple proof for the ``evenness" of the universal invariant of algebraically split links for
all quantum groups of type $A$.
Recall that $U_q\fg$ has a natural action of a finite group 
$\Gamma=Y/2Y$ where $Y$ is the root lattice of $\fg$. 
 For $\fg=\fgl_N$,  $\Gamma=\Z_2^N$ and for
 $\fg=\fsl_N$,  $\Gamma=\Z_2^{N-1}$. 

\begin{theorem}
\label{thm:intro even}
 The universal $\fgl_N$  invariant of any evenly framed link is $\Gamma$-invariant. 
 The universal $\fsl_N$  invariant of any  0-framed algebraically split link is $\Gamma$-invariant. 
\end{theorem}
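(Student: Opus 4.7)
The plan is to realize $\Gamma=Y/2Y$ as outer automorphisms of $U_q\fg$ via conjugation by the formal square roots $K_{\mu/2}$, $\mu\in Y$, in a mild completion of $U_q\fg$; this is well-defined modulo $2Y$ because $K_\mu=K_{\mu/2}^2\in U_q\fg$ already acts innerly. $\Gamma$-invariance of $J_L\in U_q\fg^{\otimes k}$ then amounts to the identity $(\phi_{\mu_1}\otimes\cdots\otimes\phi_{\mu_k})(J_L)=J_L$ for every tuple $(\mu_i)\in Y^k$, where $\phi_{\mu_i}=\mathrm{Ad}(K_{\mu_i/2})$ acts on the $i$th tensor factor.

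The first step is to record local invariance. Because $K_{\mu/2}\otimes K_{\mu/2}$ is group-like, the quasi-triangular identity $(g\otimes g)R=R(g\otimes g)$ forces $R$ to be fixed by the diagonal conjugation; moreover $K_{\mu/2}$ commutes with the pivotal $K_{2\rho}$ and with the central ribbon element. Consequently the componentwise $\Gamma$-action fixes all contributions from cups, caps, and self-crossings of a single component, and the only nontrivial corrections arise at crossings between distinct components $i\ne j$, where the two sides of $R$ are conjugated by different elements $K_{\mu_i/2}$ and $K_{\mu_j/2}$.

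The second step is to compute these corrections using the weight decomposition $\bar R=\sum_\lambda\bar R_\lambda$ with $\bar R_\lambda\in U_q\fg_\lambda\otimes U_q\fg_{-\lambda}$, together with the sliding identity $(K_\nu\otimes 1)R(K_\nu^{-1}\otimes 1)=(1\otimes K_\nu^{-1})R(1\otimes K_\nu)$, which transfers a Cartan element from one strand of an $R$-matrix to the other. Tracking this sliding along the $i$th component of the bottom tangle, the accumulated correction is a conjugation of the $i$th tensor factor by $K_{f_i}$ with $f_i=w_i\mu_i+2\sum_{j\ne i}\mathrm{lk}_{ij}\mu_j$, where $w_i$ is the writhe and $\mathrm{lk}_{ij}$ are the pairwise linking numbers of $L$.

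The hypotheses now kill the correction. For $\fgl_N$ with $Y=\Z\epsilon_1\oplus\cdots\oplus\Z\epsilon_N$ and an evenly framed link, $w_i\in 2\Z$ gives $f_i\in 2Y$, so $K_{f_i}$ is a square of an element of $U_q\fg$ itself and its ``half'' $K_{f_i/2}$ is already identified with the identity in $\Gamma=Y/2Y$. For $\fsl_N$ and a $0$-framed algebraically split link, all $w_i$ and $\mathrm{lk}_{ij}$ vanish, so $f_i=0$. The main obstacle is the precise sliding argument along a multi-component bottom tangle, including the careful handling of caps and cups where orientation reversal brings in the antipode and the pivotal element; this is essentially a diagrammatic ``change of color'' computation, and it is the single place where the assumption on framings and linking numbers actually enters.
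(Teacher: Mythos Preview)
Your identification of the $\Gamma$-action with conjugation by formal square roots $K_{\mu/2}$ is incorrect, and this undermines the entire argument. Conjugation by any element of the Cartan (formal or not) fixes the Cartan pointwise, whereas the $\Gamma$-action in question does \emph{not}: for $\zeta\in\Gamma$ one has $K_i\mapsto\zeta_iK_i$, $E_i\mapsto E_i$, $F_i\mapsto\zeta_i\zeta_{i+1}F_i$. Concretely, for $\fsl_2$ the nontrivial element of $\Gamma$ sends the Casimir $C$ to $-C$, so $\Gamma$-invariance is the nontrivial assertion that $J_K$ lies in $\Z[C^2]$; by contrast, $\mathrm{Ad}(K_{\mu/2})$ fixes every central element automatically. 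For multi-component links your sliding computation may be correct as a statement about how componentwise Cartan-conjugation interacts with the $R$-matrix, but it proves invariance under the wrong action.

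The paper proceeds quite differently: it identifies the $\Gamma$-twist of the action on $V(\lambda)$ with the action on $V(\lambda)\otimes L(\zeta)$, then extends the Reshetikhin--Turaev functor to colorings by such tensor products. A diagrammatic argument (sliding the auxiliary $L(\zeta)$ strands past the solid strands, at the cost of operators $\Sigma_\zeta$ that cancel under the framing/linking hypotheses) shows $J_L(V(\lambda_i)\otimes L(\zeta_i))=J_L(V(\lambda_i))\cdot\prod_i\dim_q L(\zeta_i)$. Since $\Tr_q^{V(\lambda)\otimes L(\zeta)}(J_L)=\Tr_q^{V(\lambda)}(\zeta\cdot J_L)\cdot\dim_q L(\zeta)$, this yields $\Tr_q^{V(\lambda_i)}(\zeta\cdot J_L)=\Tr_q^{V(\lambda_i)}(J_L)$ for all $\lambda_i$, hence $\zeta\cdot J_L=J_L$. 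The place where writhe and linking numbers enter is genuinely parallel to your step~3, but the operators being tracked are the $\Sigma_\zeta$ (which act by signs on irreducibles), not Cartan conjugations.
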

 

The quantum group $U_q\fgl_N$ admits a finite dimensional irreducible representation
$V(\lambda)$   with highest weight $v^{\lambda}$
for any partition $\lambda=(\lambda_1\geq \dots\geq \lambda_N)$ with $N$ parts and $v^2=q$.
To  prove  Theorem \ref{thm:intro even} we extend the  Reshetikhin-Turaev invariants to tangles colored with representations $L(\zeta)\otimes V(\lambda)$
where   $L(\zeta)$ is a one-dimensional representation of $U_q\fgl_N$ for $\zeta\in \Gamma$. 
Then the claim follows from the comparison of  the $\fgl_N$ Reshetikhin-Turaev link invariants  colored with 
 $L(\zeta)\otimes V(\lambda)$ and  $V(\lambda)$. 
\vspace{2mm}

The next main result of the paper establishes an explicit basis in the $\Gamma$-invariant part of the center $\CZ$ of  $U_q\fgl_N$. 
It generalizes Habiro's basis
$\{\sigma_m \,|\, m\in \mathbb N\}$
for the even part of  $\CZ(U_q\fsl_2)$. 

\begin{theorem}
\label{thm: intro basis} 
There exists a family of central elements $\sigma_{\lambda}\in \CZ$ labeled by partitions $\lambda$ with at most $N$ parts  with the following properties:
\begin{itemize}
\item[(a)] $\sigma_{\lambda}$ is $\Gamma$-invariant and annihilates  $L(\zeta)\otimes V(\mu)$ for all $\zeta\in \Gamma$ and partitions $\mu$ with at most $N$ parts not containing $\lambda$; 
\item[(b)] $\sigma_{\lambda}$ does not annihilate $V(\lambda)$ and acts on it by an explicit scalar (see Theorem \ref{thm:basis}).
\end{itemize}
\end{theorem}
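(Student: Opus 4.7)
The plan is to construct $\sigma_\lambda$ by applying the inverse of the quantum Harish--Chandra isomorphism to a carefully chosen interpolation Macdonald polynomial. Recall that $\hc$ identifies $\CZ(U_q\fgl_N)$ with the algebra of symmetric Laurent polynomials in $q^{\pm x_1},\ldots,q^{\pm x_N}$, and that a central element $z$ acts on $V(\mu)$ by $\hc(z)$ evaluated at the spectral point $q^{x_i}=q^{\mu_i+\rho_i}$. Tensoring with the one-dimensional $L(\zeta)$ multiplies these evaluation points by $(-1)^{\zeta_i}$, since $L(\zeta)$ has $K_i$ acting by $(-1)^{\zeta_i}$. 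The $\Gamma$-invariant subalgebra of $\CZ$ corresponds, under $\hc$, to polynomials in $q^{\pm 2x_i}$; for such a polynomial the sign factors square to $1$, so any $\Gamma$-invariant central element automatically acts by the same scalar on $V(\mu)$ as on $L(\zeta)\otimes V(\mu)$ for every $\zeta\in\Gamma$.

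Next I would take $\sigma_\lambda$ to correspond, under $\hc^{-1}$, to the interpolation Macdonald polynomial $I_\lambda^*$ in $N$ variables expressed in the $\Gamma$-invariant spectral parameters $q^{\pm 2x_i}$, with the appropriate $\rho$-shift built in. Recall that $I_\lambda^*$ is the unique symmetric polynomial (up to normalization, and subject to a degree bound) satisfying the vanishing characterization $I_\lambda^*(\mu)=0$ for every partition $\mu\not\supseteq\lambda$ with at most $N$ parts, with an explicit nonzero value $I_\lambda^*(\lambda)$ given by a product formula over the boxes of $\lambda$. With this definition, property (a) reduces via the preceding $\Gamma$-invariance observation to the classical interpolation vanishing, while property (b) reads off directly from Okounkov's evaluation formula for $I_\lambda^*(\lambda)$, giving the explicit scalar appearing in Theorem \ref{thm:basis}.

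The main obstacle I foresee is calibrating the parameters of the interpolation polynomial and the $\rho$-shift so that the grid of vanishing points of $I_\lambda^*$ matches exactly the set of spectral points $\{q^{2(\mu_i+\rho_i)}\}$ indexed by partitions $\mu$ with at most $N$ parts. One must verify both that the vanishing transfers without collisions at shifted points and that the diagonal value at $\mu=\lambda$ remains nonzero after the change of variables, so that $\sigma_\lambda$ indeed does not annihilate $V(\lambda)$. Once this dictionary between the interpolation grid and the spectrum of $\CZ(U_q\fgl_N)$ is in place, the rest of the argument is a direct translation of the interpolation characterization through $\hc$, and the family $\{\sigma_\lambda\}$ inherits the triangularity that makes it a basis (as asserted in the abstract).
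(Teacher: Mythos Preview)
Your proposal is correct and follows essentially the same route as the paper: define $\sigma_\lambda=\hc^{-1}(g_\lambda)$ for a suitably shifted interpolation polynomial, observe that $\Gamma$-invariance is automatic because only the squares of the spectral parameters enter, and read off the vanishing and diagonal value from the interpolation characterization. The only thing the paper adds beyond your outline is the explicit calibration you flagged as the main obstacle: it uses the $q=t$ specialization $F_\lambda$ (a factorial Schur function rather than the full two-parameter Macdonald interpolation polynomial) and sets $g_\lambda(x_1,\ldots,x_N)=F_\lambda(v^{N-1}x_1,\ldots,v^{N-1}x_N;q^{-1})$, so that $v^{N-1}y_i^2=q^{\mu_i+N-i}$ lands exactly on the interpolation grid.
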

 
The proof uses the theory of interpolation Macdonald polynomials developed in \cite{Knop,KS,Ok,Ok2,Ok3,Ols,Sahi}. This theory allows one to reconstruct a symmetric function $f(x_1,\ldots,x_N)$ from its values at special points $x_i=q^{-\mu_i-N+i}$ where $\mu$ is an arbitrary partition with at most 
$N$ parts. The connection between the center of $U_q\fgl_N$ and symmetric functions goes through the quantum Harish-Chandra isomorphism, and we interpret $f(q^{-\mu_1-N+1}, \dots, q^{-\mu_N})$ as the scalar by which the element of the center $f$ acts on the irreducible representation $V({\mu})$. Interpolation Macdonald polynomials  then correspond to a natural  basis in the center of $U_q\fgl_N$.

The polynomials $\sigma_{\lambda}$ yield a basis in the $\Gamma$-invariant parts of both the center $\CZ$ and its completion (a function in the completion is determined by its values on all finite-dimensional representations).  We use a formula of Okounkov \cite{Ok} to give explicit expansion of a given central element $z$ in the basis $\sigma_{\lambda}$ in terms of the scalars by which $z$ acts on all finite-dimensional representations $V({\lambda})$.  
This leads
to an  expansion of the universal knot invariant in the basis $\sigma_{\lambda}$, where the coefficients are related to Reshetikhin-Turaev invariants of the same knot colored by $V({\mu})$ via an explicit triangular matrix $(d_{\lambda, \mu})$
which does not depend on the knot.

\begin{theorem}\label{thm:main}
For any   evenly framed knot $K$,
there exist Laurent polynomials
$a_\lambda(K)\in \Z[q,q^{-1}]$  such that
 the universal  invariant of $K$ has the following expansion:
\begin{equation}
\label{eq:cycl}
J_K(\fgl_N;q)=\sum_{\lambda}
a_\lambda(K)\, \sigma_{\lambda}\ .
\end{equation}
Moreover, the
coefficients $a_\lambda(K)$ can be computed 
in terms of the Reshetikhin-Turaev invariants  as follows:
$$a_\lambda(K) =
\sum_{\mu\subset \lambda}
{d_{\lambda, \mu}(q^{-1})}\; J_K(V(\mu),q)$$
where the coefficients $d_{\lambda, \mu}(q)$ are defined in Theorem \ref{thm: dlm}. \end{theorem}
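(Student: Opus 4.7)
The plan is to combine the basis property from Theorem \ref{thm: intro basis} with a triangular inversion, and then to handle integrality separately. By Theorem \ref{thm:intro even}, since $K$ is evenly framed, $J_K(\fgl_N;q)$ lies in the $\Gamma$-invariant part of (the completion of) the center $\CZ$. The discussion following Theorem \ref{thm: intro basis} says that $\{\sigma_\lambda\}$ is a basis of this $\Gamma$-invariant completion, because a central element in the completion is determined by its scalar action on the irreducibles $V(\mu)$. Hence there exist unique scalars $a_\lambda(K)$ for which $J_K(\fgl_N;q) = \sum_\lambda a_\lambda(K)\,\sigma_\lambda$ as a formal expansion.

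To pin down the $a_\lambda(K)$, let $c_{\mu,\lambda}$ denote the scalar by which $\sigma_\lambda$ acts on $V(\mu)$. By Theorem \ref{thm: intro basis}(a), $c_{\mu,\lambda} = 0$ unless $\lambda \subset \mu$, and by (b) the diagonal entry $c_{\mu,\mu}$ is an explicit nonzero scalar. Evaluating the formal expansion on each $V(\mu)$ yields the lower-triangular system
\[
J_K(V(\mu),q) \;=\; \sum_{\lambda \subset \mu} a_\lambda(K)\, c_{\mu,\lambda}\ ,
\]
which can be inverted order by order in partition containment. The explicit inverse is precisely Okounkov's formula \cite{Ok} for interpolation Macdonald polynomials, translated via the quantum Harish-Chandra isomorphism; this is the dual basis referred to in the abstract and will appear as Theorem \ref{thm: dlm}. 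Substituting delivers the claimed expression $a_\lambda(K) = \sum_{\mu \subset \lambda} d_{\lambda,\mu}(q^{-1})\, J_K(V(\mu),q)$, with the $q^{-1}$ arising because the Harish-Chandra evaluation takes place at the shifted points $q^{-\mu_i - N + i}$.

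The main obstacle is the integrality claim $a_\lambda(K) \in \Z[q,q^{-1}]$. Each $J_K(V(\mu),q)$ is individually a Laurent polynomial in $\Z[q^{\pm 1}]$ (standard from the integral Reshetikhin-Turaev construction), but inverting $(c_{\mu,\lambda})$ introduces denominators of the form $c_{\mu,\mu}^{-1}$ from the diagonal scalars of Theorem \ref{thm:basis}. To overcome this, I would identify an integral $\Z[q^{\pm 1}]$-form of the $\Gamma$-invariant completed center in which $J_K(\fgl_N;q)$ naturally sits—this form is supplied by the universal invariant construction, whose building blocks $R^{\pm 1}$ and pivotal elements have entries in $\Z[q^{\pm 1}]$-completions—and then verify that $\{\sigma_\lambda\}$ is an integral basis of this form. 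The latter is the delicate point: one must choose the normalization of the interpolation Macdonald polynomials so that, after transport through the quantum Harish-Chandra isomorphism, the coefficients of any element arising from the universal invariant framework remain in $\Z[q^{\pm 1}]$, i.e.\ the denominators in Okounkov's inversion cancel against numerators built from the $J_K(V(\mu),q)$.
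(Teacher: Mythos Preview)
Your setup for the expansion and the explicit formula is correct and matches the paper: $J_K(\fgl_N;q)$ is $\Gamma$-invariant central (Theorem~\ref{thm:intro even}), the $\sigma_\lambda$ form a basis of the completion, and Okounkov's inversion (Theorem~\ref{thm: dlm}) yields the stated formula for $a_\lambda(K)$.

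The gap is in your integrality argument. What the integral form of $\U$ actually gives you is that $a_\lambda(K)\in\widehat{\Z[q]}$, the Habiro ring---this is Corollary~\ref{cor:main}, which rests on Lemma~\ref{lem: completion center}. It does \emph{not} directly give $a_\lambda(K)\in\Z[q,q^{-1}]$, and the $\sigma_\lambda$ are not an ``integral basis'' in the naive sense you suggest: the coefficients $d_{\lambda,\mu}$ genuinely have cyclotomic denominators (see the tables in Section~\ref{sec: interpolation tables}), and there is no direct cancellation of these against the $J_K(V(\mu),q)$ for a general knot.

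The paper's mechanism is different and uses \emph{both} descriptions of $a_\lambda(K)$ at once. From the integral completion one knows $a_\lambda(K)\in\widehat{\Z[q]}$. From the interpolation formula applied to the Laurent-polynomial values $J_K(V(\mu),q)$ one knows $a_\lambda(K)$ is a rational function in $q$ with denominator a product of cyclotomic polynomials. The key input you are missing is Proposition~\ref{prop: appendix}: if $f\in\widehat{\Z[q]}$ and $f\cdot h\in\Z[q,q^{-1}]$ for some product $h$ of cyclotomic polynomials, then already $f\in\Z[q,q^{-1}]$. This is what forces $a_\lambda(K)$ down from the Habiro ring into Laurent polynomials.
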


We prove Theorem \ref{thm:main} as Proposition \ref{pro:main}.
We would like to emphasize that the fact that $a_\lambda(K)$ are Laurent polynomials in $q$ is highly nontrivial. Indeed, 
we have computed the tables of coefficients $d_{\lambda, \mu}(q)$  for $\fgl_2$ in Section \ref{sec: interpolation tables}
and these are complicated rational functions, so a priori $a_\lambda(K)$ are  rational functions as well. Theorem \ref{thm:main} thus encodes certain divisibility properties for the linear combinations of colored invariants of $K$. We refer to Section \ref{sec: figure 8} for the explicit computation of the coefficients $a_{\lambda}(K)$ for the figure eight knot.

We  call \eqref{eq:cycl} a {\it cyclotomic expansion}
of the universal $\fgl_N$ knot invariant. 
The name {\it cyclotomic} is justified by the fact that 
\eqref{eq:cycl} has well-defined evaluations at any root of unity by Lemma \ref{lem: divisible} below.
Note that 
for $N=2$ and a $0$-framed knot, our expansion does not 
coincide with that of Habiro, simply because 
if an element $z\in U_q\fgl_2$ is
central and $\Gamma$-invariant, it
does not imply $z$ has a decomposition in even powers of the Casimir.
Therefore, our cyclotomic expansion is rather a generalization of $F_\infty$ in \cite{Sonny} or \cite[eq.(3.14)]{BH}, both having interesting application in the theory of non semisimple invariants of links and 3-manifolds.


\vspace{2mm}

For our next application, assume $M$ is an integral homology 3-sphere obtained by $\e$-surgery
on an $\ell$-component algebraically split $0$-framed link $L$ with $\e\in \{\pm 1\}^\ell$.
 Following Habiro--Le, we define 
an $\fgl_N$ unified invariant $I(M)$ 
 as  
$$I(M)= 
\langle \, r^{\otimes \e},J_L(\fgl_N;q)\, \rangle  \ $$
where $r$ is the $\fgl_N$
ribbon element and $\langle \cdot,\cdot\rangle$ is the Hopf pairing. 
In the case of $\fsl_N$ Habiro--Le proved \cite{HL} that 
the unified invariant belongs to a 
cyclotomic completition of the polynomial ring
$$ \widehat{\Z[q]}:=
{\lim\limits_{\overleftarrow{\hspace{2mm}n\hspace{2mm}}}}\;   
\frac{\Z[q]}{((q;q)_n)} \ $$
known as {\it Habiro ring}.
Using interpolation, we are able to express $I(M)$
in terms of  special linear combinations of Reshetikhin--Turaev invariants of $L$, called {\it Kirby colors}.
For this we diagonalize the
Hopf pairing, i.e. find a basis
$P_\mu$ that is orthonormal to $\sigma_{\lambda}$ and
 orthogonal to $V(\lambda)$
with respect to the Hopf pairing.
This allows us to give  explicit formulas for  the universal
Kirby colors $\omega_\pm$ (see \eqref{eq:Kirby})
in the basis  $P_\mu$
and to prove the 
following result.
\begin{theorem}\label{thm:main2}
The unified invariant 
$$I(M)=J_L(\omega_{\epsilon_1}, \dots, \omega_{\epsilon_\ell}) \;\in \;\widehat{\Z[q]} $$
 belongs to the Habiro ring
 and dominates $\fgl_N$ WRT invariants
of $M_\pm$ at all roots of unity.  Moreover, $I(M)$ is equal
to the $\fsl_N$
Habiro--Le invariant of $M_\pm$. 
\end{theorem}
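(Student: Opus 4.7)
The plan is to combine the cyclotomic expansion of Theorem \ref{thm:main} with the duality between $\{\sigma_\lambda\}$ and the basis $\{P_\mu\}$ under the Hopf pairing. First I would upgrade Theorem \ref{thm:main} from knots to links. Since $L$ is $0$-framed (hence evenly framed) and algebraically split, Theorem \ref{thm:intro even} places $J_L(\fgl_N;q)$ in $(\CZ^{\Gamma})^{\otimes \ell}$, so applying the single-component expansion tensor-factor-wise yields
\[
J_L(\fgl_N;q) = \sum_{\boldsymbol{\lambda}} a_{\boldsymbol{\lambda}}(L)\, \sigma_{\lambda^{(1)}} \otimes \cdots \otimes \sigma_{\lambda^{(\ell)}}, \qquad a_{\boldsymbol{\lambda}}(L)\in \Z[q^{\pm 1}],
\]
where $\boldsymbol{\lambda}=(\lambda^{(1)},\ldots,\lambda^{(\ell)})$ runs over tuples of partitions with at most $N$ parts.

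Next I would use the definition $I(M)=\langle r^{\otimes \e}, J_L(\fgl_N;q)\rangle$. By the construction of $\omega_\pm$ in \eqref{eq:Kirby} as the expansion of $r^{\pm 1}$ in the dual basis $\{P_\mu\}$, the orthonormality $\langle \sigma_\lambda,P_\mu\rangle = \delta_{\lambda\mu}$ implies that the coefficient of $P_\lambda$ in $\omega_\pm$ is exactly $\langle r^{\pm 1},\sigma_\lambda\rangle$. Substituting the expansion of Step 1 gives
\[
I(M) = \sum_{\boldsymbol{\lambda}} a_{\boldsymbol{\lambda}}(L)\prod_{i=1}^{\ell}\langle r^{\epsilon_i},\sigma_{\lambda^{(i)}}\rangle = J_L(\omega_{\epsilon_1},\ldots,\omega_{\epsilon_\ell}),
\]
which is the displayed identity.

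The crucial technical step is then to verify that $\langle r^{\pm 1},\sigma_\lambda\rangle$ is divisible by $(q;q)_{|\lambda|}$ (or by a finer hook-length $q$-product). This should be deduced from the explicit scalar in Theorem \ref{thm:basis}(b) for the action of $\sigma_\lambda$ on $V(\mu)$ combined with Lemma \ref{lem: divisible}. Once established, the Laurent-polynomial integrality of $a_{\boldsymbol{\lambda}}(L)$ forces the series to converge in $\widehat{\Z[q]}$, placing $I(M)$ in the Habiro ring. Evaluation at any root of unity $\zeta$ then kills all but finitely many terms, and identifying the resulting truncated sum with the standard Reshetikhin--Turaev state sum for surgery on $L$ yields domination of the $\fgl_N$ WRT invariant of $M_\pm$.

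For the comparison with the $\fsl_N$ Habiro--Le invariant, I would again exploit $\Gamma$-invariance of $J_L$: the $\fgl_N$ universal Kirby color differs from its $\fsl_N$ counterpart by the central characters $L(\zeta)$ with $\zeta\in\Gamma$, which act trivially on the $\Gamma$-invariant $J_L$, so the two pairings produce the same element of $\widehat{\Z[q]}$, matching the Habiro--Le construction. The main obstacle will be the divisibility step above, namely $\langle r^{\pm 1},\sigma_\lambda\rangle \in (q;q)_{|\lambda|}\,\Z[q^{\pm 1}]$: this is the genuine generalization of Habiro's deep cyclotomic integrality for $\fsl_2$ and the point where the interpolation Macdonald structure underlying $\sigma_\lambda$ must be used in an essential way.
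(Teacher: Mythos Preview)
Your Step~1 contains a genuine gap. For a link $L$ with $\ell>1$ components the universal invariant $J_L(\fgl_N;q)$ lies in the adjoint-invariant part $\mathrm{Inv}\bigl(\widehat{\U}^{\hat\otimes\ell}\bigr)$, \emph{not} in $(\CZ^\Gamma)^{\otimes\ell}$. (For instance, already the clasp element $c=(S\otimes\mathrm{id})\CR_{21}\CR$ for the Hopf bottom tangle has off-diagonal terms $F_\n\otimes e_\n$ and is not in $\CZ\otimes\CZ$.) Theorem~\ref{thm:intro even} only gives $\Gamma$-invariance, not centrality. Consequently the ``tensor-factor-wise'' application of Theorem~\ref{thm:main} is not available, and the integrality $a_{\boldsymbol\lambda}(L)\in\Z[q^{\pm1}]$ does not follow. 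In fact the paper explicitly remarks (after Corollary~\ref{cor:int_link}) that no direct proof of the link integrality/divisibility is known without invoking the Habiro--L\^e theory; your route would require precisely such a direct proof.

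The paper proceeds in the opposite order. It first shows $I(M)=I^{\mathrm{HL}}(M)$ by proving $J_L(\fgl_N;q)=J_L(\fsl_N;q)$ for $0$-framed algebraically split links (the $R$-matrices differ only in the diagonal part $D$ versus $D'$, which contributes nothing when the linking matrix vanishes) and checking that the $\fgl_N$ and $\fsl_N$ twist forms agree on such invariants. Habiro ring membership and WRT domination then come \emph{for free} from \cite{HL}. The identity $I(M)=J_L(\omega_{\epsilon_1},\ldots,\omega_{\epsilon_\ell})$ is established separately by induction on $\ell$, using Lemma~\ref{lem:Kirby} for the base case and the fact that after applying all but one pairing the remaining factor is genuinely central. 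Your Step~3 (divisibility of $\langle r^{\pm1},\sigma_\lambda\rangle$) is essentially what the paper does in Section~\ref{sec:IM} (``Few direct arguments''), but only to handle the \emph{knot} case directly; the exponent obtained there is roughly $|\lambda|/\binom{N+1}{2}$, not $|\lambda|$. For links the paper relies on \cite{HL} rather than on a direct cyclotomic expansion.
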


To prove  
that $I(M)$ is equal to the $\fsl_N$ Habiro--Le invariant we
show   the equality of the universal $\fgl_N$ and $\fsl_N$ invariants for $0$-framed algebraically split links, and the 
fact that the $\fgl_N$ and $\fsl_N$ twist forms $x \mapsto \langle r^{\pm 1}, x \rangle$ on them
coincide. 
%
It follows that $I(M)$ belongs to the Habiro ring.
Then we establish invariance of Kirby colors $\omega_\pm$
under Hoste moves (a version of Fenn--Rourke moves
between algebraically split links)
in Lemma \ref{lem:Kirby}, and finally deduce   the
equality $I(M)=J_L(\omega_{\epsilon_1}, \dots, \omega_{\epsilon_\ell})$.

The main advantage of Theorem \ref{thm:main2}
compared to Habiro--Le approach is the interpretation of $I(M)$ as the Reshetikhin--Turaev  invariant 
of $L$ colored by
 $\omega_\e$. This  leads to various
 striking divisibility results and allows us to extend our cyclotomic expansion to links.

\begin{corollary} \label{cor:int_link1}
Given an $\ell$ component algebraically split $0$-framed link $L$, then
for all but finitely many
partitions $\lambda_i$ with $1\leq i\leq \ell$, 
there exist positive integers $n=n(\lambda_i, N)$, such that 
 $$
 J_L(P'_{\lambda_1}, \dots , P'_{\lambda_\ell})
\in (q;q)_n\,  \Z[q,q^{-1}]\  $$
where $P'_\lambda= v^{|\lambda|}\, \dim_q V(\lambda) \, P_\lambda$ is a scalar multiple of $P_\lambda$. 
\end{corollary}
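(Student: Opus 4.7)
The strategy is to deduce the divisibility from Theorem \ref{thm:main2}, which places $I(M) = J_L(\omega_{\epsilon_1}, \ldots, \omega_{\epsilon_\ell})$ in the Habiro ring $\widehat{\Z[q]}$, together with the explicit expansion of the universal Kirby colors in the dual basis $\{P'_\mu\}$.

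First, I would write
$$\omega_{\epsilon_i} = \sum_{\mu_i} w_{\epsilon_i, \mu_i}(q)\, P'_{\mu_i},$$
with explicit Laurent-polynomial coefficients $w_{\epsilon_i, \mu_i}(q)$ that do not depend on $L$; these arise from the construction of $\omega_\pm$ preceding Theorem \ref{thm:main2}. Multilinearity of the Reshetikhin--Turaev invariant then gives
$$I(M) = \sum_{\mu_1, \ldots, \mu_\ell} \Big(\prod_{i=1}^{\ell} w_{\epsilon_i, \mu_i}(q)\Big)\, J_L(P'_{\mu_1}, \ldots, P'_{\mu_\ell}).$$
Since $I(M) \in \widehat{\Z[q]}$, for every $n \in \mathbb{N}$, all but finitely many terms on the right must vanish modulo $(q;q)_n$. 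Because the coefficients $w_{\epsilon_i,\mu_i}$ carry no universal $(q;q)_n$-divisibility, the vanishing is forced onto the factors $J_L(P'_{\mu_1}, \ldots, P'_{\mu_\ell})$ themselves.

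To rigorously extract term-by-term divisibility, I would argue by induction on $\ell$. The base case $\ell = 1$ reduces to Theorem \ref{thm:main}: the integrality of $a_\lambda(K) \in \Z[q, q^{-1}]$ and the triangular change-of-basis matrix $d_{\lambda, \mu}$ of Theorem \ref{thm: dlm} translate, via the Hopf-pairing duality between $\sigma_\lambda$ and $P'_\lambda$, into divisibility of $J_K(P'_\lambda)$ by an appropriate power of $(q;q)_n$. For the inductive step, one freezes one component of $L$ colored by a specific $P'_{\lambda_1}$ and applies the Hoste-move invariance of $\omega_\pm$ (Lemma \ref{lem:Kirby}) to reduce to the $(\ell-1)$-component case, using the freedom to vary the surgery signs $\vec\epsilon \in \{\pm 1\}^\ell$ to generate a finite invertible linear system on any fixed range of partitions.

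The main obstacle is this extraction step: convergence of a multi-indexed sum in the Habiro ring does not automatically yield individual term divisibility, because of possible cancellations across tuples $\vec\mu$. Overcoming this requires the triangular structure of the change-of-basis matrices from Theorems \ref{thm:basis} and \ref{thm: dlm}, which combined with the multi-signed surgery system decouples the contributions of different $\vec\mu$. The integrality $J_L(P'_{\vec\lambda}) \in \Z[q, q^{-1}]$ itself follows from the fact that $P'_\lambda$ forms a $\Z[q,q^{-1}]$-integral dual basis to $\{\sigma_\lambda\}$ and that the universal invariant admits an integral cyclotomic expansion (the link analogue of Theorem \ref{thm:main}).
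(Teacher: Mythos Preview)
Your overall strategy agrees with the paper's: both deduce the corollary from Theorem~\ref{thm:main2}, which in turn rests on the Habiro--L\^e result that $I^{\mathrm{HL}}(M)\in\widehat{\Z[q]}$. The paper itself does not spell out the extraction in any more detail than you attempt; it simply asserts that Theorem~\ref{thm:linksurgery} forces $J_L$ to restrict to a $\Z[q^{\pm 1}]$-multilinear map $\CP^{\times\ell}\to\Z[q,q^{-1}]$ extending to $\widehat{\CP}^{\times\ell}\to\widehat{\Z[q]}$, and the authors explicitly say that no direct proof bypassing the unified-invariant machinery is known.

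That said, the specific extraction mechanisms you propose do not work. First, your base case $\ell=1$ via Theorem~\ref{thm:main} gives only $a_\lambda(K)\in\Z[q,q^{-1}]$; from $J_K(P'_\lambda)=v^{-|\lambda|}\dim_q V(\lambda)\,a_\lambda(K)$ no $(q;q)_n$-divisibility follows, since neither factor carries growing $q$-factorial divisibility. Second, varying the $2^\ell$ sign choices $\vec\epsilon$ produces only finitely many linear relations among the infinitely many unknowns $J_L(P'_{\vec\mu})$, so no invertible system arises on any unbounded range of partitions. Third, invoking a ``link analogue of Theorem~\ref{thm:main}'' is circular: in the paper the integral cyclotomic expansion for links is derived \emph{from} this corollary, not conversely.

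What actually carries the argument (in the paper's logic) is not the bare conclusion $I(M)\in\widehat{\Z[q]}$ but the full structural input from \cite{HL} underlying it: the universal $\fsl_N$ invariant $J_L(\fsl_N;q)$ lies in a specific integral completion on which the relevant partial traces and twist forms land in $\widehat{\Z[q]}$. Combined with $J_L(\fgl_N;q)=J_L(\fsl_N;q)$ (Corollary~\ref{cor:JKeven}) this yields the term-by-term conclusion. Your proposal, as written, never accesses that structural input, and the purely formal manipulations you outline cannot substitute for it.
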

This is a generalization of  the famous integrability theorem in \cite[Thm. 8.2]{H}. 
The authors do not know
any direct proof of Corollary \ref{cor:int_link1}
 without using the 
theory of unified invariants.
Based on Corollary \ref{cor:int_link} we obtain a 
cyclotomic expansion for the Reshetikhin-Turaev invariants of $L$:
\begin{equation}
J_L(\lambda_1, \dots, \lambda_\ell)= v^{\sum_i |\lambda_i|}\sum_{\mu_i\subset \lambda_i}
\prod^l_{j=1} c_{\lambda_j,\mu_j}(q^{-1})\,
J_L(P'_{\mu_1}, \dots, P'_{\mu_\ell})
\end{equation}
where the matrix $
\left[c_{\lambda,\mu}(q)\right]_{\lambda,\mu}:=\left[F_{\lambda}(q^{-\mu_i-N+i})\right]_{\lambda,\mu}
$ is the inverse of $\left[d_{\lambda,\mu}(q)\right]_{\lambda,\mu}$. 
This generalizes equation $(8.2)$ in \cite{H}.

In addition,
in the case of knot surgeries 
we give a direct proof
of the fact that
$$I(M_\pm)=J_L(\omega_\pm) \;\in \;\widehat{\Z[v]}$$
by using our cyclotomic expansion and the interpolation theory.

\vspace{2mm}

Finally, we would like to comment on potential ideas for categorification of these results. The ring of symmetric polynomials in $N$ variables is naturally categorified by the category of annular $\fgl_N$-webs, with morphisms given by annular foams \cite{BPW,QR,RW,GW,GHW}. By the work of the second author and Wedrich \cite{GW}, one can interpret it as a symmetric monoidal Karoubian category generated by one object $E$ corresponding to a single essential circle. The symmetric polynomials are then categorified by the Schur functors of $E$.

We expect the categorified interpolation polynomials to correspond to interpolation Macdonald polynomials where $q$ plays the role of quantum grading and $t$ of  the homological grading (after some change of variables). We recall the general definitions and properties of these polynomials from \cite{Ok} in  Appendix. The key obstacle for categorification of interpolation polynomials is that they are not homogeneous. Therefore one needs to enrich the category and allow additional morphisms between $E$ and identity.  

On the other hand, the conjectures of the second author, Negu\cb{t} and Rasmussen (\cite{GNR}, see \cite{GH,GHW} for further discussions) relate a version of the annular category to the derived category of the Hilbert scheme of points on the plane. The interpolation Macdonald polynomials appear in that context as well \cite{CNO}.

\vspace{2mm}

The paper is organized as follows. After recalling the 
definitions, we compare the Reshetikhin--Turaev invariants of tangles colored by $V(\lambda)$
and  $L(\zeta)\otimes V(\lambda)$
in Section \ref{sec:repribbon}.
In the next two sections we summarize  known results about
the center of $U_q\fgl_N$, define its completion
 and prove Theorem \ref{thm:intro even} in Section \ref{sec:HU}.
The remaining results
are proven in Sections \ref{sec:results}, \ref{sec:IM} assuming some facts about interpolation. 

In
the last sections we develop the theory of the interpolation
Macdonald polynomials, starting from the one variable case. We define multi-variable interpolation polynomials, state and prove their properties in Section \ref{sec: def polynomials}. Next, we solve the interpolation problem in two ways, one using the approach of Okounkov 
 (Theorem \ref{thm: dlm}), and another using Hopf pairing (see \eqref{eq: dlm from hopf}). We study divisibility of $F_{\lambda}(q^{a_1},\ldots,q^{a_n})$
 by quantum factorials
 in Section \ref{sec: divisibility}
 (see Lemma \ref{lem: divisible}). Section \ref{sec: stability} is focused on various stability properties of the interpolation polynomials such as adding a column to a partition $\lambda$ (Proposition \ref{prop: adding a column}) and changing $N$ for a fixed Young diagram $\lambda$. In particular, in Proposition \ref{prop: HOMFLY interpolation} we describe a HOMFLY-PT analogue of the interpolation polynomials depending on an additional parameter $A=q^N$. We provide lots of examples and tables of interpolation polynomials, especially for $\fgl_2$.
In Appendix A  we collect some additional known facts about the interpolation Macdonald polynomials and the Habiro ring. 

\section*{Acknowledgments}

The authors would like to thank Pavel Etingof, Kazuo Habiro, Thang Le, Matt Hogancamp, Andrei Okounkov and Paul Wedrich for the useful discussions. We thank Satoshi Nawata for his comments on the first version of the paper and for bringing the reference \cite{KNTZ} to our attention.
Our work  was partially supported by the NSF grants DMS-1700814 (E.G.), DMS-1760329 (E.G.) and the NCCR SwissMAP (A.B.).

\section{Notations and conventions}

\subsection{$q$-binomial formulas}

Throughout the paper we will use the following notations for the $q$-series. The $q$-Pochhammer symbols are defined as
$$
(a;q)_{m}=\prod_{i=0}^{m-1}(1-aq^{i}),\ (a;q)_{\infty}=\prod_{i=0}^{\infty}(1-aq^{i}),\ m\ge 0.
$$
It is easy to see that
$$
(a;q)_{m+k}=(a;q)_{m}(aq^{m};q)_{k},\ (a;q)_{m}=\frac{(a;q)_{\infty}}{(aq^{m};q)_{\infty}}.
$$
We will use two normalizations for $q$-binomial coefficients defined as follows:
$$\{a\}_q=1-q^a,\
[a]_{q}=\frac{\{a\}_q}{\{1\}_q},\ [a]_{q}!=[1]_{q}\cdots [a]_{q},\ \binom{a}{b}_q=\frac{[a]_{q}!}{[b]_{q}![a-b]_{q}!}\ .
$$ 
Note that
$$
[a]_{q}=\frac{(q;q)_{a}}{(1-q)^{a}},\ \binom{a}{b}_q=\frac{(q;q)_{a}}{(q;q)_{b}(q;q)_{a-b}}.
$$
Finally, the $q$-binomial formula gives
$$
(a;q)_{m}=\sum_{j=0}^{m}(-1)^{j}q^{\frac{j(j-1)}{2}}\binom{m}{j}_{q}a^{j}.
$$
Let us also define symmetric $q$-numbers. For this we chose  $v$ such that
$v^2=q$ and set
$$ \{a\}=v^a-v^{-a},\quad [a]:=\frac{\{a\}}{\{1\}}, \quad
\left[\begin{array}{cc}\! a\!\\\!b\!\end{array}\right]=\frac{\{a\}!}{\{b\}!\{a-b\}!}\ .$$

We will use all these formulas throughout the paper without a reference.

\subsection{Partitions}

We will work with partitions $\lambda=(\lambda_1\ge \lambda_2\ge \ldots \lambda_N)$ which we will identify with the corresponding Young diagrams in French notation, where the rows have length $\lambda_i$. Transpose diagram to $\lambda$ is denoted by $\lambda'$, and $|\lambda|=\sum \lambda_i$.
Given a box in a Young diagram, we define its arm, co-arm, leg and co-leg as in Figure \ref{fig: arm}.

\begin{figure}[ht!]
\begin{tikzpicture}[scale=0.5]
\draw (0,0)--(10,0)--(10,2)--(7,2)--(7,5)--(4,5)--(4,7)--(0,7)--(0,0);
\draw (2,3)--(3,3)--(3,4)--(2,4)--(2,3);
\draw [<->] (3,3.5)--(7,3.5);
\draw [<->] (2,3.5)--(0,3.5);
\draw [<->] (2.5,0)--(2.5,3);
\draw [<->] (2.5,4)--(2.5,7);
\draw (3,1.5) node {$l'$};
\draw (3,5.5) node {$l$};
\draw (1,3) node {$a'$};
\draw (5,3) node {$a$};
\end{tikzpicture}
\caption{Arm, co-arm, leg and co-leg}
\label{fig: arm}
\end{figure}
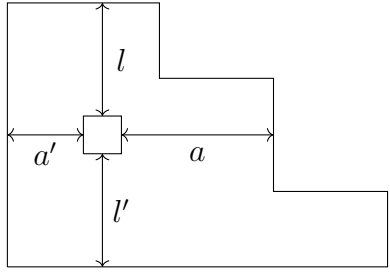

We define the hook length as $h(\sq)=a(\sq)+l(\sq)+1$, and the content $c(\sq)=a'-l'$.

Let 
$$
n(\lambda)=\sum (i-1)\lambda_i=\sum_{\sq} l'(\sq)=\sum_{\sq} l(\sq),
$$
then 
$$
n(\lambda')=\sum \frac{\lambda_i(\lambda_i-1)}{2}=\sum_{\sq} a'(\sq)=\sum_{\sq} a(\sq).
$$
The content of $\lambda$ is defined as
$$
c(\lambda)=\sum_{\sq} c(\sq)=n(\lambda')-n(\lambda).
$$
Let $\bar{\lambda}_i=\lambda_i+N-i$ for $1\leq i\leq N$,
then we have the following identity
\begin{equation}\label{prem:id}
\prod_{\Box\in \lambda}(1-t^{h(\Box)})=\frac{\prod_{i\geq 1}\prod^{\bar{\lambda}_i}_{j=1} \left(1-t^j\right)}{\prod_{i<j}\left(1-t^{\bar{\lambda}_i-\bar{\lambda}_j}\right)}
\end{equation}
 and we define
\begin{align}
\label{eq:def D}
D_N(\lambda)&=\sum_{i=1}^{N} \frac{(\bar{\lambda}_i)(\bar{\lambda}_i-1)}{2}=
\sum_{i} \frac{\lambda_i(\lambda_i-1)}{2}+\sum_{i} (N-i)\lambda_i+\sum_{i=1}^{N}\binom{N-i}{2}\\&= n(\lambda')+(N-1)|\lambda|-n(\lambda)+\binom{N}{3}=c(\lambda)+(N-1)|\lambda|+\binom{N}{3}.
\end{align}

\section{Quantum groups}

\subsection{Quantum $\fgl_N$}

The quantum group $\U=U_q\fgl_N$ is a 
$\C(v)$-algebra generated by $E_1,\ldots,E_{N-1}$, $F_1,\ldots,F_{N-1}$, $K^{\pm1}_1,\ldots,K_N^{\pm 1}$ 
satisfying the following relations:
\begin{equation}\label{eq:def1}
K_iE_i=vE_iK_i,\ K_iF_i=v^{-1}F_iK_i,\ K_{i+1}E_{i}=v^{-1}E_iK_{i+1},\ K_{i+1}F_i=vF_iK_{i+1}
\end{equation}
\begin{equation}\label{eq:def2}
[E_i,F_j]=\delta_{ij}\frac{K_iK_{i+1}^{-1}-K_{i+1}K_i^{-1}}{v-v^{-1}},\ [K_i,K_j]=0,
\end{equation}
\begin{equation}\label{eq:def3}
E_i^2E_j - [2]E_iE_jE_i + E_jE_i^2=0\ \text{if}\ |i - j| = 1\ \text{and}\ [E_i, E_j ]=0 \
 \text{otherwise}\end{equation}
 
\noindent
and analogously for $F_i$, where $v^2=q$.
To simplify the notation we set $\K_i:=K_iK^{-1}_{i+1}$.
Then the Hopf algebra structure on $\U$
(i.e. coproduct, antipode and counit) can be defined as follows:
$$
\Delta(E_i) = E_i\otimes 1 + \K_{i}\otimes E_i,\ \Delta(F_i) = 1\otimes F_i + F_i\otimes \K_i^{-1},\ \Delta(K_i^{\pm 1})=K_i^{\pm 1}\otimes K_i^{\pm 1},
$$
$$
S(K_i^{\pm 1})=K_i^{\mp 1},\ S(E_i) = -E_i\K_i^{-1},\ S(F_i) = -\K_iF_i
$$
$$
\varepsilon(K_i^{\pm 1}) = 1, \varepsilon(E_i) = \varepsilon(F_i) = 0.
$$

\vspace{0.8mm}

Usually $\U$ is considered as a subalgebra of $\U_h$ that is an $h$-adically complete $\mathbb{C}[[h]]$-algebra
topologically generated by $E_i$, $F_i$ and $H_j$
for $1\leq i\leq N-1$ and $1\leq j\leq N$
with
$$v=\exp{h/2},\quad  K_i=v^{H_i}=\exp{hH_i/2}$$
satisfying \eqref{eq:def2}, \eqref{eq:def3} and
$$H_iE_i-E_iH_i= E_i, \;\; H_iF_i-F_iH_i= -F_i,\;\;
 H_{i+1}E_i-E_iH_{i+1}=- E_i, \;\;
 H_{i+1}F_i-F_iH_{i+1}= F_i$$
replacing \eqref{eq:def1}.
Rewriting the defining relations in terms of  the  generators
$$e_i={E_i}(v-v^{-1}),\quad 
F_i^{(n)}=\frac{F_i^n}{[n]!}\quad \text{and}\quad
K_j\quad 
\text{for}\quad 1\leq i\leq N-1, \quad 1\leq j\leq N$$
we obtain an integral version $\U_\Z$ 
as a Hopf algebra over $\Z[v,v^{-1}]\subset \C(v)\subset
\C[[h]]$.




The quantum group $\fgl_N$ has a fundamental representation $\C^N$ with basis $v_1,\ldots,v_N$ such that 
$$
K_iv_j=v^{\delta_{ij}}v_j,\ 
E_iv_j=\begin{cases}
v_i & \text{if}\ j=i+1\\
0 & \text{otherwise}\\
\end{cases},
F_iv_j=\begin{cases}
v_{i+1} & \text{if}\ j=i\\
0 & \text{otherwise}.\\
\end{cases}
$$
It generates a braided monoidal category with simple objects $V(\lambda)$, where $\lambda$ is a partition with at most $N$ parts. These are highest weight modules where $K_i$ act on the highest weight vector by $v^{\lambda_i}$. The fundamental representation corresponds to $\lambda=(1)$.
The representations $V(\lambda)$ have integral basis where $\U_{\Z}$ acts by  $\Z[v,v^{-1}]$-valued matrices.

\subsection{Ribbon structure}\label{sec:ribbon}
The Hopf algebra $\U_h$ admits a ribbon Hopf algebra structure (see e.g. \cite[Cor. 8.3.16]{CP}).
The universal $R$-matrix has 
the form
$\mathcal R=D \Theta$ where the diagonal part $D$
and the quasi-$R$-matrix are defined as follows
$$D=v^{\sum^N_{i=1} H_i\otimes H_i} \quad
\text{and} \quad \Theta=\sum_{\n\in \mathbb{N}^{N-1}} F_\n \otimes e_\n
$$
where 
for any sequence of non-negative integers
$\n=(n_1, \dots ,n_{N-1})$, the elements $e_\n$ and 
$F_\n$ are defined by equations (66) and (67) in
\cite{HL} and form  topological bases of the positive and negative parts in the triangular decomposition of
 $\U_\Z$.
The inverse matrix $\CR^{-1}=\iota(\Theta) D^{-1}$ is obtained by 
applying the involution $\iota: v\to v^{-1}$.

The ribbon element and its inverse have the form
\begin{equation}
\label{eq:ribbon}
r=\sum_{\n} F_\n\; \K_{\n} \; r_0 \; e_\n
\quad\text{and}\quad
r^{-1}= \sum_{\n} \iota( F_\n)\; \K_{-\n} \;r^{-1}_0 \;\iota (e_{\n})
\end{equation}
where $r_0=K_{-2\rho} v^{-\sum_{i} H^2_i}$
and   
$K_{-2\rho}=\prod^N_{i=1} K_i^{2i-N-1}$ is the pivotal  element. 
Here for any sequence of integers $\n\in \Z^{N-1}$ we set $\K_\n=\prod_i \K^{n_i}_i$, and
 denote by $$
\rho=\left(\frac{N-1}{2},\frac{N-3}{2},\ldots,\frac{1-N}{2}\right)=\frac{1-N}{2}(1,\ldots,1)+(N-1,N-2,\ldots,0)
$$
the half sum of all positive roots. Using the central element $K=\prod^N_{i=1} K_i$, we 
can write the previous definitions as follows:
$$r^{-1}_0=K^N \;\prod^N_{i=1} K^{-2i}_i \; v^{\sum_i H_i(H_i+1)},
\quad K_{-2\rho}=K^{-N-1} \prod^N_{i=1} K^{2i}_i \ .
$$
The central element $r^{-1}$   acts on $V(\lambda)$ by the 
multiplication with $$\theta_{V(\lambda)}=v^{ (\lambda, \lambda+2\rho)}=
v^{N|\lambda|} q^{c(\lambda)}
$$
where $(\lambda, \mu)=\sum^N_{i=1} \lambda_i \mu_i$, $c(\lambda)$ is the content of $\lambda$ and $v^2=q$.

\subsection{Even part of  $\U$}

The algebra $\U$ has a natural grading by $\Gamma=\Z_2^N=\{\pm 1\}^N$ where $\zeta=(\zeta_1,\ldots,\zeta_N)\in \Gamma$ acts on $K_i$ by $\zeta_i$,  on $E_i$ by 1 and on $F_i$ by $\zeta_i\zeta_{i+1}$. It is easy to see that the defining relations are preserved under this action. Following \cite{HL}, we call an element of 
$\U_N$ {\it even} or
 $\Gamma$-invariant
if it is preserved under the action of $\Gamma$.

Let us denote by $\U^{\text{ev}}_\Z$ a
 $\Z[q,q^{-1}]$-subalgebra of $\U_\Z$
 generated by $e_i$, $F^{(n)}_i\K_i$ and $K^2_j$
 for $1\leq i\leq N-1$ and $1\leq j\leq N$.
 It is easy to check that $\U^{\text{ev}}_\Z$ is $\Gamma$-invariant.

The action of $\Gamma$ descends on the category $\Rep(\U)$ of all finite-dimensional representations. Given  $\zeta=(\zeta_1,\ldots,\zeta_N)\in \Gamma$, we can define a one-dimensional representation $L(\zeta)$ where $E_i$ and $F_i$ act by zero, and $K_i$ act by $\zeta_i$. We can also define representation $V(\lambda)\otimes L(\zeta)$ where $K_i$ act on the highest weight vector by $\zeta_iv^{\lambda_i}$.

\begin{lemma}
\label{lem: twisted action}
 The action of $\U$ on $V(\lambda)\otimes L(\zeta)$ agrees with the $\Gamma$-twisted action of $\U$ on $V(\lambda)$. 
\end{lemma}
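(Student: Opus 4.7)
The plan is to verify the lemma by a direct computation on the generators of $\U$ using the coproduct formulas recalled just above the lemma, together with the definition of $L(\zeta)$. Since $V(\lambda)\otimes L(\zeta)$ is isomorphic to $V(\lambda)$ as a vector space (via tensoring with a fixed generator of the one-dimensional space $L(\zeta)$), the issue is purely to identify how each generator acts through $\Delta$.

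First, I would expand $\Delta(K_i^{\pm 1})=K_i^{\pm 1}\otimes K_i^{\pm 1}$ and observe that on $V(\lambda)\otimes L(\zeta)$ the second tensor factor simply contributes the scalar $\zeta_i$. Hence $K_i$ acts on $V(\lambda)\otimes L(\zeta)$ by $\zeta_i$ times its action on $V(\lambda)$, which is exactly the $\Gamma$-twisted $K_i$-action since $\zeta$ acts on $K_i$ by $\zeta_i$. Next, from $\Delta(E_i)=E_i\otimes 1+\K_i\otimes E_i$ and the fact that $E_i$ acts as zero on $L(\zeta)$, only the first summand contributes; so $E_i$ acts on $V(\lambda)\otimes L(\zeta)$ exactly as on $V(\lambda)$. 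This matches the $\Gamma$-twisted action, which leaves $E_i$ untouched (as $\zeta$ acts on $E_i$ by $1$).

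For $F_i$, I would use $\Delta(F_i)=1\otimes F_i+F_i\otimes \K_i^{-1}$. Since $F_i$ vanishes on $L(\zeta)$, only the second summand survives; and $\K_i^{-1}=K_i^{-1}K_{i+1}$ acts on $L(\zeta)$ by the scalar $\zeta_i^{-1}\zeta_{i+1}=\zeta_i\zeta_{i+1}$ (using $\zeta_j\in\{\pm1\}$). Thus $F_i$ acts on $V(\lambda)\otimes L(\zeta)$ by $\zeta_i\zeta_{i+1}$ times its action on $V(\lambda)$, which is precisely the $\Gamma$-twisted $F_i$-action as defined in the paragraph preceding the lemma.

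Combining these three computations gives the claim on generators, and hence on all of $\U$ since the $\Gamma$-action and the twisted action are algebra automorphisms. There is no real obstacle here beyond the small bookkeeping check that $\zeta_{i+1}^{-1}=\zeta_{i+1}$ because $\Gamma=\Z_2^N$; the only thing worth a remark in the write-up is that the factor $\K_i^{-1}$ (rather than $\K_i$) appearing in the coproduct of $F_i$ yields the product $\zeta_i\zeta_{i+1}$ and not some other sign, matching exactly the way $\Gamma$ was defined to act on $F_i$.
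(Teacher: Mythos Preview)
Your proof is correct and follows essentially the same approach as the paper's own proof: both verify the claim on the generators $K_i,E_i,F_i$ using the coproduct formulas and the fact that $E_i,F_i$ act by zero on $L(\zeta)$ while $K_i$ acts by $\zeta_i$. The paper's version is simply more terse.
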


\begin{proof}
Indeed,   $\Delta(F_i)=1\otimes F_i+F_i\otimes \K_i^{-1}$, so $F_i$ acts on $V(\lambda)\otimes L(\zeta)$ via 
$F_i\otimes \K_i^{-1}=F_i\zeta_i\zeta_{i+1}$. Similarly, $E_i$ acts on $V(\lambda)\otimes L(\zeta)$ via $E_i\otimes 1=E_i$ and
$K_i$ acts via $K_i\otimes K_i=K_i\zeta_i.$
\end{proof}

\subsection{The subalgebra  $U_q\fsl_{N}$}
We define $U_q\fsl_{N}$ as a subalgebra of $\U$ generated by $E_i, F_i$ and $\K^{\pm 1}_i:=K^{\pm 1}_iK^{\mp 1}_{i+1}$ for $1\leq i\leq N-1$. The Hopf algebra $U_q\fsl_{N}$ also admits an integral version
$\U_{\Z}\fsl_{N}$ generated by 
$$e_i, \;\;
 F^{(n)}_i \quad
\text{and}\quad 
\mathcal{K}^{\pm 1}_i$$
 over $\Z[q,q^{-1}]$. 
The braiding $\CR=D'\Theta$ with $\Theta$
as for $\fgl_N$, but different diagonal part 
$$D'=v^{\sum^{N-1}_{i=1} \frac{{\mathcal H}_i\otimes {\mathcal H}_i}{2}} \quad
\text{where}\quad {\mathcal H}_i=H_i-H_{i+1}\ .
$$
The ribbon element is defined by \eqref{eq:ribbon}
with $r_0=K_{-2\rho}\prod^{N-1}_{i=1}v^{{-\mathcal H}_i^2/2}$.
The pivotal element  $K_{-2\rho}$ does not change.
Note that the $\Gamma$-invariant part of $U_q\fsl_N$ 
generated by $e_i$, $F^{(n)}_i\K_i$ and $\K^2_j$
 for $1\leq i, j\leq N-1$ has a smaller Cartan part
than its $\fgl_N$ analogue.
\begin{example}\nonumber{\rm
For $N=2$  the product $K_1K_2$ is central. By denoting $\mathcal K=K_1K_2^{-1}, E=E_1,F=F_1$ we get the standard presentation for $U_q(\fsl_2)$:
$$
\K E=v^2E \K ,\; \K F=v^{-2}F \K ,\; [E,F]=\frac{\K-\K^{-1}}{v-v^{-1}}.
$$}
\end{example}

\subsection{Universal invariant}
Lawrence, Reshetikhin, Ohtsuki and Kauffman constructed
quantum group valued {\it universal} link invariants.
As it was already mentioned in the introduction,
the universal invariant of a link is defined 
by splitting a diagram of its bottom tangle into elementary pieces and by associating $R$-matrices
 and pivotal elements to them. For more details  and references
we recommend to consult \cite[Sec. 7.3]{Hbottom}. However, we admit here the convention from \cite[Sec. 2.7]{HL} and write the contributions from left to right along the orientation of each component.


\section{Ribbon structure on $\Rep(\U)$}
\label{sec:repribbon}

The aim of this section is to compare the 
Reshetikhin-Turaev invariants of a bottom tangle whose components 
are colored with $V(\lambda)$ and $V(\lambda)\otimes L(\zeta)$.
This will be  later used to prove Theorem \ref{thm:intro even}.

Let us denote by $\CR_\Q$ the representation ring
of $\Rep(\U)$ over $\Q(v)$. Given an $l$ component link $L$,
Reshetikhin--Turaev functor associated with Lie algebra $\fg$ provides a $\Q(v)$-multilinear map
\begin{align*}
 J_L: \CR_\Q\times \dots\times \CR_\Q&\to \Q(v)\\
(\mu_1, \dots, \mu_l)&\mapsto \bigotimes_i \Tr^{V(\mu_i)}_q \left(J_L(\fg;q)\right)=: J_L(\fg;\mu_1,...,\mu_l)
\end{align*}
 normalized to 
$\prod_i\dim_q (V(\mu_i))$ for the $0$-framed
$(\mu_1, \dots, \mu_l)$-colored unlink. 
In cases when $\fg$ is fixed in the context, we will remove it from the notation for simplicity.

Note that
in the case of a knot, we have $J_K(\lambda)=\dim_q(V(\lambda))
J_K(V(\lambda),q)$ where the last invariant is the colored Jones polynomial used in Introduction and normalized to be 1 for the unknot.

The universal $R$-matrix defines a braiding between the representations $V(\lambda)$. We can extend this braiding to $\Rep(\U)$ as follows. Clearly, $L(\zeta)\otimes L(\zeta')\simeq L(\zeta\zeta')$ and we define the braiding between $L(\zeta)$ and $L(\zeta')$ to be trivial. Let $V$ be a finite-dimensional representation of $\U$ where the eigenvalues of $K_i$ are integral powers of $v$. Given $\zeta\in \Gamma$ we consider a $\C$-linear map $T_V(\zeta):V\to V$ which acts by $\prod \zeta_i^{a_i}$ on the weight subspace of $V$ where $K_i$ acts as $v^{a_i}$.

\begin{lemma}
The maps 
$$
c_{\zeta,V}:=\mathrm{swap}\circ (\Id\otimes T_V(\zeta)):L(\zeta)\otimes V\to V\otimes L(\zeta)
$$
with inverses
$$
c_{V, \zeta}:=\mathrm{swap}\circ ( T_V(\zeta)\otimes\Id): V\otimes L(\zeta)\to L(\zeta)\otimes V
$$
define a braiding on  $\Rep(\U)$.
\end{lemma}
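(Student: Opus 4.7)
The strategy is to verify three things: that $c_{\zeta,V}$ and $c_{V,\zeta}$ are $\U$-module intertwiners, that they are mutually inverse, and that together with the existing $R$-matrix braiding on the subcategory of integer-weight representations and the trivial braiding on the $L(\zeta)$, they satisfy the hexagon axioms on the whole of $\Rep(\U)$. Semisimplicity then extends the braiding to all objects.

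The key input for intertwining is that $T_V(\zeta)$ twists the action of $E_i$ and $F_i$ on $V$ by signs while commuting with all $K_j$. Concretely, since $E_i$ (resp.\ $F_i$) raises (resp.\ lowers) the $K_i$-weight by $1$ and the $K_{i+1}$-weight by $-1$, on any weight vector $v\in V$ one has
\[
T_V(\zeta)\circ E_i(v)=\zeta_i\zeta_{i+1}^{-1}\,E_i\circ T_V(\zeta)(v),\qquad T_V(\zeta)\circ F_i(v)=\zeta_i^{-1}\zeta_{i+1}\,F_i\circ T_V(\zeta)(v).
\]
Combined with $\Delta(E_i)=E_i\otimes 1+\K_i\otimes E_i$, the analogous coproduct for $F_i$, and the fact that $E_i,F_i$ annihilate $L(\zeta)$ while $\K_i$ acts on $L(\zeta)$ as $\zeta_i\zeta_{i+1}^{-1}$, a short direct calculation yields that $c_{\zeta,V}$ commutes with the action of every $E_i$, $F_i$, $K_j$; the verification for $c_{V,\zeta}$ is parallel. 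Naturality in $V$ is automatic because any $\U$-intertwiner preserves weight subspaces. The two maps are mutually inverse because $\zeta_j^2=1$ forces $T_V(\zeta)^2=\Id_V$.

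For the hexagon axioms, the decisive identity is multiplicativity of the twist,
\[
T_{V\otimes W}(\zeta)=T_V(\zeta)\otimes T_W(\zeta),
\]
which is immediate from $\Delta(K_i)=K_i\otimes K_i$. Using it, the hexagons involving one $L(\zeta)$ and one or two integer-weight factors unwind by direct expansion; the hexagons with two $L(\zeta)$ factors are trivial because the braiding among the $L(\zeta)$ is trivial and $T_V(\zeta\zeta')=T_V(\zeta)T_V(\zeta')$. Compatibility with the $R$-matrix braiding on $V\otimes W$ reduces to checking that $T_V(\zeta)\otimes T_W(\zeta)$ commutes with $\CR=D\Theta$: the diagonal factor $D$ preserves all weight subspaces and hence commutes with the twist, while each summand $F_\n\otimes e_\n$ of $\Theta$ is weight-balanced (its left factor shifts weights by $-\sum n_i\alpha_i$ and its right factor by $+\sum n_i\alpha_i$), so the scalars produced by $T\otimes T$ on either side of $\CR$ agree.

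The main obstacle is simply bookkeeping the $\zeta$-signs through the various hexagons, but every case reduces to the two displayed identities plus the fact that $\Theta$ is weight-balanced, so no further nontrivial computation is required.
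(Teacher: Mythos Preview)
Your proof is correct and follows essentially the same approach as the paper: verify that $c_{\zeta,V}$ intertwines the $\U$-action by a direct coproduct computation, then deduce the braiding axioms from the multiplicativity $T_{V\otimes W}(\zeta)=T_V(\zeta)\otimes T_W(\zeta)$ and $T_V(\zeta)T_V(\zeta')=T_V(\zeta\zeta')$. You are in fact more explicit than the paper about the hexagons, supplying the weight-balance argument on $\Theta$ to check compatibility with the $R$-matrix braiding; the paper leaves this implicit (it follows from naturality of $c_{\zeta,-}$ applied to the morphism $c_{V,W}$), so your extra detail is helpful but not strictly necessary.
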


\begin{proof}
First, let us check that $\text{swap}\circ (\Id\otimes T_V(\zeta))$ intertwines the actions of $\U$ on both sides. Indeed, let $v\in V$ be a vector with weight $(v^{a_1},\ldots,v^{a_N})$, then $E_iv$ has weight $(v^{a_1},\ldots,v^{a_i+1},v^{a_{i+1}-1},\ldots,v^{a_N})$
 while 
$F_iv$ has weight $(v^{a_1},\ldots,v^{a_i-1},v^{a_{i+1}+1},\ldots,v^{a_N})$.

Let $\bullet$ denote the basis vector in $L(\zeta)$, then
\begin{multline*}
c_{\zeta,V} E_i(\bullet \otimes v)= c_{\zeta,V} \left(\zeta_i\zeta_{i+1}\bullet \otimes E_i(v)\right)=\zeta_1^{a_1}\cdots \zeta_i^{a_i}\zeta_{i+1}^{a_{i+1}}\cdots \zeta^{a_N}_N   E_i(v)\otimes \bullet,\\
c_{\zeta,V} F_i(\bullet \otimes v)=  c_{\zeta,V}\left(\bullet \otimes F_i(v)\right)=\zeta_1^{a_1}\cdots \zeta_i^{a_i-1}\zeta_{i+1}^{a_{i+1}+1}\cdots \zeta^{a_N}_N F_i(v)\otimes \bullet,\\
c_{\zeta,V} K_i(\bullet \otimes v)= c_{\zeta,V}\left( \zeta_i\bullet \otimes K_i(v)\right)=\zeta_1^{a_1}\cdots \zeta_i^{a_i+1} \cdots \zeta^{a_N}_N  K_i(v)\otimes \bullet,\\
\end{multline*}
while 
\begin{multline*}
E_i c_{\zeta,V}(\bullet\otimes v)= E_i(\zeta_1^{a_1}\cdots \zeta_N^{a_N} v\otimes \bullet)=\zeta_1^{a_1}\cdots \zeta^{a_N}_N  E_i(v)\otimes \bullet,\\
 F_ic_{\zeta,V}(\bullet \otimes v)=F_i(\zeta_1^{a_1}\cdots \zeta_N^{a_N} v\otimes \bullet)=\zeta_1^{a_1}\cdots \zeta_i^{a_i-1}\zeta_{i+1}^{a_{i+1}+1}\cdots \zeta^{a_N}_N   F_i(v)\otimes \bullet,\\
K_i c_{\zeta,V}(\bullet \otimes v)= K_i(\zeta_1^{a_1}\cdots \zeta_N^{a_N} v\otimes \bullet)=\zeta_1^{a_1}\cdots \zeta_i^{a_i+1} \cdots \zeta^{a_N}_N K_i(v)\otimes \bullet.\\
\end{multline*}

Next, we observe that $T_V(\zeta)T_{V}(\zeta')=T_{V}(\zeta\zeta')$ and $T_{U\otimes V}(\zeta)=T_U(\zeta)\otimes T_V(\zeta)$, so $c_{\zeta,V}$ indeed defines a braiding.  Even more concretely, we get the braiding as the composition
\begin{multline}
\label{eq: extended braiding}
c_{L(\zeta)\otimes V,L(\zeta')\otimes U}:L(\zeta)\otimes V\otimes L(\zeta')\otimes U\xrightarrow{c_{V, \zeta'}}L(\zeta)\otimes L(\zeta')\otimes V\otimes U = L(\zeta')\otimes L(\zeta)\otimes V\otimes U \xrightarrow{c_{V,U}} \\
 L(\zeta')\otimes L(\zeta)\otimes U\otimes V\xrightarrow{c_{\zeta,U}} L(\zeta')\otimes U\otimes L(\zeta)\otimes V.
\end{multline}
\end{proof}

The representations $L(\zeta)$ are self-dual, and it is easy to see that the braiding $c_{\zeta,V}$ is compatible with changing $V$ to $V^*$. Therefore,
$\Rep(\U)$ with objects
  $L(\zeta)\otimes V$ form a pivotal braided monoidal category. 

 The quantum dimension of $L(\zeta)$ equals 
to the trace of the action of the pivotal element, which is $(\prod_i \zeta_i)^{N+1}$. 
The twist coefficient $\theta_{L(\zeta)}$ is defined as the action of the ribbon element on $ L(\zeta)$, and is given by 
$(\prod_i \zeta_i)^{N}$.

\begin{lemma}
$\Rep(\U)$  is a ribbon  category with twist $\theta_{L(\zeta)\otimes V}=\theta_{L(\zeta)} \theta_V$.
\end{lemma}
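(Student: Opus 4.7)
The plan is to declare the twist on each object $W = L(\zeta) \otimes V$ to be the scalar $\theta_W := \theta_{L(\zeta)} \, \theta_V$, where $\theta_V$ is the existing twist from the ribbon element of $\U$ on the subcategory of ordinary representations. Naturality is automatic since both factors act as scalars on any weight-isotypic summand, and these commute with every $\U$-linear map. What remains is the ribbon identity $\theta_{W_1 \otimes W_2} = (\theta_{W_1} \otimes \theta_{W_2}) \circ c_{W_2, W_1} \circ c_{W_1, W_2}$ and the pivotal compatibility $\theta_{W^*} = \theta_W$.

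The technical heart is the observation that $\zeta_i^2 = 1$ forces the double braiding of $L(\zeta)$ with any ordinary $V$ to be trivial:
\begin{equation*}
(c_{V, \zeta} \circ c_{\zeta, V})(\bullet \otimes v) \;=\; \Bigl(\prod_i \zeta_i^{a_i}\Bigr)^{\!2} \bullet \otimes v \;=\; \bullet \otimes v
\end{equation*}
for any weight vector $v$ of weight $(a_1, \dots, a_N)$. Combined with the trivial braiding between any $L(\zeta), L(\zeta')$, expanding $c_{W_2, W_1} \circ c_{W_1, W_2}$ via \eqref{eq: extended braiding} for $W_i = L(\zeta_i) \otimes V_i$ one sees the mixed $c_{V, \zeta}$-factors pair up between the two directions and cancel, while the $L(\zeta_i)$-braidings contribute trivially. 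Thus $c_{W_2, W_1} \circ c_{W_1, W_2}$ reduces to $\Id_{L(\zeta_1) \otimes L(\zeta_2)} \otimes \bigl(c^{\mathrm{std}}_{V_2, V_1} \circ c^{\mathrm{std}}_{V_1, V_2}\bigr)$. Invoking the ribbon axiom on ordinary representations and the multiplicativity $\theta_{L(\zeta_1)} \theta_{L(\zeta_2)} = \bigl(\prod_i \zeta_{1,i} \zeta_{2,i}\bigr)^{N} = \theta_{L(\zeta_1 \zeta_2)}$, the right-hand side of the ribbon identity collapses to $\theta_{L(\zeta_1 \zeta_2)} \theta_{V_1 \otimes V_2} = \theta_{W_1 \otimes W_2}$, as required.

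Pivotal compatibility is then immediate: $L(\zeta)^* \cong L(\zeta^{-1}) = L(\zeta)$ because $\zeta_i = \zeta_i^{-1}$ in $\Gamma$, so $\theta_{L(\zeta)^*} = \theta_{L(\zeta)}$, and the compatibility on the ordinary factor $V$ is already known. The main obstacle is the bookkeeping step in the second paragraph: tracking which $c_{V, \zeta}$-type contribution in the three-step expansion of $c_{W_1, W_2}$ cancels against which contribution in the three-step expansion of $c_{W_2, W_1}$, and confirming that the residual composition is exactly the standard double braiding on $V_1 \otimes V_2$. Note that a purely Hopf-algebraic shortcut via the ribbon element of $\U$ is unavailable here, because $L(\zeta)$ is not of integer weight and the factor $v^{-\sum H_i^2}$ in $r_0$ has no canonical action on $L(\zeta)$ — this is precisely why the scalar $\theta_{L(\zeta)}$ must be pinned down by the ribbon axioms rather than read off from $r$.
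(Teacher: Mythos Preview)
Your argument is correct and hinges on the same observation the paper uses: in the $\fgl_N$ setting $T_V(\zeta)^2=\Id$, so the double braiding of $L(\zeta)$ with any ordinary $V$ is the identity. The paper's proof is the single line
\[
\theta_{L(\zeta)\otimes V}=c_{\zeta,V}\,\theta_{L(\zeta)}\theta_V\,c_{V,\zeta}=\theta_{L(\zeta)}\theta_V,
\]
i.e.\ it invokes the balancing axiom once for the pair $(L(\zeta),V)$ and stops there; you go further and check the axiom for general $W_1\otimes W_2$ and the compatibility with duals. The bookkeeping you flag does go through: under the identification $\phi=c_{V_1,\zeta_2}\colon W_1\otimes W_2\xrightarrow{\sim} L(\zeta_1\zeta_2)\otimes V_1\otimes V_2$, one finds $\phi\circ(c_{W_2,W_1}c_{W_1,W_2})\circ\phi^{-1}=\Id\otimes c_{V_2,V_1}c_{V_1,V_2}$, because the two copies of $T_{V_1}(\zeta_2)$ contributed by $\phi,\phi^{-1}$ cancel the two coming from the outer legs of the extended braiding \eqref{eq: extended braiding}, while the inner $c_{\zeta_1,V_2},c_{V_2,\zeta_1}$ pair annihilates.

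Your closing remark is also worth recording: the paper asserts just before the lemma that $\theta_{L(\zeta)}$ is the action of the ribbon element on $L(\zeta)$, yielding $(\prod_i\zeta_i)^N$, whereas you point out that the factor $v^{-\sum H_i^2}$ in $r_0$ has no canonical meaning on a representation with $K_i$-eigenvalues $\pm1$. Your axiomatic route sidesteps this and is self-contained; the paper's phrasing is arguably informal on this point, though it arrives at the same value.
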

\begin{proof}
By  definition
$ \theta_{L(\zeta)\otimes V}=
c_{\zeta,V}\theta_{L(\zeta)} \theta_V c_{V,\zeta}=
\theta_{L(\zeta)} \theta_V$.
\end{proof}

\subsection{Braiding in $\Rep(U_q\fsl_N)$}
In this section, we study the action of $\Gamma$ and the corresponding braiding for $U_{q}\fsl_N$, starting from $N=2$.
Similarly to the previous section,  $U_q\fsl_2$ has a one dimensional representation $L(-1)$ where $E$ and $F$ act by 0 and $\K$ acts by $-1$. 
The action of  $U_q\fsl_2$  on $L(-1)\otimes V$ is equivalent to $\Z_2$-twisted action on $V$ where $\Z_2$ scales $E$ by 1 and $F,\K$ by $-1$. 
 
One can attempt to define a  braiding for $U_q\fsl_2$. Since $E$ and $F$ shift the weights by $2$, it is easy to see that the analogue of $T_V$ should act by $(\sqrt{-1})^{a}$ on a subspace with weight $v^a$, and it does not square to identity. Nevertheless, it squares to $\pm \text{id}$ on each irreducible representation. This means that braiding relations on $\Rep(U_q\fsl_2)$ hold up to sign.

To pin down this sign, we define the {\em sign automorphism} $\Sigma_{V}$  which acts by $(-1)^{a}$  on a subspace with weight $v^a$. Since $E, F$ shift the weight by $\pm v^2$, $\Sigma_{V}$ commutes with the action of $U_q\fsl_2$ on $V$. The operator $\Sigma_{V}$ acts on the irreducible representation $V(n)$ by a scalar $(-1)^{n}$. Also, it is easy to see that $\Sigma_{V\oplus W}=\Sigma_{V}\oplus \Sigma_{W}$ and $\Sigma_{V\otimes W}=\Sigma_{V}\otimes \Sigma_{W}$.  

\begin{lemma}
\label{lem: sign braiding for sl2}
The operators $T_{V}$ and $\Sigma_{V}$ 
satisfy the following properties:
\begin{itemize}
\item[(a)] We have
$$
T_{V}^2=\Sigma_{V},\ c_{L(-1),V}=c^{-1}_{L(-1),V}(1\otimes \Sigma_{V})=(\Sigma_V\otimes 1)c^{-1}_{L(-1),V}
$$
\item[(b)] Let $c_{V,W}:V\otimes W\to W\otimes V$ be the braiding, then 
$$
c_{V,W}(\Sigma_{V}\otimes 1)=(1\otimes \Sigma_{V})c_{V,W},\ c_{V,W}(1\otimes \Sigma_{W})=(\Sigma_{W}\otimes 1)c_{V,W}
$$
\item[(c)] We have $c_{L(-1),V\otimes W}=c_{L(-1),V}\circ c_{L(-1),W}.$
\item[(d)] The braiding with $L(-1)$ satisfies Yang-Baxter equation, that is, the following diagram commutes:

\begin{tikzcd}
L(-1)\otimes V\otimes W \arrow{r}{c_{L(-1),V}} \arrow{d}{c_{V,W}}& V\otimes L(-1)\otimes W \arrow{r}{c_{L(-1),W}} & V\otimes W\otimes L(-1) \arrow{d}{c_{V,W}}\\
L(-1)\otimes W\otimes V \arrow{r}{c_{L(-1),W}} &  W\otimes L(-1) \otimes V  \arrow{r}{c_{L(-1),V}} & W\otimes V\otimes L(-1)\\
\end{tikzcd}
\end{itemize}
\end{lemma}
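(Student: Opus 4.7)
My plan is to verify each claim by direct computation on weight vectors, using the explicit formula $c_{L(-1),V}=\mathrm{swap}\circ(\Id\otimes T_V)$, where $T_V$ acts by $i^a$ on the $\K$-weight-$v^a$ subspace of $V$ and $\Sigma_V$ acts by $(-1)^a$ on the same subspace. The underlying phenomenon is that weights shift by multiples of $2$ under $E$ and $F$, so both $i^a$ and $(-1)^a$ interact predictably with the action of $U_q\fsl_2$.

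For (a), the equation $T_V^2=\Sigma_V$ is immediate from $i^{2a}=(-1)^a$. I interpret $c^{-1}_{L(-1),V}:L(-1)\otimes V\to V\otimes L(-1)$ as the analogous map $\mathrm{swap}\circ(\Id\otimes T_V^{-1})$ defined via the inverse scalar $i^{-a}$; then both $c^{-1}_{L(-1),V}(1\otimes\Sigma_V)$ and $(\Sigma_V\otimes 1)c^{-1}_{L(-1),V}$ reduce on $\bullet\otimes v$ to the scalar identity $i^{-a}\cdot(-1)^a=i^a$, matching $c_{L(-1),V}(\bullet\otimes v)=i^a v\otimes\bullet$.

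For (b), I note that $\Sigma_V$ commutes with the entire $U_q\fsl_2$-action on $V$: since $E$ and $F$ shift the $\K$-weight by $\pm 2$, the scalar $(-1)^a$ is preserved, and $\K$ acts diagonally. Thus $\Sigma_V:V\to V$ is an intertwiner, and naturality of the braiding $c_{V,W}$ applied to this morphism gives $c_{V,W}\circ(\Sigma_V\otimes 1)=(1\otimes\Sigma_V)\circ c_{V,W}$; the identity involving $\Sigma_W$ is symmetric. For (c), the hexagon $c_{L(-1),V\otimes W}=(1_V\otimes c_{L(-1),W})\circ(c_{L(-1),V}\otimes 1_W)$ reduces to the multiplicativity $T_{V\otimes W}=T_V\otimes T_W$, which is clear from $i^{a+b}=i^a\cdot i^b$ since weights in $V\otimes W$ add.

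For (d), I will chase the diagram on the vector $\bullet\otimes v\otimes w$ with $v,w$ of weights $a,b$. Writing $c_{V,W}(v\otimes w)=\sum_k w_k\otimes v_k$ with $\K$-weights $b_k,a_k$, the fact that $c_{V,W}$ is a $\K$-intertwiner forces $a_k+b_k=a+b$ for each $k$. The top path produces $i^{a+b}\sum_k w_k\otimes v_k\otimes\bullet$, while the bottom path produces $\sum_k i^{b_k+a_k}w_k\otimes v_k\otimes\bullet$, and these agree. The only mildly delicate point throughout is fixing the interpretation of $c^{-1}_{L(-1),V}$ in (a); after that, all identities are consequences of weight additivity of $c_{V,W}$ together with the scalar identity $i^{2a}=(-1)^a$.
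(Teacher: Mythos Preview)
Your proof is correct and follows essentially the same approach as the paper: (a) is immediate from $i^{2a}=(-1)^a$, (b) uses that $\Sigma_V$ commutes with the $U_q\fsl_2$-action (the paper phrases this as the $R$-matrix lying in the completion of $U_q\fsl_2\otimes U_q\fsl_2$, you phrase it as naturality of the braiding---these are equivalent), and (c) reduces to $T_{V\otimes W}=T_V\otimes T_W$. The only cosmetic difference is in (d): the paper simply says ``(d) follows from (c)'' (via naturality of $c_{L(-1),-}$ with respect to the morphism $c_{V,W}$), whereas you spell out the weight chase directly; your computation is exactly the content of that naturality, since $c_{V,W}$ preserves total $\K$-weight.
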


\begin{proof}
Part (a) is clear. To prove (b), observe that the action of $U_q\fsl_2\otimes U_q\fsl_2$ on $V\otimes W$ commutes with both $\Sigma_{V}\otimes 1$ and $1\otimes \Sigma_{V}$, and the $R$-matrix is an element of the completion of  $U_q\fsl_2\otimes U_q\fsl_2$. 

Given a pair of vectors $u\in V,w\in W$ such that $Ku=v^{i}u$ and $Kw=v^{j}w$, we get $K(u\otimes w)=v^{i+j}u\otimes w$, so 
$T_{V\otimes W}=T_{V}\otimes T_{W}$. Since $c_{L(-1),V}=\mathrm{swap}\circ (\Id\otimes T_V)$, we get the desired relation. 
Finally, (d) follows from (c).
\end{proof}

We can generalize the above results to representations of $U_q\fsl_N$ as follows. For $\zeta\in \Z_2^{N-1}$ there is a one-dimensional representation $L(\zeta)$ of $U_q(\fsl_N)$ where $E_i,F_i$ act by 0 and $\K_i=K_iK_{i+1}^{-1}$ act by $\zeta_i$ ($1\le i\le N-1$). Given a representation $V$ where all weights of $\K_i$ are integral powers of $v$, we can define an operator $T_{\zeta,V}:V\to V$ which acts by $\zeta^{A^{-1}\ba}$ on a subspace where $\K_i$ acts by $v^{a_i}$. Here 
$A$ 
is the Cartan matrix for $\fsl_N$ 
given by 
\begin{equation}\label{Cartan_matrix}
A=\left(\begin{matrix}
2 & -1 & 0 &\ldots & 0\\
-1 & 2 & -1 & \ldots & 0\\
0 & -1 & 2 & \ldots & 0\\
\vdots & \vdots & \vdots & \ddots & \vdots\\
0 & 0 & 0 &\ldots & 2\\
\end{matrix}\right)
\end{equation} 
and $\ba=(a_1,\ldots,a_{N-1})$. Note that $\det(A)=N$, so $A^{-1}$ has rational entries with denominator $N$ and one needs to choose an $N$-th root of $(-1)$ to define $\zeta^{A^{-1}\ba}$. Define $\Sigma_{\zeta,V}=T^2_{\zeta,V}$.

\begin{lemma}
\label{lem: sign braiding for slN}
The operators $T_{\zeta,V}$ and $\Sigma_{\zeta,V}$ satisfy the following properties:
\begin{itemize}
\item[(a)] $T_{\zeta,V}E_i=\zeta_iE_iT_{\zeta,V}, T_{\zeta,V}F_i=\zeta_iF_iT_{\zeta,V}$
\item[(b)] $\Sigma_{\zeta,V}$ commutes with the action of $U_q\fsl_N$ on $V$
\item[(c)] The map  $c_{L(\zeta),V}=\mathrm{swap}\circ (\Id\otimes T_{\zeta,V}):L(\zeta)\otimes V\to V\otimes L(\zeta)$
is a morphism of $U_q\fsl_N$-representations
\item[(d)] The maps $T_{\zeta,V}$ and $\Sigma_{\zeta,V}$ satisfy all equations in Lemma \ref{lem: sign braiding for sl2}
with $L(-1)$ changed to $L(\zeta)$. 
\end{itemize}
\end{lemma}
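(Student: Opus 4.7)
\medskip

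\noindent\textbf{Proof plan.} The plan is to trace through the four statements in the same order as in Lemma \ref{lem: sign braiding for sl2}, with the only genuinely new input being the appearance of the Cartan matrix $A$. The key computation that makes everything work is how $E_i$ and $F_i$ shift the $\K_j$-weights: from the defining relations one checks that if $v$ has $\K_j$-eigenvalue $v^{a_j}$ for every $j$, then $E_i v$ has $\K_j$-eigenvalue $v^{a_j + A_{ji}}$, i.e.\ $E_i$ shifts the weight vector $\ba$ by the $i$-th column $A^{(i)}$ of the Cartan matrix (and $F_i$ shifts by $-A^{(i)}$). Since $A^{-1}A^{(i)} = e_i$, applying $T_{\zeta,V}$ to $E_i v$ multiplies by $\zeta^{A^{-1}(\ba+A^{(i)})} = \zeta_i\cdot \zeta^{A^{-1}\ba}$; this gives part (a) immediately, and part (b) follows because $\zeta_i^2 = 1$ forces $\Sigma_{\zeta,V} = T_{\zeta,V}^2$ to commute with each $E_i$ and $F_i$ (and trivially with the $\K_j$).

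For part (c), I will feed the coproduct $\Delta(E_i) = E_i \otimes 1 + \K_i \otimes E_i$ and $\Delta(F_i) = 1 \otimes F_i + F_i \otimes \K_i^{-1}$ through both sides. On $L(\zeta)\otimes V$ the generator $E_i$ collapses to $\zeta_i (1 \otimes E_i)$ because $E_i$ kills $L(\zeta)$ and $\K_i$ acts on it by $\zeta_i$; on $V \otimes L(\zeta)$ the same $E_i$ collapses to $E_i \otimes 1$. Applying $c_{L(\zeta),V} = \mathrm{swap}\circ(\Id \otimes T_{\zeta,V})$ to one side and using part (a) gives a factor of $\zeta_i^2 = 1$, matching the other side; the $F_i$ case is symmetric. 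The $\K_j$-intertwining is automatic because $T_{\zeta,V}$ preserves weight spaces. Together this shows $c_{L(\zeta),V}$ is a $U_q\fsl_N$-module map.

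For part (d), I will copy the arguments in Lemma \ref{lem: sign braiding for sl2} essentially verbatim, substituting $T_{\zeta,V}$ for $T_V$ and $\Sigma_{\zeta,V}$ for $\Sigma_V$. The identity $T_{\zeta,V}^2 = \Sigma_{\zeta,V}$ and the two forms of $c_{L(\zeta),V}^{-1}$ follow from the definition. The naturality statement $c_{V,W}(\Sigma_{\zeta,V}\otimes 1) = (1 \otimes \Sigma_{\zeta,V}) c_{V,W}$ reduces, by part (b), to the fact that the $R$-matrix lives in a completion of $U_q\fsl_N \otimes U_q\fsl_N$ and therefore commutes with any central operators acting on the factors. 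The multiplicativity $T_{\zeta,V\otimes W} = T_{\zeta,V}\otimes T_{\zeta,W}$ is immediate from the fact that $\K_j$-weights add under tensor product, and this yields $c_{L(\zeta),V\otimes W} = c_{L(\zeta),V}\circ c_{L(\zeta),W}$. The Yang--Baxter square then commutes by combining (b), (c) and this multiplicativity, exactly as in the $\fsl_2$ case.

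The one genuine subtlety, and the main obstacle, is the ambiguity in the definition of $\zeta^{A^{-1}\ba}$: since $A^{-1}$ has denominators dividing $N$, we must pin down an $N$-th root of $-1$. I will note that any choice works provided it is made once and for all, and that the four statements of the lemma are independent of the choice because in (a) the exponent $e_i$ is integral, in (b) we square $T_{\zeta,V}$, and in (c)--(d) every $T_{\zeta,V}$ appearing is paired with a matching $T_{\zeta,V}^{-1}$ on the other side of the equation. In particular, the relevant quantities reduce to integer powers of $\zeta_i \in \{\pm 1\}$, so the ambiguity disappears.
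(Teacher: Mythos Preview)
Your proposal is correct and follows essentially the same approach as the paper's proof: part (a) via the weight shift $E_i,F_i\colon \ba\mapsto \ba\pm Ae_i$ and $A^{-1}(Ae_i)=e_i$, part (b) as an immediate consequence, part (c) by unwinding the coproduct on $L(\zeta)\otimes V$ versus $V\otimes L(\zeta)$ and invoking (a), and part (d) by carrying over Lemma~\ref{lem: sign braiding for sl2} verbatim. Your treatment of (c) is in fact slightly more careful than the paper's (which has the $E_i$/$F_i$ twists on $L(\zeta)\otimes V$ stated the wrong way round, harmlessly since $\zeta_i^2=1$), and your closing paragraph on the $N$-th-root ambiguity is a useful addition that the paper leaves implicit.
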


\begin{proof}
(a) The operator $F_i$  changes the weight $\ba=(a_1,\ldots,a_{N-1})$ by $Ae_i$, so if $\K_i v=v^{a_i}v$ then
$$
T_{\zeta,V}F_i(v)=\zeta^{A^{-1}(\ba+Ae_i)}F_iv=\zeta^{A^{-1}\ba+e_i}F_iv=\zeta_iF_iT_{\zeta,V}(v).
$$
The proof for $E_i$ is similar. Part (b) immediately follows from (a).

For (c), we observe that the action of $E_i$ on $L(\zeta)\otimes V$ is the same as the action on $V$, while the actions of $F_i,\K_i$ are twisted by $\zeta_i$. On the other hand, the  action of $F_i$ on $V\otimes L(\zeta)$ is the same as the action on $V$, while the actions of $E_i,\K_i$ are twisted by $\zeta_i$. Therefore by (a) the operator $c_{L(\zeta),V}$ intertwines the actions of $U_q\fsl_N$ on 
$L(\zeta)\otimes V$ and $V\otimes L(\zeta)$. 

Finally, the proof of the rest of Lemma \ref{lem: sign braiding for sl2} extends to $U_q\fsl_N$ verbatim. 
\end{proof}

\begin{remark}{\rm
The above construction of $T_{\zeta,V}$ and $\Sigma_{\zeta,V}$ can be extended to an arbitrary semisimple Lie algebra with Cartan matrix $A$. The action of $\Sigma_{\zeta,V}$ can be interpreted in terms of projection of the weight lattice to its quotient by the root lattice.}
\end{remark}
 
We  draw a tangle colored by a representation $V=V(\lambda)$ using solid lines, and a tangle colored by $L(\zeta)$ by dotted lines. If a component is colored by $L(\zeta)\otimes V$, we draw a dotted line on the left of a solid line and parallel to it. The crossings between solid and dotted lines correspond to $c^\pm_{L(\zeta),V}$ depicted in Figure \ref{braid}. Note that unlike $\fgl_N$ case, $c_{L(\zeta),V}$ does not square to identity and we have to distinguish under- and over-crossings between solid and dotted lines. This allows us to define Reshetikhin-Turaev invariants for framed tangles colored by representations of $U_q\fsl_N$ of the form $L(\zeta)\otimes V(\lambda)$.

Using the notations as in Figure \ref{braid},
we can visualize the statement of Lemma \ref{lem: sign braiding for slN} in Figure \ref{fig: diagrams for sigma}.
\begin{figure}[ht!]
\begin{center}
\begin{tikzpicture}
\draw (1,0)--(0,1);
\draw[line width=3,white] (0,0)--(1,1);
\draw[dotted] (0,0)--(1,1);
\draw (1.5,0.5);

\draw[dotted] (3,0)--(4,1);
\draw[line width=3,white] (4,0)--(3,1);
\draw (4,0)--(3,1);
\draw (4.5,0.5);

\draw (6,0.5) circle (0.2);
\draw (6,0.5) node {\scriptsize $\Sigma$};
\draw (6,0)--(6,0.3);
\draw (6,0.7)--(6,1);
\draw (7,0.5);
\end{tikzpicture}
\end{center}
\caption{The operators $c_{L(\zeta),V}$, $c^{-1}_{L(\zeta),V}$ and $\Sigma_{\zeta}$.}\label{braid}
\end{figure}
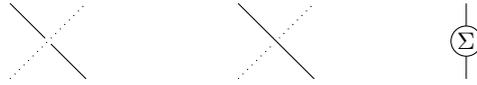

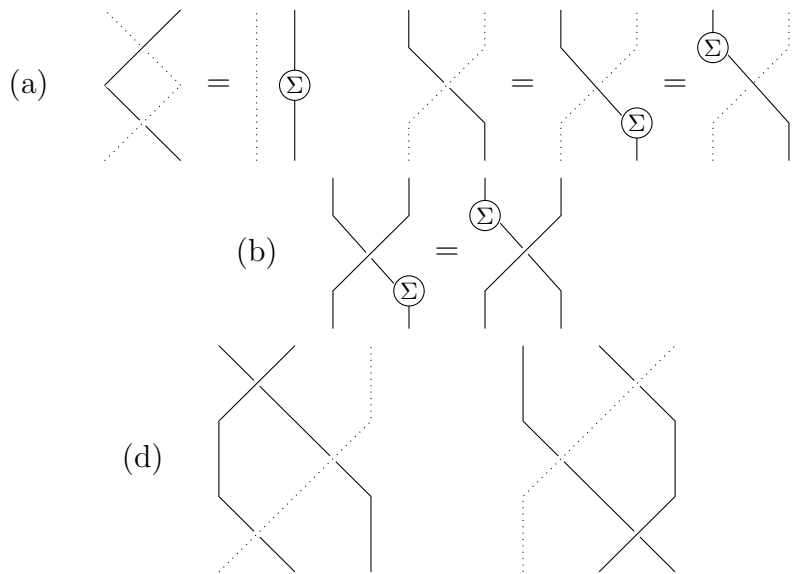
\begin{figure}
\begin{center}
\begin{tikzpicture}
\draw (-1,1) node {(a)};
\draw (1,0)--(0,1);
\draw[line width=3,white] (0,0)--(1,1);
\draw[dotted] (0,0)--(1,1);
\draw [dotted] (1,1)--(0,2);
\draw[line width=3,white] (0,1)--(1,2);
\draw (0,1)--(1,2);
\draw (1.5,1) node {$=$};
\draw [dotted] (2,0)--(2,2);
\draw (2.5,1) circle (0.2);
\draw (2.5,0)--(2.5,0.8);
\draw (2.5,1.2)--(2.5,2);
\draw (2.5,1) node {\scriptsize $\Sigma$};

\draw (5,0)--(5,0.5)--(4,1.5)--(4,2);
\draw[line width=3,white] (4,0)--(4,0.5)--(5,1.5)--(5,2);
\draw[dotted] (4,0)--(4,0.5)--(5,1.5)--(5,2);
 
\draw (5.5,1) node {$=$};

\draw[dotted] (6,0)--(6,0.5)--(7,1.5)--(7,2);
\draw[line width=3,white] (7,0.5)--(6,1.5);
\draw (7,0.5) circle (0.2);
\draw (7,0)--(7,0.3);
\draw (7,0.5) node {\scriptsize $\Sigma$};
\draw (6.8,0.6)--(6,1.5)--(6,2);

\draw (7.5,1) node {$=$};

\draw[dotted] (8,0)--(8,0.5)--(9,1.5)--(9,2);
\draw[line width=3,white] (9,0.5)--(8,1.5);
\draw (8,1.5) circle (0.2);
\draw (8,2)--(8,1.7);
\draw (8,1.5) node {\scriptsize $\Sigma$};
\draw (8.2,1.4)--(9,0.5)--(9,0);

\end{tikzpicture}
\end{center}

\begin{center}
\begin{tikzpicture}
\draw (5,1) node {(b)};

\draw (7,0.5) circle (0.2);
\draw (7,0)--(7,0.3);
\draw (7,0.5) node {\scriptsize $\Sigma$};
\draw (6.8,0.6)--(6,1.5)--(6,2);
\draw  [line width=3,white]  (6,0)--(6,0.5)--(7,1.5)--(7,2);
\draw    (6,0)--(6,0.5)--(7,1.5)--(7,2);

\draw (7.5,1) node {$=$};

\draw (8,1.5) circle (0.2);
\draw (8,2)--(8,1.7);
\draw (8,1.5) node {\scriptsize $\Sigma$};
\draw (8.2,1.4)--(9,0.5)--(9,0);
\draw [line width=3,white]  (8,0)--(8,0.5)--(9,1.5)--(9,2);
\draw   (8,0)--(8,0.5)--(9,1.5)--(9,2);
 
\end{tikzpicture}
\end{center}

\begin{center}
\begin{tikzpicture}
\draw (-1,1.5) node {(d)};
\draw (1,0)--(0,1)--(0,2);
\draw (2,0)--(2,1)--(0,3);
\draw [line width=3,white] (0,0)--(2,2);
\draw [dotted] (0,0)--(2,2)--(2,3);
\draw [line width=3,white] (0,2)--(1,3);
\draw (0,2)--(1,3);

\draw (6,0)--(4,2)--(4,3);
\draw [line width=3,white] (5,0)--(6,1)--(6,2)--(5,3);
\draw (5,0)--(6,1)--(6,2)--(5,3);
\draw [line width=3,white] (4,0)--(4,1)--(6,3);
\draw [dotted] (4,0)--(4,1)--(6,3);
\end{tikzpicture}
\end{center}
\caption{Diagrammatics for Lemma \ref{lem: sign braiding for slN}}
\label{fig: diagrams for sigma}
\end{figure}

\begin{theorem}
\label{thm: zeta RT}
(a) Let $L$ be an algebraically split $0$-framed link with $\ell$ components. Then for arbitrary partitions $\lambda_1,\ldots,\lambda_{\ell}$ and $\zeta_1,\ldots,\zeta_{\ell}\in \Gamma$ the following identity of Reshetikhin--Turaev invariants holds:
\begin{multline}
\label{eq: RT identity}
J_L\left({\fsl_N};V(\lambda_1)\otimes L(\zeta_1),\ldots,V(\lambda_{\ell})\otimes L(\zeta_{\ell})\right)=\\
J_L\left({\fsl_N};V(\lambda_1),\ldots,V(\lambda_{\ell})\right)\cdot \dim_{q}L(\zeta_1)\cdots \dim_{q}L(\zeta_{\ell}),
\end{multline} 
where $\dim_{q}L(\zeta_i)=\Tr^{L(\zeta_i)}_{q}(1)=\pm 1$.

(b) Let $L$ be an arbitrary link with 
evenly framed components, if $N$ is odd. Then \eqref{eq: RT identity} holds for $\fgl_N$ Reshetikhin--Turaev invariants.


\end{theorem}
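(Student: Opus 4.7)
The plan is to reduce the computation to a statement about isolated dotted loops via a cabling argument.  For each component colored by $V(\lambda_i)\otimes L(\zeta_i)$, I would present it as a pair of parallel strands: a \emph{solid} strand colored $V(\lambda_i)$ and a \emph{dotted} strand colored $L(\zeta_i)$.  The explicit decomposition of the extended braiding in \eqref{eq: extended braiding} ensures that the Reshetikhin--Turaev invariant of this cabled diagram equals the left-hand side of \eqref{eq: RT identity}.

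Next I would use that every dotted-solid interaction is ``almost scalar.''  By construction $c_{L(\zeta),V}=\mathrm{swap}\circ(\mathrm{Id}\otimes T_{\zeta,V})$ acts diagonally on weight spaces of $V$, and the braiding of two dotted strands is trivial since both are one-dimensional.  After resolving each crossing, the contribution of the dotted loop $L(\zeta_i)$ to the operator acting on the solid strand $V(\lambda_j)$ is a power $T_{\zeta_i,V(\lambda_j)}^{2\,\mathrm{lk}(i,j)}$, while the dotted loop contributes its own quantum trace $\dim_q L(\zeta_i)$ together with a self-framing factor $\theta_{L(\zeta_i)}^{\mathrm{fr}_i}$.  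After summing over weight spaces, the remaining ``bare'' part of the diagram computes $J_L(V(\lambda_1),\ldots,V(\lambda_\ell))$, so \eqref{eq: RT identity} reduces to showing that every $T$- and $\theta$-correction collapses to the identity.

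For part (a), the hypotheses $\mathrm{lk}(i,j)=0$ and $\mathrm{fr}_i=0$ make every correction trivially equal to the identity, and each dotted loop contributes its quantum dimension.  For part (b), the crucial input is that $T_V(\zeta)^2=\mathrm{id}$ for any integer-weighted $V$ (because each $\zeta_k=\pm 1$), so $T_{\zeta_i,V(\lambda_j)}^{2\,\mathrm{lk}(i,j)}=\mathrm{id}$ for arbitrary linking numbers; likewise $\theta_{L(\zeta_i)}^2=((\prod_k\zeta_{i,k})^N)^2=1$, so even framings produce no twist.  The hypothesis that $N$ is odd enters through the normalization $\dim_q L(\zeta_i)=(\prod_k\zeta_{i,k})^{N+1}$ and the compatibility of the pivotal element with the extended ribbon structure.

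The main obstacle will be the careful diagrammatic bookkeeping in the $\fsl_N$ setting, where $T^2=\Sigma$ is not the identity.  Each Reidemeister move required to slide a dotted strand off a solid strand contributes a $\Sigma_{\zeta_i,V(\lambda_j)}$ factor governed by the relation $c^{-1}=c\,(\Sigma\otimes\mathrm{id})$ from Lemma \ref{lem: sign braiding for slN}(a), and one must track that the total $\Sigma$-exponent equals $2\,\mathrm{lk}(i,j)=0$ under the algebraically split and $0$-framed assumption.  Once this pairing is established using the relations in Lemma \ref{lem: sign braiding for slN}(a)--(d) and is shown to be compatible with closing the dotted loop, the rest of the argument is a routine bookkeeping of scalars, and the $\fgl_N$ case follows a fortiori since $\Sigma$ is already trivial.
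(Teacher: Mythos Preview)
Your proposal is correct and follows essentially the same approach as the paper's proof: cable each component into a solid strand colored $V(\lambda_i)$ and a parallel dotted strand colored $L(\zeta_i)$, then separate the dotted sublink from the solid one at the cost of $\Sigma$-factors whose net exponent is governed by linking numbers and framings. The paper phrases the separation as ``change every dotted-under-solid crossing to dotted-over-solid, then slide the resulting $\Sigma$'s together via Lemma~\ref{lem: sign braiding for slN}(b),'' while you phrase it as ``the dotted loop acts on the solid strand by $T^{2\,\mathrm{lk}}$''; these are the same computation, since $T^2=\Sigma$ and the signed crossing count is $2\,\mathrm{lk}$.

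One small correction: your remark that ``the hypothesis that $N$ is odd enters through $\dim_q L(\zeta_i)$'' is off. The quantum dimension $(\prod_k\zeta_{i,k})^{N+1}$ appears on both sides of \eqref{eq: RT identity} as the stated factor, so it needs no hypothesis. The clause ``if $N$ is odd'' qualifies only the even-framing requirement, exactly as you already identified via $\theta_{L(\zeta)}=(\prod_k\zeta_k)^N$: when $N$ is even this is $1$ for any framing, and when $N$ is odd it is $\pm1$, so even framing is needed. Your handling of the twist is correct; just drop the aside about $\dim_q$.
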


\begin{proof}
(a) We use the results of Lemmas \ref{lem: sign braiding for sl2} and  \ref{lem: sign braiding for slN} and the above diagrammatic notation. By Lemma \ref{lem: sign braiding for sl2}(a), we can change crossings between dotted and solid lines at a cost of placing $\Sigma_{\zeta}$ on solid lines. By doing this iteratively, we can make all dotted lines to be above solid lines.
At this stage, each solid component of $L$ acquires several copies of 
$\Sigma_{\zeta}$ and $\Sigma_{\zeta}^{-1}$ at various places of the link diagram. The number of these copies (with signs) equals the linking number between this component and the dotted part which is even by our assumption. By Lemma  \ref{lem: sign braiding for sl2}(b) we can combine all these copies of $\Sigma_{\zeta}$  together and cancel out. 
Finally, using Lemma \ref{lem: sign braiding for sl2}(d), we can separate the dotted and solid links. By changing the crossings in the dotted link, we transform it to the $0$-framed unlink. Therefore the invariant of the solid link equals $J_L\left({\fsl_N};V(\lambda_1),\ldots,V(\lambda_{\ell})\right)$ while the invariant of the dotted link equals $\dim_{q}L(\zeta_1)\cdots \dim_{q}L(\zeta_{\ell})$.

The proof of (b) is similar, except that $\Sigma_{V}$ is trivial for all $V$. As before we can unknot dotted components.
Now  the ribbon element 
acts on $L(\zeta)$  by
$\theta_{L(\zeta)}=(\prod_i \zeta_i)^{N}$, 
 and  hence, any (even if $N$ is odd) number of them  acts by $1$.
 The result follows.


\end{proof}

\section{Center of $\U$}
Let $\CZ$ be the center of $\U_\Z$.
In this section we recall the main  facts known about $\CZ$.
 
\subsection{Harish-Chandra isomorphism}
\label{sec:HC}
Let $(\U^0_\Z)^{S_{N}}:=\Z[v,v^{-1}][{K^{\pm 1}_{1}},\ldots,K_N^{\pm 1}]^{S_{N}}$
be the Cartan part of $\U_\Z$ invariant under the Weyl group action. After a multiplication by an appropriate power of the central element $K:=\prod^N_{i=1}K_i$, 
each element of $(\U^0)^{S_N}$
 can be viewed as a symmetric function in $N$ variables.
This allows  to identify $(\U^0_\Z)^{S_{N}}$ with
 the ring of symmetric functions divided by powers of
the elementary symmetric polynomial
$e_N=K$.
In the classical case, this ring can be identified with the center using the Harish-Chandra isomorphism. After quantization,  
the image of the Harish-Chandra homomorphism belongs to
$$\SF=\Z[v^{\pm 1}, e_N^{-1}][x_1,\dots,x_N]^{S_N}$$ 
where $x_i=K_i^2$ (compare e.g. \cite[Ch. 6]{Ja}).
In this section 
we will furthermore identify  $\SF$ with   the Grothendieck ring $\CR$  of 
$\Rep(\U)$ with coefficients $\Z[v^{\pm 1}]$.

First,  the character map $$\ch: \CR\to \SF$$ sends a representation $U$ to its character $\ch(U)$. Clearly, $\ch(U\oplus V)=\ch(U)+\ch(V)$ and $\ch(U\otimes V)=\ch(U)\ch(V)$, so $\ch$ is a ring homomorphism. The character of $V(\lambda)$ equals the Schur function $s_{\lambda}(x_1,\dots, x_N)$, while the character of $L(\zeta)$ equals $\zeta_1\cdots \zeta_N$.

The Harish-Chandra map $$\hc:\CZ\to \SF $$ 
is defined as follows. 
Let $\phi$ be a central element in $\U$, it acts in the Verma module $\Delta(\lambda)$ by some scalar $\phi|_{\Delta(\lambda)}$. We define $\hc(\phi)$ to be the polynomial in $\SF$ defined by the condition
$$
\hc(\phi)(q^{\rho+\lambda})=\phi|_{\Delta(\lambda)}\quad \text{for all}\quad \lambda
$$
where 
$
\rho=\left(\frac{N-1}{2},\frac{N-3}{2},\ldots,\frac{1-N}{2}\right)
$. 
Note that the product $\phi\phi'$ acts on $\Delta(\lambda)$ by the product of the corresponding scalars, so $\hc$ is also a ring homomorphism. It is known to be an isomorphism (see e.g. \cite[Ch. 6]{Ja}).

Finally, the map $\xi:\CR\to \CZ$ is defined by $\xi=\hc^{-1}\circ \ch$.
  It is a composition of two ring homomorphisms and hence a ring homomorphism too. 
Hence, we get the commutative diagram:
\begin{center}
\begin{tikzcd}
\CR \arrow{dr}{\ch} \arrow{rr}{\xi}& & \CZ \arrow{dl}{\hc}\\
 & \SF & 
\end{tikzcd}
\end{center}  
  In Lemma \ref{lem:Hopf}
we will show that $\xi$ actually coincides with the Drinfeld map.

\begin{example}
{\rm The central element $K=K_1\cdots K_N$ acts on $V(\lambda)$ by a scalar $v^{\sum \lambda_i}$. Since $\sum \rho_i=0$, we get $\hc(K_1\cdots K_N)=y_1\cdots y_N$. }
\end{example}

\begin{example}
{\rm The center of $U_q\fsl_2$ is generated by the Casimir element:
$$
C=(v-v^{-1})^2 FE+v\K+v^{-1}\K^{-1}
$$
It acts on a representation $V_m$ by $v^{m+1}+v^{-m-1}$, so $\hc(C)=y+y^{-1}$ (note that $v^{\rho}=v$ in this case). 
On the other hand, $\ch(V_1)=y+y^{-1}$, so $\xi(V_1)=C$,
where $V_1$ the 2-dimensional representation.

Similarly, we can consider the corresponding central element in $U_q\fgl_2$ defined by
$$
C_{\fgl_2}=(v-v^{-1})^2 FE+vK_1K_2^{-1}+v^{-1}K_1^{-1}K_2.
$$
It acts on  a representation $V(\lambda)$ by a scalar
$$
v^{1+\lambda_1-\lambda_2}+v^{-1-\lambda_1+\lambda_2}=\frac{y_1}{y_2}+\frac{y_2}{y_1},\ \quad y_1=v^{1/2+\lambda_1},y_2=v^{-1/2+\lambda_2},
$$
so $\hc(C_{\fgl_2})=\frac{y_1}{y_2}+\frac{y_2}{y_1}=\frac{y^2_1+y^2_2}{y_1y_2}=e^{-1}_2(y_1,y_2)(x_1+x_2)$.
}
\end{example}


\subsection{Hopf pairing}
\label{sec: Hopf}

The Hopf pairing $\langle U,V\rangle$ of two representations $U,V\in \CR$ is defined as the Reshetikhin--Turaev invariant of the Hopf link with components labeled by $U$ and $V$. This is a symmetric bilinear pairing on $\CR$.
The map $\xi$ is related to the Hopf pairing as follows:

\begin{lemma}\label{lem:Hopf}
The Hopf pairing on $\CR$  can be computed as
$$
\langle U,V\rangle=\Tr_q^{U}(\xi(V)).
$$
\end{lemma}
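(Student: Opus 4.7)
The plan is to identify $\xi(V)$ with the Drinfeld central element $\phi_V:=(\Id\otimes \Tr_q^V)(\mathcal{R}_{21}\mathcal{R})$ and to recognize the Hopf pairing as its $q$-trace on $U$.

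First, viewing the Hopf link as the closure of the square of a positive crossing on two strands, its Reshetikhin--Turaev invariant reads
$$\langle U,V\rangle=(\Tr_q^U\otimes \Tr_q^V)\bigl((\mathcal{R}_{21}\mathcal{R})_{U,V}\bigr).$$
Tracing out $V$ first yields $\langle U,V\rangle=\Tr_q^U(\phi_V)$, with $\phi_V$ living in a suitable completion of $\U$. A classical theorem of Drinfeld asserts that $\phi_V\in\CZ$ and that $V\mapsto\phi_V$ defines a ring homomorphism $\CR\to\CZ$. In view of the factorization $\xi=\hc^{-1}\circ\ch$, the lemma reduces to the identity $\hc(\phi_V)=\ch(V)$ in $\SF$.

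To verify this I would compute the scalar by which $\phi_V$ acts on the highest-weight vector $v_\lambda$ of each Verma module $\Delta(\lambda)$ and match it to $\ch(V)(q^{\lambda+\rho})$, as prescribed by the defining relation $\hc(\phi)(q^{\rho+\lambda})=\phi|_{\Delta(\lambda)}$. The computation exploits the quasi-triangular structure $\mathcal{R}=D\Theta$ with $\Theta=\sum_\n F_\n\otimes e_\n$: the identities $e_\n v_\lambda=0$ for $\n\neq 0$ kill the $\Theta$-contributions in the slot containing $v_\lambda$, collapsing both $\mathcal{R}$ and $\mathcal{R}_{21}$ to the Cartan factor $D$ together with a few off-diagonal pieces of the form $e_\m w\otimes F_\m v_\lambda$. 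After taking the partial $q$-trace over $V$ and combining the $\rho$-shift furnished by the pivotal element $K_{-2\rho}$ with the Cartan factor $D^2=v^{2(\mathrm{wt},\lambda)}$, one recovers the sum-over-weights expression $\sum_\mu\dim V_\mu\, q^{(\mu,\lambda+\rho)}=\ch(V)(q^{\lambda+\rho})$.

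This gives $\hc(\phi_V)=\ch(V)$ in $\SF$, hence $\phi_V=\xi(V)$, and therefore $\langle U,V\rangle=\Tr_q^U(\phi_V)=\Tr_q^U(\xi(V))$. The main obstacle is the careful accounting of the off-diagonal contributions from the quasi-$R$-matrix together with the $\rho$-shift coming from the pivotal element; once these conventions are aligned, the identification $\phi_V=\xi(V)$ is essentially the classical Drinfeld construction, and the output is the expected sum-over-weights expression for the character of $V$.
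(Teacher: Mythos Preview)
Your proposal is correct and follows essentially the same route as the paper: both arguments identify $\xi(V)$ with the Drinfeld central element $\phi_V$ by checking that $\hc(\phi_V)=\ch(V)$, and then read off $\langle U,V\rangle=\Tr_q^U(\phi_V)$ from the Hopf-link description. The only difference is that the paper does not carry out the highest-weight computation itself but cites \cite[eq.~(20)]{GZB} (see also \cite[Prop.~8.19]{HL}) for the fact that $\phi_V$ acts on $V(\lambda)$ by $\ch(V)(q^{\lambda+\rho})$.

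One comment on your sketch of that computation: the claim that $e_{\n}v_\lambda=0$ ``collapses both $\mathcal R$ and $\mathcal R_{21}$ to the Cartan factor $D$'' is too optimistic. In $\mathcal R_{21}\mathcal R=D\Theta_{21}D\Theta$ the factor $\Theta$ acts \emph{first} and produces terms $F_{\n}v_\lambda\otimes e_{\n}w$ with $\n\neq 0$, which $\Theta_{21}$ can then raise back; after projecting to the weight-$\lambda$ line you are left with terms $e_{\n}F_{\n}v_\lambda\otimes F_{\n}e_{\n}w$ for all $\n$, and these do contribute to the partial $q$-trace over $V$. You acknowledge this as ``the main obstacle,'' and indeed it is: making these cross-terms assemble into $\ch(V)(q^{\lambda+\rho})$ is exactly the content of the cited references, and is not as immediate as your outline suggests. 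The structure of your argument is right, but that step needs either a cleaner trick or an appeal to the literature, as the paper does.
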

\begin{proof}
Consider the Drinfeld map $D$ \cite{Dri} which sends a representation $V$ to a central element corresponding to the universal invariant of the following tangle:
\begin{center}
\begin{tikzpicture}[scale=0.8]
\draw (1,1).. controls (1,1.5) and (1,1.5) .. (0,2);
\draw [line width=5,white] (0,1).. controls (0,1.5) and (0,1.5).. (1,2);
\draw (0,1).. controls (0,1.5) and (0,1.5).. (1,2);
\draw[stealth-] (1,0).. controls (1,0.5) and (0,0.5) .. (0,1);
\draw [line width=5,white] (0,0).. controls (0,0.5) and (1,0.5)..(1,1);
\draw (0,0).. controls (0,0.5) and (1,0.5)..(1,1);
\draw [stealth-] (-1,0) .. controls (-1,1.5) and (-1,2.5) .. (0,2);
\draw (2,0) .. controls (2,1.5) and (2,2.5) .. (1,2);
\draw (1,0).. controls (1,-1) and (2,-1) .. (2,0);
\draw (-1,0)--(-1,-0.7);
\draw (0,0)--(0,-0.7);
\draw (-2.5,1) node {$D(V):=$};
\draw (2.5,1) node {$V$};
\end{tikzpicture}
\end{center}
By e.g. \cite[eq. (20)]{GZB} (see also \cite[Proposition 8.19]{HL} and references therein) the eigenvalue of $D(V)$ on the irreducible representation $V(\lambda)$ equals $\ch(q^{\lambda+\rho})$ where $\ch$ is the character of $V$. By the definition of the Harish-Chandra map, this means that $\hc(D(V))=\ch(V)$, and $$D(V)=\hc^{-1}(ch(V))=\xi(V), $$ so $\xi$ agrees with the Drinfeld map.
Now $ \langle U,V\rangle=\Tr_q^{U}(D(V))=\Tr_q^{U}(\xi(V))$ or more precisely,
$$\langle V(\lambda), V(\mu)\rangle=
s_\lambda(q^{\mu+\rho})s_\mu(q^\rho)\quad \text{where}
\quad \dim_q V(\mu)=s_\mu(q^\rho) .$$
\end{proof}

Using 
the  Drinfeld isomorphism $\xi$ we can extend the Hopf pairing  to the center  by setting
$$\langle z_1,z_2\rangle:=\langle \xi^{-1}(z_1),\xi^{-1}(z_2)
\rangle\quad \text{for any}\quad z_1,z_2\in \CZ\ .$$

\section{Cyclotomic completion and the universal invariant}

The universal invariant of a link 
 belongs a priori  to a (completed) tensor product
of copies of  $\U_h$, rather than $\U$ or $\U_\Z$, due to the
diagonal part of the $R$-matrix.
The aim of this section is to define a certain
completion of $\U_\Z$ and its tensor powers, such that
the  universal $\fgl_N$ invariant of evenly framed 
links belongs to it. Since the action of $\Gamma$ extends to the completion,
this will allow us to speak about $\Gamma$-invariance
of $J_L(\fgl_N;q)$.

\subsection{Cyclotomic completion of $\U_\Z$}
Given $n \in \N$,
we define a family of two-sided  ideals $\U^{(n)}_{\Z}$ as the minimal filtration such that $\U^{(n)}_{\Z}\U^{(m)}_{\Z}\subset \U^{(m+n)}_{\Z}$ 
and 
$$
(q;q)_{n},\;\; e^{n}_i,\;\; f_n(K^2_j)\in \U^{(n)}_{\Z}
$$
for any $1\leq i\leq N-1$ and $1\leq j\leq N$
where $f_n(x)=(x;q)_n $.
In other words,  $\U^{(n)}_{\Z}$ is the two-sided ideal
generated by the products 
\begin{equation}
\label{eq: filtration}
(q;q)_a \;e_{\m}\; f_{c_1}(K^2_1)\cdots f_{c_{N}}(K^2_{N}),\quad
\text{with}\quad a+\sum_i m_i+ \sum_i c_i=n \ .
\end{equation}


\begin{lemma}
\label{lem: coproduct fn}
We have 
$$
\Delta\left(f_n\left(K^2_i\right)\right)=\sum_{a=0}^{n}\binom{n}{a}_q f_a(K^2_i)\otimes K_i^{2(n-a)}f_{n-a}(K^2_i).
$$
\end{lemma}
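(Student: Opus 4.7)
The plan is to reduce the identity to a universal $q$-binomial statement for two commuting variables. Since $\Delta$ is an algebra homomorphism and $\Delta(K_i^{\pm 1}) = K_i^{\pm 1}\otimes K_i^{\pm 1}$, we have $\Delta(K_i^2) = K_i^2\otimes K_i^2$, so
$$\Delta\bigl(f_n(K_i^2)\bigr) \;=\; f_n\bigl(\Delta(K_i^2)\bigr) \;=\; \prod_{j=0}^{n-1}\bigl(1 - q^{j}\,K_i^2\otimes K_i^2\bigr).$$
The elements $x := K_i^2\otimes 1$ and $y := 1\otimes K_i^2$ in $\U_\Z\otimes \U_\Z$ commute, so the whole computation takes place in the commutative subring $\mathbb{Z}[q^{\pm1}][x,y]$.

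I would then establish, as a polynomial identity in $x,y$, the $q$-binomial expansion
$$(xy;q)_n \;=\; \sum_{a=0}^{n}\binom{n}{a}_q (x;q)_a\, y^{a}\,(y;q)_{n-a},$$
or the version obtained by swapping $x\leftrightarrow y$, whichever matches the claimed placement of the factor $K_i^{2(n-a)}$. Substituting $x = K_i^2\otimes 1$, $y = 1\otimes K_i^2$ then gives exactly the right-hand side of the lemma, because $(x;q)_a = f_a(K_i^2)\otimes 1$, $y^{a} = 1\otimes K_i^{2a}$, and $(y;q)_{n-a} = 1\otimes f_{n-a}(K_i^2)$, all of which commute and multiply cleanly in the tensor product.

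The auxiliary identity itself is a standard consequence of the classical $q$-binomial theorem $\sum_{n\ge 0}\frac{(a;q)_n}{(q;q)_n} z^n = \frac{(az;q)_\infty}{(z;q)_\infty}$. Indeed, factoring
$$\frac{(xyz;q)_\infty}{(z;q)_\infty} \;=\; \frac{(xyz;q)_\infty}{(yz;q)_\infty}\cdot\frac{(yz;q)_\infty}{(z;q)_\infty}$$
and extracting coefficients of $z^n$ on both sides immediately yields the expansion. Alternatively, one can argue by induction on $n$ using the recursion $(xy;q)_n = (xy;q)_{n-1}\bigl(1-q^{n-1}xy\bigr)$ together with the Pascal-type identity $\binom{n}{a}_q = \binom{n-1}{a-1}_q + q^{a}\binom{n-1}{a}_q$; the inductive step amounts to a short bookkeeping of powers of $y$ and $q$.

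I do not expect any serious obstacle. The one point that requires care is keeping track of which of the two commuting variables carries the extra monomial factor, so that the power of $K_i$ appears in the correct tensor slot; this is just a matter of fixing the convention in the commutative identity before specializing. Once that is done, the lemma follows in one line from the $q$-binomial theorem applied to commuting elements of $\U_\Z\otimes\U_\Z$.
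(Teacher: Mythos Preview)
Your approach is correct and, at its core, the same as the paper's: the paper proves the identity by induction on $n$, using $f_{n+1}(K_i^2)=f_n(K_i^2)(1-q^nK_i^2)$ together with the splitting
\[
\Delta(1-q^nK_i^2)=(1-q^aK_i^2)\otimes q^{n-a}K_i^2+1\otimes(1-q^{n-a}K_i^2),
\]
which is exactly your inductive alternative rephrased in the commuting variables $x=K_i^2\otimes 1$, $y=1\otimes K_i^2$. Your generating-function derivation via $\frac{(xyz;q)_\infty}{(z;q)_\infty}=\frac{(xyz;q)_\infty}{(yz;q)_\infty}\cdot\frac{(yz;q)_\infty}{(z;q)_\infty}$ is a clean alternative that the paper does not use; it has the advantage of giving the identity $(xy;q)_n=\sum_a\binom{n}{a}_q(x;q)_a\,y^a\,(y;q)_{n-a}$ in one stroke without induction.

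One small point on your hedge about ``which slot carries the extra monomial'': your identity yields $f_a(K_i^2)\otimes K_i^{2a}f_{n-a}(K_i^2)$, i.e.\ the exponent is $2a$, not $2(n-a)$. Swapping $x\leftrightarrow y$ moves the monomial to the \emph{first} tensor factor rather than changing the exponent. In fact the paper's own inductive proof also produces the exponent $2a$, so the $2(n-a)$ in the displayed statement is a typo; your argument proves the correct version.
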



\begin{proof}
We prove Lemma by induction in $n$. For $n=0$ it is clear. The induction step follows from the identities
$$
f_{n+1}(K^2_i)=f_n(K_i)(1-q^nK^2_i) 
$$
and
$$
\Delta(1-q^nK^2_i)=1\otimes 1 -q^nK^2_i\otimes K^2_i=(1-q^aK^2_i)\otimes q^{n-a}K^2_i+1\otimes (1-q^{n-a}K^2_i).
$$
\end{proof}


\begin{proposition}
\label{prop: filtration}
a) $\U^{(n)}_{\Z}$ is the left ideal generated by \eqref{eq: filtration}.

b) $\U^{(n)}_{\Z}$ form a Hopf algebra filtration, that is $\Delta \,\U^{(n)}_{\Z}\subset\, \sum_{i+j=n}\,\U^{(i)}_{\Z}\otimes \U^{(j)}_{\Z}$.

c) Assume that $\lambda_i\le k$ for all $i$. Given arbitrary $m$, there exists $n=n(k,m)$ such that the elements of $\U^{(n)}_{\Z}$ act on the integral basis of $V(\lambda)$ by matrices divisible by $(q;q)_{m}$. 
\end{proposition}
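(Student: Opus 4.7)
The plan is to establish the three statements in order, since (a) provides the explicit description of $\U^{(n)}_\Z$ that (b) and (c) both rely on.

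For (a), let $I^{L}_n$ denote the left ideal of $\U_\Z$ generated by the products \eqref{eq: filtration}. The inclusion $I^L_n \subset \U^{(n)}_\Z$ is immediate from the axioms of the filtration. For the reverse, I would show that $\{I^L_n\}$ is itself a filtration by two-sided ideals satisfying $I^L_n I^L_m \subset I^L_{n+m}$ and containing the required generators, so that by minimality $\U^{(n)}_\Z \subset I^L_n$. The only real content is verifying that $I^L_n$ is a right ideal, i.e., that right multiplication by each of $K_j^{\pm 1}$, $e_i$, $F_i^{(k)}$ preserves $I^L_n$. The case $K_j^{\pm 1}$ is clear, since $K_j^{\pm 1}$ commutes with every $f_{c_l}(K_l^2)$ and scales each $e_i$ by a power of $v$. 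For $e_i$, I would push it to the right through each $f_{c_l}(K_l^2)$ using $e_i K_l^2 = q^{\epsilon} K_l^2 e_i$ with $\epsilon \in \{-1,0,1\}$, producing $f_{c_l}(q^{\epsilon} K_l^2)$; the expansion
$$f_c(q^{\pm 1} K^2) = \prod_{s=1}^{c}\bigl((1-K^2) + (1-q^{\pm s})K^2\bigr) = \sum_{S \subset \{1,\ldots,c\}} \Bigl(\prod_{s\in S}(1-q^{\pm s})\Bigr)(1-K^2)^{c-|S|} K^{2|S|}$$
keeps the result in $I^L_c$, since $(1-K^2) = f_1(K^2)$ and each $(1-q^{\pm s})$ lie in $I^L_1$. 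For $F_i^{(k)}$ one pushes it past $e_{\m}$ using iterated $[e_i, F_j] = \delta_{ij}(\K_i - \K_i^{-1})$ and the divided-power analogues; the commutators lower the $e$-degree but introduce Cartan factors $\K_i - \K_i^{-1} = (K_i K_{i+1})^{-1}\bigl(f_1(K_{i+1}^2) - f_1(K_i^2)\bigr) \in I^L_1$, so the total $I^L$-degree is preserved.

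For (b), because $\Delta$ is an algebra map and the filtration is multiplicative by (a), it suffices to verify the property on generators. The scalar $(q;q)_n$ satisfies $\Delta((q;q)_n) = (q;q)_n \otimes 1 \in \U^{(n)}_\Z \otimes \U^{(0)}_\Z$. The coproduct of $f_n(K_i^2)$ is given by Lemma \ref{lem: coproduct fn}, and its $a$-th summand sits in $\U^{(a)}_\Z \otimes \U^{(n-a)}_\Z$ after absorbing $K_i^{2(n-a)}$ into the right tensor factor. For $e_i^n$, writing $X = e_i \otimes 1$ and $Y = \K_i \otimes e_i$ in $\U \otimes \U$ gives $YX = q\,XY$, and the quantum binomial theorem yields
$$\Delta(e_i^n) = (X+Y)^n = \sum_{a=0}^{n} \binom{n}{a}_q \K_i^{\,n-a} e_i^{\,a} \otimes e_i^{\,n-a},$$
whose $a$-th summand lies in $\U^{(a)}_\Z \otimes \U^{(n-a)}_\Z$.

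For (c), by (a) it is enough to prove the divisibility for a single generator $g = (q;q)_a e_{\m} f_{c_1}(K_1^2) \cdots f_{c_N}(K_N^2)$ with $a + \sum m_i + \sum c_j = n$, because left multiplication by any element of $\U_\Z$ acts on $V(\lambda)$ by an integer matrix and preserves entry-wise divisibility by $(q;q)_m$. Since $\lambda$ is a partition with $\lambda_i \le k$, every weight $\mu$ of $V(\lambda)$ satisfies $0 \le \mu_j \le k$, so $K_j^2$ acts on each weight vector by $q^{\mu_j}$ and $f_{c_j}(K_j^2)$ acts on it by the scalar $(q^{\mu_j};q)_{c_j}$. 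When $\mu_j = 0$ this scalar vanishes; when $\mu_j \ge 1$, the identity $(q^{\mu};q)_c = (q;q)_c \binom{\mu+c-1}{c}_q$ shows it is divisible by $(q;q)_{c_j}$, and hence by $(q;q)_m$ whenever $c_j \ge m$. Similarly $(q;q)_a$ is divisible by $(q;q)_m$ once $a \ge m$, and $e_i^{m_i}$ annihilates $V(\lambda)$ once $m_i$ exceeds a constant $M = M(k,N)$ bounding the weight spread. Choosing $n(k,m) = (N+1)m + (N-1)M$ (or any similar estimate) ensures that $a + \sum m_i + \sum c_j \ge n(k,m)$ forces at least one threshold to be met, yielding the claim. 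The principal technical hurdle is the right-ideal verification in (a); parts (b) and (c) are direct consequences of the explicit left-ideal presentation established there.
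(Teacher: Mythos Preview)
Your overall strategy matches the paper's, and for (b) and (c) your arguments are essentially identical to those in the text. For (a), however, there is a circularity. You expand $f_c(q^{\pm 1}K^2)$ as a sum of terms $\prod_{s\in S}(1-q^{\pm s}) \cdot (1-K^2)^{c-|S|} K^{2|S|}$ and then assert that each such term lies in $I^L_c$ ``since $(1-K^2) = f_1(K^2)$ and each $(1-q^{\pm s})$ lie in $I^L_1$.'' But concluding that a $c$-fold product of elements of $I^L_1$ lies in $I^L_c$ is precisely the multiplicativity $I^L_a I^L_b \subset I^L_{a+b}$ that you are in the process of establishing; you cannot use it here. A direct check shows the difficulty is real: expanding $(1-K^2)^3$ in the basis $f_j(K^2)$ gives a coefficient $(1-q^{-1})^2=q^{-2}(1-q)^2$ in front of $f_1(K^2)$, and $(1-q)^2$ is \emph{not} a $\Z[q,q^{-1}]$-multiple of $(q;q)_2$. (Also, your displayed product formula is only correct for the $+1$ shift; for $f_c(q^{-1}K^2)$ the shifted exponents run through $-1,0,1,\ldots,c-2$, not $-1,\ldots,-c$.)

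The paper sidesteps this by invoking Lemma~\ref{lem: binomial}, which expands $f_m(q^s x)$ \emph{directly} in the basis $\{f_j(x)\}$: the coefficient of $f_j$ equals, up to a monomial, $\binom{m}{j}_q (q^{s+j};q)_{m-j}$. Since $(q^t;q)_k$ is an integral multiple of $(q;q)_k$ for every integer $t$, each summand is literally a $\Z[q,q^{-1}]$-multiple of the single generator $(q;q)_{m-j} f_j(K_i^2)$ from \eqref{eq: filtration}, giving $f_m(q^s K_i^2)\in I^L_m$ with no appeal to multiplicativity. The commutation identities $f_n(K_i^2)F_i^{(s)}=F_i^{(s)}f_n(q^{-s}K_i^2)$ and $f_n(K_i^2)e_i^s=e_i^s f_n(q^{s}K_i^2)$ then finish the right-ideal verification directly.
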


\begin{proof}
a) Observe that by Lemma \ref{lem: binomial} we get $f_n(q^{s}K^2_i)\in \U^{(n)}_{\Z}$ for all integer $s$. Now the statement follows from the identities
$$
f_n(K^2_i)F_i^{(s)}=F_i^{(s)}f_n(q^{-s}K^2_i),\ f_n(K^2_{i+1})F_{i}^{(s)}=F_{i}^{(s)}f_n(q^{s}K^2_{i+1})
$$
and 
$$
f_n(K^2_i)e^s_i=e^s_if_n(q^{s}K^2_i),\ f_n(K^2_{i+1})e^s_{i}=e^s_{i}f_n(q^{-s}K^2_{i+1}).
$$


b) Follows from the identity 
$$\Delta(e_j^m)=\sum_{i=0}^{m}\binom{m}{i}_qe_j^{m-i}\K^i\otimes e_j^i$$ and 
Lemma \ref{lem: coproduct fn}.

c) By (a), it is sufficient to check the statement for $e_i^n$ and $f_n(K^2_i)$. If $\lambda_i\le k$ then for $n>k$ $e_i^n$ annihilates $V(\lambda)$, while 
$f_n(K^2_i)$ acts on a vector with weight $(v^{\lambda_1},\ldots,v^{\lambda_N})$ by $f_n(q^{\lambda_i})=(q^{\lambda_i};q)_n$ which is divisible by $(q;q)_{n}$.
\end{proof}
 By Proposition \ref{prop: filtration}(b), the filtration 
$$\U^{}_{\Z}=\U^{(0)}_{\Z} \supset \U^{(1)}_{\Z}
\supset \dots \U^{(n)}_{\Z}\supset \dots$$
is a Hopf algebra filtration  of $\U_\Z$ with respect 
to a descending filtration of ideals $I_n=((q;q)_n)$ in {$\Z[v,v^{-1}]$} in the sense of \cite[Sec. 4]{Hcenter}.
Hence, the completion 
$$
\widehat{\U}:={\lim\limits_{\overleftarrow{\hspace{2mm}n\hspace{2mm}}}}\;  \; \frac{\U_{\Z}}{\U^{(n)}_{\Z}}
$$
{is a complete Hopf algebra over the  ring
 $$ \widehat{\Z[v]}:=
{\lim\limits_{\overleftarrow{\hspace{2mm}n\hspace{2mm}}}}\; \;  
\frac{\Z[v]}{((q;q)_n)} .$$}
We refer to \cite[Section 4]{Hcenter} for details.
Analogously, we define the $\Gamma$-invariant subalgebra
$$
\widehat{\U^{\text{ev}}}:={\lim\limits_{\overleftarrow{\hspace{2mm}n\hspace{2mm}}}}\;  \; \frac{\U^{\text{ev}}_{\Z}}{\U^{(n)}_{\Z}}
$$
as a complete Hopf algebra over the Habiro ring $\widehat{\Z[q]}$.
Let us now extend the completion to the tensor powers of $\U_\Z$.
For this we define the filtration for $\U_\Z^{\otimes l}$
for $l\geq 1$ as follows
$$
{\mathcal F}_n(\U_\Z^{\otimes l})=\sum^l_{i=1}
\U_\Z^{\otimes i-1}\otimes \U^{(n)}_\Z
\otimes\U_\Z^{\otimes l-i}
$$
and the completed tensor product
$\U_\Z^{\hat{\otimes}l}$ with respect to this filtration
will be the image of the homomorphism
$$
{\lim\limits_{\overleftarrow{\hspace{2mm}n\hspace{2mm}}}}\;  \; \frac{\U_\Z^{\otimes l}}{
{\mathcal F}_n(\U_\Z^{\otimes l})}  \;\;\to\;\;
\U_h^{{\otimes}k}
$$
where on the right hand side we use the $h$-adically completed tensor product.

\subsection{Hopf pairing and universal invariants}\label{sec:HU}
Let us denote by $c\in \U_h {\otimes}\, \U_h$  the double braiding or the universal invariant of the clasp tangle in Figure \ref{fig:clasp}, given by
$$c=(S\, \otimes\,\text{id})\,\CR_{21}\CR \ .$$
The main point about this element is that it is dual to
the Hopf pairing or the quantum Killing form (compare \cite[Sec. 4]{HL}).
Hence, 
after writing
$c=\sum_{i} c(i) \otimes  c'(i)
$ 
  the Hopf pairing is defined by setting
\begin{equation}\label{eq:Hopf_on_U}
\langle c(i), c'(j) \rangle :=\delta_{ij} 
\end{equation}

 Restricting to the Cartan part this gives us
 (compare \cite[Lemma 3.12]{HL})
 \begin{align}
 D^{-2}=\prod^N_{i=1} q^{-H_i\otimes H_i}=
\prod^N_{i=1} \sum_{n_i} (-1)^{n_i}\frac{h^{n_i}}{n_i!}H_i^{n_i}\, \otimes \, H_i^{n_i}
\end{align}
and hence, $\langle H_i^n, H_j^m\rangle=\delta_{ij}\delta_{nm} (-1)^n
\frac{n!}{h^n}$. We deduce that
$\langle K_i^2, K_j^2\rangle=q^{-1}$ or, more generally,
$$\langle K_i^{2a}, K_j^{2b}\rangle=\delta_{ij}q^{-ab}$$
defines the Hopf pairing on the $\Gamma$-invariant part of the Cartan. In Section \ref{sec:inter} we construct
another basis for the Cartan given by
$\prod^N_{i=1}  f_{n_i}(K^2_i)$ such that
$ \langle f_n, f_m \rangle= \delta_{nm}(-1)^n q^{-n}(q;q)_n$. In this new basis, we can rewrite the Cartan part of the clasp element as follows:
\begin{equation}\label{eq:D2}
D^{-2}= \sum_{\n\in \mathbb{N}^{N}}\prod^N_{i=1} \frac{(-1)^{n_i}q^{n_i}}{(q;q)_{n_i}} f_{n_i}(K^2_i)\otimes f_{n_i}(K^2_i)
\end{equation}
For $\fsl_N$ similar computations will give
$$(D')^{-2}= \sum_{\n\in \mathbb{N}^{N-1}}\prod^{N-1}_{i=1} \frac{(-1)^{n_i}q^{n_i}}{(q;q)_{n_i}} f_{n_i}(\K_i)\otimes f_{n_i}(\K^2_i) \ $$
(compare Section B.1 in \cite{HL}).

Let us denote by
$$\text{Inv}\,(\U)=
\{u\in \U\, |\, x \vartriangleright u=\epsilon(x) u \quad\forall
x\in \U\}$$ 
 the invariant part of $\U$ under the adjoint action 
 $x \vartriangleright u:=x_{(1)}u S(x_{(2)})$ in Sweedler
 notation. 
The main advantage of  the usage of bottom tangles
in the definition of $J_L(\fgl_N;q)$
is that in this case $J_L(\fgl_N;q) \in \text{Inv}\,(\U)$
(compare \cite[Sec.4.3]{H}).
As a corollary, we get the following:

\begin{proposition}
\label{prop: JK in completion} Given an $l$-component 
evenly framed link $L$, the 
 universal invariant $J_L(\fgl_N;q)$ 
 is a well defined element of $\text{\rm Inv}\, \left(\widehat{\U}^{\hat{\otimes}l}\right)$.
\end{proposition}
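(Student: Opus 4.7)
The plan is to fix a bottom tangle diagram $T$ whose closure is $L$, to write $J_L(\fgl_N;q)$ as the usual product of $R^{\pm 1}$-matrices and pivotal elements $K_{-2\rho}$ along $T$, and to verify that this product converges in $\widehat{\U}^{\hat{\otimes}l}$. Once this is established, the fact that it lies in the adjoint-invariant subalgebra is a general property of universal invariants of bottom tangles, proved by Habiro in \cite[Sec.~4.3]{H} (see also \cite[Sec.~7.3]{Hbottom}), and is preserved under passage to the completion.

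I would first dispose of the unproblematic contributions. The summands of the quasi-$R$-matrix $\Theta=\sum_{\mathbf{n}}F_{\mathbf{n}}\otimes e_{\mathbf{n}}$, and of its inverse $\iota(\Theta)$, lie in $\U_{\Z}^{(|\mathbf{n}|)}\otimes \U_{\Z}^{(|\mathbf{n}|)}$ by the very definition of the cyclotomic filtration, while the pivotal factors $K_{-2\rho}$ belong to $\U_{\Z}^{(0)}$. Consequently, summing over $\mathbf{n}$ at each crossing and combining across crossings produces convergent series in $\widehat{\U}^{\hat{\otimes}l}$, provided the Cartan-level contribution is also controlled. Here Proposition~\ref{prop: filtration}(b) is used: the cyclotomic filtration is compatible with the coproduct, so the multi-index sums arising from different crossings only mix in a way that respects the filtration degree.

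The genuine obstacle is the diagonal factor $D^{\pm 1}=v^{\pm\sum_i H_i\otimes H_i}$, which a priori is only defined $h$-adically. To handle it I would use the evenly framed hypothesis to regroup the diagonal contributions into blocks of $D^{\pm 2}$, which do live in the cyclotomic completion via formula \eqref{eq:D2}. More precisely, by the cocommutativity of $D$ on the Cartan, the diagonal factors produced at the crossings of $T$ slide freely along the strands of the link; those attached to an ordered pair $(i,j)$ of distinct components collect into $D^{2\,\mathrm{lk}(i,j)}$, an automatically even power of $D$, and those attached to the self-pair $(i,i)$ collect (together with the $K_{-2\rho}$ factors inserted at the caps and cups) into a power of $D$ whose exponent is controlled by the framing $f_i$. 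The evenness of $f_i$ then guarantees that the full diagonal contribution on each pair of tensor slots is a product of $D^{\pm 2}$-blocks; combining each such block with \eqref{eq:D2} and with the $\Theta$-expansions at each crossing yields the desired element of $\widehat{\U}^{\hat{\otimes}l}$.

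The main obstacle is the regrouping in the previous paragraph, specifically on the self-pair $(i,i)$: the self-crossings of $T$ produce $v^{H_k^2}$-type terms on the single tensor factor labelled by component $i$, and one must show that these combine with the pivotal corrections from $K_{-2\rho}$ and with the even parity of $f_i$ to give a net exponent that is a sum of $\pm 2$'s, after which \eqref{eq:D2} applies. This is essentially a combinatorial exercise inside the Cartan subalgebra, but it is the crux of the proof and the reason the evenly framed hypothesis cannot be dropped at this stage.
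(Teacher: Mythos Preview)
Your proposal is correct and follows essentially the same approach as the paper's proof. The paper makes one step more explicit by recording the sliding rules $D(E_i\otimes 1)D^{-1}=E_i\otimes\K_i$ and $D(1\otimes F_j)D^{-1}=\K_j^{-1}\otimes F_j$, which show that collecting the diagonal factors only introduces $\K$-corrections lying in $\U_\Z$; with that in hand, the even-framing hypothesis yields an even number of $D$-parts and \eqref{eq:D2} finishes exactly as you outline.
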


\begin{proof}
By definition, $J_K$ is obtained by multiplying together
elementary pieces, such as
$F_\n$, $e_\n$,  $K^{\pm 1}_{2\rho}$,  $D^{\pm 1}$, 
and by
then  taking  a sum over all indices.
The linking between different components and framing will make appear powers of $D^{\pm 2}$ that we can decompose using 
the basis elements $f_n(K^2_i)$ of the completion
by \eqref{eq:D2}.
Note that we can collect all diagonal contributions 
of each component by using   formulas like
$$ D(E_i\otimes 1)D^{-1}=E_i \otimes \K_i  \quad
\text{and} \quad D(1 \otimes F_j)D^{-1}=\K^{-1}_j \otimes F_j \ 
 .$$
Since framing is assumed to be even, we will
have an even number of $D$-parts. 
Hence using \eqref{eq:D2} and the explicit form of the quasi $R$-matrix $\Theta$, we get the claim.
\end{proof}

\begin{remark}{\rm
For $\fsl_N$ we can build the same completion
after replacing $K_i$ with $\K_i$. Then the arguments in the proof of Proposition \ref{prop: JK in completion}
will show us that for any algebraically split link
the universal invariants belongs to this completion.
}
\end{remark}

\noindent
{\bf Proof of Theorem \ref{thm:intro even}.}

Using Proposition \ref{prop: JK in completion} and remark
above, we can define the action of $\Gamma$ on each component of $J_L(\fg;q)$ separately. 
We will denote by $J_L^{\zeta_1,\ldots,\zeta_{\ell}}(\fg;q)$ the result of this action. 
Then we have
$$
J_L\left(V(\lambda_1)\otimes L(\zeta_1),\ldots,V(\lambda_{\ell})\otimes L(\zeta_{\ell})\right)=
$$
$$
\bigotimes^l_{i=1}\Tr^{V(\lambda_i)\otimes L(\zeta_i)}_{q}\left(J_L(\fg;q)\right)=
\bigotimes^l_{i=1}
\Tr^{V(\lambda_i)}_{q}\left(J_T^{\zeta_1,\ldots,\zeta_{\ell}}(\fg;q)\right)
\cdot \dim_{q}L(\zeta_1)\cdots \dim_{q}L(\zeta_{\ell})
.
$$
The second equation follows from Lemma \ref{lem: twisted action}.
By Theorem \ref{thm: zeta RT} 
we conclude that 
$$
J_L^{\zeta_1,\ldots,\zeta_{\ell}}\left(\lambda_1, \ldots,\lambda_{\ell}\right)=J_L\left(\lambda_1, \ldots,\lambda_{\ell}\right)
$$
for all $\lambda_1,\ldots,\lambda_{\ell}$ under the assumptions of Theorem \ref{thm:intro even}, therefore $J_L(\fg;q)=J_L^{\zeta_1,\ldots,\zeta_{\ell}}(\fg;q)$ and hence, $J_L(\fg;q)$ is $\Gamma$-invariant under the same assumptions.

$\hfill\Box$
\vspace{2mm}


\begin{corollary}\label{cor:JKeven}
For any $\ell$-component evenly framed  link $L$,
 $J_L(\fgl_N;q)$ belongs to the $\Gamma$-invariant part of $\text{\rm Inv}\left(\widehat{\U}^{\hat{\otimes}\ell}\right)$. 
 Moreover, for every $0$-framed algebraically split link $L$,
 $$J_L(\fgl_N;q)=J_L(\fsl_N;q) \ .$$
\end{corollary}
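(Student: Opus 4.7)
The first claim follows immediately by combining two results already in hand. Proposition \ref{prop: JK in completion} places $J_L(\fgl_N;q)$ in $\mathrm{Inv}(\widehat{\U}^{\hat{\otimes}\ell})$ for any evenly framed $\ell$-component link $L$, and Theorem \ref{thm:intro even} shows that this element is $\Gamma$-invariant, so there is essentially nothing further to do.

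For the equality $J_L(\fgl_N;q)=J_L(\fsl_N;q)$ on $0$-framed algebraically split links, the plan is to assemble both universal invariants from the same elementary pieces and localize the discrepancy. Both recipes share the same quasi-$R$-matrix $\Theta$ and the same pivotal element $K_{-2\rho}$, since these ingredients already live inside $U_q\fsl_N\subset \U$. The only places where the recipes differ are the diagonal Cartan part of the $R$-matrix and the Cartan part of the ribbon element: one writes $D=D'\cdot X$ and $r_0=r_0'\cdot Y$, where the correction factors $X\in \U^0\otimes \U^0$ and $Y\in \U^0$ are explicit expressions in the central element $K=\prod_i K_i$ (equivalently in $H=\sum_i H_i$).

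The heart of the argument will be to verify that, once the universal invariant is assembled, all these correction factors trivialize on each component of $L$. Travelling along a component $C_i$, each crossing in which a strand of $C_i$ participates contributes a power of $K$ coming from $X$, and each framing twist on $C_i$ contributes a power of $K$ coming from $Y$. After grouping contributions per component, the total exponent accumulated on $C_i$ is a fixed linear combination of the writhe of $C_i$, the sum of linking numbers $\sum_{j\ne i}\mathrm{lk}(C_i,C_j)$, and the framing $f_i$. Under the hypothesis of $0$-framing and algebraic splitness these numerical invariants all vanish, so every correction disappears and we conclude $J_L(\fgl_N;q)=J_L(\fsl_N;q)$ as elements of the common completion.

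The main technical obstacle will be the bookkeeping in this last step: one must check carefully that the central $K$-factors coming from $D$ at each crossing and from $r_0$ at each framing correction are correctly associated with the two strands incident to each crossing, and that along each closed-up component of the bottom tangle they really assemble into the writhe, linking, and framing data stated above. This part of the argument is very close in spirit to the $\Sigma_\zeta$-manipulation in the proof of Theorem \ref{thm: zeta RT}(a), where analogous $\Gamma$-twist contributions were shown to cancel precisely under the algebraic splitness and even-framing hypotheses.
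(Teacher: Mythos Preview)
Your proposal is correct and follows essentially the same approach as the paper. For the first claim you invoke exactly the two ingredients the paper uses (Proposition \ref{prop: JK in completion} and Theorem \ref{thm:intro even}). For the second claim your factorization $D=D'\cdot X$ with $X$ central in both tensor factors is equivalent to the paper's terse remark that ``the rules for moving $D$ and $D'$ along a component of the link coincide'': since $X$ commutes with everything, the commutation formulas $D(E_i\otimes 1)D^{-1}=E_i\otimes\K_i$ etc.\ are identical for $D$ and $D'$, and the residual $X$-factors accumulate according to the linking matrix, which vanishes by hypothesis. One minor simplification: for a $0$-framed link represented by a bottom tangle diagram there are no ribbon element insertions at all, so your discussion of $r_0$ versus $r_0'$ and the factor $Y$ is unnecessary (though not wrong).
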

\begin{proof}
{ The first statement is the direct consequence of Theorem 
\ref{thm:intro even}. The second one follows from the fact that 
the only difference in the definitions of both invariants
is in the diagonal part of the $R$-matrix, that does not contribute since the linking matrix vanishes and  the rules for moving of  $D$ and $D'$ along a component of the link coincide.}
\end{proof}

\subsection{Twist forms}\label{sec:tau}
Let us denote by $\widehat \CZ$ the center of $\widehat{\U}$.
In what follows, we will be particularly interested in the following twist forms
$${\mathcal T}_{\pm }: \widehat\CZ\to\widehat\CZ\quad\text{
given by} \quad
{\mathcal T}_{\pm }(z):=\langle r^{\pm 1}, z\rangle$$
the Hopf pairing with the ribbon element.
On the $\Gamma$-invariant Cartan part they are 
easy to compute, given the Hopf pairing between the generators $H_i$
 in Section \ref{sec:HU} . We have
\begin{equation}
\label{eq:tau}
{\mathcal T}_\pm ( K_{2\ba} )=\langle r^{\pm 1}_0, K_{2\ba} \rangle=
v^{\pm(\ba, 2\rho-\ba)}\in \Z[v,v^{- 1}]\ 
\end{equation}
for any $\ba \in \Z^N$.
Now equation \eqref{eq:Hopf_on_U} allows to extend the 
twists form to $\widehat{\U}^{\text{ev}}$ as follows:
$${\mathcal T}_\pm (F_\m\K_\m K_{2\ba}e_\n)=
\delta_{\m,\n}q^{(\rho, \sum_i n_i \alpha_i)}
v^{\pm(\ba, 2\rho-\ba)} \in \Z[v,v^{- 1}]\ $$
where $\alpha_i=e_i-e_{i+1}$ are the simple roots.
Observe that after restriction to $\U_q^{\text{ev}}\fsl_N$,
i.e. replacing $K_{2\ba}$ with $\K_{2\mathbf{b}}$ in the above formula, the result
{\it  belong to}
$\Z[q,q^{-1}]$ and coincide with
\cite[eq. (102)]{HL} for any $\mathbf{b} \in \Z^{N-1}$.

\section{Habiro's basis for $\CZ(U_q\fsl_2)$}
In this section we summarize Habiro's results for
$\fsl_2$ in the way suitable for our generalization.

Habiro \cite{H}  defined a remarkable family of central elements in $\CZ(U_q\fsl_2)$:
\begin{equation}
\label{eq:sigma}
\sigma_m:=\prod^m_{i=1} \left(C^2-(v^i+v^{-i})^2\right)=\prod_{i=1}^{m}(C-v^{i}-v^{-i})(C+v^{i}+v^{-i}).
\end{equation} 
Since $C$ acts on the $(j+1)$-dimensional representation 
$V_j$ by a scalar $v^{j+1}+v^{-j-1}$, the polynomial $\sigma_m$ is completely characterized by the following properties:
\begin{center}
\begin{itemize}
\item[(a)] (Parity) $\sigma_m$ is $\Gamma=\Z_2$-invariant.
\item[(b)] (Vanishing) $\sigma_m$ annihilates the representations $V_j$ for $j<m$.
\item[(c)] (Normalization) $\sigma_m$  acts  on the representation $V_m$ by a scalar 
\begin{equation}
\label{eq:value}
\prod^m_{i=1} \left((v^{m+1}+v^{-m-1})^2-(v^i+v^{-i})^2\right)\ .
\end{equation}
\end{itemize}
\end{center}
Note that parity implies that $\sigma_m$ also annihilates the representations $L(-1)\otimes V_j$ for $j<m$. 
By using the Harish-Chandra isomorphism, we can alternatively consider the polynomials
$$
T_m(y):=\hc(\sigma_m):=\prod^m_{i=1} (yv^i-y^{-1}v^{-i})(yv^{-i}-y^{-1}v^i)=(-1)^m\prod^m_{i=1} q^{-i}(1-y^2 q^{i})(1-y^{-2}q^{i})
$$
which are characterized by the following properties:
\begin{center}
\begin{itemize}
\item[(a)] (Parity) $T_m$ is $\Z_2$-invariant, that is, $T_m(-y)=T_m(y)$
\item[(b)] (Vanishing) $T_m(\pm v^{j+1})=0$ for $j<m$
\item[(c)] (Normalization) $T_m(v^{m+1})$ is given in \eqref{eq:value}.
\end{itemize}
\end{center}

Habiro  proved that $\{\sigma_m\}_{m\geq 0}$ form a basis in (a certain completion of) the $\Gamma$-invariant part of the center. Hence,
 the elements $S_m=\xi^{-1}(\sigma_m)$, given by
$$ S_m:=\prod_{i=1}^{m}(V_1-v^{i}-v^{-i})(V_1+v^{i}+v^{-i})$$
form a basis of $\CR$.
We will show that $$P_n=\prod_{i=0}^{n-1} (V_1-v^{2i+1}-v^{-2i-1})\in \CR$$
is a dual basis to $\{S_m\}_{m\geq 0}$ with respect to the Hopf pairing.
The following is a slight reformulation of \cite[Prop.  6.3]{H}.

\begin{lemma}
We have $$\langle P_n,S_m\rangle =\frac{\{2n+1\}!}{\{1\}}\delta_{n,m}\ .$$
\end{lemma}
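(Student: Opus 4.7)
I would invoke Lemma~\ref{lem:Hopf} to rewrite $\langle P_n, S_m\rangle = \Tr_q^{P_n}(\sigma_m)$ and then exploit the vanishing properties of $\sigma_m$. The key structural input is iterated Clebsch--Gordan in $\CR$, which gives $V_1^k = V_k + \sum_{j<k}\alpha_j V_j$, hence $P_n = V_n + \sum_{a<n} c_a V_a$. When $n<m$, every $V_a$ in this expansion satisfies $a\le n<m$ and is therefore annihilated by $\sigma_m$ via property (b) of Habiro's basis, so $\Tr_q^{P_n}(\sigma_m)=0$.

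For $n>m$, I would swap the roles using symmetry of the Hopf pairing: $\langle P_n, S_m\rangle = \Tr_q^{S_m}(\xi(P_n))$. Since $C$ acts on $V_b$ by $v^{b+1}+v^{-b-1}$, the central element $\xi(P_n) = \prod_{i=0}^{n-1}(C - v^{2i+1} - v^{-2i-1})$ acts by zero on $V_0, V_2,\ldots, V_{2n-2}$. On the other hand, $\sigma_m$ is $\Gamma$-invariant, and $\Gamma$ scales $\ch(V_b)$ by $(-1)^b$ (equivalently, $\Sigma$ acts by $(-1)^b$ on $V_b$, cf.\ Lemma~\ref{lem: sign braiding for sl2}), so $S_m=\xi^{-1}(\sigma_m)$ expands only over even-indexed $V_b$. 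Combined with the $V_1$-degree bound $S_m = V_{2m} + (\text{lower-degree terms in }V_1)$, this yields $S_m = \sum_{b\text{ even},\,0\le b\le 2m} d_b V_b$. Since $n>m$ implies $2m\le 2n-2$, each such $V_b$ is killed by $\xi(P_n)$, so $\langle P_n, S_m\rangle = 0$.

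For the diagonal case $n=m$, the expansion of $P_m$ combined with the vanishing of $\sigma_m$ on $V_a$ for $a<m$ gives $\langle P_m, S_m\rangle = \dim_q(V_m)\cdot (\sigma_m|_{V_m})$. Using the elementary identity $v^a + v^{-a} \pm (v^b + v^{-b}) = v^{-a}(v^a\pm v^b)(v^a\pm v^{-b})$, one factors
\[
(v^{m+1}+v^{-m-1})^2 - (v^i+v^{-i})^2 = \{m+1-i\}\{m+1+i\},
\]
so $\sigma_m|_{V_m} = \prod_{i=1}^m \{m+1-i\}\{m+1+i\} = \{m\}!\cdot \{m+2\}\{m+3\}\cdots\{2m+1\} = \{2m+1\}!/\{m+1\}$. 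Combined with $\dim_q(V_m) = \{m+1\}/\{1\}$ this produces the claimed value $\{2m+1\}!/\{1\}$. The main subtlety is the asymmetry of the two off-diagonal ranges: $n<m$ reduces to annihilation of $\sigma_m$, but $n>m$ crucially relies on the $\Gamma$-evenness of $S_m$'s irreducible support to match the purely even zero locus of $\xi(P_n)$; without this parity constraint, odd-indexed summands of $S_m$ would pair nontrivially and the argument would collapse.
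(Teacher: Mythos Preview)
Your proof is correct and follows essentially the same three-case strategy as the paper: for $n<m$ use that $P_n$ expands over $V_a$ with $a\le n$ and that $\sigma_m$ annihilates these; for $n>m$ swap via symmetry of the Hopf pairing, use that $S_m$ expands over even-indexed $V_b$ with $b\le 2m$, and that $\xi(P_n)$ kills $V_{2i}$ for $i<n$; for $n=m$ reduce to $\Tr_q^{V_n}(\sigma_n)$. Your writeup is in fact more detailed than the paper's, which asserts the evenness of the $S_m$-expansion and leaves the diagonal value ``easy to compute'', whereas you justify both explicitly.
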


\begin{proof}
Clearly, one has
$$
\xi(P_n)=\prod_{i=0}^{n-1} (C-v^{2i+1}-v^{-2i-1})
$$
which annihilates $V_{2i}$ for $i<n$.
We have the following cases:

1) For $n<m$ we have $\langle P_n,S_m\rangle =\Tr_q^{P_n}(\sigma_m)$. Since $P_n$ is in span of $V_i$ for $i\le n$ and  $\sigma_m$ annihilates all these, we get $\langle P_n,S_m\rangle=0$.

2) For $m<n$ we have $\langle P_n,S_m\rangle =\Tr_q^{S_m}(\xi(P_n))$. Since $S_m$ is in span of $V_{2i}$ for $i\le n$ and 
$$\langle P_n, V_{2i}\rangle=\{i+n\}\dots \{i-n+1\}[2i+1]\ .$$
Hence $P_n$ annihilates all these, we get $\langle P_n,S_m\rangle=0$.

3) Finally, for $n=m$ we observe that $P_n$ has a unique copy of $V_{n}$ and  
$$
\langle P_n,S_n\rangle = \langle V_n,S_n\rangle =\Tr_q^{V_n}(\sigma_n)
$$
which is easy to compute.
\end{proof}

We can use the above results to compute the coefficients
in the decomposition of any central element into
$\{\sigma_m\}_{m\geq 0}$.
\begin{lemma}
Let $\phi$ be a $\Z_2$-invariant element in $\CZ(U_q\fsl_2)$ which acts on $V_j$ by a scalar $\phi_j$. Then
$$
\phi=\sum a_n \sigma_n,\; \text{where}\;\;  a_n=\sum_{i=0}^{n} (-1)^{n-i}\frac{\{2i+2\}\{i+1\}}{\{n+i+2\}!\{n-i\}!}\; \phi_i \ .
$$
\end{lemma}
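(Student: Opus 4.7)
The plan is to invert the basis expansion $\phi=\sum_m a_m\sigma_m$ by pairing with the dual family $\{P_n\}$ from the preceding lemma. Applying $\xi^{-1}$, we may equivalently write $\xi^{-1}(\phi)=\sum_m a_m S_m$ in $\CR$; pairing against $P_n$ and using the preceding duality relation $\langle P_n,S_m\rangle=\delta_{nm}\{2n+1\}!/\{1\}$ together with the Hopf-pairing formula $\langle U,V\rangle=\Tr_q^U(\xi(V))$ from Lemma~\ref{lem:Hopf}, one extracts
\[
a_n=\frac{\{1\}}{\{2n+1\}!}\,\Tr_q^{P_n}(\phi).
\]

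Then I would expand $P_n=\sum_{k=0}^{n}\beta_{n,k}V_k$ in $\CR$. Because $\phi$ acts on $V_k$ by $\phi_k$ and $\dim_q V_k=\{k+1\}/\{1\}$,
\[
\Tr_q^{P_n}(\phi)=\frac{1}{\{1\}}\sum_{k=0}^{n}\beta_{n,k}\{k+1\}\phi_k,
\]
so the target formula is equivalent to the closed-form identity
\[
\beta_{n,k}=(-1)^{n-k}\,\frac{\{2n+1\}!\,\{2k+2\}}{\{n+k+2\}!\,\{n-k\}!}.
\]

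To establish this closed form, substitute $V_1=y+y^{-1}$ and $V_k=(y^{k+1}-y^{-k-1})/(y-y^{-1})$. The factorization $V_1-v^{2i+1}-v^{-2i-1}=y^{-1}(y-v^{2i+1})(y-v^{-2i-1})$ turns $(y-y^{-1})P_n$ into the Laurent polynomial $y^{-n-1}(y^2-1)\prod_{i=0}^{n-1}(y-v^{2i+1})(y-v^{-2i-1})$, and $\beta_{n,k}$ is the coefficient of $y^{k+1}$ in it. Expanding the two halves of the product via the $q$-binomial theorem, and invoking the palindromic symmetry $q_\ell=q_{2n-\ell}$ of $Q_n(y):=\prod(y-v^{2i+1})(y-v^{-2i-1})$, yields the claimed closed form after a short calculation. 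As a sanity check, specializing to $\phi=\sigma_n$ gives $a_n=1$ via the value $\sigma_n|_{V_n}=\{2n+1\}!/\{n+1\}$, which is itself a consequence of the factorization $(v^{n+1}+v^{-n-1})^2-(v^i+v^{-i})^2=\{n+1+i\}\{n+1-i\}$.

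The main technical step is the $q$-binomial identity underlying the closed form for $\beta_{n,k}$; everything else is formal bookkeeping built on the preceding duality lemma, the Drinfeld map $\xi$, and the Hopf-pairing/trace formula.
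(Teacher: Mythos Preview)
Your approach is essentially the same as the paper's: extract $a_n=\frac{\{1\}}{\{2n+1\}!}\Tr_q^{P_n}(\phi)$ from the duality of the preceding lemma, then expand $P_n$ in the $V_i$ and use $\Tr_q^{V_i}(\phi)=[i+1]\phi_i$. The only difference is that the paper simply quotes Habiro's formula \cite[Lemma 6.1]{H}
\[
P_n=\sum_{i=0}^{n}(-1)^{n-i}\frac{[2i+2]}{[n+i+2]}\left[\begin{array}{c}2n+1\\ n+1+i\end{array}\right]V_i,
\]
which, after rewriting $[a]=\{a\}/\{1\}$, is exactly your $\beta_{n,k}=(-1)^{n-k}\{2n+1\}!\{2k+2\}/(\{n+k+2\}!\{n-k\}!)$; you instead sketch a direct derivation via characters and the $q$-binomial theorem. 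That makes your argument slightly more self-contained, but the structure and the final computation are identical.
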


\begin{proof}
We have (\cite[Lemma 6.1]{H})
$$
P_n=\sum_{i=0}^{n} (-1)^{n-i} \frac{[2i+2]}{[n+i+2]}\left[\begin{array}{cc}\! 2n+1\!\\\!n+1+i\!\end{array}\right] V_i.
$$
If $\phi=\sum a_m \sigma_m$ then
$$
a_n=\frac{\{1\}}{\{2n+1\}!}\Tr_q^{P_n}(\phi)=\frac{\{1\}}{\{2n+1\}!}\sum_{i=0}^{n} (-1)^{n-i} \frac{[2i+2]}{[n+i+2]} \left[\begin{array}{cc}\! 2n+1\!\\\!n+1+i\!\end{array}\right] 
\Tr_q^{V_i}(\phi)=
$$
$$
\sum_{i=0}^{n} (-1)^{n-i} \frac{\{2i+2\}\{1\}}{\{n+i+2\}!\{n-i\}!}\dim_q(V_i)\phi_i.
$$
Using $\dim_q(V_i)=[i+1]$ we obtain the result.
\end{proof}

Habiro proved that for any $0$-framed knot $K$, there exist $a_n(K) \in \Z[q,q^{-1}]$ such that
$$J_K(\fsl_2;q) =\sum_{n\geq 0} a_n (K)\, \sigma_n $$
known as a  {\it cyclotomic} expansion of the colored Jones polynomial of the knot $K$.


\section{New basis for the center of $\widehat \U$}
\label{sec:results}
Recall that $\widehat \CZ$ is the center of the completion
$\widehat \U$.
In this section we construct the basis 
$\{\sigma_\lambda\}_\lambda$
of the $\Gamma$-invariant part of $\widehat\CZ$. 
Furthermore, we explicitly define its  dual $\{P_\lambda\}_\lambda$ with respect to the Hopf pairing.
 This  allows us to construct the cyclotomic expansion of $J_K(\fgl_N;q)$ for any $0$-framed knot $K$.


The proof uses the existence and properties of interpolation Macdonald polynomials \cite{Ok} which are summarized in the following theorem.

\begin{theorem}\label{thm:F's}
There is a family of symmetric polynomials $F_{\lambda}(x_1,\ldots,x_N;q)$ such that:
\begin{itemize}
\item[(a)] $F_{\lambda}$ is in the span of Schur functions $s_{\mu}$ for $\mu\leq \lambda$ with the leading term $$F_{\lambda}=(-1)^{|\lambda|+\binom{N}{2}}  q^{D_N(\lambda)}s_{\lambda}+\ldots \ .$$
\item[(b)] $F_{\lambda}(q^{-\mu_1-N+1},\ldots,q^{-\mu_N})=0$ unless $\mu$ contains $\lambda$. 
\item[(c)] $F_{\lambda}(q^{-\lambda_1-N+1},\ldots,q^{-\lambda_N})=(-1)^{\binom{N}{2}}q^{n(\lambda)+\binom{N}{3}}\prod_{\sq\in \lambda}(1-q^{-h(\sq)})$.
\item[(d)] Any function $F$ in the completion  can be written as 
\begin{equation}
\label{eq: interpolation}
F(x_1,\ldots,x_N)=\sum_{\lambda, \;\mu\subset \lambda} d_{\mu,\lambda}(q) F(q^{-\mu_1-N+1},\ldots,q^{-\mu_N}) F_{\lambda}(x_1,\ldots,x_N;q)
\end{equation}
where $d_{\lambda,\mu}$ are explicit coefficients prescribed by Theorem \ref{thm: dlm}.
\end{itemize}
\end{theorem}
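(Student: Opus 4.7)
The plan is to define $F_\lambda$ by the vanishing and normalization conditions (b) and (c), and then to derive (a) and (d) as consequences. The whole argument rests on the interpolation principle of Knop--Sahi--Okounkov: that evaluation at the discrete family of points $p_\mu := (q^{-\mu_1-N+1},\ldots,q^{-\mu_N})$, indexed by partitions $\mu$ with at most $N$ parts, separates elements of $\SF$ (and of its natural completion), and is moreover \emph{triangular} with respect to inclusion of Young diagrams when restricted to appropriate finite-dimensional subspaces.

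First I would set up this framework. For each fixed $\lambda$, consider the subspace $V_\lambda \subset \SF$ spanned by Schur functions $s_\nu$ indexed by partitions $\nu$ with $|\nu|\le |\lambda|$ and $\nu\subset \lambda$. A direct calculation shows that the evaluation matrix $[s_\nu(p_\mu)]_{\nu,\mu\subset\lambda}$ is triangular (in the sense that $s_\nu(p_\mu)=0$ unless $\nu\subset\mu$) with non-zero diagonal, hence invertible. Consequently there is a unique polynomial $F_\lambda\in V_\lambda$ satisfying the vanishing (b) at all $\mu\subsetneq\lambda$ together with the normalization (c). Because $F_\lambda\in V_\lambda$, it automatically lies in the span of Schur functions indexed by $\mu\le\lambda$ in containment order, which yields the structural half of (a).

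Next I would pin down the leading coefficient in (a) and simultaneously derive (c). Write $F_\lambda=\sum c_{\lambda\nu}s_\nu$. By the Weyl-type formula for Schur functions, $s_\lambda(p_\lambda)$ equals a quotient of Vandermondes in the variables $q^{-\bar\lambda_i}$, which by the identity \eqref{prem:id} can be recast as a signed $q$-power times $\prod_\sq(1-q^{-h(\sq)})$. Comparing with the triangular evaluation $F_\lambda(p_\lambda)=c_{\lambda\lambda}\, s_\lambda(p_\lambda)+\sum_{\nu\subsetneq\lambda}c_{\lambda\nu}s_\nu(p_\lambda)$ (in which the sum vanishes by the vanishing property applied to $\nu$ viewed as the index) and using the definition \eqref{eq:def D} of $D_N(\lambda)$ to package the exponents, one reads off simultaneously the sign $(-1)^{|\lambda|+\binom{N}{2}}$, the $q$-power $q^{D_N(\lambda)}$ in (a), and the normalization value in (c). The key combinatorial input here is \eqref{prem:id} rewritten as $\prod_\sq(1-q^{-h(\sq)}) = \prod_i(q^{-\bar\lambda_i};q)_{\bar\lambda_i}\,\big/\prod_{i<j}(1-q^{\bar\lambda_j-\bar\lambda_i})$.

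Finally, for (d), the triangular matrix $M_{\lambda\mu}:=F_\lambda(p_\mu)$ is supported on pairs $\mu\supset\lambda$ with non-zero diagonal; hence $M$ is invertible in the completion, and the matrix of coefficients $d_{\lambda,\mu}(q)$ of Theorem \ref{thm: dlm} is by construction $M^{-1}$. The interpolation formula \eqref{eq: interpolation} then follows because $F$ and the right-hand side of \eqref{eq: interpolation} take the same values at every point $p_\nu$, and evaluations at $\{p_\nu\}$ separate elements of the completion. The main obstacle is the explicit evaluation of step two: the linear-algebraic skeleton (existence, uniqueness, triangularity, invertibility) is routine, but matching signs and $q$-powers of the Vandermonde quotient \eqref{prem:id} with the content-based form of $D_N(\lambda)$ requires careful bookkeeping, and is the place where the $\binom{N}{3}$ and $n(\lambda)$ terms must be tracked exactly.
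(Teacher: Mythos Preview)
Your strategy---characterize $F_\lambda$ by vanishing and normalization, then derive the rest---is in the spirit of the Knop--Sahi--Okounkov theory, but the implementation has two genuine gaps.

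First, the claimed triangularity $s_\nu(p_\mu)=0$ unless $\nu\subset\mu$ is false: already $s_{(1)}(p_\emptyset)=\sum_{i=0}^{N-1}q^{-i}\neq 0$. Schur functions do not vanish triangularly at these nodes; it is precisely the inhomogeneous polynomials $F_\lambda$ that enjoy this property. Consequently your invertibility argument for the evaluation matrix on $V_\lambda$ is unsupported, and your computation of the leading coefficient in (a) collapses, since the sum $\sum_{\nu\subsetneq\lambda}c_{\lambda\nu}\,s_\nu(p_\lambda)$ does not vanish. Second, even granting a unique element of $V_\lambda$ with the prescribed values at $\{p_\mu:\mu\subset\lambda\}$, you have only imposed vanishing at $\mu\subsetneq\lambda$; property (b) demands vanishing at \emph{all} $\mu\not\supset\lambda$, including partitions incomparable to $\lambda$ (e.g.\ $\lambda=(2)$, $\mu=(1,1)$). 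Handling these extra conditions is exactly the nontrivial content of the existence theorem in the interpolation framework.

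The paper avoids both difficulties by taking the opposite route: it \emph{defines} $F_\lambda$ by the explicit factorial-Schur determinant
\[
F_\lambda(\xx)=\frac{\det\bigl[f_{\lambda_i+N-i}(x_j)\bigr]_{i,j=1}^{N}}{\prod_{i<j}(x_i-x_j)},\qquad f_m(x)=(x;q)_m,
\]
and then verifies (a)--(c) directly: (a) from the top-degree monomial of each $f_m$; (b) by observing that for $\mu\not\supset\lambda$ the matrix $[f_{\lambda_i+N-i}(q^{-\mu_\ell-N+\ell})]$ has a vanishing block forcing the determinant to zero; and (c) because at $p_\lambda$ the matrix is triangular with diagonal entries $\{\lambda_i+N-i\}_{q^{-1}}!$, after which \eqref{prem:id} gives the hook-product form. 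Property (d) is then the formal inversion of the (genuinely) triangular matrix $[F_\lambda(p_\mu)]$, with the explicit inverse quoted from Okounkov. So the paper's determinantal construction is both more elementary and more explicit than the abstract characterization you attempt.
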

We discuss the definition and give more details on interpolation Macdonald polynomials in Section \ref{sec:inter}.

\begin{theorem}\label{thm:basis}
There exists a family of central elements $\sigma_{\lambda}\in \CZ$  with the following properties:
\begin{itemize}
\item[(a)] $\sigma_{\lambda}$ is $\Gamma$-invariant and annihilates  $L(\zeta)\otimes V(\mu)$ for all  $\mu$ not containing $\lambda$ and $\zeta\in \Gamma$.
\item[(b)] $\hc(\sigma_{\lambda})$ is in the span of $s_{\mu}(x_1,\ldots,x_N)$ for $\mu\leq \lambda$, 
with the leading term $$\hc(\sigma_{\lambda})=(-1)^{|\lambda|+\binom{N}{2}}v^{(N-1)|\lambda|}q^{D_N(\lambda)}s_{\lambda}+\ldots \ .$$
\item[(c)] $\sigma_{\lambda}$ acts on $V(\lambda)$ by a scalar 
$$\sigma_{\lambda}|_{V(\lambda)}=(-1)^{\binom{N}{2}}q^{-n(\lambda)-\binom{N}{3}}\prod_{\sq\in \lambda}(1-q^{h(\sq)})\ .$$
\end{itemize}
\end{theorem}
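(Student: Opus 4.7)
My plan is to transport the interpolation polynomials $F_{\lambda}$ of Theorem \ref{thm:F's} into the center through the Harish-Chandra isomorphism $\hc \colon \CZ \to \SF$ of Section \ref{sec:HC}. Since central elements of $\U$ are uniquely determined by their scalar actions on irreducible representations $V(\mu)$, one can specify $\sigma_{\lambda}$ by prescribing its eigenvalues. A direct comparison between the Harish-Chandra evaluation points for $V(\mu)$ and the interpolation nodes $(q^{-\mu_i-N+i})$ of Theorem \ref{thm:F's}(b) shows that they are related by a symmetric substitution of the form $x \mapsto c_N/x$ for an explicit monomial $c_N = c_N(v)$. I would therefore define
\[
\hc(\sigma_{\lambda}) := c_{\lambda} \cdot F_{\lambda}(c_N/x_1, \ldots, c_N/x_N),
\]
a symmetric Laurent polynomial lying in $\SF$ (since $e_N^{-1}$ is invertible there), and take $\sigma_{\lambda} := \hc^{-1}$ of this element with a normalizing scalar $c_{\lambda} \in \Z[v^{\pm 1}]$ fixed to satisfy (c).

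Once $\sigma_{\lambda}$ is defined, the three properties would be verified as follows. For (a): by construction, $\sigma_{\lambda}|_{V(\mu)} = c_{\lambda} F_{\lambda}(q^{-\mu_1-N+1}, \ldots, q^{-\mu_N})$, which vanishes for $\mu \not\supseteq \lambda$ by Theorem \ref{thm:F's}(b). Since $\hc(\sigma_{\lambda})$ is a Laurent polynomial in the $\Gamma$-invariant variables $K_i^{\pm 2}$, the element $\sigma_{\lambda}$ is $\Gamma$-invariant, so Lemma \ref{lem: twisted action} implies that $\sigma_{\lambda}$ acts on $L(\zeta) \otimes V(\mu)$ via the $\zeta$-twisted action on $V(\mu)$, which by $\Gamma$-invariance equals the untwisted scalar and hence vanishes. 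Property (c) would follow by evaluating at $\mu = \lambda$ and invoking Theorem \ref{thm:F's}(c): the identity $(1-q^{-h}) = -q^{-h}(1-q^{h})$ together with a suitable choice of $c_{\lambda}$ converts $\prod_{\sq}(1-q^{-h(\sq)})$ into $\prod_{\sq}(1-q^{h(\sq)})$ with the correct prefactor. Property (b) is then extracted from Theorem \ref{thm:F's}(a) by tracking how the substitution $x \mapsto c_N/x$ together with the chosen $c_{\lambda}$ transforms the Schur expansion $(-1)^{|\lambda|+\binom{N}{2}} q^{D_N(\lambda)} s_{\lambda} + (\text{lower})$ into the required form with leading coefficient $(-1)^{|\lambda|+\binom{N}{2}} v^{(N-1)|\lambda|} q^{D_N(\lambda)}$.

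The main obstacle will be the leading-term bookkeeping in (b). Under the substitution $x \mapsto c_N/x$, each Schur function $s_\mu(x)$ transforms via the $GL_N$ duality $s_\mu(x_1^{-1}, \ldots, x_N^{-1}) = e_N(x)^{-\mu_1} s_{\mu^*}(x)$ (where $\mu^*_i = \mu_1 - \mu_{N+1-i}$), so recovering a clean Schur expansion in $x$ requires absorbing the resulting $e_N$-factors and checking that the partial order on partitions is preserved after the rotation $\mu \mapsto \mu^*$. Verifying that a single scalar $c_{\lambda}$ simultaneously produces the leading coefficient claimed in (b) and the scalar claimed in (c) is the core consistency check, and relies on the identity $D_N(\lambda) = n(\lambda') + (N-1)|\lambda| - n(\lambda) + \binom{N}{3}$ together with standard hook-length identities.
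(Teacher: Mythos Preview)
Your proposal has a genuine gap in part (b). The element $c_{\lambda}\,F_{\lambda}(c_N/x_1,\ldots,c_N/x_N;q)$ is a polynomial in the variables $x_i^{-1}$, hence a symmetric Laurent polynomial supported in \emph{non-positive} degrees in $x$. For $\lambda\neq\emptyset$ it can never lie in the $\Z[v^{\pm 1}]$-span of the Schur polynomials $s_{\mu}(x)$ with $\mu\le\lambda$, all of which are honest polynomials. Already for $N=1$ and $\lambda=(1)$ your element is $c_1(1-x^{-1})$, which contains no $s_{(1)}(x)=x$ term at all. The $e_N$-factors coming from $s_{\mu}(x^{-1})=e_N(x)^{-\mu_1}s_{\mu^*}(x)$ cannot be ``absorbed'' by a scalar $c_{\lambda}\in\Z[v^{\pm 1}]$; absorbing them by a power of $e_N$ would multiply the eigenvalue on $V(\mu)$ by a factor depending on $|\mu|$, so (c) could no longer be arranged simultaneously with (b).

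In fact your $\sigma_{\lambda}$ is a \emph{different} central element from the one required: it acts on $V(\mu)$ by $c_{\lambda}\,F_{\lambda}(q^{-\mu_i-N+i};q)$, whereas the correct element acts by $F_{\lambda}(q^{\mu_i+N-i};q^{-1})$, and the ratio of these two quantities depends on $\mu$ (not only on $\lambda$), so no scalar normalisation can make them agree on all $V(\mu)$.

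The paper's approach exploits the same relationship between Harish--Chandra nodes and interpolation nodes that you noticed, but realises the inversion $q^{-\mu_i-N+i}\mapsto q^{\mu_i+N-i}$ by sending $q\mapsto q^{-1}$ in the \emph{coefficients} of $F_{\lambda}$ rather than by inverting the variables. One sets
\[
g_{\lambda}(x_1,\ldots,x_N)=F_{\lambda}\bigl(v^{N-1}x_1,\ldots,v^{N-1}x_N;\,q^{-1}\bigr),\qquad \sigma_{\lambda}=\hc^{-1}(g_{\lambda}).
\]
Since $F_{\lambda}(\,\cdot\,;q^{-1})$ is still a polynomial in its arguments, $g_{\lambda}$ is a genuine symmetric polynomial in $x$ and (b) follows immediately from Theorem~\ref{thm:F's}(a) after $q\to q^{-1}$ and the homogeneity $s_{\lambda}(v^{N-1}x)=v^{(N-1)|\lambda|}s_{\lambda}(x)$. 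The rescaling $v^{N-1}$ turns the Harish--Chandra point $x_i=q^{\mu_i+\rho_i}$ into $v^{N-1}x_i=q^{\mu_i+N-i}$, which is precisely the interpolation node for $F_{\lambda}(\,\cdot\,;q^{-1})$; this gives (a), and (c) follows from Theorem~\ref{thm:F's}(c) with $q\to q^{-1}$ with no extra normalising constant needed.
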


\begin{proof}
Define $\sigma_{\lambda}=\hc^{-1}(g_{\lambda})$, where $g_{\lambda}(x_1,\dots, x_N)=F_{\lambda}(v^{N-1}x_1,\ldots,v^{N-1}x_N;q^{-1})$. Then $\sigma_{\lambda}$ is clearly $\Gamma$-invariant and
$$
\sigma_{\lambda}|_{L(\zeta)\otimes V({\mu})}=g_{\lambda}(\zeta_i\cdot v^{\mu_i+\rho_i})=
F_{\lambda}(q^{(\mu_1+N-1)},\ldots, q^{\mu_N};q^{-1}).
$$
Indeed, if $y_i=\zeta_i\cdot v^{\mu_i+\rho_i}=\zeta_iv^{(\mu_i-\frac{N-1}{2}+N-i)}$ then $v^{N-1}y_i^2=q^{(\mu_i+N-i)}$.

Now $F_{\lambda}(q^{(\mu_1+N-1)},\ldots, q^{\mu_N};q^{-1})$ vanishes unless $\mu$ contains $\lambda$, and has the nonzero value prescribed by the previous theorem for $\mu=\lambda$.
\end{proof}
Let us define $\CR_\Q:=\CR\otimes \Q(v)$ by extending the
coefficient ring $\Z[v^{\pm 1}]$ of $\CR$ to the rational functions in $v$.

\begin{theorem}\label{thm:pi's}
Define the following formal elements of $\CR_\Q$
$$
P_{\lambda}=\sum_{\mu\subset \lambda}
\frac{d_{\lambda, \mu}(q^{-1})}{\dim_q V(\mu)}\; V(\mu)\in \CR_\Q,
$$
then  one has
\begin{equation}
\langle P_{\lambda}, \sigma_{\nu}\rangle :=
\Tr^{P_{\lambda}}_q(\sigma_{\nu})=\delta_{\lambda,\nu}\ . 
\end{equation}
\end{theorem}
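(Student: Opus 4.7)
The plan is to unpack both sides of the definition and reduce the identity to the interpolation formula applied to $F_\nu$ itself. Since the sum defining $P_\lambda$ is finite (indexed by $\mu\subset\lambda$), and $\sigma_\nu$ acts on each simple $V(\mu)$ as a scalar, no convergence issues arise and the whole computation stays inside $\Q(v)$.

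First I would expand, by $\Q(v)$-linearity of the quantum trace,
\[
\langle P_\lambda,\sigma_\nu\rangle = \sum_{\mu\subset\lambda}\frac{d_{\lambda,\mu}(q^{-1})}{\dim_q V(\mu)}\,\Tr_q^{V(\mu)}(\sigma_\nu).
\]
Centrality of $\sigma_\nu$ gives $\Tr_q^{V(\mu)}(\sigma_\nu)=\dim_q V(\mu)\cdot \sigma_\nu|_{V(\mu)}$, so the quantum dimensions cancel and
\[
\langle P_\lambda,\sigma_\nu\rangle = \sum_{\mu\subset\lambda} d_{\lambda,\mu}(q^{-1})\,\sigma_\nu|_{V(\mu)}.
\]

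Next I would identify the scalar $\sigma_\nu|_{V(\mu)}$ with an interpolation polynomial evaluation. From the proof of Theorem \ref{thm:basis}, $\sigma_\nu=\hc^{-1}(g_\nu)$ with $g_\nu(x_1,\dots,x_N)=F_\nu(v^{N-1}x_1,\dots,v^{N-1}x_N;q^{-1})$; evaluating at the point $x_i = v^{\mu_i+\rho_i}$ and using $v^{N-1}v^{2(\mu_i+\rho_i)} = q^{\mu_i+N-i}$, one gets
\[
\sigma_\nu|_{V(\mu)} = F_\nu\!\left(q^{\mu_1+N-1},\dots,q^{\mu_N};q^{-1}\right).
\]

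Finally, apply the interpolation formula of Theorem \ref{thm:F's}(d) with $q$ replaced by $q^{-1}$ to the function $F=F_\nu(\,\cdot\,;q^{-1})$ itself. Since $F_\nu(\,\cdot\,;q^{-1})$ is a basis element, its expansion in the basis $\{F_\lambda(\,\cdot\,;q^{-1})\}$ is tautologically $\sum_\lambda \delta_{\nu,\lambda}F_\lambda(\,\cdot\,;q^{-1})$; matching this with the interpolation expansion
\[
F_\nu(x;q^{-1}) = \sum_{\lambda}\Bigl[\sum_{\mu\subset\lambda} d_{\lambda,\mu}(q^{-1})\,F_\nu(q^{\mu_1+N-1},\dots,q^{\mu_N};q^{-1})\Bigr] F_\lambda(x;q^{-1})
\]
and using uniqueness of coefficients in the basis $\{F_\lambda\}$ yields $\sum_{\mu\subset\lambda} d_{\lambda,\mu}(q^{-1})F_\nu(q^{\mu+N-1};q^{-1}) = \delta_{\nu,\lambda}$, which is exactly the quantity computed above. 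The only delicate point is to be consistent with the $q\to q^{-1}$ substitution and with the triangular support of the inverse matrix $[d_{\lambda,\mu}]$ relative to $[c_{\lambda,\mu}]=[F_\lambda(q^{-\mu-N+i})]$ under partition inclusion; once that bookkeeping is straight, the conclusion $\langle P_\lambda,\sigma_\nu\rangle=\delta_{\lambda,\nu}$ is immediate.
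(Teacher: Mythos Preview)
Your proof is correct and follows essentially the same route as the paper: expand $\Tr_q^{P_\lambda}(\sigma_\nu)$ by linearity, cancel the quantum dimensions using that $\sigma_\nu$ acts by a scalar on each $V(\mu)$, identify that scalar with $F_\nu(q^{\mu_i+N-i};q^{-1})$ via the Harish-Chandra construction of $\sigma_\nu$, and then read off $\delta_{\lambda,\nu}$ from the interpolation formula applied to $F=F_\nu$ after the substitution $q\mapsto q^{-1}$. The only cosmetic difference is the order of presentation: the paper first derives the scalar identity $\sum_{\mu\subset\lambda}d_{\lambda,\mu}(q^{-1})F_\nu(q^{\mu+N-i};q^{-1})=\delta_{\lambda,\nu}$ and then plugs it into the trace, whereas you compute the trace first and recognize the sum as that identity.
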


\vspace{2mm}
\begin{proof}
First, let us write the interpolation formula \eqref{eq: interpolation} for $F=F_{\nu}$:
$$
F_{\nu}(x_1,\ldots,x_N;q)=\sum_{\mu\subset \lambda} d_{\lambda,\mu}(q) F_{\nu}(q^{-\mu_1-N+1},\ldots,q^{-\mu_N};q) F_{\lambda}(x_1,\ldots,x_N;q),
$$
so 
$$
\sum_{\mu\subset \lambda} d_{\lambda,\mu}(q) F_{\nu}(q^{-\mu_1-N+1},\ldots,q^{-\mu_N};q)=\delta_{\lambda,\nu}.
$$
By changing $q$ to $q^{-1}$ we get
\begin{equation}
\label{eq:ortho}
\sum_{\mu\subset \lambda} d_{\lambda,\mu}(q^{-1}) F_{\nu}(q^{\mu_1+N-1},\ldots,q^{\mu_N};q^{-1})=\delta_{\lambda,\nu}.
\end{equation}
Now $\Tr^{V(\mu)}_q(\sigma_{\nu})=\dim_{q}(V(\mu))\; g_{\nu}(q^{\mu_1+N-1},\ldots,q^{\mu_N})$, hence
$$
\Tr^{P_{\lambda}}_q(\sigma_{\nu})=\sum_{\mu\subset \lambda}\frac{d_{\lambda,\mu}(q^{-1})}{\dim_q V({\mu})}\Tr^{V({\mu})}_q (\sigma_{\nu}) = \; \delta_{\lambda, \nu} \ .
$$
\end{proof}

Next, we would like to study the integrality properties of the universal knot invariant.

 \begin{lemma}
\label{lem: completion center}
(a) Let $\sigma \in\U^{\text{ev}}_{\Z}$. Then  $\sigma=(K_1\cdots K_N)^{-2s}\sum a_{\lambda}\sigma_{\lambda}$ with  $a_{\lambda}\in \Z[q,q^{-1}]$.

(b) Given $k$ and $m$, there exists $n=n(k,m)$ such that for all $\Gamma$-invariant central elements $\sigma$ in the ideal $\U^{(n)}_{\Z}$ the coefficients 
$a_{\lambda}$ are divisible by $(q;q)_{m}$  for $|\lambda|\le k$. 
\end{lemma}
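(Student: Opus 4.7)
The plan is to combine the triangular structure of $\{\sigma_{\lambda}\}$ (Theorem \ref{thm:basis}) with the Harish--Chandra isomorphism for part (a), and with Proposition \ref{prop: filtration}(c) applied inductively for part (b).

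For part (a), note that $\sigma\in\U^{\text{ev}}_{\Z}$ is $\Gamma$-invariant and central (the latter being implicit, as $\sigma_{\lambda}$ are central). As $\U^{\text{ev}}_{\Z}$ is a $\Z[q^{\pm 1}]$-subalgebra, after multiplying by $K^{2s}=(K_1\cdots K_N)^{2s}$ for $s$ sufficiently large to clear the $e_N^{-1}$ factors arising in the PBW decomposition of $\sigma$, the image $\hc(K^{2s}\sigma)$ lies in $\Z[q^{\pm 1}][x_1,\ldots,x_N]^{S_N}$ as a polynomial of bounded total degree. By Theorem \ref{thm:basis}(b), the elements $g_\lambda=\hc(\sigma_{\lambda})$ are triangular with respect to the Schur basis with leading coefficient $\pm v^{(N-1)|\lambda|}q^{D_N(\lambda)}$, a unit; hence $\{g_\lambda\}$ is a basis of the relevant symmetric polynomial ring and the triangular inversion yields a unique finite expansion $K^{2s}\sigma=\sum a_\lambda\sigma_\lambda$. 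A parity check using the $\Gamma$-invariance of $\sigma$ and the parity of $(N-1)|\lambda|$ confirms that the $a_\lambda$ lie in $\Z[q^{\pm 1}]$ rather than merely $\Z[v^{\pm 1}]$.

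For part (b), evaluate $K^{2s}\sigma=\sum_\nu a_\nu\sigma_\nu$ on $V(\mu)$. By Theorem \ref{thm:basis}(a), $\sigma_\nu|_{V(\mu)}=0$ unless $\mu\supset\nu$, giving the triangular system
\[
q^{s|\mu|}\,\sigma|_{V(\mu)}=\sum_{\nu\subset\mu} a_\nu\,\sigma_\nu|_{V(\mu)}.
\]
Given $k$ and $m$, choose $n=n(k,M)$ via Proposition \ref{prop: filtration}(c) for an $M$ to be specified; then $(q;q)_M\mid\sigma|_{V(\mu)}$ for every $|\mu|\le k$. Induct on $|\lambda|$: isolating
\[
a_\lambda\,\sigma_\lambda|_{V(\lambda)} = q^{s|\lambda|}\,\sigma|_{V(\lambda)} - \sum_{\nu\subsetneq\lambda} a_\nu\,\sigma_\nu|_{V(\lambda)},
\]
the right-hand side is divisible by $(q;q)_M$ by the hypothesis on $\sigma$ and the inductive assumption. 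Up to a unit in $\Z[q^{\pm 1}]$, the prefactor $\sigma_\lambda|_{V(\lambda)}=\pm q^{-n(\lambda)-\binom{N}{3}}\prod_{\sq\in\lambda}(1-q^{h(\sq)})$ factors as $\prod_{d\le k}\Phi_d(q)^{b_d(\lambda)}$ with $b_d(\lambda)$ bounded uniformly over $|\lambda|\le k$, since all hook lengths satisfy $h(\sq)\le k$. Choosing $M$ large enough that $\lfloor M/d\rfloor\ge\lfloor m/d\rfloor+\max_{|\lambda|\le k}b_d(\lambda)$ for every $d\le k$ guarantees that $(q;q)_M/\sigma_\lambda|_{V(\lambda)}$ remains divisible by $(q;q)_m$ in $\Q(q)$, whence so is $a_\lambda$. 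Combined with the integrality $a_\lambda\in\Z[q^{\pm 1}]$ from part (a), Gauss's lemma (the primitivity of $(q;q)_m$ in the UFD $\Z[q^{\pm 1}]$) upgrades this to divisibility in $\Z[q^{\pm 1}]$.

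The main obstacle is the quantitative bookkeeping in part (b): $M$, and hence $n(k,m)$, must be taken substantially larger than $m$ so that the $(q;q)_M$-divisibility granted by Proposition \ref{prop: filtration}(c) survives division by the cyclotomic factors $\Phi_d(q)$ of $\sigma_\lambda|_{V(\lambda)}$. A workable explicit choice is $n(k,m)=n'(k,m+H(k,N))$, where $n'$ is the function from Proposition \ref{prop: filtration}(c) and $H(k,N)$ bounds the total multiplicity of any $\Phi_d$ with $d\le k$ appearing in $\prod_{|\lambda|\le k}\sigma_\lambda|_{V(\lambda)}$; the precise form of $n(k,m)$ is not needed for the later applications to the cyclotomic expansion of $J_K(\fgl_N;q)$.
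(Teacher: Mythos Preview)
Your part~(a) is essentially the paper's argument: Harish--Chandra identifies the $\Gamma$-invariant center with symmetric polynomials in $x_i$ over $\Z[q^{\pm 1}]$, and since the $F_\lambda$ (equivalently the $g_\lambda=\hc(\sigma_\lambda)$) are unitriangular against Schur polynomials up to a monomial unit, the expansion with integral coefficients follows.

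For part~(b) the paper takes a different and cleaner route. It invokes the interpolation formula \eqref{eq: interpolation} (Theorem~\ref{thm:F's}(d), made explicit in Theorem~\ref{thm: dlm}), which expresses each coefficient $a_\lambda$ \emph{directly} as a finite $\Q(q)$-linear combination of the values $\sigma|_{V(\mu)}$, $\mu\subset\lambda$, with fixed rational coefficients $d_{\lambda,\mu}$. The denominators of the $d_{\lambda,\mu}$ for $|\lambda|\le k$ are bounded products of cyclotomic polynomials depending only on $k$, so a single choice of $M$ (and hence of $n$ via Proposition~\ref{prop: filtration}(c)) suffices.

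Your inductive argument, by contrast, does not close as written. You isolate
\[
a_\lambda\,\sigma_\lambda|_{V(\lambda)} \;=\; q^{s|\lambda|}\,\sigma|_{V(\lambda)} \;-\; \sum_{\nu\subsetneq\lambda} a_\nu\,\sigma_\nu|_{V(\lambda)}
\]
and assert that the right-hand side is divisible by $(q;q)_M$. The first term is; but for the sum you need $(q;q)_M\mid a_\nu$ for each $\nu\subsetneq\lambda$, whereas the conclusion you actually draw at the previous inductive steps is only $(q;q)_m\mid a_\nu$ (obtained \emph{after} dividing by $\sigma_\nu|_{V(\nu)}$). The divisibility order degrades every time you divide, so the hypothesis ``RHS divisible by $(q;q)_M$'' cannot be maintained inductively. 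Your final paragraph---bounding the accumulated cyclotomic multiplicities by $H(k,N)$---is the right remedy, but it must be built into the induction itself: carry a decreasing chain $M=M_0>M_1>\cdots>M_k\ge m$ and prove $(q;q)_{M_\ell}\mid a_\lambda$ for $|\lambda|=\ell$. Unwinding that repaired induction recovers exactly the interpolation formula, which is why the paper's packaging is more economical.
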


\begin{proof}
(a) Recall that Harish-Chandra transform $\hc$ identifies the $\Gamma$-invariant part of the center of $\U_{\Z}$ with the space of symmetric functions in $x_1,\ldots,x_N$ with coefficients in $\Z[q,q^{-1}]$.  Since $F_{\lambda}$ is a polynomial with top degree part equal to the Schur polynomial (up to a monomial in $q$),  we can write $(x_1\cdots x_N)^{s}f(x_1,\ldots,x_N)=\sum_{\lambda}a_{\lambda}F_{\lambda}(x_1,\ldots,x_N;q^{-1})$ and the result follows.

(b) If $\sigma$ is in the ideal $\U^{(n)}_{\Z}$  for sufficiently large $n$, then by Proposition \ref{prop: filtration} its matrix elements in the integral basis of $V(\lambda)$ are divisible by $(q;q)_{m}$. By definition of Harish-Chandra transform, this implies that the values $f(q^{-\lambda_1-N+1},\ldots,q^{-\lambda_N})$ are divisible by $(q;q)_{m}$ and hence by the interpolation formula  \eqref{eq: interpolation} the coefficients $a_{\lambda}$ are divisible by $(q;q)_{m}$ as well.
\end{proof}

\begin{corollary}
The center of the completion $\widehat{\U}$ is isomorphic to the completion of the space of symmetric polynomials with coefficients in $\widehat{\Z[v]}$ with respect to the basis $F_{\lambda}$. 
\end{corollary}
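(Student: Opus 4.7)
The plan is to extend the Harish--Chandra isomorphism $\hc:\CZ\to \SF$ continuously to a bijection $\widehat{\hc}:\widehat{\CZ}\to \widehat{\SF}$, where $\widehat{\SF}$ denotes the ring of formal series $\sum_{\lambda} a_{\lambda} F_{\lambda}$ with $a_{\lambda}\in \widehat{\Z[v]}$ such that, for every $m$, all but finitely many $a_{\lambda}$ are divisible by $(q;q)_{m}$. Throughout, I work in the $\Gamma$-invariant part, which is what the basis $\{\sigma_{\lambda}\}$ spans.

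First I would dispose of the nuisance factor $(K_1\cdots K_N)^{-2s}$ appearing in Lemma \ref{lem: completion center}(a). By definition of the filtration, $1-K_j^2=f_1(K_j^2)\in \U^{(1)}_{\Z}$ for each $j$, so $(K_1\cdots K_N)^2=1+u$ with $u\in \U^{(1)}_{\Z}$, and the Neumann series $\sum_{k\ge 0}(-u)^k$ converges in $\widehat{\U}$ and inverts it. Consequently, in the completion, every $\Gamma$-invariant central element admits an honest $\sigma_{\lambda}$-expansion without prefactors.

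Next I would define $\widehat{\hc}$ by approximation. For $z\in \widehat{\CZ}$, choose representatives $z^{(n)}\in \CZ$ with $z\equiv z^{(n)}\pmod{\U^{(n)}_{\Z}}$ and expand $z^{(n)}=\sum_{\lambda} a^{(n)}_{\lambda}\sigma_{\lambda}$ via Lemma \ref{lem: completion center}(a). Applying Lemma \ref{lem: completion center}(b) to the differences $z^{(n+1)}-z^{(n)}\in \U^{(n)}_{\Z}$ shows that, for each fixed $\lambda$, the sequence $(a^{(n)}_{\lambda})_{n}$ is Cauchy in $\widehat{\Z[v]}$, with limit $a_{\lambda}$; the same lemma applied to the tails $|\lambda|\le k$ produces the required divisibility of $a_{\lambda}$ by $(q;q)_{m}$ for large $n$. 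Setting $\widehat{\hc}(z):=\sum_{\lambda} a_{\lambda}F_{\lambda}$ gives the desired map, which agrees with $\hc$ on $\CZ$ and is continuous by construction.

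For surjectivity, given $(a_{\lambda})\in \widehat{\SF}$, the partial sums $z_{k}:=\sum_{|\lambda|\le k}a_{\lambda}\sigma_{\lambda}$ are Cauchy in $\widehat{\CZ}$. Indeed, by Theorem \ref{thm:basis}(a), $\sigma_{\lambda}$ annihilates $L(\zeta)\otimes V(\mu)$ for every $\mu$ not containing $\lambda$; combined with Proposition \ref{prop: filtration}(c) this forces $\sigma_{\lambda}\in \U^{(n(\lambda))}_{\Z}$ with $n(\lambda)\to\infty$ as $|\lambda|\to\infty$, so the convergence condition on $(a_{\lambda})$ transfers to convergence of the partial sums. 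The main obstacle is injectivity: if $\widehat{\hc}(z)=0$, the above construction places $z$ inside $\bigcap_{n}\U^{(n)}_{\Z}$ up to representatives, and by Proposition \ref{prop: filtration}(c) the element $z$ acts as zero on every irreducible $V(\mu)$. One then needs a separation statement for $\widehat{\U}$: an element of $\widehat{\CZ}$ vanishing on all $V(\mu)$ is zero. This is guaranteed by the interpolation property \eqref{eq: interpolation} of Theorem \ref{thm:F's}(d), which says that a symmetric function in $\widehat{\SF}$ is determined by its values at the points $(q^{-\mu_1-N+1},\ldots,q^{-\mu_N})$, i.e.\ by the scalars of action of $z$ on the $V(\mu)$. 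Establishing this separation rigorously--checking that the filtration $\U^{(n)}_{\Z}$ is Hausdorff on the center, as detected by actions on finite-dimensional representations--is the technical heart of the proof.
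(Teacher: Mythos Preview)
Your handling of the prefactor $(K_1\cdots K_N)^{-2s}$ via a Neumann series in $\widehat{\U}$ is correct and is essentially equivalent to what the paper does: the paper instead invokes Corollary~\ref{cor: inverse product}, which gives an explicit expansion of $(x_1\cdots x_N)^{-1}F_{\lambda}$ as an infinite series in the $F_{\lambda+v}$. Both arguments amount to the same observation that $x_1\cdots x_N\equiv 1$ modulo the first filtration step, so it is invertible in the completion. The paper's proof is in fact just these two inputs (Lemma~\ref{lem: completion center} plus Corollary~\ref{cor: inverse product}) and does not spell out the bijectivity argument you attempt.

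There is, however, a genuine gap in your surjectivity step. You claim that the vanishing of $\sigma_{\lambda}$ on all $V(\mu)$ with $\mu\not\supset\lambda$, ``combined with Proposition~\ref{prop: filtration}(c)'', forces $\sigma_{\lambda}\in \U^{(n(\lambda))}_{\Z}$. But Proposition~\ref{prop: filtration}(c) only gives the forward implication: membership in $\U^{(n)}_{\Z}$ implies divisibility of matrix entries on small representations. It does not give the converse, and annihilating representations does not by itself place an element deep in the filtration. Fortunately you do not need this: since your definition of $\widehat{\SF}$ already requires that for each $m$ all but finitely many $a_{\lambda}$ are divisible by $(q;q)_m$, and since $(q;q)_m\in \U^{(m)}_{\Z}$, the tail $\sum_{|\lambda|>K}a_{\lambda}\sigma_{\lambda}$ lies in $\U^{(m)}_{\Z}$ for $K$ large enough, which is all that is required for the partial sums to be Cauchy. (If one really wants $\sigma_{\lambda}$ itself to lie in a high filtration step, the route is through Lemma~\ref{lem:div}, not Proposition~\ref{prop: filtration}(c).) Your identification of the injectivity question with a Hausdorff/separation property detected by the $V(\mu)$ is accurate; the paper does not address this point explicitly either.
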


\begin{proof}
By Lemma \ref{lem: completion center} any element of the center of $\widehat{\U}$ can be written as an infinite series $\sum a_{\lambda}F_{\lambda}$
with coefficients in $\widehat{\Z[v]}$, up to a factor $(x_1\cdots x_N)^{-s}$. By Corollary  \ref{cor: inverse product} the multiplication by $(x_1\cdots x_N)^{-s}$ preserves the space of such series.
\end{proof}

\begin{corollary}\label{cor:main}
Any $\sigma\in \widehat{\U^{\text{ev}}}$  can be written as an infinite sum  $\sigma=\sum a_{\lambda}\sigma_{\lambda}$ with coefficients $a_{\lambda}=\Tr^{P_{\mu}}_{q}(\sigma)\in \widehat{\Z[q]}$.
\end{corollary}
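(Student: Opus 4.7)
The plan is to deduce the result from the preceding corollary, which identifies the $\Gamma$-invariant part of $\widehat{\CZ}$ with the completion of symmetric polynomials in the basis $\{F_\lambda\}$ (equivalently, in the basis $\{\sigma_\lambda\}$ under Harish-Chandra), together with the biorthogonality statement of Theorem \ref{thm:pi's}. First I would fix a central $\Gamma$-invariant $\sigma \in \widehat{\U^{\text{ev}}}$ and write it as a Cauchy limit $\sigma = \lim_{n\to\infty}\sigma^{(n)}$ with $\sigma - \sigma^{(n)} \in \U^{(n)}_\Z$. By Lemma \ref{lem: completion center}(a) each representative admits a finite expansion $\sigma^{(n)} = \sum_\lambda a_\lambda^{(n)}\sigma_\lambda$ with $a_\lambda^{(n)} \in \Z[q,q^{-1}]$ (the $(K_1\cdots K_N)^{-2s}$ prefactor is automatically absorbed on the $\Gamma$-invariant center, because $\sigma_\lambda$'s span a topological basis that already accounts for inverses of the elementary symmetric polynomial $e_N$).

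Next I would identify the coefficients via duality. Applying $\Tr^{P_\mu}_q(\cdot)$ termwise to $\sigma = \sum_\lambda a_\lambda \sigma_\lambda$ and invoking Theorem \ref{thm:pi's} gives $a_\mu = \Tr^{P_\mu}_q(\sigma)$ (the statement has $P_\mu$ written where $P_\lambda$ is meant). The termwise application is legitimate because $P_\mu \in \CR_\Q$ is a finite $\Q(v)$-linear combination of $V(\nu)$ with $\nu\subset \mu$, and by Theorem \ref{thm:basis}(a) the element $\sigma_\lambda$ annihilates $V(\nu)$ whenever $\lambda \not\subset \nu$; hence only finitely many terms in the infinite expansion contribute to the trace, and the orthogonality $\langle P_\mu,\sigma_\lambda\rangle = \delta_{\mu,\lambda}$ collapses the sum to $a_\mu$.

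The main obstacle is showing integrality, namely that $a_\lambda \in \widehat{\Z[q]}$ and not merely in $\Q(v)$. A priori the explicit formula
\[
\Tr^{P_\lambda}_q(\sigma) = \sum_{\mu\subset\lambda} d_{\lambda,\mu}(q^{-1})\,\sigma|_{V(\mu)}
\]
has coefficients $d_{\lambda,\mu}(q^{-1}) \in \Q(q)$ with non-trivial denominators (as emphasized after Theorem \ref{thm:main}), so the non-obvious point is that the alternating combination over $\mu\subset\lambda$ always lands in the Habiro ring. I would deduce this from Lemma \ref{lem: completion center}(b): given $m$, choose $n=n(|\lambda|,m)$ so that $\sigma - \sigma^{(n)} \in \U^{(n)}_\Z$ forces $a_\lambda - a_\lambda^{(n)}$ to be divisible by $(q;q)_m$. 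Thus the integer sequence $\{a_\lambda^{(n)}\}_n \subset \Z[q,q^{-1}]$ is Cauchy with respect to the cyclotomic filtration and converges to an element of $\widehat{\Z[q]}$; by continuity of the trace pairing this limit is exactly $\Tr^{P_\lambda}_q(\sigma)$, proving simultaneously the convergence of $\sum a_\lambda \sigma_\lambda$ to $\sigma$ in $\widehat{\U^{\text{ev}}}$ and the membership $a_\lambda \in \widehat{\Z[q]}$.
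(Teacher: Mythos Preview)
Your argument is correct and is precisely the intended one: the paper states this as an immediate corollary of Lemma~\ref{lem: completion center} and the preceding corollary on the center of the completion, and your write-up simply unpacks those two ingredients together with the biorthogonality of Theorem~\ref{thm:pi's}. The only point worth making explicit is that absorbing the $(K_1\cdots K_N)^{-2s}$ prefactor is justified by Corollary~\ref{cor: inverse product} (this turns the finite expansion into an infinite one, but with coefficients still in $\Z[q,q^{-1}]$), which is exactly how the paper handles it in the preceding corollary.
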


\begin{proposition}\label{pro:main}
The   universal knot invariant admits an expansion $$J_K(\fgl_N;q)=\sum_\lambda a_{\lambda}(K)\sigma_{\lambda}
\quad \text{with} \quad a_{\lambda}(K)= \sum_{\mu\subset \lambda}
{d_{\lambda, \mu}(q^{-1})}\, J_K(V(\mu),q)
\in \Z[q,q^{-1}]\ $$
called a {\it cyclotomic} expansion of the universal $\fgl_N$ knot
invariant.
\end{proposition}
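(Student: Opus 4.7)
The plan is to combine three ingredients: (i) the placement of $J_K(\fgl_N;q)$ in the $\Gamma$-invariant part of the center of $\widehat{\U}$, (ii) the orthonormality between $\{\sigma_\lambda\}$ and $\{P_\lambda\}$ with respect to the Hopf pairing, and (iii) a rationality-versus-Habiro argument for integrality of the coefficients.

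First, I would observe that by Proposition \ref{prop: JK in completion} one has $J_K(\fgl_N;q)\in \mathrm{Inv}(\widehat{\U})$, and by Corollary \ref{cor:JKeven} this element is $\Gamma$-invariant. Since the invariants under the adjoint action form the center, $J_K(\fgl_N;q)$ lies in $\widehat{\U^{\text{ev}}}\cap \widehat{\CZ}$. Corollary \ref{cor:main} then produces a convergent expansion
\[
J_K(\fgl_N;q)=\sum_\lambda a_\lambda(K)\,\sigma_\lambda, \qquad a_\lambda(K)=\Tr^{P_\lambda}_q\bigl(J_K(\fgl_N;q)\bigr)\in \widehat{\Z[q]}.
\]

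Next, I would compute the coefficients using the explicit dual basis of Theorem \ref{thm:pi's}. Substituting $P_\lambda=\sum_{\mu\subset\lambda}\frac{d_{\lambda,\mu}(q^{-1})}{\dim_q V(\mu)}V(\mu)$ and using the normalization $\Tr^{V(\mu)}_q(J_K(\fgl_N;q))=J_K(\mu)=\dim_q V(\mu)\cdot J_K(V(\mu),q)$ recorded in the introduction, the quantum dimensions cancel and I obtain the stated formula
\[
a_\lambda(K)=\sum_{\mu\subset\lambda} d_{\lambda,\mu}(q^{-1})\, J_K(V(\mu),q).
\]

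The hard part, as emphasized in the paper's introduction, is to upgrade this identity from $\widehat{\Z[q]}$ to $\Z[q,q^{-1}]$: the $d_{\lambda,\mu}(q^{-1})$ are intricate rational functions with poles at roots of unity, so the right-hand side a priori only lies in $\Q(q)$. My plan is to intersect the two descriptions. The displayed formula gives $a_\lambda(K)\in\Q(q)$, while the first step gives $a_\lambda(K)\in\widehat{\Z[q]}=\varprojlim_n \Z[q]/((q;q)_n)$. A rational function $p(q)/s(q)$ that defines a coherent element of this inverse limit must have $s(q)$ invertible modulo every $(q;q)_n$, which means $s(q)$ is coprime to every cyclotomic polynomial and so is a unit in $\Z[q,q^{-1}]$. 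Thus $\Q(q)\cap \widehat{\Z[q]}=\Z[q,q^{-1}]$, and the conclusion follows.
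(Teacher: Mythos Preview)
Your overall strategy and the derivation of the explicit formula match the paper's proof of Proposition~\ref{pro:main}. The gap is in your final integrality argument.

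The step ``$s(q)$ coprime to every cyclotomic polynomial implies $s(q)$ is a unit in $\Z[q,q^{-1}]$'' is false: take $s(q)=2$ or $s(q)=q-2$. Your prior deduction that $s$ must be invertible modulo every $(q;q)_n$ is also not justified: from $sf=p$ in the Habiro ring one only gets $s f_n\equiv p\pmod{(q;q)_n}$, which does not force $s$ to be a unit there. So as written the argument does not establish $a_\lambda(K)\in\Z[q,q^{-1}]$.

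The paper's fix exploits a structural feature you did not use: by Theorem~\ref{thm: dlm} and Lemma~\ref{lem:diagonal} the denominator of each $d_{\lambda,\mu}$ is $c_{\mu,\mu}c^*_{\lambda,\lambda}$, a product of factors $1-q^{\pm h(\sq)}$ and hence, up to a power of $q$, a product of cyclotomic polynomials. Since each $J_K(V(\mu),q)\in\Z[q,q^{-1}]$, your explicit formula yields a product of cyclotomics $h(q)$ with $h(q)\,a_\lambda(K)\in\Z[q,q^{-1}]$. The paper then invokes Proposition~\ref{prop: appendix}: if $f\in\widehat{\Z[q]}$ and $hf\in\Z[q,q^{-1}]$ with $h$ a product of cyclotomic polynomials, then $f\in\Z[q,q^{-1}]$. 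That is the replacement for your last paragraph.
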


 Proposition \ref{pro:main} 
implies Theorem \ref{thm:main} in Introduction.
 Note that the knot invariant $J_K(V(\mu),q)$
 is normalized to be 1 for the unknot.

\begin{proof}
By Corollary \ref{cor:JKeven}, $J_K(\fgl_N;q)$ is a central element in $\widehat{\U^{\text{ev}}}$, so it can be written as $\sigma=\sum a_{\lambda}\sigma_{\lambda}$ with coefficients $a_{\lambda}\in \widehat{\Z[q]}$. On the other hand, the value of $J_K$ on any representation $V_{\lambda}$ is in $\Z[q,q^{-1}]$, so by the interpolation formula \eqref{eq: interpolation} the coefficients $a_{\lambda}$ can be written as rational functions with numerators in $\Z[q,q^{-1}]$ and cyclotomic denominators. By Proposition \ref{prop: appendix} this implies that $a_{\lambda}\in \Z[q,q^{-1}]$.   The explicit formula for $a_\lambda$ is obtained
by taking Hopf pairing with $P_\mu$
and observing that $\Tr^{V({\mu})}_{q} \left(J_K(\fgl_N;q)\right)=
\dim_q (V(\mu) )J_K(V(\mu),q)$ according to our normalization.
\end{proof}



The last result shows that $a_\lambda(K)=\Tr_q^{P_\lambda}(J_K(\fgl_N;q))\in \Z[q^{\pm 1}]$, even through the coefficients
$d_{\lambda, \mu}(q)$  are  rational  functions in $q$ (compare Example \ref{d's}).

\section{Unified  invariants of integral homology 3-spheres}\label{sec:IM}
This section is devoted to our main application of the previous results  --- a
construction of the
 unified invariants for integral homology 3-spheres.
We start with few auxiliary  results.

Let us denote by  
$$P'_{\lambda}=v^{-|\lambda|}\dim_q V(\lambda)
\sum_{\mu\subset \lambda}
\frac{d_{\lambda, \mu}(q^{-1})}{\dim_q V(\mu)} \;  V(\mu)\in \CR_\Q$$
and  define
\begin{equation}
\label{eq:Kirby}
\omega_{\pm}=\sum_\lambda (-1)^{|\lambda|+\binom{N}{2}} 
q^{\mp c(\lambda)}
q^{w_{\pm}(\lambda)}P'_\lambda \in \widehat \CR_\Q
\quad\text{with}\quad 
\begin{array}{ll}w_{+}(\lambda)&=D_N(\lambda)\\
w_{-}(\lambda)&=D_N(\lambda)+
N|\lambda| \end{array}
 \end{equation}
where $c(\lambda)$ is the content of $\lambda$.
The next Lemma implies that $\omega_{\pm}$ is the universal Kirby color
for $(\pm 1)$-surgery. 

\begin{lemma}\label{lem:Kirby}
 For any $x\in \widehat\CR_\Q$, we have
\begin{equation}
\label{eq:H}
\langle  \omega_\pm, x\rangle=  J_{U_\mp}(x)=
\langle r^{\pm 1}, \xi(x)\rangle 
\end{equation}
where $ J_{U_\pm}(x)$ is the Reshetikhin--Turaev   invariant of the
$(\pm 1)$-framed unknot colored by $x$. 
\end{lemma}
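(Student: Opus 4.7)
Plan. The lemma has two equalities, which I would treat separately.

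For the second equality $J_{U_\mp}(x)=\langle r^{\pm 1},\xi(x)\rangle$, I would extend Lemma~\ref{lem:Hopf} to the non-central element $r^{\pm 1}$. The Hopf pairing on $\widehat{\U}$ is defined via the clasp element $c$ (equation \eqref{eq:Hopf_on_U}), and the Drinfeld map satisfies $\xi(V)=(\Tr_q^V\otimes\mathrm{id})(c)$. Combining these yields the general compatibility
\[
\langle u,\xi(V)\rangle \;=\; \Tr_q^V(u)\qquad\text{for all }u\in\widehat{\U},\; V\in\widehat{\CR}_{\Q},
\]
obtained by writing $u=\sum_j a_j c(j)$ and applying $\langle c(j),c'(i)\rangle=\delta_{ij}$ inside $\xi(V)=\sum_i\Tr_q^V(c(i))c'(i)$. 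Specializing $u=r^{\pm 1}$ and using $r^{\pm 1}|_{V(\mu)}=\theta_{V(\mu)}^{\mp 1}$ (from the paragraph preceding \eqref{eq:ribbon}) gives exactly the Reshetikhin--Turaev invariant of the $(\mp 1)$-framed unknot colored by $V(\mu)$, and $\Q(v)$-linearity extends this to arbitrary $x\in\widehat{\CR}_{\Q}$.

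For the first equality $\langle\omega_\pm,x\rangle=\langle r^{\pm 1},\xi(x)\rangle$, I would use the Hopf duality $\langle P_\lambda,S_\nu\rangle=\delta_{\lambda,\nu}$ (Theorem~\ref{thm:pi's}) where $S_\nu:=\xi^{-1}(\sigma_\nu)$. Since $\{P_\lambda\}$ and $\{S_\lambda\}$ are dual bases of (the $\Gamma$-invariant part of) $\widehat{\CR}_{\Q}$, the unique element representing the linear functional $x\mapsto\langle r^{\pm 1},\xi(x)\rangle$ is
\[
\widetilde\omega_\pm \;:=\; \sum_\lambda\mathcal{T}_\pm(\sigma_\lambda)\,P_\lambda,\qquad \mathcal{T}_\pm(\sigma_\lambda):=\langle r^{\pm 1},\sigma_\lambda\rangle,
\]
obtained by pairing with $S_\nu$ and invoking the previous step. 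Since $P'_\lambda=v^{-|\lambda|}\dim_q V(\lambda)\,P_\lambda$, the lemma reduces to verifying the closed-form identity
\begin{equation*}
\mathcal{T}_\pm(\sigma_\lambda) \;=\; (-1)^{|\lambda|+\binom{N}{2}}\,q^{\mp c(\lambda)+w_\pm(\lambda)}\,v^{-|\lambda|}\,\dim_q V(\lambda).
\end{equation*}

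To verify this identity I would apply the first-step compatibility with $V=S_\lambda$ to write
\[
\mathcal{T}_\pm(\sigma_\lambda)\;=\;\Tr_q^{S_\lambda}(r^{\pm 1})\;=\;\sum_{\mu\le\lambda}c_{\lambda,\mu}\,v^{\mp N|\mu|}\,q^{\mp c(\mu)}\,\dim_q V(\mu),
\]
where $c_{\lambda,\mu}$ are the Schur-expansion coefficients of $g_\lambda=F_\lambda(v^{N-1}x;q^{-1})=\hc(\sigma_\lambda)$. Interpreting this sum as an evaluation of $g_\lambda$ at a specialization reflecting the twist factor, and invoking the interpolation vanishing (Theorem~\ref{thm:F's}(b)) together with the leading coefficient of $g_\lambda$ (Theorem~\ref{thm:basis}(b)) and the hook-length formula \eqref{prem:id} for $\dim_q V(\lambda)$, the sum should collapse to the single desired term; the shift $N|\lambda|$ in \eqref{eq:Kirby} accounts for the sign choice distinguishing $w_+$ from $w_-$. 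The main obstacle is executing this collapse precisely: the powers of $q$ from $D_N(\lambda)$ (via \eqref{eq:def D}), from $c(\mu)$, and from the hook-length product must be matched against each other, and the sign $(-1)^{|\lambda|+\binom{N}{2}}$ together with the rescaling $v^{-|\lambda|}$ hidden in the passage $F_\lambda\rightsquigarrow g_\lambda$ require careful bookkeeping.
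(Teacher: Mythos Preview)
Your treatment of the second equality is fine and is essentially what the paper uses in its last line (the compatibility $\langle u,\xi(V)\rangle=\Tr_q^V(u)$ specialized to $u=r^{\pm1}$).

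For the first equality, your dual-basis strategy is a legitimate alternative to the paper's, and the reduction to the closed-form identity for $\mathcal{T}_\pm(\sigma_\lambda)$ is correct. The gap is in your proposed verification of that identity. You hope to interpret
\[
\Tr_q^{S_\lambda}(r^{\pm1})=\sum_{\mu}b_{\lambda,\mu}\,v^{\mp(\mu,\mu+2\rho)}\dim_qV(\mu)
\]
as an evaluation of $g_\lambda$ at a single specialization, so that the interpolation vanishing of Theorem~\ref{thm:F's}(b) forces a collapse to one term. But $v^{\mp(\mu,\mu+2\rho)}=v^{\mp N|\mu|}q^{\mp c(\mu)}$, and the content factor $q^{\mp c(\mu)}$ cannot be absorbed into a rescaling of the evaluation point: since $s_\mu$ is symmetric and homogeneous, only factors of the form $z^{|\mu|}$ can be so absorbed, whereas $c(\mu)$ is not a function of $|\mu|$ alone. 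Thus the sum is \emph{not} an evaluation of $g_\lambda$ at a node, and the vanishing property of $F_\lambda$ at interpolation nodes does not apply. No ``collapse via interpolation vanishing'' occurs here.

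The paper avoids this by testing on the basis $x=V(\nu)$ rather than on $S_\nu$. On that side the Gaussian twist appears only once, as the target scalar $\theta_{V(\nu)}^{\mp1}\dim_qV(\nu)$, and the work is to compute $\langle\omega_\pm,V(\nu)\rangle$. This is done via $\langle P'_\lambda,V(\nu)\rangle$, which by the orthogonality \eqref{eq:ortho} extracts the coefficient of $F_\lambda(\cdot;q^{-1})$ in the ordinary (non-Gaussian) $F$-expansion of $s_\nu$; the leading coefficient is read off from Theorem~\ref{thm:F's}(a), and the $\omega_\pm$-weights in \eqref{eq:Kirby} are arranged so that this leading term reproduces $v^{\mp N|\nu|}q^{\mp c(\nu)}\dim_qV(\nu)$. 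In short, the paper moves the quadratic piece to the side where it is a single known number, whereas your route leaves it inside a sum that the interpolation machinery cannot simplify.
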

\begin{proof}
It is enough to check \eqref{eq:H} for the basis
elements 
 $x=V(\nu)$.
We compute
\begin{align*}
\langle P'_\lambda, V(\nu)\rangle& = v^{-|\lambda|}\dim_q V(\lambda)\sum_{\mu\subset \lambda}
\frac{{d_{\lambda, \mu}(q^{-1})}}{\dim_q V(\mu)}\; 
\langle V(\mu), V(\nu)\rangle=
v^{-|\lambda|} \dim_q V(\lambda) \sum_{\mu\subset \lambda}
{d_{\lambda, \mu}(q^{-1})} s_\nu(q^{\mu_i+N-i})\\
&=
\dim_q V(\lambda) \sum_{\mu\subset \lambda}
{d_{\lambda, \mu}(q^{-1})} C_{\nu}F_\nu(q^{\mu_i+N-i})
 = {C_\lambda}
\delta_{\lambda, \nu}  \dim_q V(\lambda)
\end{align*}
where we used Lemma \ref{lem:Hopf}, equation \eqref{eq:ortho} and the expansion
$s_\nu= (-1)^{|\lambda|+\binom{N}{2}} q^{-D_N(\lambda)}v^{(1-N)|\lambda|}F_\nu + \text{lower terms}$ and hence,
$$C_\lambda=(-1)^{|\lambda|+\binom{N}{2}} q^{-D_N(\lambda)}v^{-N|\lambda|}\ .$$
Using this computation it is easy to check that
\begin{equation}\label{eq:Kirby1}
 \langle\omega_{\pm}, V(\nu)\rangle= v^{\mp N|\nu|}
 q^{\mp c(\nu)} \dim_q V(\nu)=
 v^{\mp (\nu, \nu+2\rho)} \dim_q V(\nu)=\Tr^{V(\nu)}_q(r^{\pm 1})
 \end{equation}
is equal to  $ J_{U_\mp}(V(\nu))$.
\end{proof}

From the following computation for $V'(\nu)=\frac{V(\nu)}{\dim_q(V(\nu))}$
$$\langle \omega_+\omega_-, V'(\nu)  \rangle=
\langle \omega_+, V'(\nu) \rangle
\langle \omega_-, V'(\nu)\rangle=
v^{ (\nu, \nu+2\rho)} v^{- (\nu, \nu+2\rho)}=
\langle 1,V'(\nu)\rangle$$
we see that $\omega_+$ and $\omega_-$ are inverse to each other in the algebra $\widehat \CR_\Q$ isomorphic to $\widehat \CZ_\Q$.

A direct consequence of the above Lemma and the fusion rules is the following result.

\begin{theorem}\label{thm:topinv}
Let $L\cup K=L_1\cup L_2 \dots \cup L_l\cup K$
be an $(l+1)$ component algebraically split $0$-framed link such that $K$ is the unknot. We denote by
$L_{(K,\pm 1)}$ the framed link in $S^3$ obtained from $L$ by $\pm 1$-surgery along $K$, then for any
$p_1, \dots, p_l\in  \CR$
$$ J_{L\cup K}(p_1, \dots, p_l, \omega^{\pm 1})=
J_{L_{(K,\pm 1)}}(p_1, \dots, p_l) .$$
\end{theorem}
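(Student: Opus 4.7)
My plan is to deduce Theorem \ref{thm:topinv} from Lemma \ref{lem:Kirby} together with the fusion rules for $U_q\fgl_N$-representations, following the standard encircling-equals-twist paradigm of Reshetikhin--Turaev theory. The authors already hint at this route by saying the theorem is a direct consequence of the lemma and the fusion rules.

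First I would isotope the diagram so that the unknot $K$ bounds an embedded disk $D\subset S^3$. Because $L$ is algebraically split from $K$, every component of $L$ pierces $D$ with algebraic intersection zero. Computing $J_{L\cup K}$ via the Reshetikhin--Turaev functor, the contribution of $K$ colored by $\omega_{\pm}$ reduces to a local encirclement of the bundle of strands of $L$ passing through $D$. By the fusion rules, this bundle decomposes as a direct sum of irreducibles $V(\nu)$, and by multilinearity in $p_1,\ldots,p_l$ together with the expansion of $\omega_{\pm}$ in $\widehat{\CR}_\Q$, it suffices to compute the local scalar produced when $\omega_{\pm}$ encircles a single $V(\nu)$-colored strand.

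By Lemma \ref{lem:Kirby} and the intermediate computation \eqref{eq:Kirby1}, this encirclement scalar is
$$
\frac{\langle \omega_{\pm}, V(\nu)\rangle}{\dim_q V(\nu)} \;=\; v^{\mp(\nu,\nu+2\rho)} \;=\; \theta_{V(\nu)}^{\pm 1},
$$
namely the action of $r^{\pm 1}$ on $V(\nu)$, which is precisely the scalar by which a full $\pm 1$ twist of a $V(\nu)$-colored strand acts. On the topological side, $\pm 1$-surgery on the unknot $K$ is realized by the twist diffeomorphism of the disk $D$, which inserts a $\pm 1$ full twist on every bundle of strands piercing $D$. Replacing each $\omega_{\pm}$-encirclement by the corresponding collection of framing twists on the fused irreducible summands therefore transforms $J_{L\cup K}(p_1,\ldots,p_l,\omega_{\pm})$ into $J_{L_{(K,\pm 1)}}(p_1,\ldots,p_l)$, giving the theorem.

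The main obstacle I anticipate is making the fusion-and-encirclement reduction rigorous in the present setting, because $\omega_{\pm}$ lives only in the completion $\widehat{\CR}_\Q$ rather than in $\CR$, and the $V(\nu)$ appearing after fusion can have arbitrarily large highest weight. One must verify that the infinite sum defining $\omega_{\pm}$ evaluates correctly on each fused irreducible summand and that exchanging the sum with the local encirclement computation is legitimate. This is however guaranteed by the cyclotomic integrality properties of the coefficients of $\omega_{\pm}$ established in Section \ref{sec:results} (via Lemma \ref{lem: divisible} and Corollary \ref{cor:main}), which ensure convergence in $\widehat{\Z[q]}$ after pairing with any finite collection of $p_i\in\CR$.
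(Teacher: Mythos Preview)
Your proposal is correct and follows exactly the route the paper indicates: the sentence preceding the theorem says it is ``a direct consequence of the above Lemma and the fusion rules,'' and the paper's own proof simply cites \cite[Thm.~9.4]{H} for the details of the encircling-equals-twist argument you sketch. One small slip: since $\theta_{V(\nu)}=v^{(\nu,\nu+2\rho)}$ in the paper's conventions, the scalar $v^{\mp(\nu,\nu+2\rho)}$ equals $\theta_{V(\nu)}^{\mp 1}$ (the action of $r^{\pm 1}$, as you correctly say), not $\theta_{V(\nu)}^{\pm 1}$; this does not affect the argument, only the bookkeeping matching $\omega_{\pm}$ to the sign of the surgery.
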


\begin{proof}
The proof is given in \cite[Thm. 9.4]{H}.
\end{proof}

\subsection{Construction of the unified invariants}
Without loss of generality, we can assume that 
an integral homology 3-sphere $M$ is obtained by $\e$-surgery
on an $\ell$ component algebraically split link $L$, 
where $\e \in \{\pm 1\}^\ell$.
For $\fsl_N$ Habiro and Le defined a unified invariant
    of $M$ as follows 
$$I^{\text{HL}}(M):= \mathcal T'_{\e} (J_L(\fsl_N;q))\; \in
\;\widehat{\Z[q]}  \quad\text{
where}  \quad\mathcal T'_{\varepsilon}=
\bigotimes^\ell_{i=1} \mathcal T'_{\varepsilon_i} \ $$
is the $\fsl_N$ twist form.
They also proved that $I^{\text{HL}}(M)$  belongs to the Habiro ring
\cite{HL}.

For $\fgl_N$ we define the  unified invariant of $M$
similarly
$$I(M):= \mathcal T_{\e} (J_L(\fgl_N;q))\; \in
\;\widehat{\Z[q]}  \quad\text{
where}  \quad\mathcal T_{\varepsilon}=
\bigotimes^\ell_{i=1} \mathcal T_{\varepsilon_i} \ $$
by using the $\fgl_N$ twist forms.

\begin{theorem}\label{thm:linksurgery}
For any integral homology $3$-sphere $M$,
$$I(M)=J_L(\omega_{\varepsilon_1}, \dots, \omega_{\varepsilon_l})\in  \;\widehat{\Z[q]}   $$
Moreover, its evaluation
at any root of unity coincides with the
$\fsl_N$  Witten--Reshetikhin--Turaev invariant of $M$.
\end{theorem}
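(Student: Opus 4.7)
The plan is to reduce the computation from $\fgl_N$ to $\fsl_N$, where the Habiro--Le theory is available, and then to identify the $\fgl_N$ twist form with quantum tracing against the Kirby colors $\omega_\pm$ by means of Lemma \ref{lem:Kirby}.

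First, because $L$ is $0$-framed and algebraically split, Corollary \ref{cor:JKeven} gives the equality of universal invariants $J_L(\fgl_N;q) = J_L(\fsl_N;q)$, and Section \ref{sec:tau} shows that the $\fgl_N$ and $\fsl_N$ twist forms $\mathcal{T}_\pm$ and $\mathcal{T}'_\pm$ coincide on the $\Gamma$-invariant even part where $J_L$ lives. Hence
$$
I(M) \;=\; \mathcal{T}_{\e}(J_L(\fgl_N;q)) \;=\; \mathcal{T}'_{\e}(J_L(\fsl_N;q)) \;=\; I^{\text{HL}}(M),
$$
the Habiro--Le invariant. By \cite{HL}, $I^{\text{HL}}(M)$ belongs to the Habiro ring $\widehat{\Z[q]}$ and its evaluation at any root of unity equals the $\fsl_N$ WRT invariant of $M$, thereby taking care of the Habiro-ring membership and the WRT-dominance claims at once.

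For the identity $I(M) = J_L(\omega_{\varepsilon_1},\ldots,\omega_{\varepsilon_\ell})$, I would expand $J_L(\fgl_N;q)$ componentwise in the basis $\sigma_{\vec\lambda}=\sigma_{\lambda_1}\otimes\cdots\otimes\sigma_{\lambda_\ell}$, using a multivariable version of Corollary \ref{cor:main}, with coefficients given by quantum traces against the dual basis $P_{\lambda_i}$ from Theorem \ref{thm:pi's}. Applying $\mathcal{T}_{\e}=\langle r^{\e},\cdot\rangle$ factor by factor and invoking Lemma \ref{lem:Kirby} in the form $\langle r^{\pm 1},\xi(x)\rangle = \langle \omega_\pm,x\rangle$ rewrites each scalar $\mathcal{T}_{\varepsilon_i}(\sigma_{\lambda_i})$ as a pairing with $\omega_{\varepsilon_i}$. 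Resumming, one should recover the Reshetikhin--Turaev invariant $J_L(\omega_{\varepsilon_1},\ldots,\omega_{\varepsilon_\ell})$ obtained by expanding each Kirby color in the basis $P'_\lambda$. A more topological alternative is to invoke Theorem \ref{thm:topinv}: iterated coloring of $L$'s components by $\omega_\pm$ realizes $\pm 1$-surgery, so $J_L(\omega_{\varepsilon_1},\ldots,\omega_{\varepsilon_\ell})$ descends to a $3$-manifold invariant of $M$ that must coincide with $I(M)=I^{\text{HL}}(M)$ after checking invariance under the Hoste (Fenn--Rourke) moves relating any two algebraically split surgery presentations.

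The main obstacle will be making these manipulations rigorous inside the completed tensor power $\widehat{\U}^{\hat\otimes\ell}$: one must verify that the componentwise $\sigma_{\vec\lambda}$-expansion converges, that $\mathcal{T}_{\e}$ can be interchanged with the (infinite) sum, and that the quantum trace against $\omega_\pm$ yields a well-defined element of the Habiro ring rather than merely a formal series. These points rely on Proposition \ref{prop: JK in completion} asserting $J_L\in\text{Inv}(\widehat{\U}^{\hat\otimes\ell})$, the integrality and divisibility statements in Lemma \ref{lem: completion center}, and the $\Gamma$-invariance furnished by Corollary \ref{cor:JKeven}. Once all of these identifications are in place, the evaluation at roots of unity is immediate by specializing the Habiro--Le result to $I^{\text{HL}}(M)$.
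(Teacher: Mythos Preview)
Your reduction to the Habiro--Le invariant via Corollary \ref{cor:JKeven} and Section \ref{sec:tau} is exactly the paper's first step, and you correctly observe that this already yields both membership in $\widehat{\Z[q]}$ and the WRT-evaluation claim.

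For the identity $I(M)=J_L(\omega_{\varepsilon_1},\ldots,\omega_{\varepsilon_\ell})$, your route via a componentwise $\sigma_{\vec\lambda}$-expansion is genuinely different from the paper's, and the obstacle you flag is real: Corollary \ref{cor:main} only gives a basis for the center of a single copy of $\widehat{\U}$, whereas $J_L$ lives in $\mathrm{Inv}(\widehat{\U}^{\hat\otimes\ell})$, and the paper never establishes that this ad-invariant part is spanned (in any completed sense) by tensors $\sigma_{\lambda_1}\otimes\cdots\otimes\sigma_{\lambda_\ell}$. The paper sidesteps this entirely by a one-slot-at-a-time swap. Writing $a_k$ for the quantity where the first $k$ components have been paired with $r^{\varepsilon_i}$ and the remaining ones with $\omega_{\varepsilon_j}$, one observes that the partial pairing over all but the $k$-th component produces an element $b_k$ which is central in the remaining tensor slot (partial ad-invariant traces of an ad-invariant element yield a central element). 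Lemma \ref{lem:Kirby} then gives $\langle r^{\varepsilon_k},b_k\rangle=\langle \omega_{\varepsilon_k},b_k\rangle$, i.e.\ $a_k=a_{k-1}$, and iterating from $a_\ell=I(M)$ down to $a_0=J_L(\omega_{\varepsilon_1},\ldots,\omega_{\varepsilon_\ell})$ finishes. This needs only the single-component Lemma \ref{lem:Kirby}, no multivariable expansion, and no convergence or interchange-of-limits arguments.
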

This implies Theorem \ref{thm:main2} from Introduction.

\begin{proof}
By Corollary \ref{cor:JKeven}, 
for any algebraically split $0$-framed link $L$ we have
\begin{equation*}\label{eq:equality}
J_L(\fgl_N;q)=J_L(\fsl_N;q) .
\end{equation*}
 Hence,    as explained in Section \ref{sec:tau},
 the $\fgl_N$
and $\fsl_N$ twist forms on $J_L$ do coincide.
This implies $I(M)=I^{\text{HL}}(M)$.
Since the Habiro--Le invariant  
is known to belong
to the Habiro ring and to
 evaluate at a root of unity to the
Witten-Reshetikhin-Turaev (WRT) one,
it remain to show
$I(M)=J_L(\omega_{\epsilon_1}, \dots, \omega_{\epsilon_\ell})$.
We prove this claim in two steps.

Step 1: Assume $\ell=1$, then $J_L(\omega_\pm
)=I(M)$ by Lemma \ref{lem:Kirby}.

Step 2: For any $ x=x_1\otimes\dots\otimes x_\ell \in 
\text{Inv}(\widehat{\U}_\Z^{\hat{\otimes}\ell})$
we define $a_k$ for $k=0,1,\dots, \ell$ and $b_k=1,\dots, \ell$ as follows:
$$a_k=\prod^{k}_{i=1}\langle  r^{\epsilon_i}, x_i\rangle
\prod^{\ell}_{j=k+1}\langle \omega_{\epsilon_j},x_j\rangle,
\quad
b_k=x_k\prod^{k-1}_{i=1}\langle  r^{\epsilon_i}, x_i\rangle
\prod^{\ell}_{j=k+1}\langle \omega_{\epsilon_j},x_j\rangle .
$$ 
Then 
$$ a_{k-1}=\langle \omega_{\epsilon_k}, b_k\rangle
\quad \text{and}\quad
a_{k}=\langle r^{\epsilon_k}, b_k\rangle .$$
where we identify $\omega_\pm$ with their image under $\xi$ for simplicity.
Since $b_k\in \CZ_\Q$, we have $a_k= a_{k-1}$ by Step 1
for $k=1,2, \dots, \ell$. Hence, we have $a_0= a_\ell$
which is our claim.
\end{proof}


Theorem \ref{thm:linksurgery}
has striking consequences. Indeed, for any $\lambda\in \CR$ let us denote
$$\CP_\lambda=\text{\rm Span}_{\Z[q^{\pm 1}]}
\{P'_\mu \,|\, \lambda\subset \mu\},
\quad \CP=\CP_{\emptyset}\quad \text{and}\quad 
\widehat{\CP}:=
{\lim\limits_{\overleftarrow{\hspace{2mm}\lambda\hspace{2mm}}}}\; \;  
\frac{\CP}{\CP_\lambda} .$$
For any framed link $L$,
the Reshetikhin--Turaev functor provides a $\Q(v)$-multilinear map
$ J_L: \CR_\Q\times \dots\times \CR_\Q\to \Q(v)$.
For any algebraically split $0$-framed link $L$,
 Theorem \ref{thm:linksurgery} implies that its
restriction to $\CP$ provides
a $\Z[q^{\pm 1}]$-multilinear map
$$ J_L: \CP\times \dots\times \CP\to \Z[q,q^{-1}]$$
inducing
$$ J_L: \widehat\CP\times \dots\times \widehat\CP\to \widehat{\Z[q]} \ .$$
This leads to a generalization of  the famous integrability theorem in \cite[Thm. 8.2]{H}.

\begin{corollary} \label{cor:int_link}
Given an $\ell$ component algebraically split $0$-framed link $L$, then
for all but finitely many
partitions $\lambda_i$ with $1\leq i\leq \ell$, 
there exist positive integers $n=n(\lambda_i, N)$, such that 
 $$
 J_L(P'_{\lambda_1}, \dots , P'_{\lambda_\ell})
\in (q;q)_n \Z[q,q^{-1}]\ . $$
\end{corollary}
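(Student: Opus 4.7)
The plan is to extract divisibility information from the Habiro-ring membership of the unified invariant established in Theorem \ref{thm:linksurgery}, using the expansion of the Kirby colors $\omega_\pm$ in the basis $\{P'_\lambda\}$. For any choice of signs $\e=(\e_1,\dots,\e_\ell)\in\{\pm 1\}^\ell$, let $M_\e$ denote the integral homology 3-sphere obtained by $\e$-surgery on $L$. Theorem \ref{thm:linksurgery} yields
$$
I(M_\e)=J_L(\omega_{\e_1},\dots,\omega_{\e_\ell})\in \widehat{\Z[q]}.
$$

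Expanding each $\omega_{\e_i}$ via \eqref{eq:Kirby} rewrites this identity as the formal multi-sum
$$
I(M_\e)=\sum_{\lambda_1,\dots,\lambda_\ell}\prod_{i=1}^{\ell}\Big((-1)^{|\lambda_i|+\binom{N}{2}}q^{-\e_i c(\lambda_i)+w_{\e_i}(\lambda_i)}\Big)\, J_L(P'_{\lambda_1},\dots,P'_{\lambda_\ell}),
$$
in which every scalar coefficient is a signed Laurent monomial in $q$, hence a unit with respect to divisibility by any $(q;q)_n$. As a preliminary step I would verify that each term $J_L(P'_{\lambda_1},\dots,P'_{\lambda_\ell})$ lies in $\Z[q^{\pm 1}]$ (a priori it is only in $\Q(v)$). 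This follows from the $\ell$-component analogue of Proposition \ref{pro:main}: expanding $J_L(\fgl_N;q)=\sum_\lambda a_\lambda(L)\,\sigma_{\lambda_1}\otimes\cdots\otimes\sigma_{\lambda_\ell}$ with coefficients $a_\lambda(L)\in\Z[q^{\pm 1}]$ obtained by applying Lemma \ref{lem: completion center} componentwise (and Proposition \ref{prop: appendix} to upgrade rational-with-cyclotomic-denominators to Laurent polynomials), then using the componentwise orthogonality $\langle P'_\mu,\sigma_\nu\rangle=v^{|\mu|}\dim_q V(\mu)\,\delta_{\mu,\nu}$ from Theorem \ref{thm:pi's} to deduce $J_L(P'_\mu)=a_\mu(L)\prod_i v^{|\mu_i|}\dim_q V(\mu_i)\in\Z[q^{\pm 1}]$.

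Next I would invoke the inverse-limit description $\widehat{\Z[q]}=\varprojlim_n \Z[q]/((q;q)_n)$: a formal series $\sum_k b_k$ with $b_k\in \Z[q^{\pm 1}]$ represents an element of $\widehat{\Z[q]}$ only if, for every $n\ge 1$, all but finitely many $b_k$ lie in $(q;q)_n\Z[q^{\pm 1}]$. Applied to the multi-sum displayed above, and using that the scalar coefficients are units, this forces: for every $n\ge 1$, only finitely many tuples $(\lambda_1,\dots,\lambda_\ell)$ satisfy $J_L(P'_{\lambda_1},\dots,P'_{\lambda_\ell})\notin (q;q)_n\Z[q^{\pm 1}]$. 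Taking $n=1$ already gives the statement of the corollary: for all but finitely many tuples, $(1-q)$ divides $J_L(P'_{\lambda_1},\dots,P'_{\lambda_\ell})$, and one may then set $n(\lambda_i,N)$ to be the largest integer with $(q;q)_{n(\lambda_i,N)}\mid J_L(P'_{\lambda_1},\dots,P'_{\lambda_\ell})$.

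The main obstacle will be to rigorously identify the formal multi-sum produced by the $\omega_\pm$-expansion with $I(M_\e)$ as a genuinely convergent series in the Habiro topology, rather than merely as a formal manipulation in a larger rational completion. By construction $I(M_\e)=\mathcal{T}_\e(J_L(\fgl_N;q))$ is defined through the twist forms of Section \ref{sec:tau}, while $J_L(\omega_{\e_1},\dots,\omega_{\e_\ell})$ on the right is defined through the Hopf pairing of Lemma \ref{lem:Kirby}; the passage between the two pictures proceeds via componentwise pairing with $P'_\lambda$, and the integrality $a_\lambda(L)\in\Z[q^{\pm 1}]$ sketched above is precisely what ensures that the formal expansion converges as an element of $\widehat{\Z[q]}$ and can be matched with $I(M_\e)$ term by term.
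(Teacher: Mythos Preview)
Your proposal is correct and follows essentially the same route as the paper. The paper gives only a one-line deduction from Theorem~\ref{thm:linksurgery}: since $J_L(\omega_{\e_1},\dots,\omega_{\e_\ell})=I(M)\in\widehat{\Z[q]}$ for every sign vector $\e$, the restriction of $J_L$ to $\CP^{\times\ell}$ lands in $\Z[q^{\pm 1}]$ and induces a continuous map $\widehat\CP^{\times\ell}\to\widehat{\Z[q]}$; the corollary is then read off from continuity. Your expansion of $\omega_\pm$ in the $P'_\lambda$ basis with unit coefficients, together with the ``terms of a convergent series in $\widehat{\Z[q]}$ tend to $0$'' principle, is exactly the mechanism behind that sentence.

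Two small remarks. First, your intermediate step---the $\ell$-component analogue of Proposition~\ref{pro:main} giving $J_L(\fgl_N;q)=\sum_\mu a_\mu(L)\,\sigma_{\mu_1}\otimes\cdots\otimes\sigma_{\mu_\ell}$ with $a_\mu(L)\in\Z[q^{\pm1}]$---is not stated in the paper but is indeed the natural way to justify $J_L(P'_\lambda)\in\Z[q^{\pm1}]$ before invoking convergence; the paper leaves this implicit. Second, the convergence issue you flag in your last paragraph is genuine: knowing that an element of $\widehat{\Z[q]}$ \emph{equals} a formal series does not by itself force the terms to zero unless one already knows the partial sums converge. Your proposed fix (passing through the $\sigma_\mu$-expansion, where convergence is guaranteed by the completion in Proposition~\ref{prop: filtration}, and then using the orthogonality of Theorem~\ref{thm:pi's} to identify the series term-by-term with the $P'_\lambda$-expansion) is the right way to close this gap, and is what the paper's inductive Step~1/Step~2 argument in the proof of Theorem~\ref{thm:linksurgery} is implicitly doing.
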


It would be interesting to have a
 direct proof of Corollary \ref{cor:int_link}
 without using the 
theory of unified invariants.

Based on Corollary \ref{cor:int_link} we can give a 
cyclotomic expansion of the Reshetikhin--Turaev invariant of $L$ as follows:
\begin{equation}
J_L(\lambda_1, \dots, \lambda_\ell)= v^{\sum_i |\lambda_i|}\sum_{\mu_i\subset \lambda_i}
\prod^\ell_{j=1} c_{\lambda_j,\mu_j}(q^{-1})\,
J_L(P'_{\mu_1}, \dots, P'_{\mu_\ell})
\end{equation}
where the matrix $
\left[c_{\lambda,\mu}(q)\right]_{\lambda,\mu}:=\left[F_{\lambda}(q^{-\mu_i-N+i})\right]_{\lambda,\mu}
$ is the inverse of $\left[d_{\lambda,\mu}(q)\right]_{\lambda,\mu}$ by Theorem \ref{thm: dlm} below.
This generalizes equation $(8.2)$ in \cite{H}.

%


\subsection{Few direct arguments}
 Our proof of the fact that  $I(M)$ belongs to the Habiro ring is based on the result
 that $I^{\text{HL}}(M)\in \widehat{\Z[q]}$
 proven in \cite{HL} on more than 100 pages. 
Given the complexity of their argument, we decided
to collect here different facts that can be shown
without reference to \cite{HL}.


\begin{theorem} Assume $M_\pm$ is obtained by $(\pm 1)$-surgery on the knot $K$, then 
$$I(M_\pm)=J_K(\omega_\pm) \;\in \;\widehat{\Z[v]} $$
belongs to  the Habiro ring.
\end{theorem}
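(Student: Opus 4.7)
The plan is to prove the identity $I(M_\pm) = J_K(\omega_\pm)$ first, and then to establish the Habiro-ring integrality directly from the cyclotomic expansion without invoking the Habiro--Le machinery.

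First, I would follow Step 1 of the proof of Theorem~\ref{thm:linksurgery}. For a knot $K$ the invariant $J_K(\fgl_N;q)$ is central in $\widehat{\U^{\mathrm{ev}}}$, so $J_K(\omega_\pm)$ is the quantum trace $\Tr_q^{\omega_\pm}\bigl(J_K(\fgl_N;q)\bigr)$ of a central element coloured by an element of $\widehat{\CR}_\Q$. Lemma~\ref{lem:Hopf} rewrites this quantum trace as the Hopf pairing $\langle \omega_\pm, \xi^{-1}(J_K(\fgl_N;q))\rangle$, and then Lemma~\ref{lem:Kirby} identifies it with $\langle r^{\pm 1}, J_K(\fgl_N;q)\rangle = \mathcal{T}_\pm\bigl(J_K(\fgl_N;q)\bigr) = I(M_\pm)$.

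Next, I would substitute the cyclotomic expansion of Proposition~\ref{pro:main}, $J_K(\fgl_N;q) = \sum_\lambda a_\lambda(K)\,\sigma_\lambda$ with $a_\lambda(K) \in \Z[q^{\pm 1}]$, into the previous identity to obtain
\begin{equation*}
I(M_\pm) \;=\; \sum_\lambda a_\lambda(K)\, \mathcal{T}_\pm(\sigma_\lambda).
\end{equation*}
Using the duality $\langle P_\mu, \sigma_\nu\rangle = \delta_{\mu\nu}$ of Theorem~\ref{thm:pi's}, together with the identification $\omega_\pm = \xi^{-1}(r^{\pm 1})$ implicit in Lemma~\ref{lem:Kirby} (both pair as $\Tr_q^{V(\nu)}(r^{\pm 1})$ against the basis $V(\nu)$), a direct calculation collapses $\mathcal{T}_\pm(\sigma_\lambda)$ into an explicit product of a sign, powers of $q$ and $v$, and $\dim_q V(\lambda)$; in particular $\mathcal{T}_\pm(\sigma_\lambda) \in \Z[v^{\pm 1}]$.

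To conclude convergence in $\widehat{\Z[v]}$, I would show that for every $n$ the combined coefficient $a_\lambda(K)\,\mathcal{T}_\pm(\sigma_\lambda)$ belongs to $(q;q)_n \Z[v^{\pm 1}]$ for all but finitely many $\lambda$. The mechanism is the divisibility of interpolation Macdonald polynomials by quantum factorials from Lemma~\ref{lem: divisible}: applied to the formula $a_\lambda(K)=\sum_{\mu\subset\lambda}d_{\lambda,\mu}(q^{-1})J_K(V(\mu),q)$, it forces growing divisibility of $a_\lambda(K)$ as $\lambda$ grows, and one must verify this divisibility survives multiplication by $v^{-|\lambda|}\dim_q V(\lambda)$. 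The hard part will be this last cancellation: unlike the $\fsl_2$ situation of Habiro, here the inverse interpolation matrix $[d_{\lambda,\mu}(q)]$ has rational rather than polynomial entries, so a careful cancellation between the hook-length denominators appearing in $d_{\lambda,\mu}$ and the hook-length factors of $\dim_q V(\lambda)$ must be tracked. Once this cancellation is verified, the series is Cauchy in $\widehat{\Z[v]}$, yielding both $I(M_\pm) = J_K(\omega_\pm)$ and its membership in the Habiro ring.
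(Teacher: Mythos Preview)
Your first two paragraphs are fine and indeed match the paper: the identity $I(M_\pm)=J_K(\omega_\pm)$ is exactly Step~1 of Theorem~\ref{thm:linksurgery} (Lemma~\ref{lem:Kirby} applied to the central element $J_K$), and substituting the cyclotomic expansion of Proposition~\ref{pro:main} to write $I(M_\pm)=\sum_\lambda a_\lambda(K)\,\mathcal T_\pm(\sigma_\lambda)$ is also what the paper does.

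The gap is in your convergence argument. Lemma~\ref{lem: divisible} controls the \emph{values} $F_\lambda(q^{a_1},\dots,q^{a_N})$, i.e.\ the entries $c_{\lambda,\mu}$ of the matrix $\mathsf C$; it says nothing about the entries $d_{\lambda,\mu}$ of $\mathsf C^{-1}$, which carry growing cyclotomic \emph{denominators} (Table~\ref{tab 4}). Hence there is no reason for $a_\lambda(K)=\sum_{\mu\subset\lambda}d_{\lambda,\mu}(q^{-1})J_K(V(\mu),q)$ to acquire $(q;q)_n$-divisibility, and the figure-eight computations in Section~\ref{sec: figure 8} show it does not (e.g.\ $a_{(1,1)}=q^{-2}(q^2+q+1)$, and $a_{(2,1)}$ has nonzero value at $q=1$). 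Nor does the factor you extract, $v^{-|\lambda|}\dim_qV(\lambda)$, carry growing $(v;v)_m$-divisibility: for $N=2$ and $\lambda=(m,0)$ it is $v^{-m}[m+1]$, nonzero at $v=1$. So neither factor separately gives the Cauchy condition in $\widehat{\Z[v]}$, and the ``cancellation'' you anticipate is not located where you look for it.

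The paper's route is genuinely different: it does \emph{not} pass through the closed formula for $\mathcal T_\pm(\sigma_\lambda)$. Instead it uses Lemma~\ref{lem:div} to write (the Harish--Chandra image of) $\sigma_\mu$ as a $\Z[q^{\pm1}]$-combination of products $\prod_i f_{n_i}(q^{s_i}x_i)$ with $\sum_i n_i=|\mu|$, interprets $\mathcal T_\pm$ via the quadratic substitution of~\eqref{eq:tau}, and then invokes a Gauss-sum divisibility theorem \cite[Theorem~2.2]{BCL} to conclude that $\mathcal T_\pm(\sigma_\mu)$ itself lies in $(v;v)_m\Z[v^{\pm1}]$ with $m$ growing in $|\mu|$. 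The needed divisibility therefore resides in $\mathcal T_\pm(\sigma_\mu)$, produced by the Gaussian evaluation of the $f_n$-products, and not in the coefficients $a_\lambda(K)$.
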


\begin{proof}
By Theorem \ref{thm:main} we know 
$$ J_K(\fgl_N;q )=\sum_\mu a_\mu(K) \sigma_\mu
\quad\text{ with} \quad a_\mu(K) \in \Z[q,q^{-1}]
$$
The fact that $I(M_\pm)$ belongs to the Habiro ring
 easily follows from the  claim that $\mathcal T_{\pm}(\sigma_\mu)$ is divisible
by $(v;v)_m$ for some $m$ depending on $\mu$ and $N$.
Let us prove this claim. 
By \eqref{eq:tau} 
 the Hopf pairing with $r^{\pm 1}$ replaces an element
$x^{k_i}_i$ 
   with
$v^{Q_\pm(k_i)}$ where $Q_\pm$ is a quadratic form.
By Lemma \ref{lem:div} we can rewrite $\sigma_\mu$
as  a linear combination of $\prod^d_{i=1}f_{n_i}(q^{s_i}x_i)$ such that
$\sum_{i} n_i=|\mu|$, $d=N(N+1)/2$ and $s_i\in \Z$.
Moreover, each $f_n(q^ax_i)$ is divisible by 
$f_n(v^a y_i)$ where $y_i^2=x_i$ and 
hence 
belongs to the ideal $I_n$ of $\Z[v^{\pm 1},y^{\pm 1}_i]$
characterized in Proposition 2.1 of \cite{BCL}.
The result follows now from \cite[Theorem 2.2]{BCL}.
The number $m$ we are looking for is  $\left\lfloor\frac{|\mu|}{N(N+1)}\right\rfloor$.

\end{proof}

Combining  previous  results
we obtain an explicit expression for the unified 
invariant for knot surgeries:
\begin{equation}
\label{eq:IM}
I_{M_\pm}= J_K(\omega_\pm)=
\Tr_q^{\omega_{\pm}}J_K(\fgl_N;q)=
\sum_{\lambda} 
 (-1)^{|\lambda|+\binom{N}{2}} 
q^{\mp c(\lambda)}
q^{w_{\pm}(\lambda)} \, J_K(P'_\lambda)
  \end{equation}


Assuming that $I(M)=J_L(\omega_{\varepsilon_1}, \dots, \omega_{\varepsilon_l})$ is well defined,  its
topological invariance can be shown directly as follows.
Since $I(M)$ depends only on the isotopy class of $L$,
it remains to check its invariance under Hoste moves (a version of Fenn-Rourke moves between algebraically split links). Without loss of generality, we can assume that the last component is an unknot, then the statement follows from Theorem \ref{thm:topinv}. 


Assuming $I(M)$  belongs
to the Habiro ring, and as such has well defined
 evaluations at  roots of unity
\cite[Thm. 6.3]{Hpoly} we can 
use the same trick as above to show
 that for any root of unity $\zeta$
$$\text{ev}_\zeta I(M)=\text{WRT}(M,\zeta).$$
Let us recall that the WRT
invariant is obtained from $J_L(\fsl_N;q)$ by
taking  trace along each component  with the Kirby color
$$\Omega_\pm=\frac{\sum_\lambda v^{\pm (\lambda, \lambda+2\rho)}\dim_qV (\lambda) }{\sum_\mu v^{\pm(\mu, 2\rho+\mu)} \dim^2_q V(\mu) } \, V(\lambda) $$
where  the  sums
are taken over all $\lambda, \mu\in \CR^{\text{fin}}=\{\lambda| \dim_\zeta V(\lambda) \neq 0\} $
and  $v^2$ is evaluated to $\zeta$.
Hence, we need to show that for any 
$x$ in the ad-invariant part of the completed $\ell$th tensor power of  
$\widehat{\U}_\Z$,
we have 
 $$\Tr_q^{\Omega_\varepsilon} (x) \stackrel{\zeta}{=}
\Tr_q^{\omega_\varepsilon} (x)\quad
\quad \forall x \in 
\text{Inv}(\widehat{\U}_\Z^{\hat{\otimes}\ell}) $$
where 
$\stackrel{\zeta}{=}$ means the equality after evaluation $v^2=\zeta$.
 We will prove this fact in two steps.

{ Step} 1: Assume $\ell=1$, in this case  $\text{Inv}\,\widehat{\U}_\Z=\widehat \CZ$ with basis given by $z_\lambda=
\xi(V(\lambda))$.
Since $\Omega_\pm$ is invariant under Hoste moves, we have
$$\Tr_q^{\Omega_\pm}\left(z_\nu\right)=\langle\Omega_\pm, V(\nu)\rangle=\text{ev}_\zeta \left(v^{\mp (\nu, \nu+2\rho)} \dim_q V(\nu)\right)$$
where we interpret the left hand side as a Hopf link with  components colored by $\Omega_\pm$ and  
 $V(\nu)$, and the right hand side
is the result of the sliding. Comparing this computation with
 \eqref{eq:Kirby1}, we deduce that at roots of unity the actions of
 $\Omega_\pm$ and $\omega_\pm$ 
 do coincide on 
  $\xi(\CR^{\text{fin}})$, and they vanish
  on $x\in \widehat{\CZ}\setminus \xi(\CR^{\text{fin}})$
  after evaluation.
 
{ Step} 2:  Define $a_k$ for $k=0,1,\dots, \ell$ and $b_k=1,\dots, \ell$ as follows:
$$ a_k=\bigotimes^{k}_{j=1} \Tr_q^{\Omega_{\varepsilon_j}}\otimes
\bigotimes^\ell_{j=k+1}\Tr_q^{\omega_{\varepsilon_j}}(x),
\quad
b_k=\bigotimes^{k-1}_{j=1} \Tr_q^{\Omega_{\varepsilon_j}}\otimes
1\otimes\bigotimes^\ell_{j=k+1}\Tr_q^{\omega_{\varepsilon_j}}(x).
$$
 Then
$$a_{k-1}=\Tr_q^{\omega_{\varepsilon_k}}( b_k )
\quad\text{and}\quad
a_{k}=\Tr_q^{\Omega_{\varepsilon_k}} (b_k ) .
$$
Since $b_k\in \CZ_\Q$, we have $a_k\stackrel{\zeta}{=} a_{k-1}$ by Step 1 and Lemma \ref{lem:Kirby}
for $k=1,2, \dots, \ell$. Hence, we have $a_0\stackrel{\zeta}{=}a_\ell$
which is our claim.

\section{Interpolation polynomials}\label{sec:inter}
In this section we summarize the theory of interpolation Macdonald polynomials.
\subsection{One variable case}

Consider the space of polynomials in one variable $x$ over $\C(q)$ with the following bilinear form
$$
(x^k,x^m)=q^{-km}.
$$
Let us define polynomials $f_m(x),m=0,1,\ldots$ by the equation $f_0(x)=1$ and
\begin{equation}
\label{eq: def f}
f_m(x)=(x;q)_{m}=(1-x)\cdots (1-xq^{m-1})\quad\text{for}\quad m\ge 1.
\end{equation}
Clearly, $f_m(x)$ is a degree $m$ polynomial with leading term $(-1)^{m}q^{\frac{m(m-1)}{2}}x^m$, so $\{f_m\}_{m\geq 0}$ form a basis in $\Z_{q,q^{-1}}[x]$.
Our next aim is to show that this basis is orthogonal.
Observe that $f_m(q^{-k})=0$ for $k<m$.

\begin{lemma}
\label{lem: orthogonal 1d}
We have $(f_m(x),f_k(x))=\delta_{km}q^{-m}(q;q)_{m}$
\end{lemma}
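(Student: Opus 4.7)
The plan is to exploit the fact that the bilinear form is essentially evaluation at a geometric sequence. Since $(x^{j},x^{k})=q^{-jk}=(q^{-j})^{k}$, for any polynomial $g(x)=\sum_{k}a_{k}x^{k}$ one has
$$
(x^{j},g(x))=\sum_{k}a_{k}q^{-jk}=g(q^{-j}),
$$
and consequently, writing $f_{m}(x)=\sum_{j}c_{j}^{(m)}x^{j}$, the form evaluates to
$$
(f_{m}(x),f_{k}(x))=\sum_{j=0}^{m}c_{j}^{(m)}f_{k}(q^{-j}).
$$
This reduces the whole problem to two simple facts about $f_{k}(x)=(x;q)_{k}$: its vanishing pattern at the points $q^{-j}$ and the value of its top coefficient.

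For the vanishing of off-diagonal entries, I would use that $f_{k}(q^{-j})=\prod_{i=0}^{k-1}(1-q^{i-j})$ has a zero factor whenever $0\leq j<k$ (take $i=j$). If $m<k$, every $j$ in the sum satisfies $j\leq m<k$, so $(f_{m},f_{k})=0$. Symmetry of the bilinear form, which is immediate from $q^{-jk}=q^{-kj}$, then gives the case $m>k$.

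For the diagonal term, only the top-degree contribution $j=m$ survives, since $f_{m}(q^{-j})=0$ for $j<m$ and $\deg f_{m}=m$. The $q$-binomial formula stated earlier gives the leading coefficient $c_{m}^{(m)}=(-1)^{m}q^{m(m-1)/2}$, so
$$
(f_{m},f_{m})=(-1)^{m}q^{m(m-1)/2}f_{m}(q^{-m}).
$$
A short direct computation, pulling out $-q^{-i-1}$ from each factor, shows
$$
f_{m}(q^{-m})=\prod_{i=0}^{m-1}\bigl(1-q^{-i-1}\bigr)=(-1)^{m}q^{-m(m+1)/2}(q;q)_{m},
$$
and multiplying through yields $(f_{m},f_{m})=q^{-m}(q;q)_{m}$.

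No real obstacle is expected; the only place where a small calculation is needed is the last one, and it is routine. The conceptual heart of the argument is the identification of the bilinear form with evaluation at $q^{-j}$, which immediately matches the defining interpolation property of $f_{k}$.
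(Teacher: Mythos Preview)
Your proof is correct and follows essentially the same approach as the paper: both identify the bilinear form with evaluation at $q^{-j}$, use the vanishing $f_k(q^{-j})=0$ for $j<k$ to kill the off-diagonal terms, and pick off the leading coefficient for the diagonal. The only cosmetic difference is that the paper argues the vanishing directly for $m>k$ (noting $(f_m,g)=0$ for all $g$ of degree $<m$) rather than handling $m<k$ first and then invoking symmetry as you do.
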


\begin{proof}
First, observe that $(g(x),x^k)=g(q^{-k})$ for any polynomial $g(x)$. Therefore
for $m>k$ we have $(f_m(x),x^k)=f_m(q^{-k})=0$, so $(f_m(x),g(x))=0$ for any polynomial $g(x)$ of degree strictly less than $m$. In particular, $(f_m(x),f_k(x))=0$ for $m>k$ and 
$$
(f_m(x),f_m(x))=(-1)^{m}q^{\frac{m(m-1)}{2}}(f_m(x),x^m)=(-1)^{m}q^{\frac{m(m-1)}{2}}f_m(q^{-m})=
$$
$$
(-1)^{m}q^{\frac{m(m-1)}{2}}(1-q^{-m})\cdots (1-q^{-1})=q^{-m}(1-q^{m})\cdots (1-q).
$$
\end{proof}

\begin{lemma}
The transition matrix between the monomial basis $x^a$ and the basis $f_b(x)$ has the following form:
\begin{equation}
\label{eq: monomial in interpolation basis}
x^a=\sum_{b\le a}k_{a,b}f_b(x),\quad k_{a,b}=(-1)^{b}q^{-ab+\frac{b(b+1)}{2}}\binom{a}{b}_q.
\end{equation}
\end{lemma}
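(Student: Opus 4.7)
Since $\{f_b\}_{b\ge 0}$ is a basis of $\C(q)[x]$ (shown just before Lemma 9.1), the coefficients $k_{a,b}$ in $x^a=\sum_{b}k_{a,b}f_b(x)$ are uniquely determined, and $k_{a,b}=0$ for $b>a$ by comparison of degrees. To extract the remaining $k_{a,b}$ cleanly, I would exploit the orthogonality of the basis $\{f_b\}$ with respect to the bilinear form $(x^k,x^m)=q^{-km}$ (Lemma 9.1). Pairing $x^a=\sum_{b\le a}k_{a,b}f_b(x)$ against $f_b(x)$ gives
\[
k_{a,b}=\frac{(x^a,f_b(x))}{(f_b(x),f_b(x))}=\frac{f_b(q^{-a})}{q^{-b}(q;q)_b},
\]
where I use the immediate identity $(g(x),x^k)=g(q^{-k})$ (extended by linearity/symmetry to $(x^a,g(x))=g(q^{-a})$) applied to $g=f_b$.

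The remaining work is to evaluate $f_b(q^{-a})=(q^{-a};q)_b$. I would factor each term as
\[
1-q^{i-a}=-q^{i-a}(1-q^{a-i}),\qquad 0\le i\le b-1,
\]
and collect the sign, the power of $q$, and the surviving Pochhammer factor. The sign contributes $(-1)^b$; the exponent sums to $\sum_{i=0}^{b-1}(i-a)=\binom{b}{2}-ab$; and the product $\prod_{i=0}^{b-1}(1-q^{a-i})$ telescopes into $(q;q)_a/(q;q)_{a-b}$. This yields
\[
f_b(q^{-a})=(-1)^b\,q^{\binom{b}{2}-ab}\,\frac{(q;q)_a}{(q;q)_{a-b}}.
\]
Dividing by $q^{-b}(q;q)_b$ and using $\binom{b}{2}+b=\tfrac{b(b+1)}{2}$ along with $\binom{a}{b}_q=(q;q)_a/((q;q)_b(q;q)_{a-b})$ gives exactly the stated formula for $k_{a,b}$.

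There is no serious obstacle here: the only genuine computation is the evaluation of $(q^{-a};q)_b$, which is routine. If one preferred a more direct route, an alternative would be to invert the $q$-binomial formula $f_b(x)=(x;q)_b=\sum_j(-1)^jq^{\binom{j}{2}}\binom{b}{j}_q x^j$ (stated in Section 2.1) matrix-theoretically, but this amounts to the classical $q$-Pascal inversion identity and is essentially equivalent. The pairing approach has the advantage of reusing the orthogonality relation just established, giving a one-line derivation up to the Pochhammer evaluation.
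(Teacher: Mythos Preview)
Your proposal is correct and essentially identical to the paper's own proof: both use the orthogonality of the $f_b$ from Lemma~\ref{lem: orthogonal 1d} to write $k_{a,b}=f_b(q^{-a})/(q^{-b}(q;q)_b)$, then evaluate $f_b(q^{-a})=(-1)^b q^{\binom{b}{2}-ab}(q;q)_a/(q;q)_{a-b}$ by the same factorization you describe, and simplify. There is nothing to add.
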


\begin{proof}
To find the coefficients we compute the pairing $(f_b(x), x^a)$,
then using orthogonality we obtain
$$k_{a,b}=\frac{(f_b(x),x^a)}{(f_b(x),f_b(x))}=\frac{f_b(q^{-a})}{(f_b(x),f_b(x))}.$$
For $a\ge b$ from Lemma \ref{lem: orthogonal 1d} we get
$$
(f_{b}(x),f_{b}(x))=q^{-b}(q;q)_{b},
$$
while
\begin{align*}
f_{b}(q^{-a})&=(1-q^{-a})\cdots (1-q^{-a+b-1})=(-1)^{b}q^{-ab+\frac{b(b-1)}{2}}(1-q^{a})\cdots (1-q^{a-b+1})\\
&=(-1)^{b}q^{-ab+\frac{b(b-1)}{2}}\frac{(q;q)_{a}}{(q;q)_{a-b}}.
\end{align*}
and the equation follows.
\end{proof}

Our next goal is to expand arbitrary polynomial $f(x)$ in the basis $f_m(x)$. This can be done in two different ways. First, we can expand $f(x)$ in the monomial basis and apply \eqref{eq: monomial in interpolation basis}. Alternatively, we can apply {\em Newton interpolation method}: if $f(x)=\sum a_mf_m(x)$ then 
$$
f(q^{-j})=\sum_{m\ge j} a_m f_m(q^{-j}),
$$ 
which is a triangular system of equations for the unknown coefficients $a_m$. Thus knowing $f(q^{-j})$ 
one can at least theoretically reconstruct the coefficients $a_m$. This can be made explicit by the following:

\begin{lemma}
We have 
\begin{equation}
\label{eq: explicit interpolation 1d}
f(x)=\sum_{m=0}^{\infty} a_mf_m(x),\ a_m=\frac{1}{(f_m,f_m)}\sum_{j=0}^{m}(-1)^{j}q^{\frac{j(j-1)}{2}}\binom{m}{j}_{q}f(q^{-j}).
\end{equation}
\end{lemma}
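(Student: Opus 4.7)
The plan is to exploit the orthogonality of the basis $\{f_m\}_{m \geq 0}$ established in Lemma \ref{lem: orthogonal 1d} together with the $q$-binomial expansion of $f_m(x)$. Since $\{f_m\}$ is a basis of the polynomial ring, every polynomial $f(x)$ has a unique expansion $f(x) = \sum_{m \geq 0} a_m f_m(x)$ (with only finitely many nonzero $a_m$). Taking the pairing with $f_m$ on both sides and using orthogonality yields the formula
\[
a_m = \frac{(f_m, f)}{(f_m,f_m)}.
\]

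The first step is to expand $f_m$ in the monomial basis via the $q$-binomial formula (stated earlier in Section 2):
\[
f_m(x) = (x;q)_m = \sum_{j=0}^{m}(-1)^{j} q^{\frac{j(j-1)}{2}}\binom{m}{j}_{q} x^{j}.
\]
Next, I would use the identity $(x^j, g(x)) = g(q^{-j})$ for every polynomial $g$, which was already observed at the start of the proof of Lemma \ref{lem: orthogonal 1d} (it follows immediately by linearity from the definition $(x^j, x^k) = q^{-jk}$). Substituting the $q$-binomial expansion of $f_m$ into $(f_m, f)$ and applying this identity gives
\[
(f_m, f) = \sum_{j=0}^{m}(-1)^{j} q^{\frac{j(j-1)}{2}}\binom{m}{j}_{q} (x^j, f) = \sum_{j=0}^{m}(-1)^{j} q^{\frac{j(j-1)}{2}}\binom{m}{j}_{q} f(q^{-j}).
\]
Dividing by $(f_m, f_m)$ yields exactly the claimed formula for $a_m$.

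There is no genuine obstacle here: the argument is a direct application of orthogonality plus the $q$-binomial expansion, and all ingredients have already been proved. The only point worth noting for completeness is that when $f$ is a polynomial of degree $d$, the expression on the right-hand side automatically vanishes for $m > d$, which amounts to the identity $\sum_{j=0}^{m}(-1)^{j} q^{\binom{j}{2}}\binom{m}{j}_{q} f(q^{-j}) = 0$ for $m > \deg f$; this follows a posteriori from the uniqueness of the expansion in the basis $\{f_m\}$, so no separate verification is required.
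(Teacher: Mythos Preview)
Your proof is correct and follows essentially the same route as the paper: use orthogonality to write $a_m=(f,f_m)/(f_m,f_m)$, expand $f_m$ via the $q$-binomial formula, and then apply $(f,x^j)=f(q^{-j})$. The paper's proof is identical in substance, just slightly more terse.
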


\begin{proof}
By $q$-binomial theorem we have
\begin{equation}
\label{eq: interpolation in monomial 1d}
f_{m}(x)= \sum_{j=0}^{m}(-1)^{j}q^{\frac{j(j-1)}{2}}\binom{m}{j}_{q}x^{j}.
\end{equation}
Now
$$
a_m=\frac{(f,f_m)}{(f_m,f_m)}=\frac{1}{(f_m,f_m)}\sum_{j=0}^{m}(-1)^{j}q^{\frac{j(j-1)}{2}}\binom{m}{j}_{q}(f,x^{j}).
$$
Finally, $(f,x^{j})=f(q^{-j})$.
\end{proof}

\begin{remark}{\rm
Equation \eqref{eq: interpolation in monomial 1d} can be interpreted as an explicit inverse of the matrix in \eqref{eq: monomial in interpolation basis}.}
\end{remark}

One can consider completion $\widehat{\Z_q[x]}$ of the space of polynomials with respect to the basis $f_m(x)$. 
In this completion, infinite sums $\sum_{m=0}^{\infty}a_mf_m(x)$ are allowed. Newton interpolation method and \eqref{eq: explicit interpolation 1d} identify this completion with the space of distributions on the interpolation nodes $1,q^{-1},\ldots$.

We will need the following lemma.

\begin{lemma}
\label{lem: binomial}
We have
\begin{multline*}
(x-q^{s})(x-q^{s+1})\cdots (x-q^{s+m-1})=\\
\sum_{j=0}^{m} (-1)^{j} q^{-jm+\binom{j+1}{2}}\binom{m}{j}_{q}(1-q^{s+j})\cdots (1-q^{s+ m-1})f_j(x).
\end{multline*}
\end{lemma}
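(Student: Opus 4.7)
The lemma asks for the expansion of $g(x)=(x-q^s)(x-q^{s+1})\cdots(x-q^{s+m-1})$ in the basis $\{f_j(x)=(x;q)_j\}_{j\ge 0}$. My plan is to compose the $q$-binomial theorem with the monomial-to-$f_j$ change-of-basis formula \eqref{eq: monomial in interpolation basis} already established in this section.

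First, apply the $q$-binomial theorem in the form $\prod_{i=0}^{m-1}(x+aq^i)=\sum_{k=0}^{m}\binom{m}{k}_q q^{\binom{m-k}{2}}a^{m-k}x^k$ with $a=-q^s$ to write
$$g(x)=\sum_{k=0}^{m}(-1)^{m-k}q^{\binom{m-k}{2}+s(m-k)}\binom{m}{k}_q x^k.$$
Next, expand each $x^k$ via \eqref{eq: monomial in interpolation basis}, exchange the two summations, and collect the coefficient of $f_l(x)$. Using the standard identity $\binom{m}{k}_q\binom{k}{l}_q=\binom{m}{l}_q\binom{m-l}{k-l}_q$, the factor $\binom{m}{l}_q$ can be pulled outside, so the coefficient of $f_l(x)$ becomes
$$(-1)^l q^{\binom{l+1}{2}}\binom{m}{l}_q\sum_{k=l}^{m}(-1)^{m-k}\binom{m-l}{k-l}_q q^{\binom{m-k}{2}+s(m-k)-kl}.$$

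Now reindex the inner sum by $j=m-k$, so that $j$ runs from $0$ to $m-l$. Aggregating the powers of $q$, the inner sum equals $q^{-lm}\sum_{j=0}^{m-l}(-1)^j\binom{m-l}{j}_q q^{\binom{j}{2}+(s+l)j}$, which by a second application of the $q$-binomial theorem is $q^{-lm}(q^{s+l};q)_{m-l}$. Combining the pieces yields
$$[f_l(x)]\,g(x)=(-1)^l q^{-lm+\binom{l+1}{2}}\binom{m}{l}_q(q^{s+l};q)_{m-l},$$
which matches the statement since $(q^{s+l};q)_{m-l}=(1-q^{s+l})(1-q^{s+l+1})\cdots(1-q^{s+m-1})$.

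The only real obstacle is the bookkeeping of $q$-exponents through the two reindexings; a sanity check with $m=1,2$ confirms the formula. Alternatively, one can start from Newton interpolation \eqref{eq: explicit interpolation 1d} applied to the explicit values $g(q^{-k})=q^{-km}(q^{s+k};q)_m$, at which point the coefficient of $f_l$ becomes a ${}_2\phi_1$-sum that is evaluated by $q$-Chu--Vandermonde, giving the same answer; this provides a second, independent check.
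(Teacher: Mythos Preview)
Your proof is correct. The two applications of the $q$-binomial theorem together with the change-of-basis formula \eqref{eq: monomial in interpolation basis} and the identity $\binom{m}{k}_q\binom{k}{l}_q=\binom{m}{l}_q\binom{m-l}{k-l}_q$ give exactly the stated coefficient; the exponent bookkeeping checks out.

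The paper takes a different route: it proves the identity by induction on $m$, using the one-step recursion
\[
(x-q^{s+m})f_j(x)=-q^{-j}f_{j+1}(x)+q^{-j}(1-q^{s+m+j})f_j(x)
\]
and then verifying that the coefficients combine via a $q$-Pascal identity. Your argument is a direct closed-form computation rather than an induction: you expand $g(x)$ in monomials, push each monomial into the $f_j$-basis, and resum. The advantage of your approach is that it derives the formula from scratch without needing to know the answer in advance, and it makes transparent why a $q$-Pochhammer $(q^{s+l};q)_{m-l}$ appears (it is the second $q$-binomial sum). The paper's induction is shorter once the formula is on the table and avoids the double change of basis. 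Your alternative via \eqref{eq: explicit interpolation 1d} and $q$-Chu--Vandermonde is also valid and gives a third independent derivation.
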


\begin{proof}
We prove it by induction in $m$. For $m=1$ we get
$$
x-q^s=-(1-x)+(1-q^s)=-f_1+(1-q^s)f_0.
$$
For the step of induction we observe
\begin{align}
\label{eq:help}
(x-q^{s+m})f_j(x)&=-q^{-j}(1-q^j x)f_{j}(x)+(q^{-j}-q^{s+m})f_{j}(x)\\\nonumber &=
-q^{-j}f_{j+1}(x)+q^{-j}(1-q^{s+m+j})f_{j}(x).
\end{align}
Using \eqref{eq:help}, it is easy to identify the coefficient at $f_j(x)$ in
$$
(x-q^{s+m})\sum (-1)^{j} q^{-jm+\binom{j+1}{2}}\binom{m}{j}_{q}(1-q^{s+j})\cdots (1-q^{s+m-1})f_j(x)
$$
as
\begin{multline*}
-q^{-j+1}(-1)^{j-1} q^{-(j-1)m+\binom{j}{2}}\binom{m}{j-1}_{q}(1-q^{s+j-1})\cdots (1-q^{s+m-1})\\+
q^{-j}q^{-jm+\binom{j+1}{2}}\binom{m}{j}_{q}(1-q^{s+j})\cdots (1-q^{s+m-1})(1-q^{s+m+j})\\=
-q^{-j(m+1)+\binom{j+1}{2}}(1-q^{s+j})\cdots (1-q^{s+m-1})\\\times
\left[q^{m-j+1}\binom{m}{j-1}_{q}(1-q^{s+j-1})+\binom{m}{j}_{q}(1-q^{s+m+j})\right].
\end{multline*}
It remains to notice that
$$
q^{m-j+1}\binom{m}{j-1}_{q}(1-q^{s+j-1})+\binom{m}{j}_{q}(1-q^{s+m+j})
$$
$$
=\left[q^{m-j+1}\binom{m}{j-1}_{q}+\binom{m}{j}_{q}\right]-q^{s+m}\left[\binom{m}{j-1}_{q}+q^j\binom{m}{j}_{q}\right]
$$
$$=
\binom{m+1}{j}_{q}-q^{s+m}\binom{m+1}{j}_{q}=(1-q^{s+m})\binom{m+1}{j}_q.
$$
\end{proof}

\begin{remark}{\rm
If we set a formal variable $y=q^s$ in Lemma \ref{lem: binomial}, then we get the identity
$$
(x-y)(x-qy)\cdots (x-yq^{m-1})=\sum_{j=0}^{m} (-1)^{j} q^{-jm+\binom{j+1}{2}}\binom{m}{j}_{q}f_{m-j}(yq^j)f_j(x).
$$
This is a $q$-analogue of the binomial identity
$$
(x-y)^m=\sum_{j=0}^{m} (-1)^{j}  \binom{m}{j}(1-y)^{m-j}(1-x)^{j}.
$$}
\end{remark}





\subsection{Multi-variable case: polynomials}
\label{sec: def polynomials}

Let us generalize the above results to the case of $N$ variables. 
The pairing has the form
$$
(x_1^{a_1}\cdots x_N^{a_N},x_1^{b_1}\cdots x_N^{b_N})=q^{-\sum a_ib_i}=(x_1^{a_1},x_1^{b_1})\cdots(x_N^{a_N},x_N^{b_N}).
$$
Note that for $\xx=(x_1, \dots, x_N)$
$$
(g(\xx),x_1^{b_1}\cdots x_N^{b_N})=g(q^{-b_1},\ldots,q^{-b_N}).
$$
Consider the products
$$
f_{k_1,\ldots,k_N}(\xx)=f_{k_1}(x_1)\cdots f_{k_N}(x_N).
$$
Since $f_k(x)$ give a basis in $\C(q)[x]$, the polynomials
$f_{k_1,\ldots,k_N}$ give a basis in $\C(q)[x_1,\ldots,x_N]$.
Clearly,
$$
(f_{k_1,\ldots,k_N},x_1^{b_1}\cdots x_N^{b_N})=0\ \textrm{unless}\ b_i\ge k_i\ \textrm{for all}\ i.
$$
\begin{lemma}
We have $(f_{k_1,\ldots,k_N},f_{m_1,\ldots,m_N})=0$ unless $k_i=m_i$ for all $i$.
\end{lemma}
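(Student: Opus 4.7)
The plan is to reduce the multi-variable orthogonality directly to the one-variable orthogonality established in Lemma \ref{lem: orthogonal 1d}, exploiting the fact that the bilinear form on polynomials in $N$ variables factors as a tensor product of one-variable forms.

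First I would observe that by the definition of the pairing on monomials, for any monomials one has
\[
(x_1^{a_1}\cdots x_N^{a_N},\, x_1^{b_1}\cdots x_N^{b_N}) \;=\; \prod_{i=1}^{N} q^{-a_i b_i} \;=\; \prod_{i=1}^{N}(x_i^{a_i}, x_i^{b_i}).
\]
By bilinearity this extends to arbitrary polynomials that are products of one-variable polynomials: if $g(\xx)=g_1(x_1)\cdots g_N(x_N)$ and $h(\xx)=h_1(x_1)\cdots h_N(x_N)$, then
\[
(g,h) \;=\; \prod_{i=1}^{N}(g_i(x_i), h_i(x_i)).
\]

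Next I would apply this factorization to $f_{k_1,\ldots,k_N}(\xx)=\prod_i f_{k_i}(x_i)$ and $f_{m_1,\ldots,m_N}(\xx)=\prod_i f_{m_i}(x_i)$, obtaining
\[
(f_{k_1,\ldots,k_N}, f_{m_1,\ldots,m_N}) \;=\; \prod_{i=1}^{N}(f_{k_i}(x_i), f_{m_i}(x_i)).
\]
By Lemma \ref{lem: orthogonal 1d}, each one-variable factor equals $\delta_{k_i,m_i}\, q^{-k_i}(q;q)_{k_i}$. Hence the product vanishes as soon as $k_i\neq m_i$ for some $i$, and when $k_i=m_i$ for all $i$ one obtains the explicit value $\prod_i q^{-k_i}(q;q)_{k_i}$.

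There is no real obstacle here; the content is simply that the pairing respects the tensor product structure, which is visible on monomials and then transferred to the multiplicative basis $\{f_{k_1,\ldots,k_N}\}$ by bilinearity. The only mild care is to note that the lemma as stated asserts only non-vanishing on the diagonal, but the same argument in fact produces the formula
\[
(f_{k_1,\ldots,k_N}, f_{m_1,\ldots,m_N}) \;=\; \delta_{\mathbf{k},\mathbf{m}}\, q^{-\sum_i k_i}\prod_{i=1}^{N}(q;q)_{k_i},
\]
which will be useful later for normalizing the dual basis of the Hopf pairing.
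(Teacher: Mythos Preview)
Your proof is correct and follows essentially the same route as the paper: both rely on the observed factorization of the pairing over the tensor product, which the paper records just before the lemma. The paper expands $f_{m_1,\ldots,m_N}$ in monomials and uses the vanishing $(f_{k_1,\ldots,k_N},x_1^{b_1}\cdots x_N^{b_N})=0$ unless $b_i\ge k_i$, whereas you go one step further and invoke the one-variable orthogonality Lemma~\ref{lem: orthogonal 1d} directly; this also yields the explicit diagonal value, which the paper does not record here.
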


\begin{proof}
Suppose that $k_i>m_i$ for some $i$. Since $f_{m_1,\ldots,m_N}$ contains only monomials of the form $x_1^{b_1}\cdots x_N^{b_N}$ with $b_i\le m_i$, we have $(f_{k_1,\ldots,k_N},x_1^{b_1}\cdots x_N^{b_N})=0$ for all such monomials and hence $(f_{k_1,\ldots,k_N},f_{m_1,\ldots,m_N})=0$.
\end{proof}

Next, we would like to describe the basis in symmetric polynomials. It will be labeled by partitions
$\lambda=(\lambda_1\ge \lambda_2\ge\ldots\ge \lambda_N)$ with at most $N$ parts. We define
\begin{equation}
\label{eq: def F}
F_{\lambda}(\xx)=\frac{\det(f_{\lambda_i+N-i}(x_j))}{\prod_{i<j}(x_i-x_j)}.
\end{equation}
Clearly, the numerator in \eqref{eq: def F} is antisymmetric in $x_i$, so it is divisible by 
$\prod_{i<j}(x_i-x_j)$ and the ratio is a symmetric function. It is easy to see that $F_{\lambda}(\xx)$ is a non-homogeneous polynomial of degree $|\lambda|$, and the top degree component equals $(-1)^{|\lambda|+\binom{N}{2}}q^{D_{N}(\lambda)}s_{\lambda}$ where $s_{\lambda}$ is the Schur function
and $D_{N}(\lambda)$ is defined by \eqref{eq:def D}. The function $F_{\lambda}(\xx)$ is known as a special case of a factorial Schur function \cite{Macd2,Macd,Molev}, it is also a specialization of nonsymmetric Macdonald polynomials described below.

\begin{lemma}
Suppose that $b_1>\ldots > b_N$. Then 
$F_{\lambda}(q^{-b_1},\ldots,q^{-b_N})=0$ unless $b_i\ge \lambda_i+N-i$ for all $i$.
\end{lemma}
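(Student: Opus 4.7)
The plan is to argue directly from the determinantal formula \eqref{eq: def F}. Write $\bar{\lambda}_i = \lambda_i + N - i$, so that the numerator of $F_{\lambda}$ is $\det\bigl(f_{\bar{\lambda}_i}(x_j)\bigr)$ and the denominator is the Vandermonde $\prod_{i<j}(x_i - x_j)$. Since we assume $b_1 > \cdots > b_N$, the points $q^{-b_1}, \ldots, q^{-b_N}$ are pairwise distinct, so the Vandermonde is nonzero at this specialization. It therefore suffices to show the specialized determinant vanishes whenever $b_{i_0} < \bar{\lambda}_{i_0}$ for some index $i_0$.

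The key input is the one-variable observation already noted before Lemma \ref{lem: orthogonal 1d}: from $f_m(x) = (1-x)(1-xq)\cdots(1-xq^{m-1})$ one has $f_m(q^{-b}) = 0$ if and only if $b \in \{0,1,\ldots,m-1\}$, i.e.\ if and only if $b < m$. Suppose now $b_{i_0} < \bar{\lambda}_{i_0}$. Then for every $i \le i_0$ and every $j \ge i_0$, monotonicity of both sequences gives
$$
b_j \;\le\; b_{i_0} \;<\; \bar{\lambda}_{i_0} \;\le\; \bar{\lambda}_i,
$$
so the $(i,j)$-entry $f_{\bar{\lambda}_i}(q^{-b_j})$ of the specialized matrix is zero. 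This produces an $i_0 \times (N - i_0 + 1)$ block of zeros in the upper-right corner of the matrix.

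The last step is the standard pigeonhole observation on the Leibniz expansion: for the determinant to have any nonzero contribution, there would need to exist a permutation $\sigma$ of $\{1,\ldots,N\}$ mapping the $i_0$ rows $\{1,\ldots,i_0\}$ into the column complement $\{1,\ldots,i_0-1\}$ of the zero block, but that target set has size only $i_0 - 1 < i_0$. Hence every product $\prod_i M_{i,\sigma(i)}$ contains at least one zero factor, and the determinant vanishes, which gives $F_{\lambda}(q^{-b_1},\ldots,q^{-b_N}) = 0$.

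I do not expect any genuine obstacle: the argument is a one-shot bookkeeping computation, and all that needs to be checked is the shape of the block of zeros together with the elementary fact that an $N\times N$ matrix with an $r\times s$ zero block satisfying $r + s > N$ is singular. The only mild care needed is keeping the two decreasing sequences $(b_i)$ and $(\bar{\lambda}_i)$ aligned correctly when deducing the inequality $b_j \le b_{i_0} < \bar{\lambda}_{i_0} \le \bar{\lambda}_i$.
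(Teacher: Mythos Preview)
Your proof is correct and follows essentially the same approach as the paper's: both show that when $b_{i_0}<\bar\lambda_{i_0}$ the numerator matrix $\bigl(f_{\bar\lambda_i}(q^{-b_j})\bigr)$ acquires an upper-right zero block large enough to force the determinant to vanish, while the Vandermonde denominator is nonzero because the $b_j$ are distinct. Your version is in fact slightly more careful than the paper's: you take the column range $j\ge i_0$ (yielding the $i_0\times(N-i_0+1)$ block with $i_0+(N-i_0+1)=N+1>N$) and spell out the pigeonhole/Leibniz reason for singularity, whereas the paper writes $\ell>j$ and leaves the rank argument implicit.
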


\begin{proof}
Suppose that $b_j<\lambda_j+N-j$ for some $j$, then for all $i\le j$ and $\ell>j$ one has
$\lambda_i+N-i\ge \lambda_j+N-j>b_j\ge b_{\ell}$, so $f_{\lambda_i+N-i}(q^{-b_{\ell}})=0$.
This implies $\det[f_{\lambda_i+N-i}(q^{-b_{\ell}})]_{i,\ell=1}^{N}=0$.
On the other hand, since $b_i\neq b_j$ the denominator $\prod_{i<j}(q^{-b_i}-q^{-b_j})$ does not vanish. 
\end{proof}

\begin{corollary}
\label{cor: vanishing}
If $\mu$ is another partition then we can define $b_i=\mu_i+N-i$, and conclude that 
$F_{\lambda}(q^{-\mu_i-N+i})=0$ unless $\mu_i\ge \lambda_i$ for all $i$, that is, partition $\mu$ contains $\lambda$. 
\end{corollary}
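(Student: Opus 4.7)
The plan is to derive this corollary directly from the preceding lemma by the substitution $b_i = \mu_i + N - i$. The main thing to verify is that this substitution meets the hypothesis $b_1 > b_2 > \cdots > b_N$ of the lemma, and then to unpack what the conclusion $b_i \geq \lambda_i + N - i$ says in terms of $\mu$.

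First I would check strict decrease: since $\mu$ is a partition we have $\mu_i \geq \mu_{i+1}$, hence
\[
b_i - b_{i+1} = (\mu_i - \mu_{i+1}) + 1 \geq 1 > 0,
\]
so $b_1 > b_2 > \cdots > b_N$ as required. Thus the lemma applies and $F_\lambda(q^{-b_1},\ldots,q^{-b_N}) = 0$ unless $b_i \geq \lambda_i + N - i$ for every $i$. Subtracting $N-i$ from both sides of this inequality gives the equivalent condition $\mu_i \geq \lambda_i$ for all $i$, which is precisely the statement that the Young diagram of $\mu$ contains that of $\lambda$.

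There is essentially no obstacle here; the corollary is just a relabeling of the lemma, and the only content is verifying that $b_i = \mu_i + N - i$ gives the strictly decreasing sequence needed to invoke the nonvanishing of the Vandermonde denominator in \eqref{eq: def F}. I would write it out as a two-line proof.
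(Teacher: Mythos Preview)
Your proof is correct and follows exactly the argument the paper intends: the corollary is stated without a separate proof, precisely because it is obtained by substituting $b_i=\mu_i+N-i$ into the preceding lemma, and your two verifications (strict decrease of the $b_i$ and the equivalence $b_i\ge\lambda_i+N-i\iff\mu_i\ge\lambda_i$) are all that is needed.
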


\begin{example}{\rm
\label{ex: 1 box}
Suppose that $\lambda=(1)$, then $F_{(1)}$ is a symmetric function of degree~1 with leading term $(-1)^{1+\binom{N}{2}}q^{D_N(1)}s_{(1)}=q^{D_N(1)}\sum x_i$.
We have $D_N(1)=N-1+\binom{N}{3}$, so $F_{(1)}(x_1,\ldots,x_N)=(-1)^{1+\binom{N}{2}}q^{N-1+\binom{N}{3}}\sum x_i+c.$ To find the constant $c$, 
we observe that by Corollary~\ref{cor: vanishing}  we get $F_{(1)}(q^{-N+1},q^{-N+2},\ldots,1)=0$, so 
$$c=(-1)^{\binom{N}{2}}q^{N-1+\binom{N}{3}}(q^{-N+1}+q^{-N+2}+\ldots+1)=(-1)^{\binom{N}{2}}q^{\binom{N}{3}}[N]_q.$$ }
\end{example}

\begin{lemma}
\label{lem:diagonal}
We have 
$$
F_{\lambda}(q^{-\lambda_i-N+i})=(-1)^{\binom{N}{2}}q^{n(\lambda)+\binom{N}{3}}\prod_{\sq\in \lambda}(1-q^{-h(\sq)}),
$$
where $h(\sq)$ is the hook length of a box $\sq$ in the Young diagram corresponding to $\lambda$.
\end{lemma}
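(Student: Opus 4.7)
The plan is to evaluate the determinantal formula \eqref{eq: def F} at $x_j=q^{-\bar\lambda_j}$ (with $\bar\lambda_j=\lambda_j+N-j$), exploit a triangularity that makes the numerator collapse to a product of diagonal entries, and then convert the resulting ratio into the hook product via identity \eqref{prem:id}.

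First I would observe the basic vanishing $f_m(q^{-k})=\prod_{j=0}^{m-1}(1-q^{-k+j})=0$ whenever $0\le k\le m-1$. Applied to the matrix $M_{ij}:=f_{\bar\lambda_i}(q^{-\bar\lambda_j})$, this gives $M_{ij}=0$ whenever $\bar\lambda_j<\bar\lambda_i$. Since $\bar\lambda_1>\bar\lambda_2>\cdots>\bar\lambda_N$ strictly, this means $M_{ij}=0$ for $j>i$, so $M$ is lower triangular and
\[
\det M \;=\; \prod_{i=1}^{N} f_{\bar\lambda_i}(q^{-\bar\lambda_i}) \;=\; \prod_{i=1}^{N}\prod_{k=1}^{\bar\lambda_i}(1-q^{-k}),
\]
using the telescoping $f_m(q^{-m})=\prod_{k=1}^{m}(1-q^{-k})$.

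Next I would handle the Vandermonde denominator. Writing $q^{-\bar\lambda_i}-q^{-\bar\lambda_j}=q^{-\bar\lambda_i}(1-q^{\bar\lambda_i-\bar\lambda_j})$ and then $(1-q^{\bar\lambda_i-\bar\lambda_j})=-q^{\bar\lambda_i-\bar\lambda_j}(1-q^{-(\bar\lambda_i-\bar\lambda_j)})$, the $q$-powers telescope to
\[
\prod_{i<j}(q^{-\bar\lambda_i}-q^{-\bar\lambda_j}) \;=\; (-1)^{\binom{N}{2}} q^{-\sum_{i<j}\bar\lambda_j}\prod_{i<j}\bigl(1-q^{-(\bar\lambda_i-\bar\lambda_j)}\bigr).
\]
A short computation (splitting $\bar\lambda_j=\lambda_j+N-j$ and using $\sum_{j=1}^{N}(j-1)(N-j)=\binom{N}{3}$) yields $\sum_{i<j}\bar\lambda_j=n(\lambda)+\binom{N}{3}$.

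Finally I would combine the two pieces. Dividing the numerator by the denominator and applying identity \eqref{prem:id} with $t=q^{-1}$,
\[
\frac{\prod_{i}\prod_{k=1}^{\bar\lambda_i}(1-q^{-k})}{\prod_{i<j}(1-q^{-(\bar\lambda_i-\bar\lambda_j)})}=\prod_{\sq\in\lambda}(1-q^{-h(\sq)}),
\]
gives
\[
F_{\lambda}(q^{-\bar\lambda_i})=(-1)^{\binom{N}{2}}q^{n(\lambda)+\binom{N}{3}}\prod_{\sq\in\lambda}(1-q^{-h(\sq)}),
\]
as claimed. No step is really hard; the main thing to watch is bookkeeping of the sign $(-1)^{\binom{N}{2}}$ and the exponent $n(\lambda)+\binom{N}{3}$, which both arise from the Vandermonde manipulation.
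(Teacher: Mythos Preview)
Your proof is correct and follows essentially the same route as the paper: exploit the lower-triangularity of $\bigl(f_{\bar\lambda_i}(q^{-\bar\lambda_j})\bigr)$ to reduce the determinant to the diagonal product, rewrite the Vandermonde to extract $(-1)^{\binom{N}{2}}q^{-n(\lambda)-\binom{N}{3}}$, and then invoke identity \eqref{prem:id} with $t=q^{-1}$ to obtain the hook product. The bookkeeping of the sign and the exponent is handled just as in the paper (the paper records the same identity $\sum_j(j-1)\bar\lambda_j=n(\lambda)+\binom{N}{3}$).
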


\begin{proof}
Since the sequence $\lambda_i+N-i$ is strictly decreasing, we have 
$f_{\lambda_j+N-j}(q^{-\lambda_i-N+i})=0$ for $j>i$ and  
$$
f_{\lambda_i+N-i}(q^{-\lambda_i-N+i})=\{\lambda_i+N-i\}_{q^{-1}}!
$$
and 
$$
\det(f_{\lambda_j+N-j}(q^{-\lambda_i-N+i}))=\prod_i \{\lambda_i+N-i\}_{q^{-1}}!\ .
$$
On the other hand,
$$
\prod_{i<j}(q^{-\lambda_i-N+i}-q^{-\lambda_j-N+j})=(-1)^{\binom{N}{2}}q^{-\sum (\lambda_j+N-j)(j-1)}\prod_{i<j}(1-q^{-\lambda_i+i+\lambda_j-j}).
$$
and the statement follows now from 
formula \eqref{prem:id} and 
the identity
$$
\sum (\lambda_j+N-j)(j-1)=n(\lambda)+\binom{N}{3}.
$$
\end{proof}


\begin{example}{\rm  
For arbitrary $N$ and $\lambda=(1)$ we computed in Example \ref{ex: 1 box} that
$$F_{(1)}=(-1)^{1+\binom{N}{2}}q^{\binom{N}{3}}(q^{N-1}(x_1+\ldots+x_N)-[N]_{q}).$$ Hence,
$$
F_{(1)}(q^{-N},q^{-N+2},\ldots,1)=q^{\binom{N}{3}}(q^{N-1}(q^{-N}+q^{-N+2}+\ldots+1)-[N]_{q})=
$$
$$
(-1)^{1+\binom{N}{2}}q^{\binom{N}{3}}(q^{-1}-1)=(-1)^{\binom{N}{2}}q^{\binom{N}{3}}(1-q^{-1}).
$$}
\end{example}

We summarize the above results in the following proposition:

\begin{proposition}\cite{Ok}
\label{prop:interpolation}
There exists a unique collection of nonhomogeneous symmetric polynomials $F_{\lambda}(x_1,\ldots,x_N)$ with the following properties: 
\begin{itemize}
\item $F_{\lambda}(x_1,\ldots,x_N)$ has degree $|\lambda|$.
\item $F_{\lambda}(q^{-\mu_i-N+i})=0$ for all partitions $\mu$ not containing $\lambda$.
\item $F_{\lambda}(q^{-\lambda_i-N+i})=(-1)^{\binom{N}{2}}q^{n(\lambda)+\binom{N}{3}}\prod_{\sq\in \lambda}(1-q^{-h(\sq)}).$
\end{itemize} 
\end{proposition}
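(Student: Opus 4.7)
My plan is to split the proof into existence, which is essentially already assembled in the preceding lemmas, and uniqueness, which requires a triangular inversion. For existence, I take $F_\lambda$ to be defined by the determinantal formula \eqref{eq: def F}: it is symmetric because the numerator is antisymmetric in the $x_i$, it has degree $|\lambda|$ with top-degree component $(-1)^{|\lambda|+\binom{N}{2}}q^{D_N(\lambda)}s_\lambda$, Corollary \ref{cor: vanishing} gives the vanishing at $(q^{-\mu_i-N+i})$ for $\mu\not\supseteq\lambda$, and Lemma \ref{lem:diagonal} supplies the prescribed value at the diagonal point. So no new work is needed for existence.

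For uniqueness, I would first verify that $\{F_\nu : |\nu|\le d,\ \ell(\nu)\le N\}$ is a basis of the space $\SF_{\le d}$ of symmetric polynomials of degree $\le d$ in $x_1,\dots,x_N$. The cardinality of this set equals $\dim \SF_{\le d}$ (both count partitions $\nu$ with $|\nu|\le d$ and $\ell(\nu)\le N$), and linear independence follows from the fact that the top-degree component of $F_\nu$ is a nonzero scalar multiple of $s_\nu$, so expanding any relation in the Schur basis and comparing leading terms gives triangularity. Consequently, any candidate $G_\lambda$ satisfying the three stated conditions expands uniquely as $G_\lambda = \sum_{|\nu|\le|\lambda|} c_\nu F_\nu$, and it suffices to show $c_\nu = 0$ for $\nu\ne\lambda$ together with $c_\lambda = 1$.

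The core step is an induction on $|\nu|$ over partitions with $\nu \not\supseteq \lambda$ (note that among $\nu$ with $|\nu|\le|\lambda|$, the only one containing $\lambda$ is $\lambda$ itself, so this covers all $\nu\ne\lambda$ appearing in the expansion). Evaluating $G_\lambda = \sum_\mu c_\mu F_\mu$ at the point $(q^{-\nu_i-N+i})$: Corollary \ref{cor: vanishing} kills every term with $\mu \not\subseteq \nu$; the inductive hypothesis kills the terms with $\mu \subsetneq \nu$, since any such $\mu$ also fails to contain $\lambda$; and the $\mu=\nu$ term carries the nonzero factor $F_\nu(q^{-\nu_i-N+i})$ by Lemma \ref{lem:diagonal}. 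Since the left side vanishes by the assumed vanishing property of $G_\lambda$, we conclude $c_\nu = 0$. Finally, evaluating at $(q^{-\lambda_i-N+i})$ and using the matching normalization forces $c_\lambda = 1$. The only real subtlety is the combinatorial implication $\mu \subseteq \nu$ and $\nu \not\supseteq \lambda \Rightarrow \mu \not\supseteq \lambda$, which is what lets the induction close; everything else is a direct application of Corollary \ref{cor: vanishing} and Lemma \ref{lem:diagonal}.
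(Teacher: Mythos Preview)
Your proof is correct. The existence portion is exactly what the paper does: it introduces the determinantal formula \eqref{eq: def F}, observes symmetry and degree, then packages Corollary~\ref{cor: vanishing} and Lemma~\ref{lem:diagonal} into Proposition~\ref{prop:interpolation} with the sentence ``We summarize the above results in the following proposition.''

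For uniqueness the paper simply cites \cite{Ok} and gives no argument, so your triangular-inversion proof actually supplies more than the paper does. Your argument is the standard one and is sound: the key combinatorial point you flag---that $\mu\subseteq\nu$ and $\lambda\not\subseteq\nu$ force $\lambda\not\subseteq\mu$---is exactly what makes the induction on $|\nu|$ close, and the nonvanishing of the diagonal value $F_\nu(q^{-\nu_i-N+i})$ from Lemma~\ref{lem:diagonal} is what lets you peel off $c_\nu$ at each step. The basis claim for $\{F_\nu:|\nu|\le d\}$ is also fine, via the unitriangular (up to scalar) transition to Schur functions that you describe.
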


We will denote the value $F_{\lambda}(q^{-\lambda_i-N+i})=(-1)^{\binom{N}{2}}q^{n(\lambda)+\binom{N}{3}}\prod_{\sq\in \lambda}(1-q^{-h(\sq)})$ by $c_{\lambda,\lambda}$.


\begin{lemma}
\label{lem: divisibility}
Suppose that $q$ is a root of unity. Then $c_{\lambda,\lambda}$ vanishes for all but finitely many partitions $\lambda$.
\end{lemma}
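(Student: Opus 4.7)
The plan is to reduce the vanishing of $c_{\lambda,\lambda}$ to a purely combinatorial question about hook lengths, and then exhibit an elementary bound on the ``exceptional'' partitions.

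First, let $d\geq 1$ be the multiplicative order of $q$. Since $q$ is a nonzero root of unity, the factor $q^{n(\lambda)+\binom{N}{3}}$ appearing in the formula for $c_{\lambda,\lambda}$ is a unit. Therefore $c_{\lambda,\lambda}=0$ if and only if the product $\prod_{\sq\in\lambda}(1-q^{-h(\sq)})$ vanishes, which happens precisely when $\lambda$ contains a box $\sq$ with $d\mid h(\sq)$. So the claim reduces to the statement that only finitely many partitions with at most $N$ parts have no hook length divisible by $d$ (i.e.\ are $d$-cores when viewed through the at-most-$N$-parts filter).

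Next, I would prove the following sharper geometric fact: if $\lambda_i-\lambda_{i+1}\geq d$ for some $i\in\{1,\dots,N\}$ (with the convention $\lambda_{N+1}:=0$), then $\lambda$ contains a box with hook length exactly $d$. The key observation is that for $\lambda_{i+1}<j\leq\lambda_i$, the box $(i,j)$ has no box below it in the diagram, so its leg length vanishes and its hook length equals $\lambda_i-j+1$. As $j$ runs over this range, the hook length sweeps through all integers from $1$ to $\lambda_i-\lambda_{i+1}$, so the value $d$ is attained whenever $\lambda_i-\lambda_{i+1}\geq d$.

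Finally, by contrapositive: if $c_{\lambda,\lambda}\neq 0$ then $\lambda_i-\lambda_{i+1}\leq d-1$ for every $i=1,\dots,N$. Summing these inequalities from $i$ to $N$ gives $\lambda_i\leq (N-i+1)(d-1)$, so every part is bounded in terms of $N$ and $d$, leaving only finitely many possibilities for $\lambda$. There is no real obstacle here; the only subtlety is recognizing that within each row one automatically obtains a contiguous block of hook lengths $1,2,\dots,\lambda_i-\lambda_{i+1}$ coming from the ``rightmost'' boxes of that row, which bypasses the need for the full abacus/$d$-core machinery.
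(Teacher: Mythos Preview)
Your proof is correct and rests on the same key observation as the paper's: the boxes $(i,j)$ with $\lambda_{i+1}<j\le\lambda_i$ have leg zero and hook lengths running through $1,2,\ldots,\lambda_i-\lambda_{i+1}$, so $c_{\lambda,\lambda}\neq 0$ forces every gap $\lambda_i-\lambda_{i+1}$ to be at most $d-1$. The only difference is in the final bookkeeping: the paper packages this as divisibility of $\prod_{\sq}(1-q^{-h(\sq)})$ by $\prod_i[\lambda_i-\lambda_{i+1}]_q!$ and then applies pigeonhole to $|\lambda|=\sum_i i(\lambda_i-\lambda_{i+1})$ to get $|\lambda|<dN^2$, whereas your telescoping gives the slightly sharper bound $\lambda_i\le (N-i+1)(d-1)$, hence $|\lambda|\le (d-1)\binom{N+1}{2}$.
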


\begin{proof}
Observe that $\prod_{\sq\in \lambda}(1-q^{-h(\sq)})$ is divisible by $\prod_{i} [\lambda_i-\lambda_{i+1}]_{q}!$ and 
$$\sum_{i=1}^{N}i(\lambda_i-\lambda_{i+1})=|\lambda|.$$ This means that for some $i$ we must have
$$
i(\lambda_i-\lambda_{i+1})\ge \frac{|\lambda|}{N},\ \lambda_i-\lambda_{i+1}\ge \frac{|\lambda|}{iN}\ge \frac{|\lambda|}{N^2},
$$
and $c_{\lambda,\lambda}$ is divisible by $(1-q)\cdots (1-q^{\lfloor \frac{|\lambda|}{N^2}\rfloor})$.
If $q^s=1$ then it vanishes for $|\lambda|\ge sN^2$.
\end{proof}

\begin{remark}
{\rm 
A partition is called an {\em $s$-core} if none of its hook lengths is divisible by $s$. The $s$-core partitions play an important role in representation theory of symmetric groups in finite characteristic, and of Hecke algebras at roots of unity \cite{JK}. If $q^s=1$ then clearly $c_{\lambda,\lambda}(q)\neq 0$ if and only if $\lambda$ is an $s$-core.
Although there are infinitely many $s$-cores, Lemma \ref{lem: divisibility} shows that there are finitely many $s$-cores with at most $N$ rows. 

For example, for $s=2$ the $2$-cores are ``staircase partitions'' $\lambda=(k,k-1,\ldots,1)$, and the maximal $2$-core with at most $N$ rows has size $N+(N-1)+\ldots+1=\binom{N+1}{2}$.
}
\end{remark}

\subsection{Multi-variable case: interpolation}

One can use the polynomials $F_{\lambda}$ to solve the following interpolation problem.
\begin{problem}
Find a symmetric function $f=\sum a_{\lambda}F_{\lambda}$ given its values $f(q^{-\mu_i-N+i})$ for all $\mu$. 
\end{problem}
We have
$$
f(q^{-\mu_i-N+i})=\sum a_{\lambda}F_{\lambda}(q^{-\mu_i-N+i})
$$
This is a linear system on $a_{\lambda}$ with the triangular matrix 
\begin{equation}
\label{eq: def c}
{\sf C}=\left[c_{\lambda,\mu}\right]_{\lambda,\mu},\ c_{\lambda,\mu}(q):=F_{\lambda}(q^{-\mu_i-N+i})
\end{equation}
It is clear from Proposition \ref{prop:interpolation} that to find $a_{\lambda}$ for a given $\lambda$ it is sufficient to know all coefficients $c_{\mu,\nu}$ for $\mu\subset \nu\subset \lambda$. 

In \cite{Ok} Okounkov  computed the inverse matrix $D={\sf C}^{-1}$ which allows one to 
explicitly compute the coefficients $a_{\lambda}$. 


\begin{theorem}
\label{thm: dlm}
\cite{Ok}
Define $c^*_{\lambda,\mu}(q)=c_{\lambda,\mu}(q^{-1})$ and $\cont(\lambda)=n(\lambda)-n(\lambda')$. Then
$$
D=\left[d_{\lambda,\mu}\right]_{\lambda,\mu},\ d_{\lambda,\mu}=(-1)^{|\mu|-|\lambda|}q^{\cont(\lambda)-\cont(\mu)}\frac{c^*_{\lambda,\mu}}{c_{\mu,\mu}c^*_{\lambda,\lambda}}
$$
and
$$
a_{\mu}=\sum_{\lambda\subset \mu}d_{\lambda,\mu}f(q^{-\lambda_i-N+i})=\frac{1}{c_{\mu,\mu}}\sum_{\lambda\subset \mu}(-1)^{|\mu|-|\lambda|}q^{\cont(\lambda)-\cont(\mu)}\frac{c^*_{\lambda,\mu}}{c^*_{\lambda,\lambda}}f(q^{-\lambda_i-N+i}).
$$
\end{theorem}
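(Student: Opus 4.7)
The plan is to verify directly that $D$ is a two-sided inverse of $C$. By Corollary~\ref{cor: vanishing} the entries $c_{\lambda,\mu}(q)$ vanish unless $\lambda\subset\mu$, and by the same vanishing applied to $q^{-1}$ the entries $c^*_{\lambda,\mu}(q)$ have the identical support. Thus $C$ and $D$ are both upper-triangular with respect to the containment order on partitions, and it suffices to prove that $(CD)_{\nu,\lambda}=\delta_{\nu,\lambda}$ whenever $\nu\subset\lambda$. The diagonal case is immediate: plugging $\mu=\lambda=\nu$ into the definition of $d_{\lambda,\lambda}$ gives $d_{\lambda,\lambda}=1/c_{\lambda,\lambda}$. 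So the content of the theorem is the identity
$$
\sum_{\nu\subset\mu\subset\lambda}(-1)^{|\mu|}q^{\cont(\mu)}\,\frac{c_{\nu,\mu}(q)\,c^*_{\mu,\lambda}(q)}{c^*_{\mu,\mu}(q)}\;=\;0\qquad\text{for }\nu\subsetneq\lambda,
$$
with the prefactors that do not depend on $\mu$ having been pulled outside.

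The key input I would use is a duality/symmetry between $F_\lambda$ and the ``$q\mapsto q^{-1}$'' polynomial $F^*_\lambda$. Concretely, one normalizes the evaluations by setting $B(\lambda,\mu):=c_{\lambda,\mu}(q)/c_{\lambda,\lambda}(q)$, and shows that on nested partitions $B(\lambda,\mu)$ plays the role of a $q$-deformed binomial coefficient and satisfies a relation of the shape
$$
B(\lambda,\mu)\;=\;(\text{explicit sign and power of }q\text{ involving }\cont)\cdot B^*(\mu,\lambda),
$$
with $B^*$ the analogous quantity built from $c^*$. The cleanest proof of this duality is via Okounkov's extra-vanishing theorem: the degree of $F_\lambda$ together with its vanishing at all $q^{-\mu_i-N+i}$ for $\mu\not\supset\lambda$ pins $F_\lambda$ down uniquely up to scalar, so the polynomial whose normalized evaluations are $B^*(\mu,\lambda)$ can be identified with $F_\lambda$ (up to the normalization) by checking that it has the correct degree and vanishing. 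The bookkeeping of signs and content shifts is exactly what produces the factors $(-1)^{|\mu|-|\lambda|}q^{\cont(\lambda)-\cont(\mu)}$ in the formula for $d_{\lambda,\mu}$.

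Once the duality is in hand, the identity above becomes a $q$-analogue of the classical alternating sum $\sum_{\nu\subset\mu\subset\lambda}(-1)^{|\mu|-|\nu|}\binom{\lambda}{\mu}\binom{\mu}{\nu}=0$ for $\nu\neq\lambda$, and it can be closed out by a telescoping/M\"obius-inversion argument on the interval $[\nu,\lambda]$ in Young's lattice. The main obstacle is the duality step: verifying that after the substitution $q\leftrightarrow q^{-1}$, the normalization factors $c_{\mu,\mu}$ and $c^*_{\mu,\mu}$ produced by Lemma~\ref{lem:diagonal} combine correctly with the content shifts to match the claimed formula for $d_{\lambda,\mu}$ on the nose, rather than up to a partition-dependent scalar. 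An alternative route that avoids the extra-vanishing argument, and is in fact the one used elsewhere in the paper, is to build the Hopf-pairing dual basis $\{P_\mu\}$ explicitly (as in Theorem~\ref{thm:pi's}) and to read $d_{\lambda,\mu}$ off from the change-of-basis coefficients against $\sigma_\lambda$; orthogonality then supplies the inverse-matrix identity directly.
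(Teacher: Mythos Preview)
The paper does not prove this theorem at all: it is quoted from \cite{Ok} and only illustrated by examples. So there is no ``paper's own proof'' to compare against; your outline is essentially a sketch of Okounkov's original argument (the key ingredient being the $q\leftrightarrow q^{-1}$ symmetry of the normalized binomial coefficients $c_{\lambda,\mu}/c_{\lambda,\lambda}$).

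That said, two points deserve flagging. First, your ``alternative route'' through Theorem~\ref{thm:pi's} is circular: in the paper $P_\lambda$ is \emph{defined} using the coefficients $d_{\lambda,\mu}$, and the orthogonality $\langle P_\lambda,\sigma_\nu\rangle=\delta_{\lambda,\nu}$ is proved by plugging $F=F_\nu$ into the interpolation formula \eqref{eq: interpolation}, which already presupposes Theorem~\ref{thm: dlm}. The Hopf-pairing computation does independently yield \eqref{eq: dlm from hopf}, but that is a different expression for $d_{\lambda,\mu}$ in terms of the Schur-expansion coefficients $b_{\lambda,\mu}$, and matching it to the formula in the theorem is exactly the duality identity you are trying to prove.

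Second, in your main line the hard step is not the final ``telescoping/M\"obius'' closure but the duality itself, and your sketch does not actually establish it. The assertion that ``the polynomial whose normalized evaluations are $B^*(\mu,\lambda)$ can be identified with $F_\lambda$ by checking degree and vanishing'' hides the real work: one has to produce, from the values $c^*_{\mu,\lambda}$, a genuine symmetric polynomial in the $x$-variables with the correct vanishing pattern, and the precise sign/content bookkeeping you allude to is the substance of Okounkov's argument rather than a cosmetic normalization. As written, the proposal is a plausible road map to \cite{Ok} but not yet a proof.
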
 

\begin{example}{\rm
If $\lambda=\mu$ then clearly $d_{\lambda,\mu}=\frac{1}{c_{\lambda,\lambda}}$.}
\end{example}

\begin{example}
{\rm 
We have $F_{(\emptyset)}=(-1)^{\binom{N}{2}}q^{\binom{N}{3}}$, so 
$$
c_{(\emptyset),(\emptyset)}=c_{(\emptyset),(1)}=(-1)^{\binom{N}{2}}q^{\binom{N}{3}},\quad c^*_{(\emptyset),(\emptyset)}=c^*_{(\emptyset),(1)}=(-1)^{\binom{N}{2}}q^{-\binom{N}{3}}.
$$
Since 
$$
c_{(1),(1)}=(-1)^{\binom{N}{2}}q^{\binom{N}{3}}(1-q^{-1})=(-1)^{\binom{N}{2}+1}q^{\binom{N}{3}-1}(1-q),
$$
 we get 
$$d_{(\emptyset),(1)}=\frac{(-1)^{\binom{N}{2}}q^{-\binom{N}{3}+1}}{(1-q)}.$$
So the first two terms of interpolation series have the following form:
\begin{multline*}
f(x_1,\ldots,x_N)=(-1)^{\binom{N}{2}}q^{-\binom{N}{3}}f(q^{1-N},q^{2-N},\ldots,1)F_{(\emptyset)}(\xx)+\\
\frac{(-1)^{\binom{N}{2}+1}q^{-\binom{N}{3}+1}}{1-q}\left[-f(q^{1-N},q^{2-N},\ldots,1)+f(q^{-N},q^{2-N},\ldots,1)\right]F_{(1)}(\xx)+\ldots
\end{multline*}}
\end{example}

\begin{example}{\rm
\label{ex:one row}
For $N=1$ and $a\ge b$ we have 
$$
c_{(b),(a)}=f_{b}(q^{-a})=(1-q^{-a})\cdots (1-q^{-a+b-1})
$$
hence 
$$
c^{*}_{(b),(a)}=(1-q^{a})\cdots (1-q^{a-b+1})
$$
Now 
$$
\frac{c^{*}_{(b),(a)}}{c^{*}_{(b),(b)}}=\frac{(1-q^{a})\cdots (1-q^{a-b+1})}{(1-q^{b})\cdots (1-q)}=\binom{a}{b}_{q},
$$
and
$$
d_{(b),(a)}=(-1)^{a-b}q^{\frac{b(b-1)}{2}-\frac{a(a-1)}{2}}\frac{c^*_{(b),(a)}}{c_{(a),(a)}c^*_{(b),(b)}}=\frac{(-1)^{a-b}}{c_{(a),(a)}}q^{\frac{b(b-1)}{2}-\frac{a(a-1)}{2}}\binom{a}{b}_{q},
$$
which matches \eqref{eq: explicit interpolation 1d}. }
\end{example}

\begin{example}
\label{ex: 1and32}
Let $N=2$, $\lambda=(1)$ and $\mu=(3,2)$. We have $F_{\lambda}=q(x_1+x_2)-(1+q)$, so
$$
c_{\lambda,\mu}=F_{\lambda}(q^{-4},q^{-2})=(-q - 1 + q^{-1} + q^{-3}),\ c^*_{\lambda,\mu}=q^3 + q - 1 - q^{-1},\\
$$
and using Lemma \ref{lem:diagonal}
$$
c_{\lambda,\lambda}=-(1-q^{-1}),\ c^*_{\lambda,\lambda}=-(1-q),
$$
$$
c_{\mu,\mu}=-q^2(1-q^{-1})^2(1-q^{-2})(1-q^{-3})(1-q^{-4})=q^{-9}(1-q)^2(1-q^2)(1-q^3)(1-q^4). 
$$
Now
$$
d_{\lambda,\mu}=q^{-2}\frac{c^*_{\lambda,\mu}}{c_{\mu,\mu}c^*_{\lambda,\lambda}}=-q^{6}\frac{q^4 + q^2 - q - 1}{(1-q)^3(1-q^2)(1-q^3)(1-q^4)}.
$$
\end{example}

\subsection{Hopf pairing}

We have a symmetric bilinear form $(\cdot,\cdot)$ on 
$\Z[x_1,\ldots,x_N]^{S_N}$ defined by its values on Schur polynomials
$$
(s_{\lambda},s_{\mu})=s_{\lambda}(q^{-\mu_1-N+1},\ldots,q^{-\mu_N})s_{\mu}(q^{-N+1},\ldots,1).
$$ 
It is closely related to the Hopf pairing $\langle \cdot,\cdot \rangle$ for $\CR=\Rep(\U)$ defined in Section \ref{sec: Hopf}.
Note that 
$$
(f,s_{\mu})=f(q^{-\mu_1-N+1},\ldots,q^{-\mu_N})s_{\mu}(q^{-N+1},\ldots,1).
$$
for any symmetric function $f$.

\begin{proposition}
We have 
\begin{equation}\label{orthogonality}
(F_{\lambda},F_{\nu})=\delta_{\lambda,\nu}q^{-|\lambda|+2\binom{N}{3}}\prod_{\sq \in \lambda}(1-q^{N+c(\sq)}),
\end{equation}
 so the Hopf pairing is diagonal in the basis $\{F_{\lambda}\}_\lambda$.
\end{proposition}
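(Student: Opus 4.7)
My plan is to exploit the triangularity of $F_\lambda$ both in the Schur basis (upper-triangular by inclusion) and in terms of vanishing at the interpolation nodes (given by $c_{\lambda,\mu}$), and then reduce the computation of $(F_\lambda,F_\lambda)$ to a product of three known scalars.

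First I would unfold the pairing by linearity. From the defining identity $(f, s_\mu) = f(q^{-\mu_i - N + i})\, s_\mu(q^{-N+1},\ldots,1)$ and the Schur expansion $F_\nu = \sum_{\mu\subseteq \nu} a_{\nu,\mu} s_\mu$ furnished by Theorem~\ref{thm:F's}(a), one obtains
\[
(F_\lambda, F_\nu) \;=\; \sum_{\mu\subseteq \nu} a_{\nu,\mu}\, c_{\lambda,\mu}\, s_\mu(q^{-N+1},\ldots,1).
\]
By Corollary~\ref{cor: vanishing}, $c_{\lambda,\mu}=0$ unless $\lambda\subseteq\mu$. Thus the sum is supported on partitions $\mu$ with $\lambda\subseteq\mu\subseteq\nu$, and in particular vanishes whenever $\lambda\not\subseteq\nu$. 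For the opposite inclusion I would invoke the symmetry of the pairing, which follows from the classical Schur identity $s_\lambda(q^{\mu+\rho})s_\mu(q^\rho)=s_\mu(q^{\lambda+\rho})s_\lambda(q^\rho)$; hence $\nu\not\subseteq\lambda$ also forces vanishing. So $(F_\lambda,F_\nu)=0$ for $\lambda\neq\nu$, proving orthogonality.

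For $\lambda=\nu$ the double inclusion $\lambda\subseteq\mu\subseteq\lambda$ collapses the sum to $\mu=\lambda$, giving
\[
(F_\lambda, F_\lambda) \;=\; a_{\lambda,\lambda}\, c_{\lambda,\lambda}\, s_\lambda(q^{-N+1},\ldots,1).
\]
Now I would substitute the three explicit factors: $a_{\lambda,\lambda}=(-1)^{|\lambda|+\binom{N}{2}}q^{D_N(\lambda)}$ from Theorem~\ref{thm:F's}(a), $c_{\lambda,\lambda}=(-1)^{\binom{N}{2}}q^{n(\lambda)+\binom{N}{3}}\prod_{\sq}(1-q^{-h(\sq)})$ from Lemma~\ref{lem:diagonal}, and the hook-content evaluation
\[
s_\lambda(q^{-N+1},\ldots,1) \;=\; q^{-(N-1)|\lambda|+n(\lambda)}\prod_{\sq}\frac{1-q^{N+c(\sq)}}{1-q^{h(\sq)}}.
\]

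The remaining work is bookkeeping. The signs combine to $(-1)^{|\lambda|}$, and the product factor simplifies via $\frac{1-q^{-h(\sq)}}{1-q^{h(\sq)}}=-q^{-h(\sq)}$, producing an extra $(-1)^{|\lambda|}$ and a $q^{-\sum h(\sq)}$. Using $\sum_{\sq} h(\sq)=n(\lambda)+n(\lambda')+|\lambda|$ and the identity $D_N(\lambda)=n(\lambda')-n(\lambda)+(N-1)|\lambda|+\binom{N}{3}$ from \eqref{eq:def D}, all the $n(\lambda)$ and $n(\lambda')$ contributions cancel, leaving $q^{-|\lambda|+2\binom{N}{3}}\prod_{\sq}(1-q^{N+c(\sq)})$, which matches \eqref{orthogonality}. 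No genuine obstacle arises; the only delicate point is ensuring that the hook-content formula, normalised to the argument list $(q^{1-N},\ldots,q^0)$ rather than $(1,q,\ldots,q^{N-1})$, acquires exactly the factor $q^{-(N-1)|\lambda|}$ that is needed to cancel the $(N-1)|\lambda|$ in $D_N(\lambda)$.
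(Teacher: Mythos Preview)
Your argument is correct and matches the paper's: expand $F_\nu$ in Schur functions, use the vanishing of $c_{\lambda,\mu}$ unless $\lambda\subseteq\mu$ together with symmetry of the pairing to obtain off-diagonal vanishing, and reduce the diagonal to the product $a_{\lambda,\lambda}\,c_{\lambda,\lambda}\,s_\lambda(q^{1-N},\ldots,1)$, which you then simplify just as the paper does (your hook-content formula is the same as the paper's after the substitution $\frac{1-q^{-N-c}}{1-q^{-h}}=q^{h-N-c}\frac{1-q^{N+c}}{1-q^{h}}$). One minor caveat: the assertion that only $\mu\subseteq\nu$ occur in the Schur expansion of $F_\nu$ is itself a consequence of this proposition (via \eqref{eq: dlm from hopf}), so strictly speaking you should argue, as the paper does with its order $\preceq$, from the weaker a priori fact that $F_\nu$ has degree $|\nu|$ with top term proportional to $s_\nu$---your proof goes through unchanged with that substitution, since $\lambda\subseteq\mu$ and $|\mu|\le|\nu|$ already force $|\lambda|\le|\nu|$, and for the diagonal term only $\mu=\lambda$ survives anyway.
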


\begin{proof}
We have 
$$
(F_{\lambda},s_{\mu})=F_{\lambda}(q^{-\mu_1-N+1},\ldots,q^{-\mu_N})s_{\mu}(q^{-N+1},\ldots,1)=0
$$
unless $\lambda\subset \mu$. On the other hand, $F_{\nu}$ can be expanded in $s_{\mu}$ for $\mu\preceq \nu$, so
$(F_{\lambda},F_{\nu})$ vanishes unless there exists $\mu\preceq \nu$ such that $\lambda\subset \mu$, in particular, $\lambda\preceq \nu$. 

Since the Hopf pairing is symmetric, $(F_{\lambda},F_{\nu})$ vanishes unless $\lambda\preceq \nu$ and $\nu\preceq \lambda$, so $\lambda=\nu$. Finally,
$$
(F_{\lambda},F_{\lambda})=(-1)^{|\lambda|+\binom{N}{2}}q^{D_{N}(\lambda)}(F_{\lambda},s_{\lambda})=(-1)^{|\lambda|+\binom{N}{2}}q^{D_N(\lambda)}F_{\lambda}(q^{-\lambda_1-N+1},\ldots,q^{-\lambda_N})s_{\lambda}(q^{-N+1},\ldots,1).
$$
Now 
$$
F_{\lambda}(q^{-\lambda_1-N+1},\ldots,q^{-\lambda_N})=(-1)^{\binom{N}{2}}q^{n(
\lambda)+\binom{N}{3}}\prod_{\sq\in \lambda}(1-q^{-h(\sq)})
$$
while
$$
s_{\lambda}(q^{-N+1},\ldots,1)=q^{-n(\lambda)}\prod_{\sq\in \lambda}\frac{(1-q^{-N-c(\sq)})}{(1-q^{-h(\sq)})}.
$$
hence
$$
F_{\lambda}(q^{-\lambda_1-N+1},\ldots,q^{-\lambda_N})s_{\lambda}(q^{-N+1},\ldots,1)=(-1)^{\binom{N}{2}}q^{\binom{N}{3}}\prod_{\sq\in \lambda}(1-q^{-N-c(\sq)})=
$$
$$
(-1)^{|\lambda|+\binom{N}{2}}q^{-N|\lambda|-c(\lambda)+\binom{N}{3}}(1-q^{N+c(\sq)}).
$$
On the other hand, $D_N(\lambda)=c(\lambda)+(N-1)|\lambda|+\binom{N}{3}$.
\end{proof}

This provides us with a different perspective for the interpolation problem. Suppose that we have a Schur expansion for $F_{\lambda}$:
$$
F_{\lambda}=\sum_{\mu\preceq \lambda}b_{\lambda,\mu}s_{\mu}.
$$
Then for an arbitrary symmetric function $f(x_1,\ldots,x_N)$ we can write 
$$
f=\sum_{\lambda}\frac{(f,F_{\lambda})}{(F_{\lambda},F_{\lambda})}F_{\lambda}=\sum_{\lambda}\sum_{\mu\preceq \lambda}b_{\lambda,\mu}\frac{(f,s_{\mu})}{(F_{\lambda},F_{\lambda})}F_{\lambda}=\sum_{\lambda}\sum_{\mu\preceq \lambda}\frac{b_{\lambda,\mu}s_{\mu}(q^{-N-i})}{(F_{\lambda},F_{\lambda})}f(q^{-\mu_i-N+i})F_{\lambda},
$$
and the interpolation coefficient is equal to 
\begin{equation}
\label{eq: dlm from hopf}
d_{\lambda,\mu}=\frac{b_{\lambda,\mu}s_{\mu}(q^{-N+i})}{(F_{\lambda},F_{\lambda})}.
\end{equation}

\begin{example}\label{d's}{\rm
For $N=2$ and $\lambda=(3,2)$ we have
\begin{multline*}
F_{(3,2)}=q^2(1-x_1)(1-qx_1)(1-x_2)(1-qx_2)(q^3(x_1+x_2)-(1+q))=\\
q^7s_{3,2}-q^6(1+q)s_{3,1}-q^4(1+q+q^2+q^3)s_{2,2}+q^6s_{3,0}+q^3(1+q+q^2+q^3)(1+q)s_{2,1}-\\ q^3(1+q+q^2+q^3)s_{2,0}-q^2(1+q+q^2+q^3)(1+q)s_{1,1}+(q^5 + q^4 + 2q^3 + q^2)s_{1,0}-(q^3+q^2).
\end{multline*}
Also
$$
(F_{3,2},F_{3,2})=-q^{-5}(1-q^4)(1-q^3)(1-q^2)^2(1-q)
$$
Therefore the interpolation coefficient for $\lambda=(3,2)$ and $\mu=(1,0)$ equals
$$
d_{(3,2),(1,0)}=(q^5 + q^4 + 2q^3 + q^2)\frac{s_{1,0}(q^{-1},1)}{(F_{3,2},F_{3,2})}=
$$
$$
-\frac{(q^5 + q^4 + 2q^3 + q^2)(1+q^{-1})}{q^{-5}(1-q^4)(1-q^3)(1-q^2)^2(1-q)}=-\frac{q^6(q^4 + q^2 - q - 1)}{(1-q^4)(1-q^3)(1-q^2)(1-q)^3}.
$$
This agrees with Example \ref{ex: 1and32}.
}
\end{example}

\subsection{Divisibility}
\label{sec: divisibility}

Given a polynomial $f(x)$, define 
$$
\partial_{xy}(f):=\frac{f(x)-f(y)}{x-y}.
$$
Observe that 
$$
\partial_{xy}(fg)=\frac{f(x)-f(y)}{x-y}g(x)+f(y)\frac{g(x)-g(y)}{x-y}=\partial_{xy} f\cdot g(x)+f(y)\cdot \partial_{xy}(g).
$$
More generally, we have 
\begin{align}
\label{eq: partial}
\partial_{xy}(f_1\cdots f_k)&=\partial_{xy}(f_1)f_2(x)\cdots f_k(x)+f_1(y)\partial_{xy}(f_2)f_3(x)\cdots f_k(x)+\ldots\\ \nonumber & + f_1(y)f_2(y)\cdots \partial_{xy}(f_k).
\end{align}

\begin{example} {\rm
\label{ex: partial f}
For $f_n(x)=(1-x)\cdots (1-q^{n-1}x)$, note that $\partial_{xy}(1-q^{i}x)=-q^i$, so we get 
\begin{align*}
F_{n,0}(x,y)&=\partial_{x,y}f_{n+1}(x)=\sum_{i=0}^{n} (1-y)\cdots (1-q^{i-1}y)[\partial_{x,y}(1-q^{i}x)](1-q^{i+1}x)\cdots (1-q^{n}x)\\
&= \sum_{i=0}^{n}f_{i}(y)\cdot (-q^{i})f_{n-i}(q^{i+1}x).
\end{align*}
}
\end{example}

\begin{example}{\rm
For example, 
$$
F_{1,0}(x,y)=q(x+y)-(1+q)=q(y-1)+(qx-1)=-[qf_1(y)+f_1(qx)].
$$
Similarly, 
\begin{align*}
F_{2,0}(x,y)&=-q^3(x^2+xy+y^2)+(q+q^2+q^3)(x+y)-(1+q+q^2)\\ &=
-[(1-qx)(1-q^2x)+q(1-q^2x)(1-y)+q^2(1-y)(1-qy)]\\
&=
-[f_2(qx)+qf_1(x)f_2(y)+q^2f_2(y)].
\end{align*}}
\end{example}

\begin{corollary}
For all  integers $a$ and $b$ the value $F_{n,0}(q^a,q^b)$ is divisible by $\left(\left\lfloor\frac{n}{2}\right\rfloor\right)_{q}!$
\end{corollary}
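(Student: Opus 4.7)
The plan is to use the explicit expansion of $F_{n,0}$ from Example~\ref{ex: partial f} and reduce the corollary to an elementary divisibility property of the one-variable polynomials $f_k$ evaluated at integer powers of $q$. By that example,
$$F_{n,0}(q^{a},q^{b})=-\sum_{i=0}^{n}q^{i}\,f_{i}(q^{b})\,f_{n-i}(q^{a+i+1}),$$
so it suffices to control the divisibility of each summand in $\Z[q,q^{-1}]$.

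The main lemma I would establish is that for every integer $c$ and every $k\ge 0$, the value $f_{k}(q^{c})$ is divisible by $(q;q)_{k}$ in $\Z[q,q^{-1}]$. I would prove this by trichotomy on $c$. If $c\in\{-(k-1),\ldots,0\}$ then one of the factors $(1-q^{c+j})$ in $f_{k}(q^{c})=\prod_{j=0}^{k-1}(1-q^{c+j})$ vanishes and the claim is trivial. If $c\ge 1$ then
$$f_{k}(q^{c})=\frac{(q;q)_{c+k-1}}{(q;q)_{c-1}}=(q;q)_{k}\binom{c+k-1}{k}_{q},$$
and the Gaussian binomial is a genuine polynomial in $q$. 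If $c\le -k$, I would apply the identity $1-q^{-m}=-q^{-m}(1-q^{m})$ to rewrite
$$f_{k}(q^{c})=(-1)^{k}q^{-\sum_{j=0}^{k-1}(c+j)}\prod_{m=-c-k+1}^{-c}(1-q^{m})=(-1)^{k}q^{-\sum_{j=0}^{k-1}(c+j)}(q;q)_{k}\binom{-c}{k}_{q},$$
and since $q^{s}$ is a unit in $\Z[q,q^{-1}]$, the divisibility by $(q;q)_{k}$ follows.

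With this lemma in hand, each summand $q^{i}f_{i}(q^{b})f_{n-i}(q^{a+i+1})$ in the expansion above is divisible by $(q;q)_{i}(q;q)_{n-i}$, hence in particular by $(q;q)_{\max(i,\,n-i)}$. Since $\max(i,n-i)\ge \lceil n/2\rceil\ge \lfloor n/2\rfloor$ for every $i\in\{0,\ldots,n\}$, every summand is divisible by $(q;q)_{\lfloor n/2\rfloor}$, and therefore so is the sum $F_{n,0}(q^{a},q^{b})$. Finally, the factorization $(q;q)_{\lfloor n/2\rfloor}=(1-q)^{\lfloor n/2\rfloor}[\lfloor n/2\rfloor]_{q}!$ shows that $F_{n,0}(q^{a},q^{b})$ is divisible by $[\lfloor n/2\rfloor]_{q}!$, proving the corollary.

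The only delicate step is the case $c\le -k$ of the divisibility lemma, where one has to be careful that the powers of $q$ appearing as one rewrites negative exponents are absorbed as units of $\Z[q,q^{-1}]$; once this bookkeeping is fixed the rest of the argument is purely formal. I expect no other obstacle, since the argument uses only Example~\ref{ex: partial f} (already proved) and the standard integrality of Gaussian binomial coefficients.
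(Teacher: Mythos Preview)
Your proof is correct and follows essentially the same approach as the paper: both use the expansion from Example~\ref{ex: partial f} and the observation that in each summand at least one of the factors $f_i(q^b)$, $f_{n-i}(q^{a+i+1})$ has index $\ge \lfloor n/2\rfloor$, hence is divisible by $(q;q)_{\lfloor n/2\rfloor}$. The paper compresses your trichotomy lemma into the single phrase ``by $q$-binomial theorem,'' and there is a harmless sign slip in your exponent in the case $c\le -k$ (it should be $q^{\sum_j(c+j)}$, not $q^{-\sum_j(c+j)}$), but as you note this factor is a unit and does not affect the argument.
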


\begin{proof}
Let $k=\left\lfloor\frac{n}{2}\right\rfloor$. In the above equation either $i\ge k$ or $n-i\ge k$, so each term in the sum 
is either divisible by $f_k(q^{i+1+a})$ or by $f_k(q^b)$, so by $q$-binomial theorem it is divisible by $(k)_{q}!$
\end{proof}

More generally, let $\partial_{i}=\partial_{x_i,x_{i+1}}$ then it is well known that $\partial_i$ satisfy braid relations, so one can define $\partial_{w}$ for any permutation $w$. Furthermore, 
$$
F_{\lambda}(x_1,\ldots,x_N)=\partial_{w_0}[f_{\lambda_1+N-1}(x_1)\cdots f_{\lambda_N}(x_N)],
$$
where $w_0=(N\ N-1\ \ldots 1)$ is the longest element in $S_N$. 

\begin{lemma}\label{lem:div}
For all $\lambda$ one can write $F_{\lambda}(x_1,\ldots,x_N)$ as the sum where each term has the form
\begin{equation}
\label{eq: product}
f_{j_1}(q^{s_1}x_{m_1})\cdots f_{j_{d}}(q^{s_d}x_{m_d}),\ \text{where}\ j_1+\ldots+j_{d}=|\lambda|\ \text{and}\ d=\binom{N+1}{2}.
\end{equation}
Here the indices $m_i$ might repeat arbitrarily.
\end{lemma}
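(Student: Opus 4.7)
The plan is to exploit the formula
\[
F_{\lambda}(x_1,\ldots,x_N)=\partial_{w_0}\bigl[f_{\lambda_1+N-1}(x_1)\cdots f_{\lambda_N}(x_N)\bigr]
\]
stated just above the lemma, and then induct along a reduced decomposition $w_0=s_{i_1}\cdots s_{i_m}$ of length $m=\binom{N}{2}$. The starting product is already a single term of the form \eqref{eq: product}, with exactly $N$ factors $f_{\lambda_i+N-i}(q^0x_i)$ and total index $\sum_i(\lambda_i+N-i)=|\lambda|+\binom{N}{2}$. I will show that every application of a simple divided difference $\partial_k$ carries a sum of products of the form \eqref{eq: product} to another such sum in which each summand has one more factor and total index diminished by $1$. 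After the $\binom{N}{2}$ applications $\partial_{i_1},\ldots,\partial_{i_m}$, each summand will have $N+\binom{N}{2}=\binom{N+1}{2}$ factors and total index $|\lambda|$, as claimed.

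For the inductive step, by linearity I may work with a single product $P$ and group its factors as $P=R\cdot Q\cdot S$, where $R$ collects the factors depending on $x_k$, $Q$ those depending on $x_{k+1}$, and $S$ the rest. Since $S$ is $s_k$-invariant, $\partial_k P=S\cdot\partial_k(RQ)$. Writing $R=\tilde R(x_k)$ and $Q=\tilde Q(x_{k+1})$ as single-variable polynomials, the elementary identity
\[
\tilde R(x_k)\tilde Q(x_{k+1})-\tilde R(x_{k+1})\tilde Q(x_k)=\tilde R(x_k)\bigl[\tilde Q(x_{k+1})-\tilde Q(x_k)\bigr]+\tilde Q(x_k)\bigl[\tilde R(x_k)-\tilde R(x_{k+1})\bigr]
\]
expresses $\partial_k(RQ)$ as $\tilde R(x_k)\cdot\partial\tilde Q+\tilde Q(x_k)\cdot\partial\tilde R$, where $\partial$ denotes the single-variable divided difference evaluated at $(x_k,x_{k+1})$. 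The quantities $\partial\tilde R$ and $\partial\tilde Q$ are divided differences of products of single-variable factors $f_a(q^s\cdot)$, and the Leibniz formula \eqref{eq: partial} expands each into a sum in which exactly one factor is replaced by its own divided difference, while the remaining factors are evaluated at $x_k$ (to the right of the distinguished position) or at $x_{k+1}$ (to the left). The one-factor divided difference of $f_a(q^s\cdot)$, computed via Example \ref{ex: partial f} after the substitution $u=q^sx,\ v=q^sy$, is a sum of two-factor products of total index $a-1$, each of the required shape $f_j(q^{s'}x_m)$. Thus across each summand, one factor of index $a$ gets replaced by a pair of factors of total index $a-1$, giving one extra factor and one fewer unit of total index, as needed.

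The argument is essentially bookkeeping, and the only (minor) obstacle is to verify that after the Leibniz expansion, the variable swap $x_k\leftrightarrow x_{k+1}$, and the application of Example \ref{ex: partial f}, every factor still has the required form $f_j(q^s x_m)$ with integer $s$. This is immediate from the explicit formulas: untouched factors merely change which of $x_k,x_{k+1}$ they depend on (the index $m$ changes, not $(j,s)$), while the newly produced pair of factors has the shape directly read off from Example \ref{ex: partial f}. Iterating this observation along the chosen reduced word for $w_0$ yields the lemma.
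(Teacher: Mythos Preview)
Your proof is correct and follows essentially the same approach as the paper: both use the formula $F_{\lambda}=\partial_{w_0}[f_{\lambda_1+N-1}(x_1)\cdots f_{\lambda_N}(x_N)]$ and then argue inductively that each simple divided difference $\partial_i$ takes a product of the form \eqref{eq: product} with $\ell$ factors to a sum of such products with $\ell+1$ factors and total index decreased by one, via the Leibniz rule \eqref{eq: partial} and Example \ref{ex: partial f}. Your version is just more explicit about grouping factors by variable and checking that integer $q$-shifts are preserved; the paper's proof records the same idea in one sentence.
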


\begin{proof}
From \eqref{eq: partial} and Example \ref{ex: partial f} it is clear that $\partial_{i}$ applied to a product \eqref{eq: product} with $\ell$ factors produces a sum of similar products with $\ell+1$ factors. We start from a product of $N$ factors, and $\partial_{w}$ is a composition of $\binom{N}{2}$ operators $\partial_i$, so the terms in the resulting sum have $N+\binom{N}{2}=\binom{N+1}{2}$ factors. 
Also, each $\partial_i$ decreases the degree by 1, so 
$$
j_1+\ldots+j_{d}=\sum(\lambda_i+N-i)-\binom{N}{2}=|\lambda|.
$$
\end{proof}

\begin{remark}{\rm
A more careful analysis of this proof leads to a combinatorial formula for $F_{\lambda}$ where the terms are labeled by semistandard tableaux, but we do not need it here. This is a $q$-analogue of the expansion of a Schur function in the monomial basis. }
\end{remark}

\begin{lemma}
\label{lem: divisible}
For any sequence of integers $a_1,\ldots,a_N$ the value $F_{\lambda}(q^{a_1},\ldots,q^{a_N})$ is divisible by $(k)_{q}!$ where $k=\left\lfloor \frac{|\lambda|}{\binom{N+1}{2}}\right\rfloor$. 
\end{lemma}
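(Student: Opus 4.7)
The plan is to combine Lemma \ref{lem:div} with a pigeonhole argument and a basic divisibility fact about $q$-Pochhammer symbols at integer powers of $q$. By Lemma \ref{lem:div}, after substituting $x_i = q^{a_i}$, the value $F_{\lambda}(q^{a_1},\ldots,q^{a_N})$ is a sum of terms, each of the form
\[
\prod_{r=1}^{d}(q^{s_r+a_{m_r}};q)_{j_r},\qquad j_1+\cdots+j_d=|\lambda|,\qquad d=\binom{N+1}{2},
\]
where the $s_r$ are (possibly negative) integers produced by the divided-difference recursion.

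The first key step is a pigeonhole observation: since there are $d=\binom{N+1}{2}$ factors and $\sum_r j_r = |\lambda|$, in each such term there exists some index $r_0$ with
\[
j_{r_0}\;\ge\;\Bigl\lceil\tfrac{|\lambda|}{d}\Bigr\rceil\;\ge\;k.
\]
Hence, to conclude divisibility of the whole sum by $(k)_q!$, it suffices to prove that the single factor $(q^{s_{r_0}+a_{m_{r_0}}};q)_{j_{r_0}}$ is divisible by $(q;q)_k$ in $\mathbb Z[q,q^{-1}]$, since $(q;q)_k=(1-q)^k (k)_q!$.

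The second step is therefore the auxiliary claim that for any integer $s$ and any $n\ge 0$,
\[
(q^s;q)_n \;\in\; (q;q)_n\,\mathbb Z[q,q^{-1}].
\]
This splits into three cases. If $s\ge 1$, one has $(q^s;q)_n=(q;q)_{s+n-1}/(q;q)_{s-1}$ and the ratio equals $\binom{s+n-1}{n}_q(q;q)_n/(q;q)_n\cdot$ adjustments, giving $(q^s;q)_n/(q;q)_n=\binom{s+n-1}{n}_q\in\mathbb Z[q]$. If $0\ge s$ and $s+n-1\ge 0$, then some factor $1-q^0$ vanishes, so the claim is trivial. If $s\le -n$, write $1-q^{s+i}=-q^{s+i}(1-q^{-s-i})$ for $0\le i\le n-1$, which produces $(-1)^n q^{ns+\binom{n}{2}}(q^{-s-n+1};q)_n$, reducing to the first case. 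Since $k\le j_{r_0}$, $(q;q)_{j_{r_0}}$ is divisible by $(q;q)_k$, and hence by $(k)_q!$, finishing the argument.

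No serious obstacle is expected; the only slightly delicate point is keeping track of signs and units ($\pm q^{\text{integer}}$) in the negative-$s$ case of the auxiliary claim, but these are absorbed into $\mathbb Z[q,q^{-1}]^\times$ and do not affect the divisibility statement. The pigeonhole bound $k=\lfloor|\lambda|/\binom{N+1}{2}\rfloor$ is precisely what Lemma \ref{lem:div} makes available, which is why the factorization into $\binom{N+1}{2}$ factors there is crucial.
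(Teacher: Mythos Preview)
Your proof is correct and follows essentially the same route as the paper's: invoke Lemma \ref{lem:div}, apply pigeonhole to locate a factor $f_{j_{r_0}}(q^{s_{r_0}+a_{m_{r_0}}})$ with $j_{r_0}\ge k$, and then use that $f_j(q^a)$ is divisible by $(j)_q!$ for any integer $a$. The only difference is that you spell out the case analysis for this last divisibility fact, whereas the paper states it in one line (appealing implicitly to the $q$-binomial theorem).
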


\begin{proof}
In each term \eqref{eq: product} there are $d=\binom{N+1}{2}$ indices $j_1,\cdots,j_d$ which add up to $|\lambda|$, so at least one of these indices is greater than $|\lambda|/d$. It remains to  notice that $f_j(q^a)$ is divisible by $(q)_{j}!$ for all integers $a$. 
\end{proof}

The following lemma gives a rough description of the expansion
\begin{equation}
\label{eq: nonsymmetric expansion}
F_{\lambda}(x_1,\ldots,x_N)=\sum_{m_1,\ldots,m_N}b_{m_1,\ldots,m_k}f_{m_1}(x_1)\cdots f_{m_N}(x_N).
\end{equation}
of the symmetric interpolation polynomial $F_{\lambda}$ in terms of nonsymmetric ones. 

\begin{lemma}
Given $k$, for sufficiently large $|\lambda|$ for all terms of the expansion \eqref{eq: nonsymmetric expansion}  either the coefficient $b_{m_1,\ldots,m_k}$ is divisible by $(k)_{q}!$ or there exists $m_i \geq k$  for some  $1\leq i\leq N$.
\end{lemma}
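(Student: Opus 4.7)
My plan is to invert the expansion \eqref{eq: nonsymmetric expansion} by multi-variable Newton interpolation, express each coefficient $b_{m_1,\ldots,m_N}$ as an explicit $\Z[q,q^{-1}]$-linear combination of the values $F_\lambda(q^{-\ell_1},\ldots,q^{-\ell_N})$ with $\ell_i\le m_i$, then invoke Lemma \ref{lem: divisible} for the divisibility of these values, and finally conclude by a cyclotomic-polynomial bookkeeping inside the UFD $\Z[q,q^{-1}]$.

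For the inversion, I will use the tensor-product pairing from Section \ref{sec: def polynomials}. Combining the orthogonality relation $(f_{k_1,\ldots,k_N},f_{m_1,\ldots,m_N})=\delta_{\mathbf{k},\mathbf{m}}\prod_i q^{-m_i}(q;q)_{m_i}$ with the $q$-binomial identity \eqref{eq: interpolation in monomial 1d} applied variable by variable, the one-variable inversion formula \eqref{eq: explicit interpolation 1d} tensorises to
\begin{equation*}
\prod_{i=1}^N (q;q)_{m_i}\cdot b_{m_1,\ldots,m_N}\;=\;q^{\sum_i m_i}\sum_{0\le \ell_i\le m_i}\prod_{i=1}^N (-1)^{\ell_i} q^{\binom{\ell_i}{2}}\binom{m_i}{\ell_i}_q\, F_\lambda(q^{-\ell_1},\ldots,q^{-\ell_N}).
\end{equation*}
Since $\binom{m_i}{\ell_i}_q\in\Z[q]$ and, by Lemma \ref{lem: divisible}, each value $F_\lambda(q^{-\ell_1},\ldots,q^{-\ell_N})$ is divisible by $(q;q)_{k'}$ with $k':=\lfloor |\lambda|/\binom{N+1}{2}\rfloor$, the whole right-hand side lies in $(q;q)_{k'}\Z[q,q^{-1}]$.

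To pass from divisibility of $\prod_i (q;q)_{m_i}\cdot b_{m_1,\ldots,m_N}$ to divisibility of $b_{m_1,\ldots,m_N}$ itself by $(q;q)_k$, I will work prime-by-prime in the UFD $\Z[q,q^{-1}]$, whose irreducibles are (units times) cyclotomic polynomials $\Phi_d$ satisfying $\operatorname{mult}_{\Phi_d}((q;q)_n)=\lfloor n/d\rfloor$. Under the standing assumption $m_i<k$ for all $i$, a short estimate yields
\[
\operatorname{mult}_{\Phi_d}(b_{m_1,\ldots,m_N})\;\ge\;\lfloor k'/d\rfloor-\sum_{i=1}^N\lfloor m_i/d\rfloor\;\ge\;\lfloor k'/d\rfloor-\tfrac{N(k-1)}{d},
\]
so choosing $|\lambda|$ large enough that $k'\ge (N+2)k$ (for instance $|\lambda|\ge (N+2)k\binom{N+1}{2}$) forces this bound to exceed $\lfloor k/d\rfloor$ uniformly for all $d\le k$, while the case $d>k$ imposes no constraint. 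The main obstacle I expect is purely accounting: checking that the multi-variable Newton inversion is derived correctly and that the unit prefactor $q^{\sum m_i}$ does not interfere with the cyclotomic bookkeeping. Both issues are forced by the tensor structure of the pairing and the UFD property, so no genuinely new input is required beyond Lemma \ref{lem: divisible}.
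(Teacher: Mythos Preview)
Your approach is correct and takes a genuinely different route from the paper. The paper proceeds constructively: it uses Lemma~\ref{lem:div} to write $F_\lambda$ as a sum of products $\prod_\alpha f_{j_\alpha}(q^{s_\alpha}x_{m_\alpha})$ with $\sum_\alpha j_\alpha=|\lambda|$ and $\binom{N+1}{2}$ factors, so for $|\lambda|>2k\binom{N+1}{2}$ some factor has the form $f_{2k}(q^s x_i)$; it then applies Lemma~\ref{lem: binomial} to expand $f_{2k}(q^s x_i)$ in the unshifted $f_j(x_i)$, producing terms each carrying either a coefficient in $(j)_q!\,\Z[q,q^{-1}]$ with $j\ge k$ or a factor $f_j(x_i)$ with $j\ge k$, hence an explicit splitting $F_\lambda=A\cdot(k)_q!+\sum_i B_i f_k(x_i)$. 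Your argument instead inverts \eqref{eq: nonsymmetric expansion} via the tensor pairing, feeds in the divisibility of the point values from Lemma~\ref{lem: divisible}, and finishes by a $\Phi_d$-multiplicity count in the UFD $\Z[q,q^{-1}]$. This is more modular (Lemma~\ref{lem: divisible} is used only as a black box) at the cost of a somewhat worse threshold on $|\lambda|$ and no explicit decomposition of $F_\lambda$.

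One small correction: Lemma~\ref{lem: divisible} gives divisibility of $F_\lambda(q^{-\ell_1},\ldots,q^{-\ell_N})$ by $(k')_q!$, not by $(q;q)_{k'}$, and the target in the statement is $(k)_q!$, not $(q;q)_k$. This does not affect your argument: for every cyclotomic prime $\Phi_d$ with $d\ge 2$ one has $\operatorname{mult}_{\Phi_d}\bigl((n)_q!\bigr)=\operatorname{mult}_{\Phi_d}\bigl((q;q)_n\bigr)=\lfloor n/d\rfloor$, so your inequality chain is unchanged, while for $d=1$ the target $\operatorname{mult}_{\Phi_1}\bigl((k)_q!\bigr)=0$ is automatic. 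You should also record that $b_{m_1,\ldots,m_N}\in\Z[q,q^{-1}]$ a priori (the products $f_{m_1}(x_1)\cdots f_{m_N}(x_N)$ form a $\Z[q,q^{-1}]$-basis by \eqref{eq: monomial in interpolation basis}), so that speaking of cyclotomic multiplicities is legitimate.
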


\begin{proof}
We follow the same logic as in Lemma \ref{lem: divisible}. For $|\lambda|>2k\binom{N+1}{2}$ every term \eqref{eq: product} is divisible by $f_{2k}(q^sx_i)$ for some $s$ and $i$.  By Lemma \ref{lem: binomial} this can be further decomposed into terms which are divisible by $(j)_{q}!f_{2k-j}(x_i)$, and either $j$ or $2k-j$ is greater than or equal to $k$. Overall, we presented 
$$
F_{\lambda}(x_1,\ldots,x_N)=A(k)_{q}!+\sum B_i f_{k}(x_i)
$$
for some polynomials $A$ and $B_i$. It remains to notice that the polynomial $B_i f_{k}(x_i)$ can be presented as the sum of 
$f_{m_1}(x_1)\cdots f_{m_N}(x_N)$ where $m_i\ge k$. 
\end{proof}

\section{Stability of interpolation and the case $N=2$}
\label{sec: stability}

\subsection{Stability of interpolation matrices}
In this section 
study  the dependence of the interpolation polynomials 
on $N$.

As above, if partition $\lambda$ has less than $N$ parts we can complete it with zeroes. We denote by $F_{\lambda;N}(x_1,\ldots,x_N)$ the corresponding polynomial in $N$ variables. 

\begin{lemma}
\label{lem: stability}
Let $\lambda$ be a partition with at most $N$ parts. Then
$$
F_{\lambda;N}(x_1,\ldots,x_{N-1},1)=
\begin{cases}
(-1)^{N-1}q^{\binom{N-1}{2}}F_{\lambda;N-1}(qx_1,\ldots,qx_{N-1})& \text{if}\ \lambda_N=0\\
0 & \text{otherwise}.
\end{cases}
$$
\end{lemma}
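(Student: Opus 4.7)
The strategy is to apply the determinantal formula \eqref{eq: def F} directly and reduce the substitution $x_N=1$ to a smaller determinant. Writing the defining identity as
\[
F_{\lambda;N}(x_1,\dots,x_N)\cdot\prod_{i<j}(x_i-x_j)=\det\bigl(f_{\lambda_i+N-i}(x_j)\bigr)_{1\le i,j\le N},
\]
I will set $x_N=1$ and study the last column of the matrix. Since $f_m(1)=0$ whenever $m\ge 1$, the entry $M_{i,N}=f_{\lambda_i+N-i}(1)$ vanishes for every $i<N$ (as $N-i\ge 1$), while for $i=N$ it equals $f_{\lambda_N}(1)$, which is $0$ if $\lambda_N\ge 1$ and $1$ if $\lambda_N=0$.

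If $\lambda_N>0$, the whole last column is zero, so the right-hand determinant vanishes identically in $x_1,\dots,x_{N-1}$. Since the factor $\prod_{i<N}(x_i-1)$ in the Vandermonde denominator is a nonzero polynomial, the quotient $F_{\lambda;N}(x_1,\dots,x_{N-1},1)$ must vanish.

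If $\lambda_N=0$, writing $\tilde\lambda=(\lambda_1,\dots,\lambda_{N-1})$, the last column becomes $(0,\dots,0,1)^T$ and cofactor expansion along it yields
\[
\det\bigl(f_{\lambda_i+N-i}(x_j)\bigr)\Big|_{x_N=1}=\det\bigl(f_{\tilde\lambda_i+N-i}(x_j)\bigr)_{1\le i,j\le N-1}.
\]
Using the identity $f_{m+1}(x)=(1-x)\,f_m(qx)$ on each row, this equals
\[
\prod_{j=1}^{N-1}(1-x_j)\cdot\det\bigl(f_{\tilde\lambda_i+(N-1)-i}(qx_j)\bigr)_{1\le i,j\le N-1},
\]
and the latter determinant is, by \eqref{eq: def F}, equal to $F_{\tilde\lambda;N-1}(qx_1,\dots,qx_{N-1})\cdot q^{\binom{N-1}{2}}\prod_{i<j\le N-1}(x_i-x_j)$.

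Finally I divide both sides by the specialized Vandermonde. The factor $\prod_{i<j\le N-1}(x_i-x_j)$ on the right cancels the corresponding factor on the left, and $\prod_{i<N}(x_i-1)=(-1)^{N-1}\prod_{j=1}^{N-1}(1-x_j)$ absorbs the $\prod_j(1-x_j)$ from the numerator at the cost of a sign $(-1)^{N-1}$. This leaves exactly the claimed formula. There is no real obstacle here; the main thing to watch is the bookkeeping of the signs and the power of $q$ produced by rescaling the Vandermonde $\prod_{i<j}(qx_i-qx_j)=q^{\binom{N-1}{2}}\prod_{i<j}(x_i-x_j)$.
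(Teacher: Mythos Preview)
Your argument is correct and coincides essentially verbatim with the alternative determinantal proof the paper records in the Remark immediately following the lemma. The paper's \emph{primary} proof is different: it argues that $L(x_1,\dots,x_{N-1}):=F_{\lambda;N-1}(qx_1,\dots,qx_{N-1})$ satisfies the same vanishing conditions at the interpolation nodes $q^{-\mu_i-N+i}$ as $F_{\lambda;N}(x_1,\dots,x_{N-1},1)$ and then invokes the uniqueness part of Proposition~\ref{prop:interpolation}, fixing the scalar via Lemma~\ref{lem:diagonal}. Your direct manipulation of the determinant is more hands-on and avoids appealing to the characterization theorem; the paper's main proof is shorter once that machinery is in place and illustrates how the vanishing characterization can replace explicit computation.
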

\begin{proof}
Let $\mu$ be a partition with at most $N-1$ parts. Then by Proposition \ref{prop:interpolation}
$$
F_{\lambda;N}(q^{-\mu_1-N+1},\ldots,q^{-\mu_{N-1}-1},1)=0
$$
unless $\mu$ contains $\lambda$. If $\lambda_{N}>0$ then this never happens and 
$F_{\lambda;N}(x_1,\ldots,x_{N-1},1)=0$. If $\lambda_{N}=0$ we write 
$L(x_1,\ldots,x_{N-1})=F_{\lambda;N-1}(qx_1,\ldots,qx_{N-1})$.
We have 
$$
L(q^{-\mu_1-N+1},\ldots,q^{-\mu_{N-1}-1})=F_{\lambda;N-1}(q^{-\mu_1-(N-1)+1},\ldots,q^{-\mu_{N-1}})
$$
which vanishes unless $\mu$ contains $\lambda$, so by Proposition \ref{prop:interpolation} $F_{\lambda;N}(x_1,\ldots,x_{N-1},1)$
is proportional to $L(x_1,\ldots,x_{N-1})$. Finally, at $\mu=\lambda$ we can use Lemma \ref{lem:diagonal} to determine the coefficient.
\end{proof}

\begin{remark}{\rm
We can also prove the lemma using the explicit determinantal formula. Indeed, $f_{\lambda_i+N-i}(1)=0$ unless $f_{\lambda_i+N-i}=0$
which is equivalent to $i=N$ and $\lambda_N=0$. Therefore for $\lambda_N\neq 0$ the last row in the matrix $f_{\lambda_i+N-i}(x_j)$
vanishes (where $x_N=1$), and $F_{\lambda;N}(x_1,\ldots,x_{N-1},1)=0$. For $\lambda_N=0$ we have
$$
F_{\lambda;N}(x_1,\ldots,x_{N-1},1)=\frac{\det\left[f_{\lambda_i+N-i}(x_j)\right]_{i,j=1}^{N-1}}{\prod_{i<j\le N-1}(x_i-x_j)\prod_{i\le N-1}(x_i-1)}.
$$
Note that $f_{k+1}(x)=(1-x)f_{k}(qx)$, so 
$$
f_{\lambda_i+N-i}(x_j)=(1-x_j)f_{\lambda_i+(N-1)-i}(qx_j)
$$
Therefore
$$
F_{\lambda;N}(x_1,\ldots,x_{N-1},1)=\frac{\prod_{i=1}^{n} (1-x_i)\det\left[f_{\lambda_i+(N-1)-i}(qx_j)\right]_{i,j=1}^{N-1}}{\prod_{i<j\le N-1}(x_i-x_j)\prod_{i\le N-1}(x_i-1)}=
$$
$$
(-1)^{N-1}q^{\binom{N-1}{2}}F_{\lambda;N-1}(qx_1,\ldots,qx_{N-1}).
$$}
\end{remark}

\begin{corollary}
\label{cor: stability}
{\rm
Let $c_{\lambda,\mu}^{(N)}$ be the coefficient defined in previous section for symmetric functions in $N$ variables.
Then the expressions
$$
(-1)^{\binom{N}{2}}q^{-\binom{N}{3}}c^{(N)}_{\lambda,\mu},(-1)^{\binom{N}{2}}q^{\binom{N}{3}}c^{(N)*}_{\lambda,\mu}, (-1)^{\binom{N}{2}}q^{\binom{N}{3}}d^{(N)}_{\lambda,\mu}
$$
are independent of $N$ (provided that $\lambda$ and $\mu$ have at most $N$ parts).
}
\end{corollary}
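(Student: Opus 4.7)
The plan is to establish, for each of the three normalized quantities, a simple one-step reduction from $N$ variables to $N-1$ variables and then iterate. All three reductions follow from Lemma~\ref{lem: stability} applied at the value $x_N=1$.

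First I would observe that if $\mu$ has at most $N-1$ parts, i.e.\ $\mu_N=0$, then the last evaluation point in $c^{(N)}_{\lambda,\mu}=F_{\lambda;N}(q^{-\mu_1-N+1},\ldots,q^{-\mu_N})$ is exactly $q^{-\mu_N}=1$, so Lemma~\ref{lem: stability} applies. Assuming additionally that $\lambda_N=0$, it yields
\[
c^{(N)}_{\lambda,\mu}=(-1)^{N-1}q^{\binom{N-1}{2}}F_{\lambda;N-1}\bigl(q\cdot q^{-\mu_1-N+1},\ldots,q\cdot q^{-\mu_{N-1}-1}\bigr)=(-1)^{N-1}q^{\binom{N-1}{2}}c^{(N-1)}_{\lambda,\mu},
\]
since $q\cdot q^{-\mu_i-N+i}=q^{-\mu_i-(N-1)+i}$. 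To conclude stability of $(-1)^{\binom{N}{2}}q^{-\binom{N}{3}}c^{(N)}_{\lambda,\mu}$, I would then combine this recursion with the two binomial identities $\binom{N}{2}=\binom{N-1}{2}+(N-1)$ and $\binom{N}{3}=\binom{N-1}{3}+\binom{N-1}{2}$: the first gives $(-1)^{\binom{N}{2}+N-1}=(-1)^{\binom{N-1}{2}}$ (mod~$2$, $2(N-1)$ disappears), and the second gives $-\binom{N}{3}+\binom{N-1}{2}=-\binom{N-1}{3}$. These two identities precisely cancel the prefactor $(-1)^{N-1}q^{\binom{N-1}{2}}$ in the recursion, giving equality of the normalized quantities for $N$ and $N-1$.

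For $c^{(N)*}_{\lambda,\mu}(q)=c^{(N)}_{\lambda,\mu}(q^{-1})$ I would just substitute $q\mapsto q^{-1}$ in the recursion to obtain
\[
c^{(N)*}_{\lambda,\mu}(q)=(-1)^{N-1}q^{-\binom{N-1}{2}}c^{(N-1)*}_{\lambda,\mu}(q),
\]
and the same parity check together with $\binom{N}{3}-\binom{N-1}{2}=\binom{N-1}{3}$ shows that $(-1)^{\binom{N}{2}}q^{\binom{N}{3}}c^{(N)*}_{\lambda,\mu}$ is $N$-independent.

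For $d_{\lambda,\mu}^{(N)}$, I would plug the two recursions above into the definition
\[
d_{\lambda,\mu}^{(N)}=(-1)^{|\mu|-|\lambda|}q^{\cont(\lambda)-\cont(\mu)}\,\frac{c^{(N)*}_{\lambda,\mu}}{c^{(N)}_{\mu,\mu}\,c^{(N)*}_{\lambda,\lambda}}.
\]
The combinatorial prefactor $(-1)^{|\mu|-|\lambda|}q^{\cont(\lambda)-\cont(\mu)}$ is $N$-free, while the three $N$-dependent factors contribute $(-1)^{N-1}q^{-\binom{N-1}{2}}$ from the numerator, and $\bigl[(-1)^{N-1}q^{\binom{N-1}{2}}\bigr]^{-1}\bigl[(-1)^{N-1}q^{-\binom{N-1}{2}}\bigr]^{-1}=(-1)^{2(N-1)}q^{0}\cdot\text{(rest)}$; collecting them yields $d_{\lambda,\mu}^{(N)}=(-1)^{N-1}q^{-\binom{N-1}{2}}d_{\lambda,\mu}^{(N-1)}$, and multiplying by $(-1)^{\binom{N}{2}}q^{\binom{N}{3}}$ absorbs this factor exactly as before.

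Iterating from any $N$ down to $N_{0}:=\max(\ell(\lambda),\ell(\mu))$ then gives the claim. I do not anticipate a real obstacle here; the only thing to be careful with is the bookkeeping of signs and $q$-exponents, so the cleanest presentation is probably to package the three binomial identities $\binom{N}{2}\equiv\binom{N-1}{2}+N-1\pmod 2$, $\binom{N}{3}=\binom{N-1}{3}+\binom{N-1}{2}$ at the start and then apply them mechanically in each of the three cases.
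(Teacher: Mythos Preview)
Your proof is correct and follows exactly the approach the paper intends: the corollary is stated immediately after Lemma~\ref{lem: stability} without a separate proof, precisely because the recursion $c^{(N)}_{\lambda,\mu}=(-1)^{N-1}q^{\binom{N-1}{2}}c^{(N-1)}_{\lambda,\mu}$ you derived (together with its $q\mapsto q^{-1}$ version and the Pascal identities $\binom{N}{2}=\binom{N-1}{2}+(N-1)$, $\binom{N}{3}=\binom{N-1}{3}+\binom{N-1}{2}$) is the whole content. Your bookkeeping of the signs and $q$-exponents is accurate in all three cases.
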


\begin{example}{\rm
For one-row partitions $\lambda=(b)$ and $\mu=(a)$ the interpolation coefficients are given by
the formulas in Example \ref{ex:one row} up to a monomial factor.} 
\end{example}

The above results allow us to describe Schur expansion of interpolation polynomials:

\begin{proposition}
\label{prop: HOMFLY interpolation}
{\rm
We have
\begin{equation}
F_{\lambda}^{(N)}=(-1)^{\binom{N}{2}}q^{\binom{N}{3}}\sum_{\mu\subset \lambda} \overline{b_{\lambda,\mu}} A^{|\mu|} \prod_{\sq\in \lambda\setminus \mu}(1-Aq^{c(\sq)}) s_{\lambda}^{(N)}
\end{equation}
where $A=q^N$ and the coefficients 
$$
\overline{b_{\lambda,\mu}}=(-1)^{\binom{N}{2}}q^{\binom{N}{3}}d^{(N)}_{\lambda,\mu}q^{-|\lambda|-|\mu|-n(\mu)}\prod_{\sq\in \mu}(1-q^{h(\sq)})
$$ 
do not depend on $N$.
}
\end{proposition}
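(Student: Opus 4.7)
The plan is to reduce the claim to a direct calculation using three already-available ingredients: the Schur-to-interpolation transition formula \eqref{eq: dlm from hopf}, the orthogonality relation \eqref{orthogonality}, and the principal specialization of $s_{\mu}$ derived in the proof of \eqref{orthogonality}. Rearranging \eqref{eq: dlm from hopf} gives
\[
b_{\lambda,\mu}^{(N)} \;=\; \frac{d_{\lambda,\mu}^{(N)}\, (F_{\lambda},F_{\lambda})^{(N)}}{s_{\mu}^{(N)}(q^{-N+1},\ldots,1)},
\]
so it suffices to expand the numerator and denominator in $A=q^{N}$ and match the factors.

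First, by \eqref{orthogonality} one has $(F_{\lambda},F_{\lambda})^{(N)} = q^{-|\lambda|+2\binom{N}{3}}\prod_{\sq\in\lambda}(1-Aq^{c(\sq)})$, which is already written in the desired form. Next, from the proof of \eqref{orthogonality},
\[
s_{\mu}^{(N)}(q^{-N+1},\ldots,1)\;=\;q^{-n(\mu)}\prod_{\sq\in\mu}\frac{1-q^{-N-c(\sq)}}{1-q^{-h(\sq)}}.
\]
I will rewrite each factor using $1-q^{-N-c(\sq)} = -q^{-N-c(\sq)}(1-Aq^{c(\sq)})$ and $1-q^{-h(\sq)} = -q^{-h(\sq)}(1-q^{h(\sq)})$; the signs cancel pairwise, the $q$-exponents accumulate via $\sum_{\sq\in\mu}c(\sq)=n(\mu')-n(\mu)$ and $\sum_{\sq\in\mu}h(\sq)=n(\mu)+n(\mu')+|\mu|$, and one obtains
\[
s_{\mu}^{(N)}(q^{-N+1},\ldots,1)\;=\;q^{n(\mu)+(1-N)|\mu|}\,\frac{\prod_{\sq\in\mu}(1-Aq^{c(\sq)})}{\prod_{\sq\in\mu}(1-q^{h(\sq)})}.
\]

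Substituting these two expressions into the formula for $b_{\lambda,\mu}^{(N)}$ and observing that when $\mu\subset\lambda$ the factor $\prod_{\sq\in\mu}(1-Aq^{c(\sq)})$ cancels out of the numerator, leaving exactly $\prod_{\sq\in\lambda\setminus\mu}(1-Aq^{c(\sq)})$, gives
\[
b_{\lambda,\mu}^{(N)}\;=\;d_{\lambda,\mu}^{(N)}\,A^{|\mu|}\,q^{-|\lambda|+2\binom{N}{3}-n(\mu)-|\mu|}\,\prod_{\sq\in\lambda\setminus\mu}(1-Aq^{c(\sq)})\,\prod_{\sq\in\mu}(1-q^{h(\sq)}).
\]
Pulling out the factor $(-1)^{\binom{N}{2}}q^{\binom{N}{3}}$ matches the stated form and identifies
\[
\overline{b_{\lambda,\mu}}\;=\;(-1)^{\binom{N}{2}}q^{\binom{N}{3}}d_{\lambda,\mu}^{(N)}\cdot q^{-|\lambda|-|\mu|-n(\mu)}\prod_{\sq\in\mu}(1-q^{h(\sq)}),
\]
which is exactly the expression in the proposition. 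Finally, Corollary \ref{cor: stability} says that $(-1)^{\binom{N}{2}}q^{\binom{N}{3}}d_{\lambda,\mu}^{(N)}$ is independent of $N$, and the remaining factor $q^{-|\lambda|-|\mu|-n(\mu)}\prod_{\sq\in\mu}(1-q^{h(\sq)})$ manifestly does not involve $N$, giving the desired stability.

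There is no real obstacle here beyond bookkeeping: the whole argument is a careful rewrite in terms of $A=q^{N}$ combined with the previously established stability of the interpolation coefficients. The only place where care is needed is tracking the signs and the $q$-exponents through the substitutions $1-q^{-\bullet}=-q^{-\bullet}(1-q^{\bullet})$, and verifying that $\prod_{\sq\in\lambda}/\prod_{\sq\in\mu}$ collapses to $\prod_{\sq\in\lambda\setminus\mu}$ — which requires the assumption $\mu\subset\lambda$ present in the sum.
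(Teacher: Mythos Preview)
Your proof is correct and follows essentially the same approach as the paper: invert \eqref{eq: dlm from hopf} to express $b_{\lambda,\mu}$, insert the formulas for $(F_\lambda,F_\lambda)$ from \eqref{orthogonality} and for $s_\mu(q^{-N+i})$, cancel the content factors over $\mu\subset\lambda$, and invoke Corollary~\ref{cor: stability} for the $N$-independence. Your version merely makes the sign and $q$-exponent bookkeeping in the principal specialization of $s_\mu$ more explicit than the paper does.
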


\begin{proof}
It follows from \eqref{eq: dlm from hopf} that
$$
F_{\lambda}=\sum b_{\lambda,\mu}s_{\mu},\ b_{\lambda,\mu}=\frac{d_{\lambda,\mu}(F_{\lambda},F_{\lambda})}{s_{\mu}(q^{-N+i})}.
$$
Since $d_{\lambda,\mu}$ vanishes unless $\mu\subset \lambda$, the same is true for $b_{\lambda,\mu}$. 
By Corollary \ref{cor: stability} the product $\overline{d_{\lambda,\mu}}=(-1)^{\binom{N}{2}}q^{\binom{N}{3}}d_{\lambda,\mu}$ does not depend on $N$, and  we can use the formulas
$$
(F_{\lambda},F_{\lambda})=q^{-|\lambda|+2\binom{N}{3}}\prod_{\sq\in \lambda}(1-Aq^{c(\sq)}),
$$
$$
s_{\mu}(q^{-N+i})=q^{n(\mu)-(N-1)|\mu|}\prod_{\sq\in \mu}\frac{(1-Aq^{c(\sq)})}{(1-q^{h(\sq)})}.
$$
to write
$$
b_{\lambda,\mu}=(-1)^{\binom{N}{2}}q^{-\binom{N}{3}}\overline{d_{\lambda,\mu}}\cdot q^{-|\lambda|+2\binom{N}{3}-n(\mu)+N|\mu|-|\mu|}\prod_{\sq\in \lambda}(1-Aq^{c(\sq)})\prod_{\sq\in \mu}\frac{(1-q^{h(\sq)})}{(1-Aq^{c(\sq)})}.
$$
The result follows.
\end{proof}

\begin{corollary}
\label{cor: one row}
{\rm
The one-row interpolation polynomials have the following Schur expansion:
$$
F_{(m)}^{(N)}=(-1)^{\binom{N}{2}}q^{\binom{N}{3}}\sum_{\mu\subset \lambda} (-1)^{j}q^{\frac{j(j-3)}{2}} A^{j} \frac{(1-Aq^{j})\cdots (1-Aq^{m-1})}{(1-q)\cdots (1-q^{m-j})} h_j^{(N)}=
$$
$$
(-1)^{\binom{N}{2}}q^{\binom{N}{3}}\sum_{\mu\subset \lambda} (-1)^{j}q^{\frac{j(j-3+2N)}{2}} \binom{N+m-1}{m-j}_{q}h_j^{(N)}.
$$
Here $h_j^{(N)}=s_{(j)}^{(N)}$ are complete symmetric functions in $N$ variables.
}
\end{corollary}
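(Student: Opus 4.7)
My plan is to derive Corollary \ref{cor: one row} by direct specialization of Proposition \ref{prop: HOMFLY interpolation} to the one-row partition $\lambda=(m)$. The subpartitions of $(m)$ are exactly $(j)$ for $0\le j\le m$, and the Schur functions $s_{(j)}$ coincide with the complete symmetric functions $h_j$, so the outer sum reduces to a sum over $j$.

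For the combinatorial inputs on the partition side, I would record: $n((j))=0$; the hook lengths of $(j)$ are $1,2,\ldots,j$, hence $\prod_{\sq\in(j)}(1-q^{h(\sq)})=(q;q)_j$; and the contents of the boxes of $(m)\setminus(j)$ are $j,j+1,\ldots,m-1$, so
$$\prod_{\sq\in (m)\setminus(j)}(1-Aq^{c(\sq)})=(1-Aq^j)(1-Aq^{j+1})\cdots(1-Aq^{m-1}).$$
The main computation is then for the coefficient $\overline{b_{(m),(j)}}$ from Proposition \ref{prop: HOMFLY interpolation}. Here I would invoke the stability result Corollary \ref{cor: stability}, which tells me that $(-1)^{\binom{N}{2}}q^{\binom{N}{3}}d^{(N)}_{(m),(j)}$ is independent of $N$, so I can evaluate it at $N=1$ using the explicit one-variable formula worked out in Example \ref{ex:one row}. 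Combining this with the value $c_{(m),(m)}=(-1)^m q^{-m(m+1)/2}(q;q)_m$ supplied by Lemma \ref{lem:diagonal} at $N=1$, and folding in the prefactor $q^{-|\lambda|-|\mu|-n(\mu)}\prod_{\sq\in\mu}(1-q^{h(\sq)})=q^{-m-j}(q;q)_j$ from the definition of $\overline{b_{\lambda,\mu}}$, everything telescopes via $\binom{m}{j}_q(q;q)_j/(q;q)_m=1/(q;q)_{m-j}$ to the clean expression
$$\overline{b_{(m),(j)}}=\frac{(-1)^{j}\,q^{j(j-3)/2}}{(q;q)_{m-j}}.$$
Inserting this in Proposition \ref{prop: HOMFLY interpolation} yields the first displayed formula of the corollary.

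For the second form, I would pass from $A$ to $q^N$ in the product $A^j(1-Aq^j)\cdots(1-Aq^{m-1})$, recognizing
$$\frac{q^{jN}(1-q^{N+j})(1-q^{N+j+1})\cdots(1-q^{N+m-1})}{(q;q)_{m-j}}=q^{jN}\binom{N+m-1}{m-j}_q$$
by the standard identity $\binom{N+m-1}{m-j}_q=(q;q)_{N+m-1}/[(q;q)_{m-j}(q;q)_{N+j-1}]$, and absorbing the $q^{jN}$ into the exponent $j(j-3)/2+jN=j(j-3+2N)/2$.

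The main obstacle is purely bookkeeping: the paper uses two incompatible conventions for $d_{\lambda,\mu}$ (in Theorem \ref{thm: dlm} the first index is the smaller partition, while in equation \eqref{eq: dlm from hopf} and Proposition \ref{prop: HOMFLY interpolation} the first index is the larger one), so one has to transport Example \ref{ex:one row}'s formula across this swap without losing a sign or a power of $q$. Beyond this, the argument is a routine cancellation once the right $N$-invariant expression is identified by Corollary \ref{cor: stability}.
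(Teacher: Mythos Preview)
Your proposal is correct and follows essentially the same strategy as the paper: exploit the $N$-independence of $\overline{b_{(m),(j)}}$ from Proposition \ref{prop: HOMFLY interpolation} and compute it at $N=1$. The only difference is that the paper extracts $\overline{b_{(m),(j)}}$ directly by matching against the $q$-binomial expansion $f_m(x)=\sum_j(-1)^jq^{j(j-1)/2}\binom{m}{j}_q x^j$ (since $s_{(j)}^{(1)}=x^j$ and $A=q$ at $N=1$), whereas you route through $d^{(1)}_{(m),(j)}$ via Example \ref{ex:one row} and then unwind the definition of $\overline{b}$; these are the same computation presented in two orders, and your bookkeeping remark about the index swap in $d_{\lambda,\mu}$ is accurate.
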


\begin{proof}
For $\lambda=(m)$ and $N=1$ we have
$$
f_m(x)=\sum_{j=0}^{m}(-1)^{j}q^{\frac{j(j-1)}{2}}\binom{m}{j}_{q} x^j.
$$
By writing $A=q$ and $\mu=(j)$ we get
$$
A^{|\mu|} \prod_{\sq\in \lambda\setminus \mu}(1-Aq^{c(\sq)})=q^{j}(1-q^{j+1})\cdots (1-q^{m}),
$$
so 
$$
\overline{b_{(m),(j)}}=(-1)^{j}\frac{q^{\frac{j(j-3)}{2}}}{(1-q)\cdots (1-q^{m-j})}.
$$
\end{proof}

\begin{remark}
{\rm
The HOMFLY-PT limit of interpolation polynomials in Proposition \ref{prop: HOMFLY interpolation} appears to be related to the results and conjectures in \cite{KNTZ}, it would be interesting to find a precise connection.
}
\end{remark}

\subsection{Adding a column}

It is well known that in symmetric functions in $N$ variables one has the identity
$$
s_{\lambda+1^{N}}=x_1\cdots x_N\cdot s_{\lambda}.
$$
Here $\lambda+1^{N}=(\lambda_1+1,\ldots,\lambda_N+1)$ and the corresponding Young diagram is obtained from the Young diagram for $\lambda$ by adding a vertical column. 

For interpolation polynomials we have two different generalizations of this identity: the first relates $F_{\lambda+1^N}$ to $F_{\lambda}$ and the second describe the action of the multiplication by $x_1\cdots x_N$.

\begin{proposition}
\label{prop: adding a column}
We have $F_{\lambda+1^N}(x_1,\ldots,x_N)=q^{\binom{N}{2}}\prod_{i=1}^{N}(1-x_i)F_{\lambda}(qx_1,\ldots,qx_N)$. More generally,
\begin{equation}
\label{eq: adding column}
F_{\lambda+k^N}(x_1,\ldots,x_N)=q^{k\binom{N}{2}}\prod_{i=1}^{N}f_k(x_i)F_{\lambda}(q^kx_1,\ldots,q^kx_N).
\end{equation}
\end{proposition}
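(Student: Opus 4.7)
The plan is to derive the identity directly from the determinantal formula \eqref{eq: def F}, bypassing the uniqueness characterization of Proposition~\ref{prop:interpolation} entirely. The basic $q$-Pochhammer factorization $(x;q)_{a+b} = (x;q)_a \cdot (q^a x; q)_b$ recalled at the start of Section~\ref{sec: def polynomials} translates to $f_{a+b}(x) = f_a(x)\, f_b(q^a x)$. Applied with $a = k$ and $b = \lambda_i + N - i$, it gives
$$f_{\lambda_i + k + N - i}(x_j) = f_k(x_j)\, f_{\lambda_i + N - i}(q^k x_j),$$
so each entry of the numerator determinant for $F_{\lambda + k^N}$ splits into two factors, the first depending only on the column index.

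First I would pull the common column factor $f_k(x_j)$ out of column $j$:
$$\det\bigl[f_{\lambda_i + k + N - i}(x_j)\bigr]_{i,j=1}^{N} = \prod_{j=1}^{N} f_k(x_j) \cdot \det\bigl[f_{\lambda_i + N - i}(q^k x_j)\bigr]_{i,j=1}^{N}.$$
Next I would recognize the remaining determinant, again via \eqref{eq: def F}, as $F_\lambda$ evaluated at $(q^k x_1, \ldots, q^k x_N)$ multiplied by the corresponding Vandermonde:
$$\det\bigl[f_{\lambda_i + N - i}(q^k x_j)\bigr]_{i,j} = \prod_{i<j}(q^k x_i - q^k x_j) \cdot F_\lambda(q^k x_1, \ldots, q^k x_N) = q^{k\binom{N}{2}} \prod_{i<j}(x_i - x_j)\, F_\lambda(q^k x_1, \ldots, q^k x_N),$$
since each of the $\binom{N}{2}$ factors picks up $q^k$. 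Dividing through by $\prod_{i<j}(x_i - x_j)$ recovers $F_{\lambda + k^N}(x_1, \ldots, x_N)$ on the left and the desired right hand side, establishing \eqref{eq: adding column}. The special case $k = 1$ stated at the beginning of the proposition is an immediate consequence.

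I do not anticipate a substantive obstacle: the proof amounts to two elementary manipulations of the determinantal formula. The only point requiring care is the arithmetic of the $q$-power, namely verifying that the rescaling of the Vandermonde contributes exactly $q^{k\binom{N}{2}}$, which must survive to the final formula. As a consistency check one can verify the value at $\mu = \lambda + k^N$: the hook lengths of the added $k$ columns telescope into $\prod_i f_k(q^{-\lambda_i - k - N + i})$, and the identity $n(\lambda + k^N) = n(\lambda) + k\binom{N}{2}$ matches the $q$-powers in Lemma~\ref{lem:diagonal}. Should the determinantal manipulation fail for any reason, the fallback strategy would be to verify instead that the right hand side is symmetric of degree $|\lambda| + kN$, vanishes at $q^{-\mu_i - N + i}$ for every partition $\mu$ not containing $\lambda + k^N$ (splitting into the cases $\mu_N < k$, making some $f_k$ factor vanish, and $\mu \supseteq k^N$ but $\mu - k^N \not\supseteq \lambda$, making $F_\lambda$ vanish), and then apply the uniqueness in Proposition~\ref{prop:interpolation}.
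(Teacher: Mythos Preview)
Your proof is correct and follows essentially the same approach as the paper's: both exploit the determinantal formula \eqref{eq: def F} together with the Pochhammer factorization, pull a common factor out of each column, and track the $q$-power coming from rescaling the Vandermonde. The only cosmetic difference is that the paper proves the case $k=1$ using $f_{m+1}(x)=(1-x)f_m(qx)$ and then iterates $k$ times, whereas you prove general $k$ in one step via $f_{a+b}(x)=f_a(x)f_b(q^ax)$; this is the same argument.
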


\begin{proof}
We have $f_{m+1}(x)=(1-x)f_m(qx)$, therefore 
$$
\det\left[f_{\lambda_i+1+N-i}(x_j)\right]=\det\left[(1-x_j)f_{\lambda_i+N-i}(qx_j)\right]=\prod_{j=1}^{N}(1-x_j)\det\left[f_{\lambda_i+N-i}(x_j)\right].
$$
Since each factor $(x_i-x_j)$ in the denominator gets multiplied by $q$ after changing $x_i\to qx_i$, this implies the first equation.
Now \eqref{eq: adding column} can be obtained by applying it $k$ times.
\end{proof}

Let $e_i$ denote the $i$-th basic vector in $\Z^N$ with $1$ at $i$-th position and $0$ at other positions. Given $I\subset \{1,\ldots,n\}$, we define $e_I=\sum_{i\in I}e_i$.

\begin{proposition}
\label{prop: eN}
We have
$$
x_1\cdots x_N F_{\lambda}(x_1,\ldots,x_N)=q^{-|\lambda|-\binom{N}{2}} \sum_{I\subset \{1,\ldots,n\}}(-1)^{|I|} F_{\lambda+e_I}(x_1,\ldots,x_N).
$$
Here we use the convention that $F_{\lambda+e_I}=0$ unless the entries of $\lambda+e_I$ are non-increasing (that is, $\lambda+e_I$ is a partition).
\end{proposition}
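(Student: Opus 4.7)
The plan is to work directly with the determinantal definition
\[
F_{\lambda}(\xx)=\frac{\det[f_{\lambda_i+N-i}(x_j)]_{i,j=1}^{N}}{\prod_{i<j}(x_i-x_j)}
\]
and to multiply both sides by $x_1\cdots x_N$. The key one-variable identity I will exploit is
\[
x\,f_k(x)=q^{-k}\bigl(f_k(x)-f_{k+1}(x)\bigr),
\]
which follows immediately from $f_{k+1}(x)=(1-q^kx)f_k(x)$.

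First I will absorb the $N$ factors $x_j$ into the $j$-th column of the determinant. Each entry $f_{\lambda_i+N-i}(x_j)$ gets multiplied by $x_j$, and the identity above turns it into
\[
q^{-(\lambda_i+N-i)}\Bigl(f_{\lambda_i+N-i}(x_j)-f_{\lambda_i+N-i+1}(x_j)\Bigr).
\]
The scalar $q^{-(\lambda_i+N-i)}$ depends only on the row index $i$, so it pulls out of the $i$-th row; collecting all such scalars gives exactly the overall factor
\[
\prod_{i=1}^{N}q^{-(\lambda_i+N-i)}=q^{-|\lambda|-\binom{N}{2}}.
\]

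Next I will expand the remaining determinant by multilinearity along the rows. Each row is a difference of two vectors, so the expansion produces a sum over subsets $I\subset\{1,\ldots,N\}$, with sign $(-1)^{|I|}$ coming from the choice of the second vector for each $i\in I$. The resulting $i$-th row is $f_{(\lambda+e_I)_i+N-i}(x_j)$. Thus
\[
x_1\cdots x_N \det[f_{\lambda_i+N-i}(x_j)]
=q^{-|\lambda|-\binom{N}{2}}\sum_{I\subset\{1,\ldots,N\}}(-1)^{|I|}\det[f_{(\lambda+e_I)_i+N-i}(x_j)].
\]

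The main (and only) subtle point is that $\lambda+e_I$ need not be a partition. I will check that in every such case two consecutive rows of the matrix coincide, so the determinant vanishes. Indeed, if $(\lambda+e_I)_i<(\lambda+e_I)_{i+1}$, then combining $\lambda_i\ge\lambda_{i+1}$ with $(\lambda+e_I)_i-(\lambda+e_I)_{i+1}\le -1$ forces $i\notin I$, $i+1\in I$, and $\lambda_i=\lambda_{i+1}$; but then $(\lambda+e_I)_i+N-i=\lambda_i+N-i$ and $(\lambda+e_I)_{i+1}+N-(i+1)=\lambda_i+1+N-i-1=\lambda_i+N-i$ agree, making rows $i$ and $i+1$ identical. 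So surviving subsets $I$ yield honest partitions $\lambda+e_I$, and the corresponding determinants equal $F_{\lambda+e_I}\cdot\prod_{i<j}(x_i-x_j)$. Dividing through by the Vandermonde gives the claimed identity; the convention $F_{\lambda+e_I}=0$ for non-partitions $\lambda+e_I$ is then consistent and harmless.
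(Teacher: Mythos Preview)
Your proof is correct and follows essentially the same approach as the paper: both use the identity $xf_k(x)=q^{-k}(f_k(x)-f_{k+1}(x))$ to rewrite each entry of the determinant, pull out the row factors $q^{-(\lambda_i+N-i)}$, and expand by multilinearity. Your treatment is in fact more complete, since you explicitly verify that the terms with $\lambda+e_I$ not a partition vanish by exhibiting two equal rows, a point the paper leaves implicit.
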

 
\begin{proof}
We have $f_{m+1}(x)=f_m(x)(1-q^m x)$, so
$$
xf_m(x)=q^{-m}(f_{m}(x)-f_{m+1}(x)).
$$
Therefore 
$$
x_1\cdots x_N\det\left[f_{\lambda_i+N-i}(x_j)\right]=\det\left[x_jf_{\lambda_i+N-i}(x_j)\right]=
$$
$$
\det\left[q^{-\lambda_i-N+i}(f_{\lambda_i+N-i}(x_j)-f_{\lambda_i+1+N-i}(x_j))\right].
$$
\end{proof}

\begin{corollary}
\label{cor: inverse product}
Consider the completion of the space of symmetric functions with coefficients in $\Z[q,q^{-1}]$ with respect to the basis $F_{\lambda}$. Then
the operator of multiplication by $x_1\cdots x_N$ is invertible in this completion and its inverse is given by the equation
$$
(x_1\cdots x_N)^{-1}F_{\lambda}(x_1,\ldots,x_N)=q^{\binom{N}{2}}\sum_{v\in \Z_{\ge 0}^N}q^{|\lambda|+v}F_{\lambda+v}(x_1,\ldots,x_N).
$$
\end{corollary}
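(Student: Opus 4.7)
The plan is to read off from Proposition \ref{prop: eN} that multiplication by $e_N := x_1\cdots x_N$ acts in the basis $\{F_\lambda\}$ as a diagonal piece plus a strictly size-raising piece, and then to verify the claimed series is a two-sided inverse by a direct inclusion-exclusion telescoping (reading the exponent $q^{|\lambda|+v}$ in the statement as $q^{|\lambda|+|v|}$ with $|v|=v_1+\cdots+v_N$). Explicitly, Proposition \ref{prop: eN} gives
$$e_N\cdot F_\lambda=q^{-|\lambda|-\binom{N}{2}}F_\lambda+q^{-|\lambda|-\binom{N}{2}}\sum_{\emptyset\neq I\subseteq\{1,\ldots,N\}}(-1)^{|I|}F_{\lambda+e_I},$$
so in the filtration of the completion by $|\lambda|$ the operator of multiplication by $e_N$ is upper-triangular with non-vanishing diagonal, hence a candidate inverse exists in the completion.

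Next I would check convergence. The completion is formed with respect to the basis $\{F_\lambda\}$ where $\lambda$ has at most $N$ parts; a formal series $\sum c_\mu F_\mu$ lies in it provided the coefficients of $F_\mu$ with $|\mu|\le k$ are, for each $k$, only finitely many nonzero. For the proposed inverse
$$\Phi_\lambda:=q^{|\lambda|+\binom{N}{2}}\sum_{v\in\Z_{\ge 0}^N}q^{|v|}F_{\lambda+v},$$
each partition $\mu\supseteq\lambda$ of a given size contributes at most once (from $v=\mu-\lambda$), so $\Phi_\lambda$ is well defined.

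Then I would verify $e_N\cdot \Phi_\lambda=F_\lambda$ by direct computation. Applying Proposition \ref{prop: eN} term-by-term,
$$e_N\cdot\Phi_\lambda=\sum_{v\in\Z_{\ge 0}^N}\sum_{I\subseteq\{1,\ldots,N\}}(-1)^{|I|}F_{\lambda+v+e_I},$$
where the $q$-powers collapse because $q^{|\lambda|+\binom{N}{2}}\cdot q^{|v|}\cdot q^{-|\lambda+v|-\binom{N}{2}}=1$. Reindexing by $w=v+e_I\in\Z_{\ge 0}^N$ (with $v=w-e_I\ge 0$ forcing $I\subseteq\mathrm{supp}(w)$), the coefficient of $F_{\lambda+w}$ becomes $\sum_{I\subseteq\mathrm{supp}(w)}(-1)^{|I|}$, which vanishes unless $w=0$ and equals $1$ for $w=0$. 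Hence $e_N\cdot\Phi_\lambda=F_\lambda$; the identity $\Phi_{e_N\cdot F_\lambda}=F_\lambda$ in the other direction follows by expanding $\Phi$ on both sides and the same telescoping, or by the abstract fact that a one-sided inverse of an upper-triangular operator with invertible diagonal is automatically two-sided.

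The only subtle point, and the reason one cannot immediately write the inverse as a naive geometric series $\sum_{k\ge 0}(\mathrm{Id}-e_N)^k$ in a single raising operator, is that the diagonal scalar $q^{-|\lambda|-\binom{N}{2}}$ depends on $\lambda$; the telescoping via inclusion-exclusion is precisely what absorbs this $\lambda$-dependence, and is the main content of the argument. Everything else is bookkeeping with $q$-exponents.
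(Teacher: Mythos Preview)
Your approach is essentially the same as the paper's: both invert multiplication by $e_N=x_1\cdots x_N$ via a geometric series, the paper writing it in operator form $(e_N)^{-1}=q^{\binom{N}{2}}\prod_i p_i(1+A_i+A_i^2+\cdots)$ with $A_iF_\lambda=F_{\lambda+e_i}$ and $p_iF_\lambda=q^{\lambda_i}F_\lambda$, while you spell out the resulting inclusion--exclusion directly.

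There is, however, a genuine subtlety that both your write-up and the paper's gloss over. You use the convention ``$F_\nu=0$ unless $\nu$ is a partition'' (this is how Proposition~\ref{prop: eN} is stated, and your convergence paragraph relies on it). Under that convention your key identity
\[
e_N\cdot\Phi_\lambda=\sum_{v\in\Z_{\ge 0}^N}\sum_{I\subseteq\{1,\ldots,N\}}(-1)^{|I|}F_{\lambda+v+e_I}
\]
is \emph{false}: on the left, terms with $\lambda+v$ not a partition contribute nothing (since $F_{\lambda+v}=0$), but on the right they may still contribute when $\lambda+v+e_I$ happens to be a partition. Concretely, take $N=2$, $\lambda=(1,1)$. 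The pair $(v,I)=((0,1),\{1\})$ has $\lambda+v=(1,2)$ (not a partition) but $\lambda+v+e_I=(2,2)$; it sits in your double sum but not in $e_N\Phi_\lambda$. A direct check then gives coefficient $+1$ (not $0$) for $F_{(2,2)}$ in $e_N\Phi_\lambda$, so your telescoping collapses to the wrong answer. Equivalently, under this convention the shifts $A_i$ do not commute ($A_1A_2F_{(1,1)}=0$ but $A_2A_1F_{(1,1)}=F_{(2,2)}$), so the paper's product factorization of $e_N$ is also not literally an operator identity.

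The fix, implicit in both arguments, is to interpret $F_\nu$ for arbitrary $\nu\in\Z_{\ge 0}^N$ via the determinantal formula \eqref{eq: def F}; then $F_\nu$ is alternating in the shifted indices $\nu_i+N-i$ (hence equals $\pm F_\mu$ for a partition $\mu$, or $0$). With this convention Proposition~\ref{prop: eN} holds for all $\nu$ by the same row-by-row determinant computation, the $A_i$ genuinely commute, and your inclusion--exclusion (now summing over \emph{all} $v\in\Z_{\ge 0}^N$, not just those with $\lambda+v$ a partition) goes through verbatim. Convergence is still fine since only finitely many $v$ have a given $|v|$.
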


\begin{proof}
Define the operators $A_i$ by $A_i(F_{\lambda})=F_{\lambda+e_i}$, and $p_i(F_{\lambda})=q^{\lambda_i}F_{\lambda}$ for $i=1,\ldots,N$. Clearly, $[A_i,A_j]=[p_i,p_j]=[A_i,p_j]$ for $i\neq j$ and 
by Proposition \ref{prop: eN} we have
$$
x_1\cdots x_N = q^{-\binom{N}{2}} \prod_{i}  (1-A_i)p^{-1}_i,
$$
hence
$$
(x_1\cdots x_N)^{-1}=q^{\binom{N}{2}}\prod_{i}  p_i(1+A_i+A_i^2+\ldots).
$$
\end{proof}

\begin{example}
\label{ex: x inverse}
{\rm
For $N=1$ and $\lambda=(0)$ we get a curious identity
$$
x^{-1}=\sum_{m=0}^{\infty}f_m(x)q^m
$$
We can check this identity directly, by computing the values of both sides at $q^{-j}$ for all $j$. Denote
$$
u_j=\sum_{m=0}^{\infty}f_m(q^{-j})q^m=\sum_{m=0}^{j}f_m(q^{-j})q^m.
$$
Then $u_{j+1}=1+q(1-q^{-j-1})u_{j}$ and $u_0=1$, so it is easy to see that $u_j=q^j$.}
\end{example}


\subsection{Interpolation polynomials for $\fgl_2$}

In this subsection we describe the interpolation polynomials for $\fgl_2$ explicitly. 
By definition, we have polynomials $F_{\lambda}(x_1,x_2)$ where $\lambda_1\ge \lambda_2$:
$$
F_{\lambda_1,\lambda_2}(x_1,x_2)=
\frac{1}{x_1-x_2} \left|
\begin{matrix}
f_{\lambda_1+1}(x_1) & f_{\lambda_1+1}(x_2)\\
f_{\lambda_2}(x_1) & f_{\lambda_2}(x_2)
\end{matrix}
\right|
$$

Let us consider the case $\lambda_2=0$ first, and write $\lambda_1=k$.
Then 
$$
F_{k,0}(x_1,x_2)=
\frac{1}{x_1-x_2}\det \left|
\begin{matrix}
f_{k+1}(x_1) & f_{k+1}(x_2)\\
1 & 1
\end{matrix}
\right|=\frac{f_{k+1}(x_1)-f_{k+1}(x_2)}{x_1-x_2}.
$$
Let
$$
h_i(x_1,x_2)=\frac{x_1^{i+1}-x_2^{i+1}}{x_1-x_2}.
$$
Recall that $f_{k+1}(x)=\sum_{j=0}^{k+1}(-1)^{j}q^{\frac{j(j-1)}{2}}\binom{k+1}{j}_{q}x^{j}$, so
$$
F_{k,0}(x_1,x_2)=\sum_{j=1}^{k+1}(-1)^{j}q^{\frac{j(j-1)}{2}}\binom{k+1}{j}_{q}h_{j-1}(x_1,x_2),
$$
compare with Corollary \ref{cor: one row}.
We just replace each $x^j$ in the expression for $f_{k+1}(x)$ by $h_{j-1}(x_1,x_2)$.

\begin{example}{\rm
We have 
$$
f_1(x)=1-x,\ f_2(x)=(1-x)(1-qx)=1-(1+q)x+qx^2,
$$
$$
 f_3(x)=(1-x)(1-qx)(1-q^2x)=1-(1+q+q^2)x+(q+q^2+q^3)x^2-q^3x^3
$$
so
$$
F_{0,0}(x_1,x_2)=-1,\ F_{1,0}(x_1,x_2)=q(x_1+x_2)-(1+q),\ 
$$
$$
F_{2,0}=-q^3(x_1^2+x_1x_2+x_2^2)+(q+q^2+q^3)(x_1+x_2)-(1+q+q^2).
$$}
\end{example}

By Proposition \ref{prop: adding a column} we have
$$
F_{\lambda_1,\lambda_2}(x_1,x_2)=q^{\lambda_2}f_{\lambda_2}(x_1)f_{\lambda_2}(x_2)F_{\lambda_1-\lambda_2,0}(q^{\lambda_2}x_1,q^{\lambda_2}x_2).
$$
In particular, for $(\lambda_1,\lambda_2)=(k,k)$ we have 
$$
F_{k,k}(x_1,x_2)=q^kf_{k}(x_1)f_{k}(x_2).
$$
Also, by Lemma \ref{lem: stability} we get
\begin{equation}
\label{eq: stability N=2}
F_{\lambda_1,\lambda_2}(x_1,1)=\begin{cases}
-f_{\lambda_1}(qx_1) & \text{if}\ \lambda_2=0\\
0 & \text{otherwise}.
\end{cases}
\end{equation}

\subsection{Interpolation tables for $\fgl_2$}
\label{sec: interpolation tables}

For the reader's convenience, we have computed the polynomials $F_{\lambda}(x_1,x_2)$ and the corresponding interpolation matrices using {\tt Sage} \cite{sage}. First, we present $F_{\lambda}$ in Schur basis:
\begin{multline*}
F_{0}=-1,\quad F_{1}=qs_1-(q+1),\quad F_{2}=-q^3s_2+(q^3+q^2+q)s_1-(q^2+q+1),\\
F_{1,1}=-qs_{1,1}+qs_{1}-q=-q(1-x_1)(1-x_2)\\
F_{3}=q^6s_{3}-(q^6+q^5+q^4+q^3)s_{2}+(q^5+q^4+2q^3+q^2+q)s_{1}-(q^3+q^2+q+1)\\
F_{2,1}=q^3s_{2,1}-q^3s_{2}-(q^3+q^2+q)s_{1,1}+(q^3+q^2+q)s_{1}-(q^2+q)\\
F_{3,1}=-q^6s_{3,1}+q^6s_{3}+(q^6+q^5+q^4+q^3)s_{2,1}-(q^6+q^5+q^4+q^3)s_{2}-\\ (q^5+q^4+2q^3+q^2+q)s_{1,1}+(q^5+q^4+2q^3+q^2+q)s_{1}-(q^3+q^2+q)\\
F_{2,2}=-q^4s_{2,2}+(q^4+q^3)s_{2,1}-q^3s_{2}-(q^4+q^3+q^2)s_{1,1}+(q^3+q^2)s_1-q^2\\
F_{3,2}=q^7s_{3,2}-(q^7+q^6)s_{3,1}-(q^7+q^6+q^5+q^4)s_{2,2}+q^6s_{3}+(q^7+2q^6+2q^5+2q^4+q^3)s_{2,1}-\\(q^6+q^5+q^4+q^3)s_{2}-(q^6+2q^5+2q^4+2q^3+q^2)s_{1,1}+(q^5 + q^4 + 2q^3 + q^2)s_1-(q^3+q^2)\\
F_{3,3}=-q^9s_{3,3}+(q^9 + q^8 + q^7)s_{3,2}-(q^8+q^7+q^6)s_{3,1}-(q^9+q^8+2q^7+q^6+q^5)s_{2,2}+q^6s_{3}+\\
(q^8 + 2q^7 + 2q^6 + 2q^5 + q^4)s_{2,1}-(q^6+q^5+q^4)s_{2}-(q^7+q^6+2q^5+q^4+q^3)s_{1,1}+(q^5+q^4+q^3)s_{1}-q^3
\end{multline*}

Next, we list the values of the evaluations $c_{\lambda,\mu}=F_{\lambda}(q^{-\mu_1-1},q^{-\mu_2})$ for various $\lambda$ and $\mu$ in Tables \ref{tab 1}, \ref{tab 2}, \ref{tab 3} below. The resulting matrix ${\sf C}=(c_{\lambda,\mu})$ is upper-triangular, with diagonal entries prescribed by Lemma \ref{lem:diagonal}. Zero entries correspond to pairs $(\lambda,\mu)$ where $\mu$ does not contain $\lambda$. The entry corresponding to $(\lambda,\mu)=((1),(3,2))$ is marked in bold, it is divisible by $1-q$ but does not factor any further.

Using either Theorem \ref{thm: dlm} or equation \eqref{eq: dlm from hopf}, one can easily reconstruct the inverse matrix $D={\sf C}^{-1}$, and we list part of it in Table \ref{tab 4} (see Examples \ref{ex: 1and32} and \ref{d's} for more computations). 

Note that by Corollary \ref{cor: stability} this determines the coefficients $c_{\lambda,\mu}$ and $d_{\lambda,\mu}$ for $\lambda\subset\mu\subset (3,3)$ and arbitrary $N$.

\subsection{Link invariants for $\fgl_2$}
\label{sec: figure 8}

We can use the interpolation tables to expand the invariants of simple knots in the basis $F_{\lambda}$. Indeed, the colored $\fgl_2$ invariants are determined by the colored $\fsl_2$ invariants (that is, colored Jones polynomial) by the formula
$$
J_{K}(V(\lambda_1,\lambda_2),q)=J_{K}(V_{\lambda_1-\lambda_2},q).
$$
The coefficients $a_{\lambda}(K)$ are then determined by Theorem \ref{thm:main}
$$
a_{\lambda}(K)=\sum_{\mu\subset \lambda}d_{\lambda,\mu}(q^{-1})J_K(V(\mu),q).
$$
For example, for the figure eight knot we have the following values of the colored Jones polynomial:
$$
J_{K}(V_{0},q)=1=J_{K}(V({1,1}),q),\ J_{K}(V_{1},q)=J_{K}(V({2,1}),q)=1+q^2+q^{-2}-q-q^{-1},
$$
$$
J_K(V_{2},q)=1+q^3+q^{-3}-q-q^{-1}+(q^3+q^{-3}-q-q^{-1})(q^3+q^{-3}-q^2-q^{-2}).
$$
Using the values of $d_{\lambda,\mu}$ from Table \ref{tab 4} (and changing $q$ to $q^{-1}$) we obtain
$$
a_{0}(K)=-J_K(V_{0},q)=-1,\ a_{1}(K)=-\frac{q^{-1}}{1-q^{-1}}J_K(V_{0},q)+\frac{q^{-1}}{1-q^{-1}}J_K(V_{1},q)=q^{-2}(q^3-1),
$$
\begin{multline*}
a_{2}(K)=-\frac{q^{-2}}{(1-q^{-1})(1-q^{-2})}J_K(V_{0},q)+\frac{q^{-2}}{(1-q^{-1})^2}J_K(V_{1},q)-\\
\frac{q^{-3}}{(1-q^{-1})(1-q^{-2})}J_K(V_{2},q)=
q^{-6}(-q^9 + q^5 + q^4 - q^3 - 1),
\end{multline*}
\begin{multline*}
a_{1,1}(K)=-\frac{q^{-3}}{(1-q^{-1})(1-q^{-2})}J_K(V_{0},q)+\frac{q^{-2}}{(1-q^{-1})^2}J_K(V_{1},q)-\\
\frac{q^{-2}}{(1-q^{-1})(1-q^{-2})}J_K(V({1,1}),q)=q^{-2}(q^2 + q + 1),
\end{multline*}
\begin{multline*}
a_{2,1}(K)=-\frac{q^{-4}}{(1-q^{-1})^2(1-q^{-3})}J_K(V_{0},q)+\frac{q^{-3}}{(1-q^{-1})^3}J_K(V_{1},q)-\\
\frac{q^{-4}}{(1-q^{-1})^2(1-q^{-2})}J_K(V_{2},q)-\frac{q^{-3}}{(1-q^{-1})^2(1-q^{-2})}J_K(V({1,1}),q)+\\
\frac{q^{-4}}{(1-q^{-1})^2(1-q^{-3})}J_K(V({2,1}),q)=q^{-6}(-q^8 - q^7 - q^6 - q^5 + q^4 + 2q^3 + q^2 + q + 1).
\end{multline*}
Using Tables \ref{tab 1}, \ref{tab 2}, \ref{tab 3}, one can similarly compute the values of $a_{\lambda}(K)$ for all $\lambda\subset (3,3)$ and verify that these are indeed Laurent polynomials in $q$.

\begin{table}[ht!]
\begin{tabular} {c ||c|c|c|c|}
$\lambda\backslash\mu$ & (0) & (1) & (2) & (1,1)  \\
\hline
(0) & $-1$ & $-1$ & $-1$ & $-1$ \\
\hline
(1) & $0$ &   $q^{-1}(1-q)$    & $q^{-2}(1-q^2)$ &    $q^{-1}(1-q^2)$     \\
\hline
(2) & $0$  &     $0$            &   $-q^{-3}(1-q)(1-q^2)$ &     $0$  \\    
\hline
(1,1) &  $0$    &     $0$    &   $0$    &  $-q^{-2}(1-q)(1-q^2)$ \\                                                                                    
\hline
(3) & $0$   &  $0$    & $0$  & $0$    \\                   
\hline
(2,1) & $0$ &   $0$    & $0$    &    $0$   \\
\hline                                                    
(3,1) & $0$ & $0$    & $0$   & $0$     \\
\hline
(2,2) & $0$ & $0$    & $0$   & $0$   \\
\hline
(3,2) & $0$ & $0$    & $0$   & $0$     \\
\hline
(3,3) & $0$ & $0$    & $0$   & $0$    \\
\hline
\end{tabular}
\caption{Evaluations of interpolation polynomials: matrix $C=(c_{\lambda,\mu})$}
\label{tab 1}
\end{table}

\begin{table}[ht!]

\resizebox{\textwidth}{!}{
\begin{tabular}{c ||c|c|c|c|}
$\lambda\backslash \mu$ &  (3) & (2,1) & (3,1)\\
\hline
(0)   & $-1$ & $-1$ & $-1$\\
\hline
(1)     & $q^{-3}(1-q^3)$    & $ q^{-2}(1-q^3) $ &  $q^{-3}(1-q^4)$ \\
\hline
(2)  &   $-q^{-5}(1-q^2)(1-q^3)$ &  $-q^{-3}(1-q)(1-q^3)$ & $ -q^{-5}(1-q^2)(1-q^4)$ \\    
\hline
(1,1)   & $0$ &   $-q^{-3}(1-q)(1-q^3)$ & $-q^{-4}(1-q)(1-q^4)$  \\                                                                                    
\hline
(3)    & $q^{-6}(1-q)(1-q^2)(1-q^3)$ &     $0$ & $q^{-6}(1-q)(1-q^2)(1-q^4)$\\                   
\hline
(2,1)   &   $0$  &  $q^{-4}(1-q)^2(1-q^3)$ & $q^{-6}(1-q)(1-q^2)(1-q^4)$\\
\hline                                                    
(3,1)   & $0$   & $0$ & $-q^{-7}(1-q)^2(1-q^2)(1-q^4)$ \\
\hline
(2,2)   & $0$   & $0$ & $0$\\
\hline
(3,2)  & $0$   & $0$ & $0$\\
\hline
(3,3)    & $0$   & $0$ & $0$ \\
\hline
\end{tabular}
}
\caption{ Matrix $C=(c_{\lambda,\mu})$, continued}
\label{tab 2}
\end{table}

\begin{table}[ht!]
\centering
\resizebox{\textwidth}{!}{
\begin{tabular}{c ||c|c|c|}
$\lambda\backslash \mu$ &  (2,2) & (3,2) & (3,3)\\
\hline
(0) & $-1$ & $-1$ & $-1$\\
\hline
(1) & $q^{-2}(1+q)(1-q^2)$ & $\mathbf{q^{-3}(-q^4 - q^3 + q^2 + 1)}$ & $q^{-3}(1+q)(1-q^3)$\\
\hline
(2) & $-q^3(1-q^2)(1-q^3)$ &  $-q^{-5}(1-q^3)(1-q^4)$ &  $-q^{-5}(1+q)(1-q^3)^2$\\
\hline
(1,1) & $-q^{-4}(1-q^2)(1-q^3)$  &   $-q^{-5}(1-q^2)(1-q^4)$ &  $-q^{-6}(1-q^3)(1-q^4)$\\
\hline
(3) & $0$    &  $q^{-6}(1-q)(1-q^3)(1-q^4)$              &       $q^{-6}(1-q^2)(1-q^3)(1-q^4)$\\
\hline
(2,1) & $q^{-5}(1-q^2)^2(1-q^3)$                    &    $q^{-7}(1-q^2)(1-q^3)(1-q^4)$  &            $q^{-8}(1+q)(1-q^2)(1-q^3)(1-q^4)$\\
\hline
(3,1) & $0$     & $-q^{-8}(1-q)(1-q^2)(1-q^3)(1-q^4)$    &   $-q^{-9}(1-q^2)(1-q^3)^2(1-q^4)$\\
\hline
(2,2) & $-q^{-6}(1-q)(1-q^2)^2(1-q^3)$        & $-q^{-8}(1-q)(1-q^2)(1-q^3)(1-q^4)$    & $-q^{-10}(1-q^2)(1-q^3)^2(1-q^4)$ \\
\hline
(3,2) & $0$ &    $q^{-9}(1-q)^2(1-q^2)(1-q^3)(1-q^4)$  &  $q^{-11}(1-q^2)^2(1-q^3)^2(1-q^4)$\\
\hline
(3,3) & $0$ & $0$ & $-q^{-12}(1-q)(1-q^2)^2(1-q^3)^2(1-q^4)$\\
\hline
\end{tabular}
 }
\caption{Matrix $C=(c_{\lambda,\mu})$, continued}
\label{tab 3}
\end{table}

\begin{table}
\centering
\begin{tabular}{c ||c|c|c|c|c|}
$\lambda\backslash\mu$ & (0) & (1) & (2) & (1,1) & (2,1) \\
\hline
(0) &   $-1$ & $-\frac{q}{1-q}$ & $-\frac{q^2}{(1-q)(1-q^2)}$ & $-\frac{q^3}{(1-q)(1-q^2)}$ & $-\frac{q^4}{(1-q)^2(1-q^3)}$ \\
\hline
(1) & $0$ & $\frac{q}{1-q}$ & $\frac{q^2}{(1-q)^2}$ & $\frac{q^2}{(1-q)^2}$ & $\frac{q^3}{(1-q)^3}$\\
\hline
(2) & $0$ & $0$ & $-\frac{q^3}{(1-q)(1-q^2)}$ & $0$ & $-\frac{q^4}{(1-q)^2(1-q^2)}$\\
\hline
(1,1) & $0$ & $0$ & $0$ &   $-\frac{q^2}{(1-q)(1-q^2)}$ & $-\frac{q^3}{(1-q)^2(1-q^2)}$ \\ 
\hline
(2,1) & $0$ & $0$ & $0$ & $0$ & $\frac{q^4}{(1-q)^2(1-q^3)}$  \\
\hline
\end{tabular}
\caption{Interpolation matrix $D=(d_{\lambda,\mu})=C^{-1}$}
\label{tab 4}
\end{table}

\section{Appendix}
Here we collect some useful definitions and  facts about Habiro's ring
and interpolation Macdonald polynomials.

\subsection{Habiro's ring}

The Habiro ring  \cite{Hpoly} is defined as 
$$ \widehat{\Z[q]}:=
{\lim\limits_{\overleftarrow{\hspace{2mm}n\hspace{2mm}}}}\;   
\frac{\Z[q]}{((q;q)_n)}$$
 Any element of $\widehat{\Z[q]}$ can be presented (not uniquely)
as infinite series 
$$
f(q)=\sum_{n=0}^{\infty} f_n \,(q;q)_n, \quad f_n\in \Z[q].
$$ 
Evaluations of such $f(q)$ at all roots of unity are well defined, since if
$q^s=1$ one has
$f(q)=\sum_{n=0}^{s-1} f_n (q)_n$.
It is easy to expand every $f(q) \in \widehat{\Z[q]}$ into formal power series in $(q-1)$, denoted by $T(f)$ and called the Taylor series of $f(q)$ at $q = 1$. One
important property of the Habiro ring
 is that any $f \in \widehat{\Z[q]}$ is uniquely determined by its Taylor 
series. In other words, the map $T :  \widehat{\Z[q]} \to \Z[[q - 1]] $ is injective \cite[Thm 5.4]{Hpoly}. 
In particular, $\widehat{\Z[q]}$
is an integral domain. Moreover, every $f \in \widehat{\Z[q]}$ is determined by the values of $f$ at any infinite set of roots of unity of prime power order. 
Because of these properties, Habiro ring is also known as  a ring of analytic functions at roots of unity.
\vspace{1mm}


Since $\cap_{n\geq 0} I_n=0$ with $I_n=(q;q)_n\Z[q]$,
  the natural map $\Z[q]\to \widehat{\Z[q]}$ is injective. The image of $q$ under this map is invertible, and the inverse is given by
$$
q^{-1}=\sum_{n=1}^{\infty} q^n (q;q)_n,
$$
compare with Example \ref{ex: x inverse}. This implies that there is an injective map $\Z[q,q^{-1}]\to \widehat{\Z[q]}$. The following result is proved in \cite[Proposition 7.5]{Hpoly}, but we give a slightly different proof here for the reader's convenience. We will denote by $\Phi_n(q)$ the $n$th cyclotomic polynomial $\Phi_n(q)=\prod_{(a,n)=1}\left(q-\zeta^a_{n}\right)$ where
$\zeta_n$ is any primitive $n$th root of unity.

\begin{proposition}
\label{prop: appendix}
Suppose that $f(q)\in \widehat{\Z[q]}$ and $f(q)h(q)\in \Z[q,q^{-1}]$ for some product of cyclotomic polynomials $h(q)=\Phi_{n_1}(q)\cdots \Phi_{n_r}(q)$. Then $f(q)\in \Z[q,q^{-1}]$. 
\end{proposition}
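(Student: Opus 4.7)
The plan is to reduce to the case $h=\Phi_n$ by an induction on the number of cyclotomic factors of $h$ counted with multiplicity, and then to settle that base case by combining evaluation at roots of unity with Gauss's lemma and the integral-domain structure of the Habiro ring.

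For the inductive step, suppose $h=h'\cdot\Phi_n$ with $h'$ having one fewer factor. I would set $f':=f\cdot\Phi_n$; since $\Phi_n\in\Z[q]\subset\widehat{\Z[q]}$ and the Habiro ring is closed under multiplication, $f'\in\widehat{\Z[q]}$, and $f'\cdot h'=f\cdot h\in\Z[q,q^{-1}]$, so by induction $f'\in\Z[q,q^{-1}]$. This reduces the problem to the case $h=\Phi_n$.

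For the base case, let $g:=f\cdot\Phi_n\in\Z[q,q^{-1}]$ and pick a primitive $n$-th root of unity $\zeta$. As recalled at the beginning of this appendix, evaluation at $\zeta$ defines a ring homomorphism $\mathrm{ev}_\zeta:\widehat{\Z[q]}\to\Z[\zeta]$ (well-defined because $(\zeta;\zeta)_m=0$ for $m\geq n$, so $\mathrm{ev}_\zeta$ factors through each $\Z[q]/((q;q)_m)$). Applying $\mathrm{ev}_\zeta$ to $g=f\cdot\Phi_n$ yields $g(\zeta)=f(\zeta)\cdot\Phi_n(\zeta)=0$. After clearing denominators by a power of $q$ (which does not affect vanishing at $\zeta\neq 0$), Gauss's lemma combined with the fact that $\Phi_n$ is the monic minimal polynomial of $\zeta$ over $\Q$ shows $\Phi_n\mid g$ in $\Z[q,q^{-1}]$; write $g=\Phi_n\cdot g'$ with $g'\in\Z[q,q^{-1}]$. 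Then $\Phi_n\cdot(f-g')=0$ in $\widehat{\Z[q]}$, and since $\widehat{\Z[q]}$ is an integral domain (by injectivity of the Taylor map $T:\widehat{\Z[q]}\to\Z[[q-1]]$) and $\Phi_n\neq 0$, we conclude $f=g'\in\Z[q,q^{-1}]$.

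The main obstacle, though mild, is the handling of multiplicities: a single evaluation $\mathrm{ev}_\zeta$ tells us only that $\Phi_n$ divides $g$, not that $\Phi_n^k$ does when $\Phi_n$ appears with multiplicity $k$ in $h$. The inductive peeling-off above sidesteps this by extracting one cyclotomic factor at a time into the Habiro ring and cancelling it there via the integral-domain property, rather than attempting to read off multiplicities from a single specialization.
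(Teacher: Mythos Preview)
Your proof is correct and follows essentially the same approach as the paper: induction on the number of cyclotomic factors, with the base case $h=\Phi_n$ handled by evaluating at a primitive $n$-th root of unity to force $\Phi_n\mid g$ in $\Z[q,q^{-1}]$, then cancelling $\Phi_n$ using that $\widehat{\Z[q]}$ is an integral domain. Your presentation is slightly more explicit in invoking Gauss's lemma to pass from divisibility in $\Q[q]$ to $\Z[q,q^{-1}]$, which the paper leaves implicit, and your closing remark about multiplicities is a helpful clarification of why the inductive peeling is the right organization.
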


\begin{proof}
Let us denote $g(q)=f(q)h(q)\in \Z[q,q^{-1}]$, we prove the statement by induction in $r$. For $r=1$ we get $h(q)=\Phi_n(q)$ and 
$g(q)=f(q)\Phi_n(q)$, so for any primitive $n$-th root of unity $\zeta_n$ we have $g(\zeta_n)=f(\zeta_n)\Phi_n(\zeta_n)=0$, so 
$g(q)=\alpha(q)\Phi_n(q)$ for some $\alpha\in \Z[q,q^{-1}]$. This implies $(f(q)-\alpha(q))\Phi_n(q)=0$, and since $\widehat{\Z[q]}$ is an integral domain we get $f(q)=\alpha(q)$.

For $r>1$ we get 
$$f(q)\Phi_{n_1}(q)\cdots \Phi_{n_r}(q)\in \Z[q,q^{-1}],$$ so by the above 
$$f(q)\Phi_{n_1}(q)\cdots \Phi_{n_{r-1}}(q)\in \Z[q,q^{-1}],$$ and by the assumption of induction $f(q)\in \Z[q,q^{-1}]$.
\end{proof}

\subsection{Interpolation Macdonald polynomials}

We consider partitions with at most $N$ parts. 

\begin{theorem}\cite{Knop,KS,Ok,Ok2,Ok3,Ols,Sahi}
There exists unique up to scalar factors family of symmetric polynomials $I_{\lambda}(x_1,\ldots,x_N;q,t)$ with the following properties:
\begin{itemize}
\item[(a)] $I_{\lambda}(q^{-\mu_i}t^{N-i})=0$ unless $\mu$ contains $\lambda$
\item[(b)] $I_{\lambda}(q^{-\lambda_i}t^{N-i})\neq 0$
\item[(c)] $I_{\lambda}$ is a nonhomogeneous polynomial of degree $|\lambda|$, and its degree $|\lambda|$ part is proportional to the Macdonald polynomial $P_{\lambda}(x_1,\ldots,x_N;q,t)$.
\end{itemize}
\end{theorem}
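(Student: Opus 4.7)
The plan is to combine a dimension-count interpolation argument (yielding uniqueness and a first existence statement at finite level) with the Macdonald-polynomial characterization (to pin down the top-degree behaviour) and, finally, a shift/Cherednik argument to upgrade the vanishing from finitely many nodes to the full list $\mu\not\supset\lambda$.

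More concretely, let $V_d$ denote the space of symmetric polynomials in $x_1,\ldots,x_N$ of total degree $\le d$, and write $\xi_\mu=(q^{-\mu_1}t^{N-1},\ldots,q^{-\mu_N}t^0)$ for the interpolation nodes. The first key claim is the analogue of Proposition \ref{prop:interpolation}: for every $d$, the evaluation map
$$
\mathrm{ev}_d:V_d\longrightarrow \mathbb{C}^{\{\mu:\,|\mu|\le d,\,\ell(\mu)\le N\}},\qquad f\mapsto (f(\xi_\mu))_\mu,
$$
is an isomorphism. Both sides have the same dimension, so it suffices to prove injectivity. I would do this by expanding $f=\sum c_\nu s_\nu$ in the Schur basis and showing that the matrix $[s_\nu(\xi_\mu)]$ has nonzero determinant for generic $q,t$, following the Weyl-denominator computation that was done for $t=q$ in Section \ref{sec:inter} (the determinantal identity \eqref{eq: def F} admits a straightforward two-parameter generalization after one clears the Vandermonde $\prod_{i<j}(x_i-x_j)$). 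Specialising to $d=|\lambda|$ and noting that $\lambda$ is the \emph{unique} partition in $\{\mu:|\mu|\le|\lambda|\}$ containing $\lambda$, this produces a one-dimensional solution space of polynomials in $V_{|\lambda|}$ vanishing at $\xi_\mu$ for all $\mu\ne\lambda$ with $|\mu|\le|\lambda|$, and their nonvanishing at $\xi_\lambda$ is forced (else the polynomial would be identically zero).

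To identify the top-degree part, I would write the tentative candidate as $I_\lambda=\sum_{|\nu|\le|\lambda|}c_\nu P_\nu$ in the Macdonald basis. Extracting the homogeneous part of degree $|\lambda|$ and evaluating against the conditions $I_\lambda(\xi_\mu)=0$ for all $\mu$ with $|\mu|=|\lambda|$, $\mu\ne\lambda$, one obtains a linear system on the coefficients $\{c_\nu:|\nu|=|\lambda|\}$. By Macdonald triangularity $P_\nu=m_\nu+\sum_{\rho<\nu}(\cdots)m_\rho$ and the corresponding triangularity of evaluations in dominance order, this system is upper-triangular with nonzero diagonal for generic $q,t$, forcing $c_\nu=0$ for $\nu\ne\lambda$ in the top stratum. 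The surviving coefficient $c_\lambda$ is nonzero precisely because $I_\lambda(\xi_\lambda)\ne 0$, so the top-degree part is $c_\lambda P_\lambda$.

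The genuinely hard step, and the main obstacle, is the \emph{extra vanishing}: upgrading $I_\lambda(\xi_\mu)=0$ from $|\mu|\le|\lambda|$ to all $\mu\not\supset\lambda$. The dimension count above gives only finitely many vanishing conditions, whereas infinitely many are claimed. I would address this following Okounkov's strategy: construct the nonsymmetric interpolation Macdonald polynomials $G_\eta$ as joint eigenfunctions of Cherednik's commuting family of $q$-difference operators, verify that each $G_\eta$ vanishes at all $\xi_\sigma$ with $\sigma$ not dominating $\eta$ in a suitable partial order (this follows by induction on $\eta$ using the intertwiners of the double affine Hecke algebra), and symmetrise to obtain $I_\lambda$, inheriting the extra vanishing. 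A more elementary alternative, modelled on Proposition \ref{prop: adding a column} but requiring a two-parameter Pieri-type shift identity, would be to prove $\prod_{i=1}^N(1-x_i)\cdot I_\lambda(tx_1,\ldots,tx_N)$ is proportional to $I_{\lambda+1^N}(x_1,\ldots,x_N)$ and then reduce the evaluation at large $\mu$ to smaller ones; I expect this to be feasible but technically delicate. Either way, this extra-vanishing step is where the two-parameter theory transcends the elementary determinantal construction of Section \ref{sec:inter}, and it is the principal obstacle in the argument.
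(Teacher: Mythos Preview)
The paper does not prove this theorem; it is quoted from the literature with only the remark that (a)+(b) determine $I_\lambda$ uniquely (via the ``nondegenerate grid'' argument of \cite{Ok3}), and that (c) ``is then a deep property of these polynomials''. Your Step~1 is exactly this nondegenerate-grid dimension count and is correct.

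The genuine gap is in your Step~2, the argument for (c). You propose to expand $I_\lambda=\sum c_\nu P_\nu$, ``extract the homogeneous part of degree $|\lambda|$'', and then use the vanishing at $\xi_\mu$ with $|\mu|=|\lambda|$ to force $c_\nu=0$ for $\nu\neq\lambda$ in the top stratum by triangularity. This does not work: the evaluation $I_\lambda(\xi_\mu)$ involves \emph{all} the $c_\nu$, not only the top-degree ones, so there is no way to isolate a linear system on $\{c_\nu:|\nu|=|\lambda|\}$ alone. Even if you first solve for the lower $c_\nu$ in terms of the top ones using the nodes with $|\mu|<|\lambda|$, the resulting system at the level $|\mu|=|\lambda|$ has no reason to be triangular with respect to the dominance order in a way that picks out $\lambda$; the Macdonald triangularity $P_\nu=m_\nu+\ldots$ says nothing about the evaluation matrix $P_\nu(\xi_\mu)$. (At $q=t$ property (c) is visible directly from the determinantal formula \eqref{eq: def F}; it is precisely the absence of such a formula for generic $q,t$ that makes (c) nontrivial.)

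You have in fact inverted the difficulty ordering relative to the literature. The paper singles out (c) as the hard statement, and in the cited proofs (Knop, Sahi, Okounkov) it is obtained by showing that $I_\lambda$ is an eigenfunction of an inhomogeneous deformation of the Macdonald difference operators; since the top-degree part of these operators is the Macdonald operator itself, the top-degree part of $I_\lambda$ must be a Macdonald eigenfunction, hence proportional to $P_\lambda$. The same operator/intertwiner machinery (your Step~3) simultaneously yields the extra vanishing. So the Cherednik/difference-operator input you correctly identify as needed for the full (a) is also what is needed for (c); there is no shortcut to (c) via elementary triangularity.
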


The polynomials $I_{\lambda}$ are called interpolation Macdonald polynomials. 
In fact, the properties (a) and (b) already uniquely determine $I_{\lambda}$ (up to a scalar), and their existence follows from the fact that $q^{-\lambda_i}t^{N-i}$ for a nondegenerate grid in the sense of \cite{Ok3}. Part (c) is then a deep property of these polynomials.

It is easy to see that at $q=t$ interpolation Macdonald polynomials $I_{\lambda}$ specialize to $F_{\lambda}$. Unlike $F_{\lambda}$, there is no determinant formula for $I_{\lambda}$ but there is a different combinatorial formula \cite{Ok2}.


\end{document}